\newtheorem{lemma}{Lemma}[section]
\newtheorem{theorem}[lemma]{Theorem}
\newtheorem{proposition}[lemma]{Proposition}
\newtheorem{definition}[lemma]{Definition}
\theoremstyle{remark}
\newtheorem{remark}[lemma]{Remark}
\def\beq{\begin{equation}}   \def\eeq{\end{equation}}
\def\bea{\begin{eqnarray}}  \def\eea{\end{eqnarray}}
\newcommand{\ch}{{{\mathtt c}_{\mathtt h}}}
\newcommand{\de}{{\rm d}}
\newcommand{\rvline}{\hspace*{-\arraycolsep}\vline\hspace*{-\arraycolsep}}
\renewcommand{\red}[1]{{\textcolor{red}{#1}}}
\newcommand{\reg}{\mathtt{r}}
\title{Full description of Benjamin-Feir instability  \\ for  generalized Korteweg-de Vries  equations}
\numberwithin{equation}{section}
 \author{Alberto Maspero, Antonio Milosh Radakovic\footnote{
International School for Advanced Studies (SISSA), Via Bonomea 265, 34136, Trieste, Italy.
 \textit{Emails: } \texttt{amaspero@sissa.it}, \texttt{aradakov@sissa.it}
 }}
\date{}
\begin{document}

\maketitle

\begin{abstract}
In this paper we consider a family of generalized  Korteweg-de Vries equations and study the linear modulational instability of small amplitude traveling waves solutions. 
Under  explicit non-degeneracy conditions on the dispersion relation, 
we completely describe  the spectrum near the origin of the  linearized operator at such solutions and prove that the unstable spectrum (when present) is composed by   branches depicting {\em always} a closed figure ``8''.
We apply our abstract theorem to several equations such as the Whitham, the gravity-capillary Whitham and the Kawahara equations, confirming that the unstable spectrum of the corresponding    linearized operators 
exhibits a figure ``8'' instability,  as it was   observed before only numerically.
Our method of proof uses a symplectic version of Kato’s theory of similarity transformation to reduce the problem to determine the eigenvalues of a $3 \times 3$  complex Hamiltonian and reversible matrix. 
Then,  via a block-diagonalization procedure, we  conjugate  such matrix into a block-diagonal one composed by a   $2\times 2$ Hamiltonian and reversible matrix, describing  the  unstable spectrum,  and a single purely imaginary element describing the stable eigenvalue.
\end{abstract}

\section{Introduction}\label{sec1}

The generalized Korteweg-de Vries (gKdV)  equations are a family of dispersive partial differential equations of the form 
\begin{equation}\label{eq}
    \partial_t u +  \cM(D) u_x +  (u^2)_x =0 ,  \quad x \in \R \ , 
\end{equation}
where  $u(t,x)$ is a  real valued function  and $\cM(D)$  a   Fourier multiplier with a real valued symbol  $\fm(\xi)$, defined by
\begin{equation}\label{M}
\widehat{[\cM(D) u]}(\xi):= \fm (\xi) \hat u (\xi)  \ . 
\end{equation}
Here $\hat u(\xi) := \int_\R u(x) e^{- \im \xi x}$.
The  family of equations \eqref{eq} includes many well-known dispersive PDEs such as
the Korteweg-de-Vries, the Benjamin-Ono, the Whitham, the Kawahara equations. 
In Table \ref{table1} below we report the  values of the corresponding symbols. 

Under very general assumptions on the symbol $\fm(\xi)$, 
equation \eqref{eq} 
admits global in time solutions in the form of space-periodic waves traveling at constant speed (they are steady in a moving frame).
A classical problem is to determine their  stability/instability when subjected to  long wave perturbations.
Such  problem, usually called modulational instability,  goes back to the pioneering works by Benjamin-Feir \cite{BF}, Whitham \cite{Wh}, Lighthill \cite{Li}, Zakharov \cite{Z0} in the context of water waves, recently fully and rigorously justified by \cite{BrM, NS, BMV1, BMV2, BMV3, BMV4, HY, CNS}.

The first, fundamental step to understand modulational instability is to   study the spectrum of the linear operator obtained  linearizing \eqref{eq}  at a traveling wave solution:
in case it intersects the positive real plane, the traveling wave is called linearly modulationally unstable. 
Then,  at least  for  equation \eqref{eq}, 
 the general theory developed by Jin-Liao-Lin in \cite{JLL} can be used to upgrade the linear instability to a nonlinear one.

Linear modulational instability for equation \eqref{eq} has been studied both for small amplitudes waves  \cite{johnson2013stability, hur2014modulational, hur2015modulational, pava2017stability, HP} and even large amplitudes  ones    in case of KdV  \cite{HK, BJ, BHJ} (corresponding to $\fm(\xi) = \xi^2$). 
All these works investigate the existence of unstable spectrum locally around the origin, in order to eventually determine the linear modulational instability of these systems.
In \cite{BD}, the presence of unstable spectrum is investigated, in the entirety of the complex plane. Leveraging the integrable structure of the KdV equation, the authors demonstrate the complete linear modulational stability of the equation. This method has moreover proven effective in determining both nonlinear and orbital modulational stability for the KdV equation. 
The integrable structure of the KdV equation has also been exploited in \cite{DK}, to prove the nonlinear orbital stability of the cnoidal periodic traveling waves of the KdV equation.
%

 The goal of the present paper, following the approach recently developed by Berti-Maspero-Ventura \cite{BMV1, BMV2, BMV3},  is to completely characterize  the spectrum  near the origin of the complex plane of the operator obtained linearizing  \eqref{eq} at small amplitude traveling waves. 
 We prove that, provided the   symbol $\fm(\xi)$ fulfills certain explicit and mild non-degeneracy conditions (see \eqref{tedet}--\eqref{formulaewb}), such spectrum is either purely imaginary  or it possesses unstable spectral branches  outside the imaginary axis depicting {\em always} a complete figure “8”. 
 This dichotomy is determined by the sign of a certain coefficient, see \eqref{eWB}, depending only on few values of $\fm(\xi)$ and its derivatives.

We apply our abstract criterion to several equations in the family of generalized KdV, among which the finite depth  Whitham equation and its  gravity-capillary and vorticity-modified variants,  and  the Kawahara equation. 
In all these cases we prove that the unstable spectrum near the origin depicts a complete figure ``8'', as was observed before only numerically \cite{hur2014modulational, hur2015modulational, creedon2021high}.

\smallskip 
We now describe precisely our result. 
The first set of assumptions that we make on the symbol $\fm(\xi)$ in \eqref{M} are rather mild and   used to guarantee the existence of  small amplitude traveling wave solutions of \eqref{eq}, see Theorem \ref{existPTW} below: \\
 
\noindent{\bf Assumption A:}\label{A} The symbol $\fm(\xi)$ of the Fourier multiplier $\cM(D)$ in \eqref{M} fulfills: 
\begin{itemize}
\item[(A1)] $\fm(\xi)$ is real valued and even,  $\fm(\xi) = \fm(-\xi)$ for any $\xi \in \R$;
\item[(A2)] $\fm(\xi)$ is a function in $\cC^\reg (\R)$, $\reg \geq 3$,  fulfilling for some $m \geq -1$ and $\xi$ sufficiently large: there exist $C_1, C_2 >0$ such that 
\begin{equation}\label{orderpsedodiff}
C_1 \la \xi \ra^m \leq \fm(\xi) \leq  C_2 \la \xi \ra^{m }  \ ,   \ \ \forall |\xi| \gg 1 \ ; 
\end{equation}
here we put $\la \xi \ra:= (1 + \xi^2)^{1/2}$;

\item[(A3)]there exists $ c_0 >0$ such that 
\begin{equation}\label{m1diff}
\inf_{n \in \N\setminus\{1\}} | \fm(n) - \fm(1) | \geq c_0  \ . 
\end{equation}
\end{itemize}
We list now some symbols $\fm(\xi)$ fulfilling Assumption A, and the corresponding  equations \eqref{eq} generated: 
\begingroup

\setlength{\tabcolsep}{12pt} 
\renewcommand{\arraystretch}{2.} 
\begin{table}[ht]\label{table1}
\caption{The correspondence between the symbol and the equation of form \eqref{eq} obtained.}
\centering
\begin{tabular}{|c  | c | } 
 \hline
Symbol  $\fm(\xi) $  & Equation \\
 \hline
$ \xi^2$ & KdV  \\ 

 $|\xi|^\alpha, \ \ \alpha\geq 3$ & fractional KdV  \\

 $|\xi|$, & Benjamin-Ono \\

 $ \sqrt{\dfrac{\tanh(\tth \xi)}{\xi}}$, \ \ $\tth >0$,  & Whitham equation \\

  $ \sqrt{(1 + \kappa \xi^2) \frac{\tanh(\tth \xi)}{\xi}}$,  \ \ $\tth , \kappa>0$, & Whitham  with surface tension \\

  $\xi \coth(\tth \xi) - \tth^{-1}$ & intermediate long-wave \\
  
  $  \xi^2 + \tb \xi^4$ & Kawahara \\
  
  \hline
\end{tabular}
\end{table}
\begin{remark}
   Actually, in view of property \textit{(ii)} at page \pageref{symspectrum}, it is sufficient to ask that $m(\xi)$ is $\cC^3$ at any $\xi>0$, has right derivatives up to third order at $\xi=0$, and every derivative is continuous at $\xi=0$. With this extension, Assumption A is satisfied also in the case of the Benjamin-Ono equation. In this case, the notation $\frac{\de^n}{\de \mu^n} \fm (0)$, corresponds to the $n$-th right derivative of $\fm$ at $0$, and so $\ddot \fm (0) = 0$.
\end{remark}
\endgroup

Under Assumption A,  the gKdV equation in  \eqref{eq} admits families of small amplitude traveling waves solutions, which are periodic in space, i.e. of the form   $u(t,x) = \breve u(x-ct)$ for some real $c$ -- the speed -- and $2\pi$ periodic function $\breve u(x)$. The following theorem is essentially known in the literature, see e.g. \cite{1f611663399640f8bf7f8a8812b5fafe, hur2014modulational}.
For $\sigma, s \in \R$, we denote by $H^{\sigma,s}(\T):= \{u:\T\to\R \text{ s.t. }\sum_{k\in\Z} |u_k|^2 e^{2\sigma|k|} \langle k\rangle^{2s}<  \infty \}$,  and $B(a)\subset \R$ the ball of center zero and radius $a$.

\begin{theorem}[\cite{1f611663399640f8bf7f8a8812b5fafe, hur2014modulational}] \label{existPTW}
 Assume Assumption {\rm A}. 
 Fix $\sigma >0$, $s > \frac12$, with $m$ the  order of the Fourier multiplier in \eqref{orderpsedodiff}.
There exist  $\e_0,\tb_0>0$ and a locally unique non-zero family of  functions and real numbers $(u_{\e,\tb}(x), c_{\e,\tb})$, analytic as a map $B(\e_0)\times B(\tb_0) \mapsto H^{\sigma, s} (\T)\times \R$, solution of the stationary equation 
\begin{equation}\label{stateq}
 c \, u- \cM(D) u - u^2 + (\fm(0)-c)^2 \tb   = 0
    \end{equation}
    so that
    $u_{\e,\tb}(x)$ is $2\pi$-periodic,  even, real valued and having at $\tb=0$  the asymptotic expansion
\begin{align}\label{expansionsPTWsol}
   & u_\e(x):= u_{\e,\tb=0}(x)=\e \cos(x)+ \e^2 \left(u_2^{[0]} + u_2^{[2]} \cos(2x)\right)  + \cO(\e^3) , \\
   & c_\e:= c_{\e,\tb=0} = \fm(1)  + \e^2 c_2 + \cO(\e^3) , 
    \end{align}
where
    \begin{align}
    &   u_2^{[0]}:=\frac{1}{2}\frac{1}{\fm(1) - \fm(0)} , \qquad u_2^{[2]} := \frac{1}{2}\frac{1}{\fm(1)-\fm(2)} , \\
    &   c_2 := \frac{1}{\fm(1) - \fm(0)} + \frac12 \frac{1}{\fm(1) - \fm(2)} \ . 
    \end{align}
    Moreover $u_{\e,\tb}$, $c_{\e,\tb}$ are given by 
    $$ u_{\e,\tb} (x) = u_\e(x) + v(\e,\tb)\, , \qquad c_{\e,\tb} = c_\e + 2v(\e,\tb)\, ,$$ for a certain real valued constant $v=v(\e,\tb)\in \R$, analytic as a map $B(\e_0)\times B(\tb_0) \mapsto \R$.
 \end{theorem}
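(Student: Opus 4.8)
The plan is to read \eqref{stateq} as a bifurcation problem from the trivial solution, solving first the case $\tb=0$ and then recovering the general $\tb$ by a Galilean-type invariance. Throughout I work in the scale of Sobolev spaces $H^s$ of real-valued, even, $2\pi$-periodic functions, with $s$ large enough that $H^s$ is an algebra, so that the quadratic term $u^2$ is well defined and the nonlinear map is smooth; by (A2) the multiplier $\cM(D)$ maps $H^s$ into $H^{s-m}$, and the regularity $\reg\geq 3$ of $\fm$ guarantees the smoothness/analyticity of the resulting family needed for the expansion \eqref{expansionsPTWsol}. The map $(u,c)\mapsto c\,u-\cM(D)u-u^2$ is then analytic, and its linearization at $u=0$ is the Fourier multiplier $L_c:=c-\cM(D)$, acting on the mode $\cos(nx)$ by multiplication by $c-\fm(n)$. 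Evenness discards the sine modes, and at $c=\fm(1)$ the kernel of $L_c$ on the even subspace is exactly $\mathrm{span}\{\cos x\}$, since by (A3) one has $\fm(n)\neq\fm(1)$ for every $n\neq 1$.

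First I would solve $\tb=0$ by a Lyapunov--Schmidt reduction: write $u=\e\cos x+w$ with $w$ in the $L^2$-orthogonal complement of $\cos x$. The crucial point, and the one I expect to be the main obstacle, is the uniform invertibility of $L_c$ on this complement, namely that $\inf_{n\neq 1}|c-\fm(n)|$ stays bounded away from zero for $c$ near $\fm(1)$, together with enough control of the high-frequency behaviour that $n\mapsto(c-\fm(n))^{-1}$ defines a bounded operator with the right mapping properties. This is precisely where the spectral gap (A3), $\inf_{n\neq 1}|\fm(n)-\fm(1)|\geq c_0$, combines with the two-sided bound (A2) on $\fm$ at infinity. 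Granting this, the analytic implicit function theorem solves the range component for $w=w(\e,c)$, analytic with $w=\cO(\e^2)$, leaving a single scalar bifurcation equation (the projection onto $\cos x$) that determines $c=c_\e$ analytically in $\e$.

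The expansions \eqref{expansionsPTWsol} then follow from matching powers of $\e$. The order-$\e$ equation is automatically satisfied because $c=\fm(1)$ at leading order; at order $\e^2$ one finds $(\fm(1)-\cM(D))u_2=\cos^2 x=\tfrac12(1+\cos 2x)$, and inverting mode by mode (legitimate since (A3) gives $\fm(1)\neq\fm(0)$ and $\fm(1)\neq\fm(2)$) yields $u_2^{[0]}=\tfrac12(\fm(1)-\fm(0))^{-1}$ and $u_2^{[2]}=\tfrac12(\fm(1)-\fm(2))^{-1}$. At order $\e^3$ the solvability condition, i.e. the vanishing of the $\cos x$-component, forces $c_2=2u_2^{[0]}+u_2^{[2]}$, which reproduces the stated formula. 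Real-valuedness and evenness are preserved throughout because the whole construction lives in the real even subspace.

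Finally I would recover the dependence on $\tb$ from the invariance of the left-hand side of \eqref{stateq} under the constant shift $u\mapsto u+v$, $c\mapsto c+2v$ with $v\in\R$: since $\cM(D)$ acts on constants by multiplication by $\fm(0)$, a direct computation shows that if $(u_\e,c_\e)$ solves \eqref{stateq} at $\tb=0$, then $c\,u-\cM(D)u-u^2$ evaluated at the shifted pair equals the $x$-independent quantity $v\bigl(c_\e+v-\fm(0)\bigr)$. Imposing the full equation therefore collapses to the scalar relation
\[
v\bigl(c_\e+v-\fm(0)\bigr)+\bigl(1-c_{\e,\tb}^2\bigr)\tb=0,
\]
which at $\tb=0$ admits the root $v=0$; since its $v$-derivative there equals $c_\e-\fm(0)$, nonzero for $\e$ small by (A3) (as $c_\e$ is close to $\fm(1)$), the implicit function theorem produces a smooth $v=v(\e,\tb)$ with $v(\e,0)=0$, yielding the claimed affine relations $u_{\e,\tb}=u_\e+v(\e,\tb)$ and $c_{\e,\tb}=c_\e-2v(\e,\tb)$ (with the appropriate sign convention for $v$).
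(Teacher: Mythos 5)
The paper does not actually prove Theorem \ref{existPTW} — it is imported from the cited references — so there is no internal proof to compare against; your argument (Lyapunov–Schmidt in the even, real, $2\pi$-periodic subspace, with (A2)–(A3) giving uniform invertibility of $c-\cM(D)$ off the $\cos x$ mode, followed by the constant-shift invariance to generate the $\tb$-family) is exactly the standard route those references take, and your coefficient computations $u_2^{[0]}$, $u_2^{[2]}$ and the solvability condition $c_2=2u_2^{[0]}+u_2^{[2]}$ all check out against the statement. One remark: your shift computation is the correct one — the $x$-dependent terms cancel only for $u\mapsto u+v$, $c\mapsto c+2v$, leaving the scalar relation $v\,(c_\e+v-\fm(0))+(1-c_{\e,\tb}^2)\tb=0$ — so the sign in the paper's displayed formula $c_{\e,\tb}=c_\e-2v$ is the one that is off (it is incompatible with $u_{\e,\tb}=u_\e+v$), and you were right to flag the convention rather than force a match.
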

    \begin{proof} We sketch the proof for completeness, since the exact statement of the theorem is not present in literature.
The function  $F(c,u,\tb):= c \, u- \cM(D) u - u^2 + (\fm(0)-c)^2 \tb $ is analytic as a map $\R \times H^{\sigma, s}(\T) \times \R  \mapsto H^{\sigma,s-m}(\T)$ for any $\sigma\geq 0$, $s>\frac 12$.
 When $\tb = 0$, the proof is a standard application of the Crandall-Rabinowitz bifurcation theorem (provided one restricts to functions even in space, to obtain a one-dimensional kernel). This yields
 an analytic map  $B(\e_0) \ni \e \to (c_\e,u_\e) \in \R \times H^{\sigma,s}(\T)$ such that  $ F(c_\e, u_\e, 0) \equiv 0$ $\forall |\e| \leq \e_0$.

When $|\tb| \leq \tb_0$ and $\tb_0$ sufficiently small, define the analytic function 
$B(\e_0)\times B(\tb_0) \ni (\e, \tb) \to v(\e,\tb) \in \R$  as the solution close to zero of the quadratic equation 
    $$
    (\fm (0)- c_\e)^2 \tb - v(\e,\tb)(\fm (0)-c_\e)(4\tb +1 ) + v(\e,\tb)^2(4\tb + 1) \equiv 0 \ .
    $$
    One verifies  that $u_{\e,\tb} (x) = u_\e(x) + v(\e,\tb)$, $c_{\e,\tb} = c_\e + 2v(\e,\tb)$ solve 
    $ F(c_{\e, \tb}, u_{\e,\tb} , \tb) \equiv 0$ for any $\e,\tb$ small.
   \end{proof}
We shall need also a second set of assumptions on the symbol $\fm(\xi)$, involving  non-degeneracy conditions: \\

\noindent{\bf Assumption B:}\label{B}  
Denote by $\dot \fm(\xi):= \frac{\di}{\di \xi} \fm(\xi)$ and $\ddot \fm(\xi):= \frac{\di^2}{\di \xi^2} \fm(\xi)$.
The numbers 
\begin{equation}\label{tedet}
\begin{aligned}
\te_{12}:= \dot \fm(1) , \qquad 
\te_{d}:= \dot \fm(1) + \fm(1) - \fm(0) \ , \qquad 
\te_{b}:=- \dot \fm(1)- \frac12 \ddot \fm(1) 
\end{aligned}
\end{equation}
and 
\begin{equation}\label{formulaewb}
 \te_{w} :=  \frac{\dot \fm(1)+ 3\fm(1) -2\fm(2) - \fm(0)}{ \big(\fm(1)-\fm(2) \big) \, \big(\dot \fm(1)+\fm(1)- \fm(0)\big)}
\end{equation}
are all well defined and non-zero.
\vspace{1em}

 In order to determine the stability/instability of the traveling wave solution given by Theorem \ref{existPTW},  we linearize the equations of motion \eqref{eq} at $u_\e$ and put ourself in  a  moving frame  at speed $c_\e$, getting the linear system
\begin{equation}\label{lin.pro}
    \pa_t h =     \cL_\e h
    \end{equation}
    where 
    \begin{equation}\label{def:cLe}
        \cL_\e :=  \pa_x \circ(c_\e -\cM(D) - 2 u_\e)  \ .
    \end{equation}
 Since the operator $\cL_\e$  has $2\pi$-periodic coefficients, its spectrum on $L^2(\R, \C)$ is mostly conveniently described by Bloch-Floquet theory, which guarantees that
    \begin{equation}\label{sp.rel}
    \sigma_{L^2(\R)}(\cL_\e) = \bigcup_{\mu \in [-\frac12, \frac12)} \sigma_{L^2(\T)}(\cL_{\mu,\e})
    \end{equation}
   where the Floquet operator $\cL_{\mu,\e}$ is  given by 
  \begin{equation}\label{Lmue}
    \cL_{\mu,\e}:= e^{-\im \mu x} \, \cL_\e \, e^{\im \mu x} \ , 
  \end{equation}
  and acts as an (unbounded) operator on $L^2(\T, \C)$.
  Recall that    if $\lambda$ is an eigenvalue of $\cL_{\mu,\e}$ on $L^2(\T, \C)$
with eigenvector $v(x) \in L^2(\T, \C)$, then  $h (t,x) = e^{\lambda t} e^{\im \mu x} v(x)$ solves the linear problem  \eqref{lin.pro}.

\begin{itemize}
\item {\bf Notation:} 
 We  denote by
$r (\mu^{m_1}\e^{n_1},\dots,\mu^{m_p}\e^{n_p}) $ {\em real} valued functions  of class at least $\cC^{1}$ fulfilling,  for small values of $(\mu, \e)$, the bound
 $|r (\mu^{m_1}\e^{n_1},\dots,\mu^{m_p}\e^{n_p})| \leq C \sum_{j = 1}^p |\mu|^{m_j}|\e|^{n_j}$.
\end{itemize}

When $(\mu,\e)=(0,0)$, $\cL_{0,0}$ has a triple eigenvalue $0$. Our main result describes the spectrum near $0$ of the operator $\cL_{\mu,\e}$ for any $(\mu,\e)$ close to $(0,0)$:

\begin{theorem}\label{mainres1}
Assume Assumption {\rm A} and {\rm B} and denote
\begin{equation}
\label{eWB}
\te_{\scaleto{\mathrm{WB}}{3pt}} := 
\te_w \, \te_b  = 
 \frac{
\big( \dot \fm(1) + \frac12 \ddot \fm(1)  \big) \ 
\big(\dot \fm(1)+ 3\fm(1) -2\fm(2) - \fm(0)\big)}{ \big(\fm(2)-\fm(1) \big) \, \big(\dot \fm(1)+\fm(1)- \fm(0)\big)} 
\end{equation}
the {\em Whitham-Benjamin} coefficient for the generalized KdV \eqref{eq}.
There exist $\e_1,\mu_0>0$  such that the following holds true:
\begin{itemize}
\item[(i)] If $\te_{\scaleto{\mathrm{WB}}{3pt}} >0$, there exists  a  function $\underline{\mu}:[0,\e_1)\to [0,\mu_0)$ of class $\cC^{\reg-2}$
 of the type
    $$
    \underline{\mu}(\e)= \sqrt{\frac{\te_{\scaleto{\mathrm{WB}}{3pt}} }{\te_{b}^2}}  \, (\e+r(\e^2))  
    $$
    such that, for any $\e\in [0,\e_1)$, the spectrum of the operator $\cL_{\mu,\e}$ near zero consists in the  purely imaginary eigenvalue 
    \begin{equation}\label{pi}
\lambda_0(\mu,\e) =    \im  \big(\fm(1)-\fm(0)\big)\mu +  \im \mu r(\e^2,\mu^2)
    \end{equation}
    and two eigenvalues $\lambda_1^\pm (\mu,\e)$ of the form
    \begin{equation}\label{figure8param}
    \lambda_1^\pm (\mu,\e) = 
        \begin{cases}
            -\im \mu  \big( \te_{12}   +  r(\e^2,\mu\e,\mu^2) \big) \pm \mu \sqrt{\Delta_{\scaleto{\mathrm{BF}}{3pt}}} \, ,  &\mu \in [0,\underline{\mu}(\e))\\
               -\im \mu  \big( \te_{12}   +  r(\e^3) \big) \, ,  &\mu=\underline{\mu}(\e)\\
               -\im \mu  \big( \te_{12}   +  r(\e^2,\mu\e,\mu^2) \big) \pm \im\mu
             \,  \sqrt{|\Delta_{\scaleto{\mathrm{BF}}{3pt}}|} \, ,  &\mu \in [\underline{\mu}(\e),\mu_0)
        \end{cases}
    \end{equation}
    with 
   \begin{equation}\label{DeltaBF}
   \Delta_{\scaleto{\mathrm{BF}}{3pt}}(\mu,\e) :=\te_{\scaleto{\mathrm{WB}}{3pt}} \, \e^2(1+\hat r_1(\e,\mu)) -  \te_{b}^2 \, \mu^2(1+\hat r_2(\e,\mu))
   \end{equation}
   and some functions $r, \hat r_1, \hat r_2$ of class $\cC^{\reg -2}$. 
    The function $\underline{\mu}(\e)$ is implicitly defined by $\Delta_{\scaleto{\mathrm{BF}}{3pt}}(\underline{\mu}(\e),\e) = 0$.
    \item[(ii)] If $\te_{\scaleto{\mathrm{WB}}{3pt}} <0$,  then for any $|\mu|\leq \mu_0$ and $|\e| \leq \e_1$, the 
operator $\cL_{\mu,\e}$ has near the origin three purely imaginary eigenvalues.
\end{itemize}

\end{theorem}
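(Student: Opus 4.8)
The plan is to follow the Kato--similarity strategy of Berti--Maspero--Ventura and reduce the spectral problem to a finite-dimensional matrix. First I would make the Floquet conjugation \eqref{Lmue} explicit: since $e^{-\im\mu x}\pa_x e^{\im\mu x}=\pa_x+\im\mu$ and $e^{-\im\mu x}\cM(D)e^{\im\mu x}$ multiplies the Fourier mode $e^{\im n x}$ by $\fm(n+\mu)$, one gets $\cL_{\mu,\e}=(\pa_x+\im\mu)\circ\big(c_\e-\cM(D+\mu)-2u_\e\big)$, which displays the Hamiltonian factorization $\cL_{\mu,\e}=\cJ_\mu\,\cB_{\mu,\e}$ with $\cJ_\mu:=\pa_x+\im\mu$ skew-adjoint and $\cB_{\mu,\e}$ self-adjoint on $L^2(\T,\C)$. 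At $(\mu,\e)=(0,0)$ the operator $\cL_0=\pa_x(\fm(1)-\cM(D))$ is Fourier-diagonal, with eigenvalue $\im n(\fm(1)-\fm(n))$ on the mode $e^{\im n x}$; by evenness (A1) and the separation condition (A3) this vanishes exactly for $n\in\{-1,0,1\}$, so $0$ is a semisimple eigenvalue of multiplicity three with eigenspace $\mathrm{span}\{e^{-\im x},1,e^{\im x}\}$. Analytic perturbation theory then gives a rank-three spectral projector $P_{\mu,\e}$ depending analytically on $(\mu,\e)$, and Kato's transformation operator $U_{\mu,\e}$ produces an analytic basis of $\mathrm{Ran}\,P_{\mu,\e}$ in which $\cL_{\mu,\e}$ is represented by a $3\times3$ matrix $\mathsf{L}(\mu,\e)$ vanishing at the origin.

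Next I would transport the two symmetries to this finite-dimensional setting. The Hamiltonian factorization, conjugated by $U_{\mu,\e}$, forces $\mathsf{L}=\mathsf{J}\,\mathsf{B}$ with $\mathsf{J}$ skew-Hermitian and $\mathsf{B}$ Hermitian, while the reversibility inherited from the evenness of $\fm$ and of $u_\e$ imposes a further conjugation constraint; together these cut the number of independent Taylor coefficients of $\mathsf{L}(\mu,\e)$ to a handful. The key computation is then to expand the surviving entries in $(\mu,\e)$, feeding in the expansions \eqref{expansionsPTWsol} of $u_\e$ and $c_\e$, so that the coefficients $\te_{12},\te_d,\te_b,\te_w$ of \eqref{tedet}--\eqref{formulaewb} emerge as the leading entries. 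Because the $n=0$ (constant) direction decouples at leading order and carries the purely imaginary eigenvalue of \eqref{pi}, I would then perform a near-identity similarity transformation --- chosen inside the subgroup preserving both the Hamiltonian and the reversible structure --- to block-diagonalize $\mathsf{L}(\mu,\e)$ into a $1\times1$ block, yielding $\lambda_0(\mu,\e)=\im(\fm(1)-\fm(0))\mu+\im\mu\,r(\e^2,\mu^2)$, and a $2\times2$ Hamiltonian-and-reversible block $\mathsf{L}_2(\mu,\e)$ acting on the $e^{\pm\im x}$ directions.

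The eigenvalues then follow from the $2\times2$ structure. A Hamiltonian--reversible $2\times2$ matrix has purely imaginary trace and a \emph{real} discriminant, so its eigenvalues are $-\im\mu(\te_{12}+\dots)\pm\sqrt{\Delta}$; computing $\Delta$ from the entries of Step two gives $\Delta=\mu^2\,\Delta_{\scaleto{\mathrm{BF}}{3pt}}(\mu,\e)$ with $\Delta_{\scaleto{\mathrm{BF}}{3pt}}$ as in \eqref{DeltaBF}. Since $\Delta_{\scaleto{\mathrm{BF}}{3pt}}=\te_{\scaleto{\mathrm{WB}}{3pt}}\,\e^2(1+\dots)-\te_b^2\,\mu^2(1+\dots)$, its sign is governed by $\te_{\scaleto{\mathrm{WB}}{3pt}}$. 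When $\te_{\scaleto{\mathrm{WB}}{3pt}}<0$ the discriminant is strictly negative, $\sqrt\Delta$ is imaginary, and all three eigenvalues lie on the imaginary axis, giving (ii). When $\te_{\scaleto{\mathrm{WB}}{3pt}}>0$ it changes sign along $\Delta_{\scaleto{\mathrm{BF}}{3pt}}=0$, and the implicit function theorem produces $\underline{\mu}(\e)=\sqrt{\te_{\scaleto{\mathrm{WB}}{3pt}}/\te_b^2}\,(\e+r(\e^2))$ of class $\cC^{\reg-2}$; the eigenvalues are genuinely complex (unstable) for $\mu<\underline{\mu}(\e)$ and purely imaginary for $\mu>\underline{\mu}(\e)$, coalescing at $\mu=\underline{\mu}(\e)$, which is precisely the closed figure ``8'' of \eqref{figure8param}, giving (i).

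I expect the main obstacle to lie in the second step together with the block-diagonalization: carrying the Kato expansion of $\mathsf{L}(\mu,\e)$ to the order needed to see $\te_{\scaleto{\mathrm{WB}}{3pt}}$ appear in $\Delta_{\scaleto{\mathrm{BF}}{3pt}}$ requires the second-order-in-$\e$ corrections to $u_\e$ and $c_\e$ as well as the mixed $\mu\e$ terms, and one must simultaneously verify that the Hamiltonian and reversible structures are preserved at each stage so that the trace of $\mathsf{L}_2$ stays purely imaginary and its discriminant stays real. Choosing the block-diagonalizing transformation within the structure-preserving group, so that it cleanly isolates the stable eigenvalue $\lambda_0$ without generating spurious real parts, is where the bookkeeping is heaviest and where the non-degeneracy hypotheses of Assumption B are genuinely used.
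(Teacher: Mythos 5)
Your overall strategy coincides with the paper's (Kato similarity reduction to a $3\times 3$ complex Hamiltonian and reversible matrix, block-diagonalization into a $1\times1$ purely imaginary block and a $2\times2$ block, then reading off the eigenvalues from the real discriminant), but there is one substantive gap in the middle step. You assert that ``the $n=0$ (constant) direction decouples at leading order'' and that a near-identity structure-preserving transformation then isolates the $2\times2$ block whose discriminant you compute ``from the entries of Step two''. In fact the coupling between the constant mode and the $e^{\pm\im x}$ modes is of size $-\sqrt2\,\e$ (the vector $\tf$ in Proposition \ref{matrixrepprop}), which is \emph{larger} than the $O(\e^2,\mu^2)$ entries of the $2\times2$ block $E$ that control the instability. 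The decoupling is therefore non-perturbative at the relevant order: eliminating the coupling via $\exp(S)$ with $S=O(\e)$ feeds back a correction $\tfrac12[S,R^{(1)}]$ whose $(1,1)$ entry is $-2\te_d^{-1}\e^2$, of exactly the same order as the $(1,1)$ entry $-\fa\,\e^2$ of $E$ itself. This shifts the coefficient from $\fa$ to $\te_w=\fa+2/\te_d$ (equation \eqref{tewb}), and it is $\te_w$, not $\fa$, that enters the Whitham--Benjamin coefficient $\te_{\scaleto{\mathrm{WB}}{3pt}}=\te_w\te_b$. If you compute the discriminant from the $2\times2$ block of the $3\times3$ matrix before decoupling, you obtain the wrong instability criterion. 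Relatedly, the paper must first apply the singular rescaling $\mathrm{diag}(\sqrt\mu,1/\sqrt\mu,\sqrt\mu)$ of Lemma \ref{Rescaling} to balance the orders in $\mu$ before the homological equation for $S$ can be solved with the stated remainders; your proposal does not account for this step.

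Two further points are glossed over. First, the symplectic tensor here is $\cJ_\mu=\pa_x+\im\mu$, which is unbounded and \emph{not invertible at} $\mu=0$; to make Kato's transformation operators symplectic one needs the modified notion of $\mu$-symplectic basis of Definition \ref{def:symp}, which treats $\mu=0$ (restriction to $L^2_0(\T)$ plus the kernel direction) separately from $\mu\neq0$ --- this is one of the places where the present setting genuinely differs from the water-waves case you are modelling your argument on. Second, you invoke \emph{analytic} perturbation theory, but the symbol $\fm$ is only assumed $\cC^\reg$ with $\reg\geq3$, so all maps are merely $\cC^\reg$ in $(\mu,\e)$ and the Taylor expansions must be truncated at second order in $\mu$ with integral remainders whose size (e.g.\ Lemma \ref{lem:off}) has to be tracked by hand; this is where the $\cC^{\reg-2}$ regularity of $\underline{\mu}(\e)$ and of the remainders in \eqref{figure8param} comes from.
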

Let us make some comments.\\

1. \textsc{Universality of Figure ``8'' spectrum  for small amplitude traveling waves:}  Consider the linearized operator $\cL_\e$ at a traveling wave of small amplitude  $\e >0$ and assume the  Whitham-Benjamin coefficient 
\eqref{eWB} is strictly positive.
In view of \eqref{sp.rel}, the unstable spectrum of $\cL_\e$ near the origin is completely described by the two curves  $\mu \mapsto \lambda^\pm_1(\mu,\e)$. 
As 
$\mu$ ranges in the interval $[0, \und{\mu}(\e))$, these two curves 
 depict a closed figure with an ``8'' shape, which  is well approximated by the curve 
\begin{equation}\label{parapp8}
    \mu \in [0,\underline{\mu}(\e)) \longrightarrow \left( \pm \mu\sqrt{\te_{\scaleto{\mathrm{WB}}{3pt}}\e^2
 - \te_b^2\mu^2},\, -\te_{12}\mu \right) \in \R^2 \ , 
\end{equation}
plotted in figure \ref{figure8image} below. 
In case the coefficient $\te_{12} <0$ (recall \eqref{tedet}),  equation \eqref{figure8param} parametrizes the upper part of the eight, the lower part being  parametrized by $\mu<0$, and viceversa in case $\te_{12} >0$. 
Nevertheless, the operator $\cL_{\e}$, defined on $L^2(\R)$, is real, and so the spectrum is invariant also by conjugation with respect to the real axis.  So one obtain a complete figure ``8''.

We find interesting that, whatever the operator $\cM(D)$ (provided Assumptions A and B are met), the spectrum near the origin of $\cL_\e$  is either purely imaginary or  it contains an unstable component having {always}  a figure 	``8'' shape.

\begin{figure}[H]\label{figure8image}
    \centering
    \includegraphics[scale = 0.3]{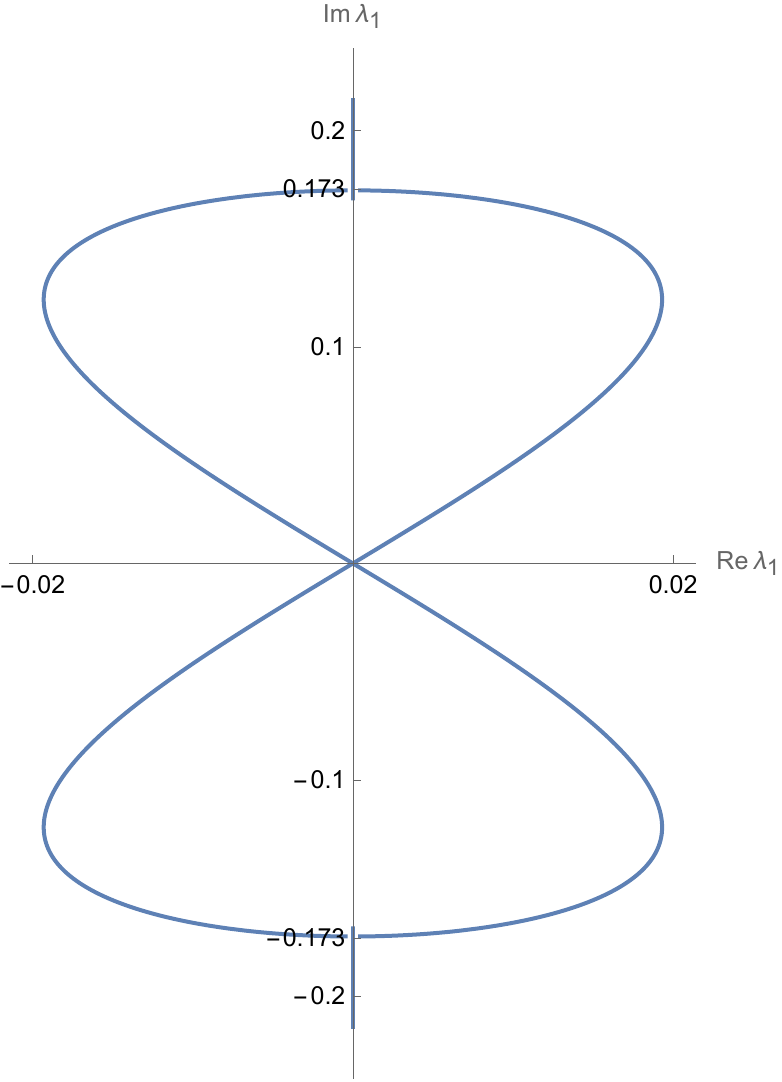} 
    \caption{The figure represents the plot of the curve parametrized in equation \eqref{parapp8}. }
    \end{figure}
    

2. \textsc{Generality of choosing a $2\pi$-periodic traveling wave:} In Theorem \ref{mainres1} we focused on the modulational instability for a space $2\pi$-periodic  traveling wave. 
The instability of   $\frac{2\pi}{\kappa}$-traveling waves, $\kappa \in \N$,  can be easily obtained by rescaling, as we now show. 
The linear rescaling 
$$
u(x)=(\Psi^{\kappa} w)(x) = \kappa^{-1} w(\kappa x)
$$
transforms equation \eqref{eq} in 
\begin{equation}\label{transfeq}
    \pa_t w + \cM_{\kappa} w_x +  (w^2)_x=0 \ , \quad \cM_\kappa(D):= \kappa\cM(\kappa D) \ . 
\end{equation}
Then   a $\frac{2\pi}{\kappa}$-periodic traveling wave solution of  \eqref{eq} with speed $c$, is  transformed in a $2\pi$-periodic traveling wave solution of \eqref{transfeq} with speed $c\kappa $.
To verify its stability/instability it is then  enough to replace $\fm(\xi)$ with $\fm_\kappa(\xi):= \fm(\kappa \xi)$ and recompute the coefficients in \eqref{tedet}, \eqref{formulaewb} for $\fm_\kappa(\xi)$. Thus, our abstract result Theorem \ref{mainres1} is also suitable to establish the modulational instability also for periodic traveling waves of general period $\frac{2\pi}{\kappa}$, provided that the symbol $\fm_\kappa$ satisfies Assumptions A and B.\\

3. \textsc{Comparison with Berti-Maspero-Ventura \cite{BMV1, BMV2, BMV3}:}  
Although  the strategy of the proof follows the same lines of  \cite{BMV1, BMV2, BMV3},  
there are  important differences that we now describe. 
The first one is that the gKdV equation in \eqref{eq} is  Hamiltonian  with a symplectic tensor $\pa_x$ which is unbounded, contrary to the case   of water waves in  \cite{BMV1, BMV2} where the symplectic tensor is the standard matrix $\begin{pmatrix} 0 & - \uno \\ \uno & 0 \end{pmatrix}$. 
A consequence  is that, in our case, the symplectic tensor is  conjugated by the Bloch-Floquet transform to the  unbounded operator $\cJ_\mu = \pa_x + \im \mu$ which is $\mu$-dependent and not invertible at $\mu = 0$. 
Then we need to develop a  Kato similarity transformation theory adapted to this situation.
In particular a different notion of symplectic basis is needed (see Definition \ref{def:symp}) which distinguishes the cases  $\mu =0$ (when $\cJ_0$ is not invertible) and $\mu \neq 0$.

A second difference is that, as we shall explain below, the bifurcation problem is reduced to study the spectrum of a $3\times 3$ complex Hamiltonian and reversible matrix, and not of a  $4\times 4$ one as in \cite{BMV1, BMV2}.
In view of this, the block-diagonalization procedure performed in Section 5 conjugates the original $3\times 3$ matrix to a new block-diagonal one in with a $2\times 2$ block and a single diagonal element which is always purely imaginary. 

A final difference is that we require  only a minimum amount of $\cC^3$ regularity  in $\mu$ (contrary to \cite{BMV1, BMV2, BMV3} where the dependence on $\mu$ is analytic). This is needed to deal with the cases of   $\fm(\xi)$ having  only finite regularity, such as for the fractional KdV. 
A consequence is that we need extra care in dealing with the size of the remainders in the expansions of the matrix elements in Proposition \ref{matrixrepprop}. \\

4. \textsc{About nonlinear modulational instability:}  When $\te_{\scaleto{\mathrm{WB}}{3pt}} >0$,  Theorem
\ref{mainres1} guarantees the presence  of unstable spectra. Note that, in case of equation \eqref{eq}, the presence of {\em any} unstable spectrum   is enough to imply nonlinear modulational instability or localized instability, by applying the general theory and methods developed by Jin, Liao, Lin in \cite{JLL}. 
A natural, interesting, question is if the  figure 8 spectrum can give more information on the nonlinear  dynamics. 
\vspace{1em}

\noindent{\bf Ideas and scheme of the paper}. 
In order to prove Theorem \ref{mainres1} we need to compute the 
spectrum near the origin of the operator $\cL_{\mu,\e}$ in \eqref{Lmue} for every value of  $\e>0$ and $\mu>0$ sufficiently small.
We treat this as a  bifurcation problem from the zero eigenvalue  of $\cL_{0,0}$ which has algebraic multiplicity three and geometric multiplicity two. 
In Section \ref{sec:Kato}
we introduce  a modified version of the symplectic   Kato's similarity theory developed in \cite{BMV1} necessary to handle the fact that the symplectic tensor is not invertible. 
In Section \ref{matrixrapsection} we exploit this theory to compute the
 $3\times 3$ complex Hamiltonian and reversible matrix representing the action of $\cL_{\mu,\e}$ on the invariant subspace $\cV_{\mu,\e}$ associated to the spectrum of $\cL_{\mu,\e}$ near the origin. 
Then, in Section 
\ref{sec5},
 we perform a non-perturbative block-diagonalization of the matrix, 
 conjugating it to a new  matrix with two decoupled blocks: a purely imaginary number, and a $2\times 2$ block, whose eigenvalues can be computed explicitly.  As the parameter $\mu$ varies, these two latter eigenvalues parametrize the desired figure ``8''.\\
 Finally, in Section \ref{appl}, we apply our abstract Theorem \ref{mainres1} to Whitham's like equations, Kawahara, the intermediate long-wave and fractional KdV equations (recovering, for the latter two equations, results already known in the literature).\\

\noindent {\bf Acknowledgments.}
A. Maspero and A. M. Radakovic are  supported by the European Union ERC CONSOLIDATOR GRANT 2023 GUnDHam,
Project Number: 101124921. 
A. Maspero is also supported by  PRIN 2020 (2020XB3EFL001) “Hamiltonian and dispersive PDEs”, PRIN 2022 (2022HSSYPN)   "TESEO - Turbulent Effects vs Stability in Equations from Oceanography" and GNAMPA.

\section{The complete Benjamin-Feir spectrum of generalized KdV}
In this section we present the complete spectral Theorem \ref{mainres2}.
We first introduce the  Hamiltonian and reversible structure of the generalized KdV  equation \eqref{eq}, which will play a crucial role in our analysis.\\
We start with the Hamiltonian formulation: 
 equation \eqref{eq} has the Hamiltonian form
$$
\pa_t u = \cJ \grad H(u)
$$
where $\cJ$ is the skew-adjoint  operator
\begin{equation}\label{J}
\cJ:= \pa_x  , 
\end{equation}
$H(u)$ is the real valued Hamiltonian function
$$
H(u) := -\frac12 \int_\R u (\cM(D) u) \di x - \frac13 \int_\R u^3 \di x , 
$$
well defined on $H^{m_*}(\R)$, $m_*:= \max(1, m)$, where $m$ is the order of the Fourier multiplier $\cM$ defined in \eqref{orderpsedodiff},  and  the gradient is taken with respect to 
the real $L^2$ scalar product
\begin{equation} \label{l2prod}
    \la u , v \ra_r := \int_\R u(x) \, v(x) \, \di x  \ . 
\end{equation}
Equation \eqref{eq} enjoys two more symmetries. 
First it is  invariant  by translation, namely its vector field $X(u):=-\pa_x  (\cM u + u^2)$
fulfills
\begin{equation}
    X \circ \tau_\varsigma = \tau_\varsigma \circ X  \ ,
\end{equation}
where $\tau_\varsigma$ is the translation operator
\begin{equation}
\label{trans}
    [\tau_\varsigma u](x) := u(x + \varsigma) \ , \quad \varsigma \in \R \ . 
\end{equation}
Secondly, the equation is time reversible,  namely the vector field $X(u)$ fulfills
\begin{equation}\label{reversibility1}
 X \circ  \rho  =  -  \rho \circ X
\end{equation}
where $\rho$ is the involution
\begin{equation}\label{involution}
[\rho u](x):= u(-x) \ .
 \end{equation}
 The linearized operator $\cL_\e$ in \eqref{def:cLe} inherits such structures: first it is linearly Hamiltonian, i.e. of the form 
 \begin{equation}
 \cL_\e = \cJ \cB_\e
 \end{equation}
  where $\cB_\e$ is the symmetric (with respect to the scalar product \eqref{l2prod}) operator
    \begin{equation}\label{def:cBe}
        \cB_\e := c_\e - \cM(D) - 2 u_\e  \ . 
    \end{equation}
Moreover it is also  reversible,  namely
\begin{equation}\label{rev}
    \cL_\e\circ \rho = - \rho \circ \cL_\e \ . 
    \end{equation}

    \noindent{\bf Bloch-Floquet expansion.}   
    Consider the Floquet operator $\cL_{\mu,\e}$ in \eqref{Lmue}.  
  We remark that (see e.g. \cite{BMV1}):
\begin{itemize}
\item[(i)] if $A = \mathrm{Op}(a) $  
is a pseudo-differential  operator with   
 symbol $ a(x, \xi ) $, which is $2\pi$ periodic in $x$, 
then 
$  A_\mu := e^{- \im \mu x}A  e^{ \im \mu x}  =  \mathrm{Op} (a(x, \xi + \mu )) $.
\item[(ii)]\label{symspectrum} If $ A$ is a real operator then 
$ \overline{ A_\mu} = A_{- \mu } $.  
We recall that $\overline{A} u:= \overline{A \bar u }$.
As a consequence the spectrum  
$ \sigma (A_{-\mu}) = \overline{  \sigma (A_{\mu}) } $ and we can restrict to study  $  \sigma (A_{\mu}) $ only  for $ \mu > 0 $.
\item[(iii)]
 $\sigma(A_{\mu})$ is a 1-periodic set with respect to $\mu$, so  we can restrict  to  $\mu \in [0, \frac12)$.
 \end{itemize}
  In view of these properties,    
     the operator $\cL_{\mu,\e}$ takes the form of the {\em complex Hamiltonian and reversible} operator
    \begin{equation}\label{Lmue2}
        \cL_{\mu,\e}=
        \cJ_\mu \, \cB_{\mu,\e} \ ,
    \end{equation}
    where the skew-adjoint operator $\cJ_{\mu}$ and the self-adjoint operator $\cB_{\mu,\e}$ are defined by 
    \begin{equation}\label{Bmue}
     \cJ_\mu := \pa_x+\im\mu , \quad \cB_{\mu,\e}:= c_\e -\cM(D+\mu) - 2u_\e 
    \end{equation}
    and  $\cM(D+\mu)$ is the Fourier multiplier
    \begin{equation}
[\cM(D+\mu)u]^\wedge (\xi):= \fm(\xi + \mu) \hat u(\xi)  \ .  
\end{equation}
    We regard $\cL_{\mu,\e}$ as a closed,  unbounded operator with domain $H^{m_\star }(\T,\C)$, with $m_\star := \max(1, 1+m)$ and $m$ the order of the Fourier multiplier $\fm(\xi)$,  and range $L^2(\T,\C)$, equipped with the complex scalar product
    \begin{equation}\label{c.sp}
        (f,g) := \frac{1}{\pi} \int_0^{2\pi} f \bar g \, \di x , \qquad \forall f,g \in L^2(\T, \C)  \ . 
    \end{equation}
    We recall that an operator $\cL$ is called  
    \begin{itemize}
        \item[1.] {\em complex Hamiltonian},  if $\cL = \cJ \cB $ with $\cJ$ skew-adjoint and $\cB$ selfadjoint (with respect to the complex scalar product \eqref{c.sp}):
        $$
        \cJ^* = - \cJ  , \qquad \cB^* = \cB \ ;
        $$
        \item[2.] {\em complex reversible}, if
        $$
        \cL \circ \bar\rho  = - \bar \rho \circ \cL 
        $$
        where $\bar \rho$ is the complex involution
        \begin{equation}\label{barrho}
            [\bar \rho u](x) := \bar u(-x) \ . 
        \end{equation}
    \end{itemize}

 Note that  the  operator $\cB_{\mu,\e}$ is  {\em reversibility preserving}, i.e.
        \begin{equation}\label{B.rev}
        \cB_{\mu,\e} \circ \bar\rho  =  \bar \rho \circ \cB_{\mu,\e}  \ . 
        \end{equation}

Due to Assumption A, we have that $(\mu,\e) \mapsto \cL_{\mu,\e}$ and $(\mu,\e) \mapsto \cB_{\mu,\e}$
are functions of class $\cC^\reg$ from 
$\{ |\mu| < \mu_0 \} \times \{ |\e| < \e_0 \} $ to 
$\cL(H^{m_\star+1}(\T),L^2(\T)))$.

 \begin{lemma}\label{lem:LB.im}
The operators 
 $\dot \cL_{0,\e}:=  \pa_\mu \cL_{\mu,\e}\big|_{\mu=0} $  and 
  $\dot  \cB_{0,\e}:= \pa_\mu \cB_{\mu,\e}\big|_{\mu=0}$ are purely imaginary\footnote{An operator $A$ is purely imaginary if $\bar A = - A$, where $\bar A f:= \overline{A \bar f}$.
A purely imaginary operator maps a real vector into a purely imaginary one.}   
and respectively reversible and reversibility preserving.
\end{lemma}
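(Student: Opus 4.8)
The whole statement follows by differentiating, at $\mu = 0$, structural identities that are already known to hold for \emph{every} $\mu$. The $\cC^\reg$ regularity ($\reg \geq 3$) of $(\mu,\e)\mapsto \cL_{\mu,\e},\cB_{\mu,\e}$ guarantees both that the derivatives $\dot\cL_{0,\e}$, $\dot\cB_{0,\e}$ exist and that $\pa_\mu$ may be interchanged with the bounded operations of complex conjugation and of composition with the fixed involution $\bar\rho$. So the plan is to turn each of the four conclusions into the derivative at $\mu=0$ of a one-parameter family of identities.

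First I would establish that both operators are purely imaginary. Since $\fm$ is real and even and $u_\e$, $c_\e$ are real, the operators $\cM(D)$, $\cB_\e$ and $\cL_\e=\cJ\cB_\e$ are real; hence by property (ii) of the Bloch--Floquet expansion their Floquet conjugates obey $\overline{\cB_{\mu,\e}}=\cB_{-\mu,\e}$ and $\overline{\cL_{\mu,\e}}=\cL_{-\mu,\e}$ for all $\mu$. The map $A\mapsto\overline A$ (with $\overline A f:=\overline{A\bar f}$) is real-linear and continuous, so it commutes with $\pa_\mu$; differentiating these two identities at $\mu=0$ and using $\pa_\mu A_{-\mu}\big|_{\mu=0}=-\dot A_{0}$ gives $\overline{\dot\cB_{0,\e}}=-\dot\cB_{0,\e}$ and $\overline{\dot\cL_{0,\e}}=-\dot\cL_{0,\e}$, which is precisely the purely imaginary property. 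As an independent check one may compute $\dot\cB_{0,\e}=-\Op{\dot \fm(\xi)}$, which is purely imaginary because $\dot\fm$ is odd (being the derivative of the even function $\fm$), and then $\dot\cL_{0,\e}=\im\,\cB_{\e}+\pa_x\dot\cB_{0,\e}$ is purely imaginary as the sum of $\im$ times a real operator and of $\pa_x$ (real) applied to a purely imaginary operator.

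Second, for the reversibility statements I would differentiate, again at $\mu=0$, identities valid for all $\mu$: the reversibility preserving relation \eqref{B.rev}, namely $\cB_{\mu,\e}\circ\bar\rho=\bar\rho\circ\cB_{\mu,\e}$, and the complex reversibility of $\cL_{\mu,\e}$ recorded after \eqref{Lmue2}, namely $\cL_{\mu,\e}\circ\bar\rho=-\bar\rho\circ\cL_{\mu,\e}$. Since $\bar\rho$ does not depend on $\mu$, applying $\pa_\mu\big|_{\mu=0}$ yields $\dot\cB_{0,\e}\circ\bar\rho=\bar\rho\circ\dot\cB_{0,\e}$ and $\dot\cL_{0,\e}\circ\bar\rho=-\bar\rho\circ\dot\cL_{0,\e}$, i.e. $\dot\cB_{0,\e}$ is reversibility preserving and $\dot\cL_{0,\e}$ is reversible. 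There is no genuine analytic obstacle here: the only points needing care are the legitimacy of interchanging $\pa_\mu$ with the (conjugate-linear) conjugation and with the fixed $\bar\rho$, and the sign bookkeeping from the chain rule in $\mu\mapsto-\mu$, both of which are immediate from the $\cC^\reg$ regularity and the boundedness of the operations involved.
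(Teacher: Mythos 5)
Your proposal is correct, and for the reversibility claims it coincides with the paper's (one-line) argument: differentiate at $\mu=0$ the identities $\cB_{\mu,\e}\circ\bar\rho=\bar\rho\circ\cB_{\mu,\e}$ and $\cL_{\mu,\e}\circ\bar\rho=-\bar\rho\circ\cL_{\mu,\e}$, which hold for every $\mu$ and involve only the fixed involution $\bar\rho$. For the purely-imaginary claims the paper argues by direct computation: $\dot\cB_{0,\e}=-\dot\cM(D)$ is a Fourier multiplier with real odd symbol, hence purely imaginary, and $\dot\cL_{0,\e}=\cJ_0\dot\cB_{0,\e}+\im\,\cB_{0,\e}$ is purely imaginary as a real operator composed with a purely imaginary one plus $\im$ times a real one --- this is exactly your ``independent check''. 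Your primary route, differentiating $\overline{A_\mu}=A_{-\mu}$ at $\mu=0$ to obtain $\overline{\dot A_{0}}=-\dot A_{0}$, is a genuinely different and somewhat more structural derivation of the same fact: it buys generality, since it uses only the reality of $\cL_\e$, $\cB_\e$ and the $\cC^1$ dependence on $\mu$ rather than the explicit multiplier form, whereas the paper's computation has the side benefit of producing the formula $\dot\cB_{0,\e}=-\dot\cM(D)$, which is reused later (e.g.\ in Lemma \ref{tB0exp2}). Your handling of the two delicate points --- interchanging $\pa_\mu$ with the real-linear continuous operations $A\mapsto\bar A$ and $A\mapsto\bar\rho\circ A\circ\bar\rho$, and the sign coming from the chain rule in $\mu\mapsto-\mu$ --- is accurate.
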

\begin{proof}
Consider first  $\dot  \cB_{0,\e}$. By \eqref{Bmue}, it  equals the Fourier multiplier  $- \dot\cM(D)$, having for  symbol the odd function $-\frac{\di \fm(\xi)}{\di \xi}$. Regarding 
 $\dot \cL_{0,\e}$,  by \eqref{Lmue2},  \eqref{Bmue} one has 
    $    \dot \cL_{0,\e} = \cJ_0 \dot \cB_{0,\e} + \im \cB_{0,\e}$.
    As $\cJ_0$ and $\cB_{0,\e}$ are real operators, whereas $ \dot\cB_{0,\e}$ is purely imaginary, $\dot \cL_{0,\e}$ is purely imaginary as well.
    The reversibility properties follow from those of $\cB_{\mu,\e}$ and $\cL_{\mu,\e}$.
    \end{proof}
 
Recall that the spectrum of a complex  Hamiltonian and reversible operator is symmetric with respect to reflections across the imaginary axis, and so the  eigenvalues can exit the imaginary axis only after collisions of two or more eigenvalues on it.
Therefore, we shall now study  multiple eigenvalues of the operators $\cL_{\mu,0}$ and $\cL_{0,\e}$.

\smallskip
\noindent
    {\bf Spectrum of $\cL_{\mu,0}$.} 
As $c_0 = \fm(1)$ and $u_0 = 0$ by Theorem \ref{existPTW}, the operator    $\cL_{\mu,0} = \cJ_\mu \cB_{\mu,0} $ equals  the Fourier multiplier $(\pa_x + \im \mu) (\fm(1) - \cM(D+\mu))$. 
Its  
 $L^2(\T)$-spectrum is given by 
  $
\sigma_{L^2(\T)}(\mathcal{L}_{\mu,0})=\{\im \, \omega_{j,\mu},  \ \ j \in\mathbb{Z}\} \ 
$ with 
\begin{equation}\label{omega_nmu}
\omega_{j,\mu}:= (j+\mu)\big(\fm(1)-\fm(j+\mu)\big) , \qquad j \in \Z \ , \quad 
    \cL_{\mu,0} \, e^{\im j x} =  \im  \, \omega_{j,\mu}e^{\im jx}\, .
    \end{equation}
We can notice that at $\mu=0$, in view of Assumption A3, one has 
\begin{equation}\label{collision}
\omega_{-1,0} = \omega_{0,0} = \omega_{1,0} = 0 , \quad |\omega_{j,0}| \geq |j| c_0 > 0  \ \  \mbox{ for }  j \neq 0, \pm 1 \ ,
\end{equation}
namely $\ker \cL_{0,0}$ has dimension three.
 We can then decompose the spectrum of $\cL_{0,0}$ in two disjoint subsets as 
\begin{equation}\label{spettrodiviso0}
    \sigma_{L^2(\T)}(\mathcal{L}_{0,0}) = \sigma'_{L^2(\T)} (\mathcal{L}_{0,0}) \cup \sigma_{L^2(\T)}'' (\mathcal{L}_{0,0})
\end{equation}
with 
$\sigma'_{L^2(\T)} (\mathcal{L}_{0,0})=\{0 \},$
and, in view of \eqref{m1diff}, 
$\sigma''_{L^2(\T)} (\mathcal{L}_{0,0}) \subseteq \{\im\lambda\text{, s.t.  } \lambda\in\R\text{, }|\lambda|\geq c_0\}$.
From Kato's perturbation theory (see Lemma
\ref{lem:Kato1} below) for any $\mu,\e$ sufficiently small, the perturbed spectrum admits a disjoint decomposition as 
\begin{equation}
    \sigma_{L^2(\T)}(\mathcal{L}_{\mu,0}) = \sigma'_{L^2(\T)} (\mathcal{L}_{\mu,0}) \cup \sigma_{L^2(\T)}'' (\mathcal{L}_{\mu,0}) 
\end{equation}
with $\sigma'_{L^2(\T)} (\mathcal{L}_{\mu,0}) $ consisting in three 
 distinguished purely imaginary eigenvalues, which are $\cO(\mu)$ close to zero, as it follows directly from equation \eqref{omega_nmu}.\\

\noindent
{\bf Spectrum of $\cL_{0,\e}$.} 
The operator $\cL_{0,\e} \equiv \cL_{\e}$ has $0$ as a defective eigenvalue of algebraic multiplicity 3 and geometric multiplicity 2. This follows since the original equation \eqref{eq} is invariant by translation, and the traveling waves solutions appear in a two parameter family $u_{\e,\tb}$. Precisely we have the following lemma:
\begin{lemma} \label{basis0,eps}
For $0<|\e| \leq \e_0$, the linear operator $\cL_{0,\e}$ has $0$ as an eigenvalue of algebraic multiplicity 3 and geometric multiplicity 2. 
In particular 
$g_\e^-(x) := (u_\e)_x$ and $g_\e^+(x) := (\partial_\e u_\e) -\frac12 \pa_\e c_\e$
are eigenvectors, whereas $g(x)  := \frac{1}{\sqrt2}$ is a generalized eigenvector fulfilling 
$\cL_{0,\e} g = - \sqrt 2 g_\e^-$.
\end{lemma}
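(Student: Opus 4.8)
The plan is to exhibit explicitly the Jordan structure of $\cL_{0,\e}=\cL_\e$ at the eigenvalue $0$ and then invoke Kato's theory to fix the exact multiplicities. The two eigenvectors come from the two one-parameter symmetries of the stationary equation \eqref{stateq} at $\tb=0$, namely $c_\e u_\e - \cM(D) u_\e - u_\e^2 = 0$, with $\cB_\e$ as in \eqref{def:cBe}. First I would differentiate this identity in $x$ (using that $\cM(D)$ commutes with $\pa_x$), obtaining $\cB_\e (u_\e)_x = 0$; since $\cL_\e = \pa_x \circ \cB_\e$, this yields $\cL_\e g_\e^- = 0$ with $g_\e^- = (u_\e)_x$. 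Next I would differentiate the same identity in $\e$, which gives $\cB_\e(\pa_\e u_\e) = -(\pa_\e c_\e)\, u_\e$. Using $\cB_\e(1) = c_\e - \fm(0) - 2u_\e$ (the constant function sits at frequency zero, so $\cM(D)1 = \fm(0)$), a short computation produces
\[
\cB_\e g_\e^+ = \cB_\e(\pa_\e u_\e) - \tfrac12 \pa_\e c_\e\, \cB_\e(1) = -\tfrac12 \pa_\e c_\e \big(c_\e - \fm(0)\big),
\]
a constant in $x$; hence $\cL_\e g_\e^+ = \pa_x(\cB_\e g_\e^+) = 0$, so $g_\e^+$ is a second eigenvector.

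For the generalized eigenvector I would simply compute $\cL_\e g = \pa_x \cB_\e\big(\tfrac{1}{\sqrt2}\big) = \tfrac{1}{\sqrt2}\pa_x\big(c_\e - \fm(0) - 2u_\e\big) = -\sqrt2\,(u_\e)_x = -\sqrt2\, g_\e^-$, so that $g$ is not an eigenvector but closes a length-two Jordan chain $g \mapsto g_\e^- \mapsto 0$, which together with the eigenvector $g_\e^+$ gives a generalized kernel of dimension at least three and geometric dimension at least two. To check that these three vectors are genuinely independent, I would read off the leading orders from the expansions of Theorem \ref{existPTW}: $g_\e^- = -\e \sin x + \cO(\e^2)$ is odd, $g_\e^+ = \cos x + \cO(\e)$ is even with non-trivial first harmonic, and $g = \tfrac{1}{\sqrt2}$ is constant, so by parity they are linearly independent for all $0 < |\e| \le \e_0$.

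Finally, to upgrade the inequalities to equalities I would use that at $\e=0$ the multiplier $\cL_{0,0}$ has, by \eqref{collision}, the eigenvalue $0$ of algebraic multiplicity exactly three, every other eigenvalue lying at distance $\geq c_0$; then the stability of the total algebraic multiplicity under the Kato perturbation (Lemma \ref{lem:Kato1}), applied to the isolating decomposition \eqref{spettrodiviso0}, shows that the eigenvalues of $\cL_{0,\e}$ inside a small disc around $0$ have total algebraic multiplicity three for $\e$ small. Since we already produced a three-dimensional generalized kernel at $0$, it must be exactly three-dimensional and $0$ is the only eigenvalue there; and since the chain $g \mapsto g_\e^-$ is non-trivial ($\cL_\e g = -\sqrt2\, g_\e^- \neq 0$ for $\e\neq 0$), the geometric multiplicity is strictly below three, hence exactly two. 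The only genuinely non-computational point, and thus the main obstacle, is precisely this last step: the explicit vectors give only lower bounds on the multiplicities, and ruling out algebraic multiplicity larger than three requires the continuity of the finite-rank Riesz projection for the unbounded family $\cL_{0,\e}$, together with the isolation of the triple eigenvalue at $\e=0$.
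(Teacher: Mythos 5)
Your proposal is correct and follows essentially the same route as the paper: differentiating the stationary identity $c_\e u_\e-\cM(D)u_\e-u_\e^2=0$ in $x$ and in $\e$ to produce the two eigenvectors, and a direct computation of $\cL_{0,\e}\bigl[\tfrac{1}{\sqrt2}\bigr]=-\sqrt2\,(u_\e)_x$ for the Jordan chain. The only difference is that you spell out the linear independence and the upgrade from lower bounds to exact multiplicities via the Kato projection, which the paper leaves implicit (it is supplied there by Lemma \ref{lem:Kato1} and the three-dimensionality of $\cV_{\mu,\e}$); your added argument is sound.
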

\begin{proof}
By \eqref{stateq},  for every $\e$ sufficiently small, $u_\e$ solves
$F(\e, u_\e):= \partial_x (c_\e u_\e -\cM u_\e  - u_\e^2) = 0$.
Derivating such identity  with respect to $x$ yields $\cL_{0,\e}  g_\e^- = 0
$.
Next from \eqref{def:cLe} and \eqref{def:cBe} we get 
$\cL_{0,\e}[ g] = -\sqrt{2}  g_\e^-$.
Finally derivating  $F(\e, u_\e)=0$  with respect to $\e$ yields  
$ \cL_{0,\e}[\pa_\e u_\e] = -  (\pa_\e c_\e) g_\e^-$,
hence  $ \cL_{0,\e}[g_\e^+]=0$. 
\end{proof}

By  Kato's perturbation theory (Lemma \ref{lem:Kato1})
for any $\mu, \e \neq 0$  sufficiently small, the perturbed spectrum
$\sigma\left(\cL_{\mu,\e}\right) $ admits a disjoint decomposition as 
\begin{equation}\label{SSE}
\sigma\left(\cL_{\mu,\e}\right) = \sigma'\left(\cL_{\mu,\e}\right) \cup \sigma''\left(\cL_{\mu,\e}\right) \, ,
\end{equation}
where $ \sigma'\left(\cL_{\mu,\e}\right)$  consists of 3 eigenvalues close to 0.

We denote by $\cV_{\mu, \e}$   the spectral subspace associated with  $\sigma'\left(\cL_{\mu,\e}\right) $, which   has  dimension 3 and it is  invariant by $\cL_{\mu, \e}$.

\begin{remark}\label{rem:V0e}
Lemma \ref{basis0,eps} tells that, for any $0<|\e| \leq \e_0$, $\cV_{0,\e} = \textup{span} \{g_\e^-, g_\e^+, \frac{1}{\sqrt 2} \}$.
\end{remark}

Our  main result gives the full description of the spectrum of the operator
 $ \cL_{\mu,\e}\vert_{\mathcal{V}_{\mu,\e} }\colon \mathcal{V}_{\mu,\e} \to  \mathcal{V}_{\mu,\e} $ whenever   $\mu, \e $ are sufficiently small, and  characterizes for which values of $(\mu,\e)$   eigenvalues  
with  nonzero real parts appear.
Before stating our main result, let us introduce some notations we shall use through  the paper:
\begin{itemize}\label{notation}
\item[$\bullet$] {\bf  Notation:}
We denote by  $\cO^\reg(\mu^{m_1}\e^{n_1},\dots,\mu^{m_p}\e^{n_p})$, $ m_j, n_j \in \N  $ (for us $\N:=\{1,2,\dots\} $),   functions  of $(\mu,\e)$ of class $\cC^\reg$ with values on a Banach space $X$ which satisfy, for some $ C > 0 $, the bound
 $\|\cO^\reg(\mu^{m_j}\e^{n_j})\|_X \leq C \sum_{j = 1}^p |\mu|^{m_j}|\e|^{n_j}$ 
 for small values of $(\mu, \e)$. 
Similarly, we denote $r_k (\mu^{m_1}\e^{n_1},\dots,\mu^{m_p}\e^{n_p}) $
scalar  functions  $\cO^1(\mu^{m_1}\e^{n_1},\dots,\mu^{m_p}\e^{n_p})$ which are also {\em real} valued, note that here we  use $k$ simply as a placeholder to distinguish different functions; it is unrelated to any different use of $k$. 
\end{itemize}

Our complete spectral result is the following:
\begin{theorem}\label{mainres2}
Assume Assumption {\rm A} and {\rm B} at page \pageref{A} and \pageref{B}.   There exist $\e_0, \mu_0>0$ such that, for any $|\e|\leq \e_0$, $0<\mu<\mu_0$, the operator $\cL_{\mu,\e}:\cV_{\mu,\e}\to \cV_{\mu,\e}$ can be represented by the $3\times 3$ block-diagonal matrix
    \begin{equation}
        \begin{pmatrix}
            \mathtt{U} & \rvline & \mathtt{0} \\
            \hline
           \mathtt{0}^\dagger & \rvline & \im \, \mathtt{g}
        \end{pmatrix}
    \end{equation}
    where
    $\mathtt{U}$ is the $2\times 2$ matrix 
        \begin{equation}\label{tU}
       \mathtt{U}:=
          -\im \mu  \big( \te_{12}   +  r_0(\e^2,\mu\e,\mu^2) \big) +
 \mu         \begin{pmatrix}
         \im \big(  \e^2 r_1(1) + \mu^2 r_2(1)  \big)& \te_{b} +  r_3(\mu,\e)\\
            \te_{ w}\,\e^2(1+r_4(\mu,\e) ) -  \te_{b}\, \mu^2 (1+r_5(\mu,\e))  &
              \im  \mu^2 r_{6}(1) 
        \end{pmatrix}
    \end{equation}
    with  $\te_{12}, \te_{w}, \te_{b}$   in \eqref{formulaewb}, \eqref{tedet}, 
 $\mathtt{g}$ is the real number 
    \begin{equation}\label{tg.fin}
    \mathtt{g}:= \big(\fm(1)-\fm(0)\big)\mu + \mu r(\e^2,\mu^2)  \ ,
\end{equation}
    and $\mathtt{0} = (0,0)^\dag$.  The functions $r_k$ are of class $\cC^{\reg-2}$.
    
  The eigenvalues of $\mathtt{U}$ have the form 
  $$
  \lambda_1^\pm (\mu,\e) =   
    -\im \mu  \big(\te_{12}   +  r(\e^2,\mu\e,\mu^2) \big)
  \pm 
  \mu \, \sqrt{
 \underbrace{ \te_{\scaleto{\mathrm{WB}}{3pt}} \, \e^2(1+\hat r_1(\e,\mu)) -  \te_{b}^2 \, \mu^2(1+\hat r_2(\e,\mu))}_{=:   \Delta_{\scaleto{\mathrm{BF}}{3pt}}(\mu,\e)}
 }
  $$
  with $r, \hat r_1, \hat r_2$ in $\cC^{\reg-2}$ and real valued. 
   In particular, they have non trivial real part whenever $\mu \neq 0$ and the Benjamin-Feir discriminant $\Delta_{\scaleto{\mathrm{BF}}{3pt}}(\mu,\e) >0$.
  
\end{theorem}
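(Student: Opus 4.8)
The plan is to reduce the infinite-dimensional spectral problem to the eigenvalue problem for a $3\times 3$ matrix and then to split off the stable eigenvalue by a structure-preserving block-diagonalization, following the three-step scheme announced in the introduction. Since $\cL_{0,0}$ has the isolated eigenvalue $0$ with a three-dimensional generalized kernel (Lemma \ref{basis0,eps}), Kato's perturbation theory yields, for $(\mu,\e)$ small, a smooth family of spectral projectors onto the invariant subspace $\cV_{\mu,\e}$ together with a transformation operator conjugating $\cV_{0,\e}$ onto $\cV_{\mu,\e}$. The first task is to transport the basis $\{g_\e^-,g_\e^+,\tfrac{1}{\sqrt2}\}$ of $\cV_{0,\e}$ to a basis of $\cV_{\mu,\e}$ adapted \emph{simultaneously} to the Hamiltonian factorization $\cL_{\mu,\e}=\cJ_\mu\cB_{\mu,\e}$ and to the reversibility $\cL_{\mu,\e}\circ\bar\rho=-\bar\rho\circ\cL_{\mu,\e}$. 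The delicate point is that the symplectic tensor $\cJ_\mu=\partial_x+\im\mu$ is non-invertible at $\mu=0$, so the usual symplectic normalization is unavailable there; I would adopt the modified notion of symplectic basis that treats $\mu=0$ and $\mu\neq0$ separately, arranging that the basis vectors depend on $(\mu,\e)$ with the finite regularity $\cC^\reg$ across $\mu=0$.

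In such a basis the matrix $L(\mu,\e)$ of $\cL_{\mu,\e}\vert_{\cV_{\mu,\e}}$ inherits both structures: the Hamiltonian property forces $L=\mathtt{J}\mathtt{B}$ with $\mathtt{J}$ skew-Hermitian and $\mathtt{B}$ Hermitian, while reversibility forces $L$ to anticommute with the (now finite-dimensional) involution induced by $\bar\rho$. These algebraic constraints drastically reduce the number of free entries and fix their reality/imaginarity type. I would then compute the entries to the needed order by pairing $\cL_{\mu,\e}$ applied to the basis against the dual basis, inserting the expansions of $u_\e,c_\e$ from Theorem \ref{existPTW} and of $\cB_{\mu,\e},\cJ_\mu$ in \eqref{Bmue}, and using Lemma \ref{lem:LB.im} to identify the purely imaginary first $\mu$-derivative. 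The output is a $3\times 3$ complex Hamiltonian and reversible matrix whose remainders are tracked with the $r_k,\cO^\reg$ bookkeeping.

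At $\e=0$ the three eigenvalues are $\im\omega_{j,\mu}$, $j\in\{-1,0,1\}$, from \eqref{omega_nmu}: the two with $j=\pm1$ collide near $-\im\te_{12}\mu$ and carry the potential instability, while the $j=0$ one sits near $\im(\fm(1)-\fm(0))\mu$. Because $\te_d=\dot\fm(1)+\fm(1)-\fm(0)\neq0$ (Assumption B), these leading slopes differ, giving a spectral gap of size $\cO(\mu)$ that persists for small $\e$. I would exploit this gap to perform a \emph{non-perturbative} block-diagonalization: solve a Sylvester/Riccati-type equation for a structure-preserving similarity that annihilates the coupling between the one-dimensional stable direction and the two-dimensional unstable plane, so that both blocks remain Hamiltonian and reversible. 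This produces the claimed block form with the single purely imaginary entry $\im\mathtt{g}$ in \eqref{tg.fin} and the $2\times2$ block $\mathtt{U}$ in \eqref{tU}. Finally, the eigenvalues of $\mathtt{U}$ follow from the quadratic formula: writing $\mathtt{U}=-\im\mu(\te_{12}+r_0)\,\mathrm{Id}+\mu N$, its spectrum is $-\im\mu(\te_{12}+\cdots)\pm\mu\sqrt{\Delta_{\scaleto{\mathrm{BF}}{3pt}}}$, where the discriminant $\Delta_{\scaleto{\mathrm{BF}}{3pt}}$ in \eqref{DeltaBF} is the product of the two off-diagonal entries (up to higher-order diagonal corrections), whose leading term $\te_b\cdot\te_w\e^2=\te_{\scaleto{\mathrm{WB}}{3pt}}\e^2$ minus $\te_b^2\mu^2$ is exactly the Whitham-Benjamin coefficient in \eqref{eWB} times $\e^2$.

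I expect the main obstacle to lie in the first two steps. Constructing a basis that is symplectic in the degenerate regime $\mu\to0$ while preserving the finite regularity $\cC^\reg$ (the symbol $\fm$ is only $\cC^\reg$, $\reg\geq3$) requires genuinely new bookkeeping compared with the analytic setting of \cite{BMV1,BMV2}: the non-invertibility of $\cJ_0$ must be handled so that the normalization does not blow up as $\mu\to0$. Moreover, extracting the precise leading coefficients $\te_b,\te_w$ (not merely their orders) from the matrix entries demands carrying the expansions of $u_\e,c_\e$ to the correct order and exploiting the cancellations forced by the Hamiltonian and reversible structures, with the finite-regularity remainders controlled uniformly in $(\mu,\e)$. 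By contrast, once the $2\times2$ block is isolated, the eigenvalue computation and the identification of the figure-``8'' are routine.
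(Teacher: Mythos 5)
Your plan follows the paper's proof essentially step for step: Kato similarity theory with the modified $\mu$-dependent notion of symplectic basis to handle the non-invertible tensor $\cJ_0$, computation of the $3\times 3$ Hamiltonian and reversible matrix on the transported Kato basis, a non-perturbative block-decoupling obtained by solving a homological (Sylvester-type) equation whose solvability rests exactly on $\te_d\neq 0$ as you identify, and the quadratic formula on the resulting $2\times 2$ block with the discriminant given by the product of the off-diagonal entries. The only device you do not mention is the paper's intermediate singular rescaling by $\mathrm{diag}(\sqrt{\mu},1/\sqrt{\mu},\sqrt{\mu})$ (Lemma \ref{Rescaling}), which equalizes the orders of the matrix entries before the decoupling and is responsible for the precise form \eqref{tU} (and for the fact that the decoupling correction shifts the leading $\e^2$ coefficient from $\fa$ to $\te_w=\fa+2/\te_d$); this is a bookkeeping convenience within the same strategy rather than a genuinely different route.
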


Theorem 
\ref{mainres1} is a direct consequence of 
Theorem \ref{mainres2}, as we now show.
\begin{proof}[Proof of Theorem \ref{mainres1} assuming Theorem \ref{mainres2}]
  The curve $\underline{\mu}(\e)$ in Theorem \ref{mainres1} is implicitly defined by
     $$
     \mu \sqrt{1+ \hat r_2(\e,\mu)} =\e  \sqrt{\dfrac{ \te_{\scaleto{\mathrm{WB}}{3pt}}}{\te_b^2}}\,  \, \sqrt{1+\hat r_1(\mu,\e)}\ ,
     $$
   which has a unique solution of class $\cC^{\reg-2}$ provided $\reg \geq 3$.
\end{proof}

    \section{Perturbative approach to the separated eigenvalues}\label{sec:Kato}
Recall that the operator $ \cL_{\mu,\e}  : Y \subset X \to X $   
has domain $Y:=H^{m_\star}(\mathbb{T},\C)$, with $m_\star=\max \{1,1+m \}$ and $m$ as in Equation (\ref{orderpsedodiff}), and range $X:=L^2(\mathbb{T}):=L^2(\mathbb{T},\C)$.\\
We now state the following Lemma, that is a variant of Lemma 3.1 in \cite{BMV1}. We denote by $B(r)$, $r>0$, the ball of center 0 and radius $r$ in $\R$.
\begin{lemma}\label{lem:Kato1}
{\bf (Kato theory for separated eigenvalues)}
 Let $\Gamma$ be a closed, counterclockwise-oriented curve around $0$ in the complex plane separating $\sigma'\left(\cL_{0,0}\right)=\{0\}$
  and the other part of the spectrum $\sigma''\left(\cL_{0,0}\right)$ in \eqref{spettrodiviso0}.
There exist $\e_0, \mu_0>0$  such that for any $(\mu, \e) \in B(\mu_0)\times B(\e_0)$ (where $B(x,r)$ is the ball centered at $x$ with radius $r$ inside $\R$)  the following statements hold:
\\[1mm] 
1. The curve $\Gamma$ belongs to the resolvent set of 
the operator $\cL_{\mu,\e} : Y \subset X \to X $ defined in \eqref{Lmue}.
\\[1mm] 
2.
The operators
\begin{equation}\label{Pproj}
 P_{\mu,\e} := -\frac{1}{2\pi\im}\oint_\Gamma (\cL_{\mu,\e}-\lambda)^{-1} \di \lambda : X \to Y 
\end{equation}  
are well defined projectors commuting  with $\cL_{\mu,\e}$,  i.e. 
$ P_{\mu,\e}^2 = P_{\mu,\e} $ and 
$ P_{\mu,\e}\cL_{\mu,\e} = \cL_{\mu,\e} P_{\mu,\e} $. 
The map $(\mu, \epsilon)\mapsto P_{\mu,\epsilon}$ is of class $\cC^\reg$.
\\[1mm] 
3.
The domain $Y$  of the operator $\cL_{\mu,\e}$ decomposes as  the direct sum
\begin{equation}\label{projdec}
    Y= \mathcal{V}_{\mu,\e} \oplus \text{Ker}(P_{\mu,\e}) \, , \quad \mathcal{V}_{\mu,\e}:=\text{Rg}(P_{\mu,\e})=\text{Ker}(\uno-P_{\mu,\e}) \, ,
\end{equation}
of   closed invariant  subspaces, namely 
$ \cL_{\mu,\e} : \mathcal{V}_{\mu,\e} \to \mathcal{V}_{\mu,\e} $, $
\cL_{\mu,\e} : \text{Ker}(P_{\mu,\e}) \to \text{Ker}(P_{\mu,\e}) $.  
Moreover 
$$
\begin{aligned}
&\sigma(\cL_{\mu,\e})\cap \{ z \in \C \mbox{ inside } \Gamma \} = \sigma(\cL_{\mu,\e}\vert_{{\mathcal V}_{\mu,\e}} )  = \sigma'(\cL_{\mu, \e}) , \\
&\sigma(\cL_{\mu,\e})\cap \{ z \in \C \mbox{ outside } \Gamma \} = \sigma(\cL_{\mu,\e}\vert_{Ker(P_{\mu,\e})} )  = \sigma''( \cL_{\mu, \e}) \, .
\end{aligned}
$$
\\[1mm] 
4.  The projectors $P_{\mu,\e}$ 
are similar one to each other: the  transformation operators
\begin{equation} \label{OperatorU} 
U_{\mu,\e}   := 
\big( \uno-(P_{\mu,\e}-P_{0,0})^2 \big)^{-1/2} \big[ 
P_{\mu,\e}P_{0,0} + (\uno - P_{\mu,\e})(\uno-P_{0,0}) \big] 
\end{equation}
are bounded and  invertible in $ Y $ and in $ X $, with inverse
$$
U_{\mu,\e}^{-1}  = 
 \big[ 
P_{0,0} P_{\mu,\e}+(\uno-P_{0,0}) (\uno - P_{\mu,\e}) \big] \big( \uno-(P_{\mu,\e}-P_{0,0})^2 \big)^{-1/2} \, , 
$$
 and 
$$ 
U_{\mu,\e} P_{0,0}U_{\mu,\e}^{-1} =  P_{\mu,\e}   \ , 
\qquad 
U_{\mu,\e}^{-1} P_{\mu,\e}  U_{\mu,\e} = P_{0,0}  \ .
$$
The map $(\mu, \epsilon)\mapsto  U_{\mu,\e}$ is of class $\cC^\reg$.
\\[1mm] 
5. The subspaces $\mathcal{V}_{\mu,\e}=\text{Rg}(P_{\mu,\e})$ are isomorphic one to each other: 
$
\mathcal{V}_{\mu,\e}=  U_{\mu,\e}\mathcal{V}_{0,0}.
$
 In particular $\dim \mathcal{V}_{\mu,\e} = \dim \mathcal{V}_{0,0}=3 $, for any 
 $(\mu, \e) \in B(\mu_0)\times B(\e_0)$.
\end{lemma}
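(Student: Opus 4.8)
The plan is to run the classical Riesz-projection and transformation-operator construction of Kato's perturbation theory, checking at each step that the estimates survive the fact that $\cL_{\mu,\e}$ is unbounded. The only ingredient I need beyond abstract operator theory is the $\cC^\reg$ dependence of $(\mu,\e)\mapsto\cL_{\mu,\e}$ into $\cL(H^{m_\star+1}(\T),L^2(\T))$ granted by Assumption~A.

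\emph{Parts 1--2.} Since $\Gamma$ lies in the resolvent set of $\cL_{0,0}$ and is compact, $(\cL_{0,0}-\lambda)^{-1}\colon X\to Y$ is uniformly bounded for $\lambda\in\Gamma$. Factoring
$$
\cL_{\mu,\e}-\lambda=(\cL_{0,0}-\lambda)\big(\uno+(\cL_{0,0}-\lambda)^{-1}(\cL_{\mu,\e}-\cL_{0,0})\big),
$$
I would invert the bracket by a Neumann series, which converges uniformly on $\Gamma$ as soon as $\|(\cL_{0,0}-\lambda)^{-1}(\cL_{\mu,\e}-\cL_{0,0})\|_{\cL(Y)}<1$; this holds for $(\mu,\e)$ small because $\cL_{\mu,\e}-\cL_{0,0}\to 0$ in $\cL(H^{m_\star+1},L^2)$ by Assumption~A. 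This proves Part~1 and shows $(\cL_{\mu,\e}-\lambda)^{-1}$ is uniformly bounded on $\Gamma$. The projectors $P_{\mu,\e}$ of \eqref{Pproj} are then well defined; the identities $P_{\mu,\e}^2=P_{\mu,\e}$ and $P_{\mu,\e}\cL_{\mu,\e}=\cL_{\mu,\e}P_{\mu,\e}$ are the usual contour computations based on the resolvent identity, and the $\cC^\reg$ regularity of $(\mu,\e)\mapsto P_{\mu,\e}$ follows by integrating the $\cC^\reg$-regular resolvent over the compact curve $\Gamma$.

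\emph{Parts 3--5.} With $P_{\mu,\e}$ a bounded projector commuting with $\cL_{\mu,\e}$, the splitting $Y=\cV_{\mu,\e}\oplus\ker P_{\mu,\e}$ into closed $\cL_{\mu,\e}$-invariant subspaces and the identification of $\sigma(\cL_{\mu,\e}\vert_{\cV_{\mu,\e}})$ with the portion of the spectrum enclosed by $\Gamma$ are the standard spectral statements for Riesz projectors, giving Part~3. For Part~4, continuity of $(\mu,\e)\mapsto P_{\mu,\e}$ from Part~2 yields $\|P_{\mu,\e}-P_{0,0}\|<1$ for $(\mu,\e)$ small, so $\uno-(P_{\mu,\e}-P_{0,0})^2$ is invertible and $U_{\mu,\e}$ in \eqref{OperatorU} is well defined, with the inverse square root given by a norm-convergent series in $(P_{\mu,\e}-P_{0,0})^2$ that inherits the $\cC^\reg$ regularity; the relations $U_{\mu,\e}^{-1}U_{\mu,\e}=\uno$ and $U_{\mu,\e}P_{0,0}U_{\mu,\e}^{-1}=P_{\mu,\e}$ are purely algebraic consequences of $P^2=P$, verified exactly as in \cite{BMV1}. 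Part~5 is then immediate: $\cV_{\mu,\e}=\mathrm{Rg}(P_{\mu,\e})=U_{\mu,\e}\,\mathrm{Rg}(P_{0,0})=U_{\mu,\e}\cV_{0,0}$, so $\dim\cV_{\mu,\e}=\dim\cV_{0,0}$, and $\dim\cV_{0,0}=3$ because, by \eqref{collision}, $\cL_{0,0}$ is the Fourier multiplier with a triple zero eigenvalue, spanned by $\{e^{-\im x},1,e^{\im x}\}$.

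\emph{The main obstacle.} The one genuinely non-textbook point is the unboundedness of $\cL_{\mu,\e}$ (of order $m_\star$): I must check that the contour integrals and the derivative formulas $\partial_\bullet(\cL_{\mu,\e}-\lambda)^{-1}=-(\cL_{\mu,\e}-\lambda)^{-1}(\partial_\bullet\cL_{\mu,\e})(\cL_{\mu,\e}-\lambda)^{-1}$ make sense and transport the $\cC^\reg$ regularity to $P_{\mu,\e}$ and $U_{\mu,\e}$ as bounded operators both on $X$ and on $Y$. This works because each of $\partial_\mu\cL_{\mu,\e}$, $\partial_\e\cL_{\mu,\e}$ maps $Y$ into $X$ boundedly, being of order no higher than $\cL_{\mu,\e}$, so every differentiation is compensated by the smoothing of one resolvent factor and no derivatives are lost; here the $\cC^\reg$ regularity of $\fm$ in Assumption~(A2) is exactly what supplies the $\cC^\reg$ dependence on $(\mu,\e)$, and it is also why the hypothesis is phrased on the higher-regularity space $\cL(H^{m_\star+1},L^2)$, leaving room for the lower-order derivatives to remain bounded into $L^2$. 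Note that, unlike in the later construction of a symplectic basis, the non-invertibility of $\cJ_0$ at $\mu=0$ plays no role at this stage, so this lemma is a direct, if careful, adaptation of Lemma~3.1 in \cite{BMV1}.
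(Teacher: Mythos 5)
Your argument is correct and is exactly the route the paper takes: the paper's proof of this lemma is a one-line reference to Lemma 3.1 of \cite{BMV1}, noting only that analyticity in $(\mu,\e)$ is replaced by $\cC^\reg$ regularity, and your Neumann-series resolvent bound, Riesz-projector identities, and transformation-operator construction are precisely the content of that cited argument. Your closing observation that the unboundedness of $\cL_{\mu,\e}$ is absorbed by the smoothing of one resolvent factor, and that the non-invertibility of $\cJ_0$ is irrelevant at this stage, matches the paper's framing as well.
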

\begin{proof}
The proof follows the same lines as Lemma 3.1 in \cite{BMV1}, replacing the analyticity of the map $(\mu,\e) \mapsto \cL_{\mu,\e}$ with the regularity $\cC^\reg$.
\end{proof}

We now prove that Kato's transformation operator $U_{\mu,\e}$ is symplectic and reversibility preserving. 
We first introduce, for $\mu \neq 0$  the inverse operator
\begin{equation}\label{Emu}
\cE_\mu := \cJ_\mu^{-1} \colon  L^2(\T)\to H^{1}(\T) , \qquad \mu >0   \ , 
\end{equation}
which acts as a Fourier multiplier of symbol $\dfrac{1}{\im (\xi+ \mu)} $.
When $\mu = 0$, 
 the operator $\cJ_0 = \pa_x$ is invertible in the  
subspace 
$$H^s_0(\T):= \{ u \in H^s(\T)\colon \int_\T u(x) \di x = 0 \} \ . $$
Hence, we define 
 \begin{equation}
 \cE_0 = \cJ_0^{-1} : L^2_0 (\T) \to   H^{1}_0 (\T) \ .
 \end{equation}
We now define  $\mu$-symplectic and reversible basis.
\begin{definition}[$\mu$-symplectic and reversible basis]\label{def:symp}

A linearly independent set $\{ f_1^+,f_1^-,f_0\}$ is 
\begin{itemize}
    \item[(i)] {\em $\mu$-symplectic}, $\mu \neq 0$,  if  one has 
            \begin{equation} \label{Emubasis}
            \begin{aligned}
                   & (\cE_\mu f_1^+ ,f_1^+)=(\cE_\mu f_1^- ,f_1^-)=\im\frac{\mu}{1-\mu^2} , 
   \qquad     (\cE_\mu f_0 ,f_0)= - \frac{\im}{\mu}               \ , 
                   \\
                    &
                    (\cE_\mu f_1^+ ,f_1^-)= \frac{1}{1-\mu^2} \ , \qquad    (\cE_\mu f_0 , f_1^+) =(\cE_\mu f_0 , f_1^-) =0  \ , 
                    \end{aligned}
                    \end{equation}
    and {\em $0$-symplectic} if both $f_1^+,f_1^- \in L_0^2(\T)$, $f_0 \in \ker\cJ_0 = \textup{span}\la 1 \ra$ and 
    \begin{equation}\label{E0basis}
                    (\cE_0 f_1^+, f_1^-) = 1,\hspace{0,3cm}(\cE_0 f_1^+, f_1^+) = (\cE_0 f_1^-, f_1^-) = 0 \ ;
            \end{equation}
    \item[(ii)] {\em reversible} if \hspace{0,2cm} $\bar \rho f_1^+ = f_1^+,$ \hspace{0,2cm} $\bar \rho f_1^- = -f_1^-,$\hspace{0,2cm} $\bar \rho f_0 = f_0$, and $\bar \rho$ 
    is the complex involution in \eqref{barrho}. 
\end{itemize}
\end{definition}
\begin{remark}
    The basis $ f_1^+ = \cos x$, $f_1^- =  \sin x$, $f_0 =  \frac{1}{\sqrt{2}}$ is $\mu$-symplectic for any $\mu \in [0, \frac12)$.
\end{remark}
In the previous Lemma \ref{lem:Kato1}, it has been shown that the operator $U_{\mu,\e}$ is an isomorphism between $\cV_{0,0}$ and $\cV_{\mu,\e}$. In the following Lemma we show that, in addition, the previous isomorphism preserves the symplectic and reversible  structure of a basis.
Here the properties are different with respect to the case of \cite{BMV1}, since here the symplectic tensor $\cJ_\mu$ is not invertible at  $\mu = 0$. 
\begin{lemma} \label{lemmanonzero}
For any $(\mu,\e)\in B(\mu_0)\times B(\e_0)$ we have that 
\begin{itemize}
    \item[(i)] $P_{\mu,\e}$ is \textit{skew-Hamiltonian}, i.e. 
   \begin{equation}\label{Psym}
    \cE_\mu P_{\mu,\e}= P_{\mu,\e}^* \cE_\mu , \quad \mbox{for } \mu \neq 0, \qquad 
        \cE_0 P_{0,\e}= P_{0,\e}^* \cE_0 \quad \mbox{ in } L^2_0(\T) , 
    \end{equation}
    and reversibility preserving, i.e.
\begin{equation}\label{Prev}
    \bar \rho P_{\mu,\e} = P_{\mu,\e}  \bar \rho \ .
  \end{equation}
  Moreover,   $P_{0,\e}[1] = 1$, $P_{0,\e} \cJ_0 = \cJ_0 P_{0,\e}^*$ and thus $P_{0,\e}$ leaves $L^2_0(\T)$ invariant.
    \item[(ii)] 
    $U_{\mu,\e}$ are symplectic, i.e. 
    \begin{equation}\label{Usymp}
    U_{\mu,\e}^* \cE_\mu U_{\mu,\e} = \cE_\mu   \quad \mbox{for } \mu \neq 0 \ ,        
    \qquad  U_{0,\e}^* \cE_0 U_{0,\e} = \cE_0 \quad \mbox{ in } L^2_0(\T)  \ ,
    \end{equation}
     and reversibility preserving. 
  Moreover,   $U_{0,\e}[1] = 1$, $U_{0,\e} \cJ_0 = \cJ_0 U_{0,\e}^{-*}$ and thus $U_{0,\e}$ leaves $L^2_0(\T)$ invariant.
    \item[(iii)] 
    $P_{0,\e}$ and $U_{0,\e}$ are real operators, i.e. 
    $    \overline{P_{0,\e}} = P_{0,\e}, \hspace{0,3cm} \overline{U_{0,\e}} = U_{0,\e}$.
    \item[(iv)] The operators  $\dot  P_{0,\e}:= \pa_\mu P_{\mu,\e}\big|_{\mu=0} $   and $\dot U_{0,\e}:= \pa_\mu U_{\mu,\e}\vert_{\mu = 0}$ are  purely imaginary.
\end{itemize}
\end{lemma}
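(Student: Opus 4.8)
The plan is to read off every structural property claimed in the statement from the Riesz integral representation
\[
P_{\mu,\e} = -\frac{1}{2\pi\im}\oint_\Gamma (\cL_{\mu,\e}-\lambda)^{-1}\,\di\lambda
\]
of Lemma \ref{lem:Kato1}, exploiting that $\Gamma$ may be taken to be a small circle centered at $0$, hence invariant under each of the reflections $\lambda\mapsto-\lambda$, $\lambda\mapsto\bar\lambda$, $\lambda\mapsto-\bar\lambda$. The point is that every structure of $\cL_{\mu,\e}$ (Hamiltonian, reversible, real) translates into an intertwining identity for the resolvent $(\cL_{\mu,\e}-\lambda)^{-1}$ in which $\lambda$ is sent to one of these reflected points; integrating over $\Gamma$ and using its symmetry then transfers the property to $P_{\mu,\e}$. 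The statements for $U_{\mu,\e}$ follow from those for $P_{\mu,\e}$ and $P_{0,0}$ through the explicit formula \eqref{OperatorU} and a purely algebraic lemma, and the degenerate value $\mu=0$ is always reached by continuity from $\mu\neq0$ using the $\cC^\reg$-regularity of $(\mu,\e)\mapsto P_{\mu,\e},U_{\mu,\e}$.

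For the skew-Hamiltonian identity in (i) I would first observe that $\cE_\mu$ is skew-adjoint (its symbol $1/(\im(\xi+\mu))$ is purely imaginary) and that $\cE_\mu\cL_{\mu,\e}=\cB_{\mu,\e}$ is self-adjoint; combining these gives $\cE_\mu\cL_{\mu,\e}=-\cL_{\mu,\e}^*\cE_\mu$, whence the resolvent relation $\cE_\mu(\cL_{\mu,\e}-\lambda)^{-1}=-(\cL_{\mu,\e}^*+\lambda)^{-1}\cE_\mu$. Integrating over $\Gamma$ and using its invariance under $\lambda\mapsto-\lambda$ identifies the right-hand side with the Riesz projector of $\cL_{\mu,\e}^*$, yielding $\cE_\mu P_{\mu,\e}=P_{\mu,\e}^*\cE_\mu$ for $\mu\neq0$. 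For reversibility I would use $\cL_{\mu,\e}\bar\rho=-\bar\rho\cL_{\mu,\e}$ together with the antilinearity of $\bar\rho$ to get $\bar\rho(\cL_{\mu,\e}-\lambda)^{-1}=-(\cL_{\mu,\e}+\bar\lambda)^{-1}\bar\rho$; integrating over $\Gamma$, now using the symmetry $\lambda\mapsto-\bar\lambda$ and the fact that pulling the antilinear $\bar\rho$ through the integral conjugates the scalar factor $-1/(2\pi\im)$, gives $\bar\rho P_{\mu,\e}=P_{\mu,\e}\bar\rho$.

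The delicate point, and the main obstacle, is the degenerate case $\mu=0$ where $\cJ_0=\pa_x$ is not invertible. Here I would rewrite the skew-Hamiltonian identity for $\mu\neq0$ in the equivalent form $P_{\mu,\e}\cJ_\mu=\cJ_\mu P_{\mu,\e}^*$ (multiply $\cE_\mu P_{\mu,\e}=P_{\mu,\e}^*\cE_\mu$ by $\cJ_\mu$ on both sides), an identity between bounded families that passes to the limit $\mu\to0$ by continuity, giving $P_{0,\e}\cJ_0=\cJ_0 P_{0,\e}^*$. Since by Remark \ref{rem:V0e} the constant $\frac{1}{\sqrt2}\in\cV_{0,\e}=\mathrm{Rg}(P_{0,\e})$, one has $P_{0,\e}[1]=1$; and since $\mathrm{Rg}(\cJ_0)=L^2_0(\T)$, the relation $P_{0,\e}\cJ_0=\cJ_0 P_{0,\e}^*$ forces $P_{0,\e}$ to map $L^2_0(\T)$ into itself, after which restricting to $L^2_0(\T)$ and applying $\cE_0=\cJ_0^{-1}$ recovers $\cE_0 P_{0,\e}=P_{0,\e}^*\cE_0$ there. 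The same limiting scheme, applied to $U_{\mu,\e}^*\cE_\mu U_{\mu,\e}=\cE_\mu$ rewritten as $U_{\mu,\e}\cJ_\mu=\cJ_\mu U_{\mu,\e}^{-*}$, delivers $U_{0,\e}\cJ_0=\cJ_0 U_{0,\e}^{-*}$ and the $L^2_0(\T)$-invariance of $U_{0,\e}$.

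Finally, for (ii) I would establish symplecticity of $U_{\mu,\e}$ by the algebraic lemma: if two projectors $P,Q$ are both skew-Hamiltonian with respect to the same skew-adjoint $\cE$, then $U=(\uno-(P-Q)^2)^{-1/2}\bigl[PQ+(\uno-P)(\uno-Q)\bigr]$ satisfies $U^*\cE U=\cE$; the proof reduces to the identity $\tilde T T=\uno-(P-Q)^2$ with $T=PQ+(\uno-P)(\uno-Q)$ and $\tilde T=QP+(\uno-Q)(\uno-P)$, combined with $\cE T=\tilde T^*\cE$ and the fact that $(\uno-(P-Q)^2)^{\pm1/2}$ is itself skew-Hamiltonian and commutes with $T$. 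The key observation needed to apply this with $Q=P_{0,0}$ is that $P_{0,0}$, being the self-adjoint Fourier truncation onto the modes $\{-1,0,1\}$, commutes with the Fourier multiplier $\cE_\mu$ and is therefore skew-Hamiltonian with respect to $\cE_\mu$ for every $\mu$, not merely for $\mu=0$. Reversibility-preservation and $U_{0,\e}[1]=1$ then propagate from $P_{\mu,\e},P_{0,0}$ through \eqref{OperatorU} (using that $\bar\rho$ commutes with $(P-Q)^2$, hence with its real-coefficient functional calculus, and that $(P_{0,\e}-P_{0,0})[1]=0$). For (iii) and (iv) I would use that $\cL_{0,\e}$ and $\cL_{0,0}$ are real and $\overline{\cL_{\mu,\e}}=\cL_{-\mu,\e}$, which by the same contour argument give $\overline{P_{\mu,\e}}=P_{-\mu,\e}$ and $\overline{U_{\mu,\e}}=U_{-\mu,\e}$; evaluating at $\mu=0$ yields reality, while differentiating at $\mu=0$ and using $\pa_\mu P_{-\mu,\e}\vert_0=-\dot P_{0,\e}$ yields $\overline{\dot P_{0,\e}}=-\dot P_{0,\e}$, i.e.\ purely imaginary, and likewise for $\dot U_{0,\e}$.
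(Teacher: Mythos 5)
Your proposal is correct and follows essentially the same strategy as the paper: the intertwining identity $(\cL_{\mu,\e}-\lambda)\cJ_\mu=-\cJ_\mu(\cL_{\mu,\e}^*+\lambda)$ integrated over $\Gamma$ for the skew-Hamiltonian property, the Kato formula \eqref{OperatorU} together with the skew-Hamiltonianity of $P_{0,0}$ with respect to $\cE_\mu$ for the symplecticity of $U_{\mu,\e}$, and the relations $P_{0,\e}\cJ_0=\cJ_0P_{0,\e}^*$, $P_{0,\e}[1]=1$ to handle the non-invertibility of $\cJ_0$. The only tactical deviations — obtaining the $\mu=0$ identities by letting $\mu\to0$ in $P_{\mu,\e}\cJ_\mu=\cJ_\mu P_{\mu,\e}^*$ rather than working directly at $\mu=0$, justifying $\cE_\mu P_{0,0}=P_{0,0}^*\cE_\mu$ via the explicit Fourier-truncation form of $P_{0,0}$ instead of the intertwining relation for $\cL_{0,0}$, and deducing (iv) by differentiating $\overline{P_{\mu,\e}}=P_{-\mu,\e}$ at $\mu=0$ rather than differentiating the contour integral and invoking Lemma \ref{lem:LB.im} — are all legitimate and do not change the substance of the argument.
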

\begin{proof}
$(i)$ First we have that for any $\mu \in [0, \frac12)$
   \begin{equation}\label{lemmaeq0}
    \cL_{\mu,\e} \cJ_\mu = \cJ_\mu \cB_{\mu,\e} \cJ_\mu = \cJ_\mu ( \cJ_\mu^* \cB_{\mu,\e}^* )^* = \cJ_\mu ( -\cJ_\mu \cB_{\mu,\e} )^* = -\cJ_\mu \cL_{\mu,\e}^* \ ,
   \end{equation}
   which implies that 
    \begin{equation} \label{lemmaeq1}
    (\cL_{\mu,\e} - \lambda ) \cJ_\mu = -\cJ_\mu (\cL_{\mu,\e}^* + \lambda) , \quad \forall \mu \in \big[0, \frac12 \big) \ .
    \end{equation}
Consider now $\mu > 0$. Then  $\cJ_\mu$ is invertible and 
from  \eqref{lemmaeq1} we deduce that $\lambda\in \rho (\cL_{\mu,\e}) \iff -\lambda \in \rho (\cL_{\mu,\e}^*)$; so inverting \eqref{lemmaeq1} and integrating along a circuit  $\Gamma\subseteq \rho (\cL_{\mu,\e})$ we obtain that
    \begin{equation}
       \cE_\mu P_{\mu,\e} = \cE_\mu \oint_\Gamma (\cL_{\mu,\e}-\lambda)^{-1} \frac{-\di \lambda}{2\pi\im}  = \oint_\Gamma (\cL_{\mu,\e}^*+\lambda)^{-1} \frac{\di \lambda}{2\pi\im} \circ \cE_\mu   =  P_{\mu,\e}^* \, \cE_\mu \ ,
    \end{equation}
where we used that 
$    P_{\mu,\e}^* =   \oint_\Gamma (\cL_{\mu,\e}^*+ \lambda)^{-1} \frac{\di \lambda}{2\pi \im}$. 
    This proves \eqref{Psym} in case $\mu > 0$.

Take now  $\mu = 0$. First we have that 
$P_{0,\e}[1] = 1$  $\forall \e$, since $1\in \cV_{0,\e}= Rg P_{0,\e}$ (see Remark \ref{rem:V0e}). 
Next consider \eqref{lemmaeq1} at $\mu = 0$, apply $(\cL_{0,\e} - \lambda)^{-1}$ to the left, $(\cL_{0,\e}^* + \lambda)^{-1}$ to the right and integrate along the circuit $\Gamma$ to get the identity  $P_{0,\e} \cJ_0 = \cJ_0 P_{0,\e}^*$. 
This last identity implies that $P_{0,\e}$ leaves $L^2_0(\T)$ invariant; indeed if $u \in L^2_0(\T)$ then 
\begin{equation}\label{P1}
( P_{0,\e} u, 1 ) = (  P_{0,\e}  \cJ_0 \, \cE_0 u, 1 ) 
= 
(  \cJ_0 P_{0,\e}^*  \, \cE_0 u, 1 )  = - ( P_{0,\e}^*  \, \cE_0 u,   \cJ_0[1] ) = 0 \ , 
\end{equation}
showing that $P_{0,\e} \colon L^2_0(\T) \to L^2_0(\T)$. Then, on $L^2_0(\T)$, apply $\cE_0$ to the left and to the right of the identity
$P_{0,\e} \cJ_0 = \cJ_0 P_{0,\e}^*$ to get the second of \eqref{Psym}.
    Property \eqref{Prev} follows as in \cite[Lemma 3.2]{BMV1}.

$(ii)$ Consider first the case $\mu > 0$. By \eqref{Bmue}, 
 $\cL_{0,0} = \pa_x (\fm(1) - \cM(D))$,  hence 
 $\cL_{0,0} \cJ_\mu = -\cJ_\mu \cL_{0,0}^*$. Then arguing as above we deduce that 
 $\cE_\mu P_{0,0}= P_{0,0}^* \cE_\mu$.
 Using also \eqref{Psym} we deduce 
    \begin{equation}\label{EU}
    \begin{split}
        \cE_\mu U_{\mu,\e} = \cE_\mu \big( \uno-(P_{\mu,\e}-P_{0,0})^2 \big)^{-1/2} \big[ 
P_{\mu,\e}P_{0,0} + (\uno - P_{\mu,\e})(\uno-P_{0,0}) \big] \\
=\big( \uno-(P_{\mu,\e}^*-P_{0,0}^*)^2 \big)^{-1/2} \big[ 
P_{\mu,\e}^*P_{0,0}^* + (\uno - P_{\mu,\e}^*)(\uno-P_{0,0}^*) \big] \cE_\mu \\
=\big( \uno-(P_{\mu,\e}-P_{0,0})^2 \big)^{-*/2} \big[ 
P_{0,0} P_{\mu,\e}+ (\uno-P_{0,0})(\uno - P_{\mu,\e}) \big]^* \cE_\mu\\
= \left(  \big[ 
P_{0,0} P_{\mu,\e}+(\uno-P_{0,0}) (\uno - P_{\mu,\e}) \big] \big( \uno-(P_{\mu,\e}-P_{0,0})^2 \big)^{-1/2} \right)^*\cE_\mu
\end{split}
    \end{equation}
    By Lemma \ref{lem:Kato1}, point $4$, this is equal to $U^{-*}_{\mu,\e} \cE_\mu$, concluding the proof of \eqref{Usymp} in case $\mu \neq 0$.
    
  Consider now  $\mu = 0$. Using the  expression \eqref{OperatorU} of  $U_{0,\e}$ and the fact that $P_{0,\e}[1]=1$ $\forall \epsilon$,    we obtain  that $U_{0,\e}[1] =1$. 
  Then, using  $P_{0,\e} \cJ_0 = \cJ_0 P_{0,\e}^*$ and arguing as in \eqref{EU} we get  $U_{0,\e} \cJ_0 = \cJ_0 U_{0,\e}^{-*} $, from which we deduce  that   $U_{0,\e} $ leaves $L^2_0(\T) $ invariant. 
  The second of \eqref{Usymp} follows easily from $U_{0,\e} \cJ_0 = \cJ_0 U_{0,\e}^{-*} $ on $L^2_0(\T)$.

 $(iii)$ It follows as in  \cite[Lemma 3.2]{BMV1}.

 $(iv)$  By \eqref{Pproj}, 
     \begin{equation*}
        \dot P_{0,\e}  = \frac{1}{2\pi\im} \oint_\Gamma (\cL_{0,\e}-\lambda)^{-1} \dot \cL_{0,\e} (\cL_{0,\e}-\lambda)^{-1} \de\lambda 
    \end{equation*}
Since $\cL_{0,\e}$ is real, 
    $$
    \overline{\dot P_{0,\e}} = -\frac{1}{2\pi \im}\oint_{\bar\Gamma} (\cL_{0,\e}-\lambda)^{-1} \overline{\dot \cL_{0,\e}} (\cL_{0,\e}-\lambda)^{-1} \de\lambda  \ ,
    $$
    where $\bar \Gamma$ denotes the curve $\Gamma$ with inverted orientation. Since $\dot \cL_{0,\e}$ is purely imaginary by Lemma \ref{lem:LB.im}, in conclusion $  \overline{\dot P_{0,\e}} = - \dot P_{0,\e} $, proving that $\dot P_{0,\e}$ is purely imaginary. 
   
Then consider  $\dot U_{0,\e}$. 
Denote $g(A):= (\uno - A^2)^{-\frac12}$, defined for $\norm{A}< 1$. Then 
$\bar{g(A)} = g(\bar A)$, and differentiating one gets $\overline{\di g(A)[B]} = \di g(\bar A)[\bar B]$. Thus,
$$
\dot U_{0,\e} = \di g(P_{0,\e} - P_{0,0})[\dot P_{0,\e}] \left( 
 P_{0,\e} P_{0,0} + (\uno - P_{0,\e})(\uno - P_{0,0}) \right) + 
 g(P_{0,\e} - P_{0,0}) \, \dot P_{0,\e} (2 P_{0,0} - \uno)
$$
Using that $P_{0,\e}$ is real and $\dot P_{0,\e}$ is purely imaginary, one gets the thesis.
\end{proof}

%

In the next lemma we show how a general vector in $\cV_{\mu,\e}$ is decomposed on a $\mu$-symplectic basis.
\begin{lemma} \label{jdecomposition}
    Let $\{ f_1^+,f_1^-,f_0 \}$ a $\mu$-symplectic basis of $\cV_{\mu,\e}$ according to Definition \ref{def:symp}.
     Then for every $f\in \cV_{\mu,\e}$, it holds that
     \begin{itemize}
     \item[(i)] if $\mu \neq 0$, then $f=\alpha_1^+ f_1^+ + \alpha_1^- f_1^- + \alpha_0 f_0$
     with
         \begin{equation}\label{coeff.basis}
        \begin{pmatrix}
            \alpha_1^+ \\
            \alpha_1^- \\
            \alpha_0
        \end{pmatrix}
        =
        \begin{pmatrix}
            \im\mu & 1 & 0 \\
            -1 & \im\mu & 0 \\
            0 & 0 & \im\mu 
        \end{pmatrix}
        \cdot 
        \begin{pmatrix}
            (\cE_\mu f, f_1^+) \\
            (\cE_\mu f, f_1^-) \\
            (\cE_\mu f, f_0)  
        \end{pmatrix} \ ; 
    \end{equation}
    \item[(ii)] if $\mu  = 0$, then, denoting $\breve f:= f -  (f,f_0) f_0 \in L^2_0(\T)$, one has 
     \begin{equation}
        f = (\cE_0 \breve f, f_1^- )f_1^+ -   (\cE_0 \breve f, f_1^+)f_1^- + (f, f_0) f_0  \ . 
    \end{equation}
     \end{itemize}
\end{lemma}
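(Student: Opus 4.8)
The statement is essentially a coordinate computation: by Lemma~\ref{lem:Kato1}(5) the space $\cV_{\mu,\e}$ is three dimensional, and since $\{f_1^+,f_1^-,f_0\}$ is linearly independent it is a basis, so every $f\in\cV_{\mu,\e}$ admits a unique decomposition $f=\alpha_1^+f_1^++\alpha_1^-f_1^-+\alpha_0 f_0$. The only task is to recover the coefficients from the symplectic pairings. The plan is to test $f$ against the basis vectors through $\cE_\mu$ and exploit the prescribed values in Definition~\ref{def:symp}.

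The crucial preliminary observation is that $\cE_\mu$ is skew-adjoint for the complex product \eqref{c.sp}: since $\cJ_\mu$ is skew-adjoint, $\cE_\mu=\cJ_\mu^{-1}$ satisfies $\cE_\mu^*=(\cJ_\mu^*)^{-1}=-\cE_\mu$, whence $(\cE_\mu u,v)=-\overline{(\cE_\mu v,u)}$. This identity turns the one-sided relations \eqref{Emubasis} into the full Gram table of the symplectic form; in particular it produces $(\cE_\mu f_1^-,f_1^+)=-\tfrac{1}{1-\mu^2}$ and $(\cE_\mu f_1^\pm,f_0)=0$, complementing the values already listed in Definition~\ref{def:symp}.

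For part (i) I would introduce the three scalars $b^\pm:=(\cE_\mu f,f_1^\pm)$ and $b_0:=(\cE_\mu f,f_0)$. Expanding $f$ and inserting the Gram values yields the decoupled system
\[
\begin{pmatrix} b^+\\ b^-\end{pmatrix}=\frac{1}{1-\mu^2}\begin{pmatrix}\im\mu & -1\\ 1 & \im\mu\end{pmatrix}\begin{pmatrix}\alpha_1^+\\ \alpha_1^-\end{pmatrix},\qquad b_0=-\frac{\im}{\mu}\,\alpha_0.
\]
The $2\times2$ block has determinant $1-\mu^2\neq0$, and the scalar relation is invertible precisely because $\mu\neq0$; inverting both reproduces exactly the matrix in \eqref{coeff.basis} (with rows $(\im\mu,1,0)$, $(-1,\im\mu,0)$, $(0,0,\im\mu)$).

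The genuine obstacle is part (ii), where $\cJ_0$ fails to be invertible, so $\cE_0$ exists only on $L^2_0(\T)$ and the kernel direction must be peeled off first. Since $f_0\in\ker\cJ_0=\textup{span}\langle1\rangle$ is normalized while $f_1^\pm\in L^2_0(\T)$ have zero average, pairing $f$ with $f_0$ isolates $\alpha_0=(f,f_0)$, so that $\breve f=f-(f,f_0)f_0=\alpha_1^+f_1^++\alpha_1^-f_1^-\in L^2_0(\T)$ lies in the domain of $\cE_0$. Testing $\cE_0\breve f$ against $f_1^\pm$, using the $0$-symplectic relations \eqref{E0basis} together with the skew-adjointness of $\cE_0$ on $L^2_0(\T)$, then gives $\alpha_1^+=(\cE_0\breve f,f_1^-)$ and $\alpha_1^-=-(\cE_0\breve f,f_1^+)$, which is the claimed formula. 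The only points requiring care are this separation of the $\ker\cJ_0$ component and the consistent bookkeeping of complex conjugation when converting the symplectic form into the full Gram table.
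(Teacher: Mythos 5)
Your proof is correct and follows essentially the same route as the paper: expand $f$ in the basis, pair with the basis vectors through $\cE_\mu$ to obtain the symplectic Gram system, and invert the resulting $2\times2$ block plus scalar relation (resp.\ peel off the $\ker\cJ_0$ component first when $\mu=0$). The only difference is cosmetic — you make explicit the skew-adjointness of $\cE_\mu$ used to fill in the remaining Gram entries, which the paper leaves implicit.
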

\begin{proof} $(i)$ Write $f=\alpha_1^+ f_1^+ + \alpha_1^- f_1^- + \alpha_0 f_0$ with some coefficients $\alpha_1^\pm$, $\alpha_0$ to be determined. Taking the inner product of $\cE_\mu f$ with every element of the basis  gives the system 
    \begin{equation*}
        \begin{pmatrix}
            (\cE_\mu f, f_1^+) \\
            (\cE_\mu f, f_1^-) \\
            (\cE_\mu f, f_0) 
        \end{pmatrix}
        =
        \begin{pmatrix}
            ( \cE_\mu f_1^+ , f_1^+) & ( \cE_\mu f_1^- , f_1^+) & ( \cE_\mu f_0 , f_1^+) \\
            ( \cE_\mu f_1^+ , f_1^-) & ( \cE_\mu f_1^- , f_1^-) & ( \cE_\mu f_0 , f_1^-) \\
            ( \cE_\mu f_1^+ , f_0) & ( \cE_\mu f_1^- , f_0) & ( \cE_\mu f_0 , f_0) 
        \end{pmatrix}
        \cdot
        \begin{pmatrix}
            \alpha_1^+ \\
            \alpha_1^- \\
            \alpha_0
        \end{pmatrix}
       \stackrel{ \eqref{Emubasis}}{ =}
        \begin{pmatrix}
            \im \frac{\mu}{1-\mu^2} & -\frac{1}{1-\mu^2} & 0 \\
            \frac{1}{1-\mu^2} & \im \frac{\mu}{1-\mu^2} & 0 \\
            0 & 0 & -\frac{\im}{\mu}
        \end{pmatrix}
        \cdot
        \begin{pmatrix}
            \alpha_1^+ \\
            \alpha_1^- \\
            \alpha_0
        \end{pmatrix}
    \end{equation*}
    Inverting the previous relation gives \eqref{coeff.basis}.
    
    $(ii)$  By definition of $0$-symplectic basis,  $f_1^\pm \in L^2_0(\T) \cap Y$, whereas $f_0$ is a constant function.
    Writing $\breve f  = \alpha_1^+ f_1^+ + \alpha_1^- f_1^- + \alpha_0 f_0$ and taking the scalar product with $f_0$, we deduce that $\alpha_0 = 0$.
     Then take  the scalar product of $\cE_0 \breve f$ with   $f_1^+$ and $f_1^-$ and use \eqref{E0basis} to get $(ii)$.
\end{proof}

 Next we outline a property of a reversible basis.  
We denote by $even(x)$ a real $2\pi$-periodic function which is even in $x$, and by
$odd(x)$ a real $2\pi$-periodic function which is odd in $x$.

\begin{lemma} \label{paritybasis}
    Let $\{ f_1^+,f_1^-,f_0 \}$ be a reversible basis according to Definition \ref{def:symp}. Then 
    \begin{equation}\label{parity}
    f_1^+(x) = even(x) + \im \, odd(x) \  , \quad 
    f_1^-(x) = odd(x) + \im\,  even(x)  \ , 
\quad    
    f_0(x) = even(x) + \im \, odd(x)  \ . 
    \end{equation}
\end{lemma}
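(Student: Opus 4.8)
The plan is to unpack the definition of a reversible basis in terms of the complex involution $\bar\rho$ and translate each relation into a statement about real and imaginary parts. Recall from Definition \ref{def:symp}(ii) that a reversible basis satisfies $\bar\rho f_1^+ = f_1^+$, $\bar\rho f_1^- = -f_1^-$, and $\bar\rho f_0 = f_0$, where $[\bar\rho u](x) = \bar u(-x)$ from \eqref{barrho}. The key observation is that for any complex-valued $2\pi$-periodic function $u$ we can write $u = p + \im q$ with $p = \mathrm{Re}\, u$ and $q = \mathrm{Im}\, u$ real-valued, so that $\bar u(x) = p(x) - \im q(x)$ and hence $[\bar\rho u](x) = p(-x) - \im q(-x)$.

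First I would compute $\bar\rho f_1^+ = f_1^+$ explicitly. Writing $f_1^+ = p + \im q$, the condition $p(-x) - \im q(-x) = p(x) + \im q(x)$ forces $p(-x) = p(x)$ and $-q(-x) = q(x)$, i.e.\ $p$ is even and $q$ is odd. This gives $f_1^+(x) = even(x) + \im\, odd(x)$, and the same computation applies verbatim to $f_0$ since it obeys the identical relation $\bar\rho f_0 = f_0$. For $f_1^-$ the sign flips: from $\bar\rho f_1^- = -f_1^-$, writing $f_1^- = p + \im q$, the identity $p(-x) - \im q(-x) = -p(x) - \im q(x)$ forces $p(-x) = -p(x)$ and $q(-x) = q(x)$, so $p$ is odd and $q$ is even, yielding $f_1^-(x) = odd(x) + \im\, even(x)$.

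This establishes all three relations in \eqref{parity}, so the lemma follows directly. There is essentially no obstacle here: the proof is a mechanical splitting into real and imaginary parts and a parity bookkeeping, with the only thing to be careful about being the sign in the $f_1^-$ case, which reverses the roles of even and odd compared to $f_1^+$ and $f_0$. One could compress the argument by noting that an eigenfunction of $\bar\rho$ with eigenvalue $+1$ has even real part and odd imaginary part, while eigenvalue $-1$ swaps these parities, but since the statement only asks for the structural form \eqref{parity}, the direct computation is cleanest and I would present it exactly as above.
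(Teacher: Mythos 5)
Your proof is correct and is essentially identical to the paper's own argument: both split each basis vector into real and imaginary parts and read off the parities forced by the eigenvalue ($\pm 1$) of $\bar\rho$, with the paper writing out only the $f_1^+$ case and declaring the others analogous. Nothing further is needed.
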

\begin{proof}  Using the definition of the involution $\bar \rho$ in 
\eqref{barrho} one has 
  $f_1^+(x) = a(x) + \im b(x) = \bar \rho f_1^+(x) = a(-x) - \im b(-x)$, implying that $a(x)$ is even and $b(x)$ odd. The other cases are analogous.
\end{proof}

In the next lemma we  express the operator $\cL_{\mu,\e}\big|_{\cV_{\mu,\e}}$ on any $\mu$-symplectic and reversible basis according to Definition \ref{def:symp}.
\begin{lemma}[Matrix representation of $\cL_{\mu,\e}$ on $\cV_{\mu,\e}$] \label{matrixraplemma}
    The $3\times 3$ matrix that represents the Hamiltonian and reversible operator $\cL_{\mu,\e} = \cJ_\mu \cB_{\mu,\e}$ with respect to any $\mu$-symplectic and reversible basis $\tF$ of $\cV_{\mu,\e}$, (according to Definition \ref{def:symp}) is 
    \begin{equation}\label{tL}
        \tL_{\mu,\e} = \tJ_\mu \tB_{\mu,\e}
    \end{equation}
    where
    \begin{equation}\label{tJtB}
        \tJ_\mu := 
        \begin{pmatrix}
            \im\mu & 1 & 0\\
            -1 & \im\mu & 0\\
            0 & 0 & \im\mu 
        \end{pmatrix},
        \qquad 
        \tB_{\mu,\e} := 
        \begin{pmatrix}
            (\cB_{\mu,\e} f_1^+ , f_1^+) & (\cB_{\mu,\e} f_1^- , f_1^+) & (\cB_{\mu,\e} f_0 , f_1^+)\\
            (\cB_{\mu,\e} f_1^+ , f_1^-) & (\cB_{\mu,\e} f_1^- , f_1^-) & (\cB_{\mu,\e} f_0 , f_1^-)\\
            (\cB_{\mu,\e} f_1^+ , f_0) & (\cB_{\mu,\e} f_1^- , f_0) & (\cB_{\mu,\e} f_0 , f_0)
        \end{pmatrix} \ .
    \end{equation}    
    Moreover, the entries of the matrix $\tB_{\mu,\e}$ are alternatively real or purely imaginary:
    \begin{equation}\label{Brealim}
        (\cB_{\mu,\e} f_k^\sigma , f_{k'}^\sigma) \in \R,\qquad (\cB_{\mu,\e} f_k^\sigma , f_{k'}^{-\sigma})\in \im\R \ .
    \end{equation}
\end{lemma}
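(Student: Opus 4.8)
The plan is to split the statement into two logically independent claims and prove each separately: first that $\cL_{\mu,\e}|_{\cV_{\mu,\e}}$ is represented by the product $\tJ_\mu \tB_{\mu,\e}$ in the given basis, and second that the entries of $\tB_{\mu,\e}$ obey the alternating reality pattern \eqref{Brealim}. For the matrix representation in the case $\mu \neq 0$, I would start from the invariance of $\cV_{\mu,\e}$ under $\cL_{\mu,\e}$ established in Lemma \ref{lem:Kato1}: for any $f = \alpha_1^+ f_1^+ + \alpha_1^- f_1^- + \alpha_0 f_0 \in \cV_{\mu,\e}$ the image $\cL_{\mu,\e} f$ lies again in $\cV_{\mu,\e}$, so it can be decomposed by Lemma \ref{jdecomposition}(i). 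The key algebraic identity is $\cE_\mu \cL_{\mu,\e} = \cE_\mu \cJ_\mu \cB_{\mu,\e} = \cB_{\mu,\e}$, valid because $\cE_\mu = \cJ_\mu^{-1}$ for $\mu \neq 0$. Hence $(\cE_\mu \cL_{\mu,\e} f, f_k^\sigma) = (\cB_{\mu,\e} f, f_k^\sigma)$, and expanding $f$ by linearity of $\cB_{\mu,\e}$ shows that the column of pairings $\big((\cE_\mu \cL_{\mu,\e} f, f_1^+), (\cE_\mu \cL_{\mu,\e} f, f_1^-), (\cE_\mu \cL_{\mu,\e} f, f_0)\big)^\top$ equals $\tB_{\mu,\e}(\alpha_1^+, \alpha_1^-, \alpha_0)^\top$, with $\tB_{\mu,\e}$ exactly as in \eqref{tJtB}. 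Left-multiplying by the matrix appearing in \eqref{coeff.basis}, which is precisely $\tJ_\mu$, returns the coefficients of $\cL_{\mu,\e} f$ and yields $\tL_{\mu,\e} = \tJ_\mu \tB_{\mu,\e}$.

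For $\mu = 0$ I would instead invoke Lemma \ref{jdecomposition}(ii). Setting $h := \cL_{0,\e} f = \cJ_0 \cB_{0,\e} f$, one first notes that $h$ is a total $x$-derivative, hence has zero mean and lies in $L^2_0(\T)$; pairing with the constant $f_0$ and using $\cJ_0 f_0 = 0$ gives $(h, f_0) = 0$, so $\breve h = h$. On $L^2_0(\T)$ the operator $\cE_0 \cJ_0$ acts as the identity up to subtraction of the mean, and since $f_1^\pm \in L^2_0(\T)$ are orthogonal to constants we get $(\cE_0 \breve h, f_1^\pm) = (\cB_{0,\e} f, f_1^\pm)$. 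Substituting into the decomposition of Lemma \ref{jdecomposition}(ii) reproduces exactly the columns of $\tJ_0 \tB_{0,\e}$, where $\tJ_0$ is the $\mu=0$ specialization of \eqref{tJtB}.

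For the reality pattern I would rely on the elementary identity $(\bar\rho u, \bar\rho v) = \overline{(u,v)}$ for the complex product \eqref{c.sp}, which follows from the definition \eqref{barrho} of $\bar\rho$ via the change of variables $x \mapsto -x$. Writing $\bar\rho f_1^+ = f_1^+$, $\bar\rho f_1^- = -f_1^-$, $\bar\rho f_0 = f_0$ from Definition \ref{def:symp}(ii), and using that $\cB_{\mu,\e}$ is reversibility preserving \eqref{B.rev} so that $\bar\rho\,\cB_{\mu,\e} f_k^\sigma = s_k^\sigma\,\cB_{\mu,\e} f_k^\sigma$ with sign $s_k^\sigma \in \{\pm 1\}$, one computes $\overline{(\cB_{\mu,\e} f_k^\sigma, f_{k'}^{\sigma'})} = (\bar\rho\,\cB_{\mu,\e} f_k^\sigma, \bar\rho f_{k'}^{\sigma'}) = s_k^\sigma s_{k'}^{\sigma'}\,(\cB_{\mu,\e} f_k^\sigma, f_{k'}^{\sigma'})$. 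Thus entries pairing vectors of equal $\bar\rho$-parity are real, while those pairing opposite parities are purely imaginary, which is precisely \eqref{Brealim}.

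The main obstacle I anticipate is the $\mu = 0$ case: because $\cJ_0$ is not invertible, the clean identity $\cE_\mu\cJ_\mu = \uno$ fails, and one must track the constant (zero-Fourier-mode) component carefully, namely verifying that $\cL_{0,\e}f$ has zero mean, that its $f_0$-coefficient vanishes, and that the mean subtracted by $\cE_0\cJ_0$ is annihilated upon pairing with the zero-mean vectors $f_1^\pm$. Once this bookkeeping around $\ker \cJ_0$ is settled, everything else reduces to linear algebra and the short reversibility computation, both routine.
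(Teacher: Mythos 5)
Your proposal is correct and follows essentially the same route as the paper: for $\mu\neq 0$ it combines Lemma \ref{jdecomposition}(i) with $\cE_\mu\cL_{\mu,\e}=\cB_{\mu,\e}$, for $\mu=0$ it uses Lemma \ref{jdecomposition}(ii) after checking that the $f_0$-component of $\cL_{0,\e}f$ vanishes (your mean-free argument is the same integration-by-parts fact the paper states as $(\cL_{0,\e}f,f_0)=-(\cB_{0,\e}f,\cJ_0 f_0)=0$), and the reality pattern follows from $(\bar\rho u,\bar\rho v)=\overline{(u,v)}$ together with \eqref{B.rev} exactly as in the paper. The bookkeeping around $\ker\cJ_0$ that you flag is handled correctly.
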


\begin{proof}
We first consider  $\mu \neq 0$. By Lemma 
  \ref{jdecomposition} $(i)$ and using that $\cE_\mu\cL_{\mu,\e} = \cB_{\mu,\e}$
  we have
  $\cL_{\mu,\e} f=\alpha_1^+ f_1^+ + \alpha_1^- f_1^- + \alpha_0 f_0$
  with 
    \begin{equation}
        \begin{pmatrix}
            \alpha_1^+ \\
            \alpha_1^- \\
            \alpha_0
        \end{pmatrix}
        =
        \begin{pmatrix}
            \im\mu & 1 & 0 \\
            -1 & \im\mu & 0 \\
            0 & 0 & \im\mu 
        \end{pmatrix}
        \cdot 
        \begin{pmatrix}
            (\cB_{\mu, \e} f, f_1^+) \\
            (\cB_{\mu,\e} f, f_1^-) \\
            (\cB_{\mu,\e} f, f_0) 
        \end{pmatrix} \ , 
    \end{equation}
hence the matrix representing the action $\cL_{\mu,\e} \colon \cV_{\mu,\e} \to \cV_{\mu,\e}$ is given by \eqref{tL}--\eqref{tJtB}.
    
Now let $\mu = 0$. Recall that by definition of $0$-symplectic basis, $f_0$ is a constant function. Then $(\cL_{0,\e}f, f_0) = - (\cB_{0,\e} f, \cJ_0 f_0) =  0$, hence 
 Lemma \ref{jdecomposition} gives 
    $\cL_{0,\e} f=\alpha_1^+ f_1^+ + \alpha_1^- f_1^- $
  with 
    \begin{equation}
        \begin{pmatrix}
            \alpha_1^+ \\
            \alpha_1^- 
        \end{pmatrix}
        =
        \begin{pmatrix}
            0 & 1  \\
            -1 & 0 
        \end{pmatrix}
        \cdot 
        \begin{pmatrix}
            (\cB_{0, \e} f, f_1^+) \\
            (\cB_{0,\e} f, f_1^-) 
        \end{pmatrix} \ , 
    \end{equation}
  hence the matrix representing the action $\cL_{0,\e} \colon \cV_{0,\e} \to \cV_{0,\e}$ is given by \eqref{tL}--\eqref{tJtB} with $\mu = 0$.
    
    To prove \eqref{Brealim} we observe that  $(f, g) = \overline{(\bar \rho f, \bar \rho g)}$ for any $f,g \in L^2(\T)$. Then by \eqref{B.rev} and the properties of  reversible basis
    \begin{equation}
 (\cB_{\mu,\e} f_k^\sigma , f_{k'}^{\sigma'}) = 
 \overline{( \bar\rho \cB_{\mu,\e}  f_k^\sigma , \bar\rho f_{k'}^{\sigma'})} = 
       \overline{     ( \cB_{\mu,\e} \bar \rho f_k^\sigma , \bar \rho f_{k'}^{\sigma'})} = 
       \sigma\sigma' \overline{     ( \cB_{\mu,\e}  f_k^\sigma , f_{k'}^{\sigma'})} \ , 
    \end{equation}
    proving  \eqref{Brealim}.
    \end{proof}

    It will be important in the following to act with coordinate transformations that preserve the symplectic and reversible structure of $\tL_{\mu,\e}$. This necessity inspires the following definition:
\begin{definition}[$\mu$-symplectic and reversible matrix]\label{def:sympm}
A $3\times 3$ matrix $\tL = \tJ \tB$ is 
\begin{itemize}
    \item {\em complex Hamiltonian}, if $\tB$ is self-adjoint and $\tJ$ is skew-adjoint with respect to the standard scalar product of $\C^3$;
    \item {\em reversible}, if 
$\tL\circ \bar\rho = -\bar\rho \circ \tL$, where     
    $$
    \bar\rho = \begin{pmatrix}
        \mathfrak{c} & 0 & 0\\
        0 & -\mathfrak{c} & 0 \\
        0 & 0 & \mathfrak{c}
    \end{pmatrix}
    $$
and  $\mathfrak{c}: z \mapsto \bar z$ is the  conjugation in the complex plane. 
\end{itemize}
Moreover, we say that an invertible matrix $Y$ is $\tJ$-{\em symplectic} if 
$    Y \tJ  Y^* = \tJ$.
\end{definition}
In our context we will  have that $\tJ$ is reversible and $\tB$ is reversibility preserving, i.e. 
$ \tB\circ \bar\rho = \bar\rho \circ \tB$.

\begin{lemma}\label{expontentiallemma}
    Let $\Sigma$ be a self-adjoint and reversible matrix and $\tJ$ be a skew-adjoint and reversible matrix.  Then $\forall \tau\in\R$, $\exp{(\tau \tJ \Sigma)}$ is $\tJ$-symplectic and reversibility preserving.
\end{lemma}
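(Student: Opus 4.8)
The plan is to treat the two claims separately, since the $\tJ$-symplectic property relies only on the adjoint relations $\tJ^*=-\tJ$ and $\Sigma^*=\Sigma$, while the reversibility-preserving property relies only on the two reversibility relations together with the antilinearity of $\bar\rho$.

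For the symplectic claim I would set $A:=\tJ\Sigma$, so that $Y:=\exp(\tau A)$ and, using $(\exp B)^*=\exp(B^*)$ for matrices, $Y^*=\exp(\tau A^*)$ with $A^*=\Sigma^*\tJ^*=-\Sigma\tJ$. The key algebraic identity is the Hamiltonian relation $A\tJ=-\tJ A^*$, which follows at once since $\tJ A^*=-\tJ\Sigma\tJ=-A\tJ$. I would first note that $Y$ is invertible, with inverse $\exp(-\tau A)$, so it is an admissible candidate for Definition \ref{def:sympm}. Then I would consider the matrix-valued function $f(\tau):=Y\tJ Y^*=\exp(\tau A)\,\tJ\,\exp(\tau A^*)$ and differentiate, obtaining $f'(\tau)=A\exp(\tau A)\tJ\exp(\tau A^*)+\exp(\tau A)\tJ A^*\exp(\tau A^*)$. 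Substituting $\tJ A^*=-A\tJ$ into the second summand and using that $A$ commutes with $\exp(\tau A)$ shows the two terms cancel, so $f'\equiv 0$. Since $f(0)=\tJ$, this gives $Y\tJ Y^*=\tJ$ for all $\tau\in\R$, which is exactly the $\tJ$-symplectic condition.

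For the reversibility-preserving claim I would first show that $M:=\tJ\Sigma$ is reversibility preserving, i.e. $M\bar\rho=\bar\rho M$: this is the statement that the product of two reversible operators is reversibility preserving, and using $\tJ\bar\rho=-\bar\rho\tJ$ and $\Sigma\bar\rho=-\bar\rho\Sigma$ one computes $\tJ\Sigma\bar\rho=-\tJ\bar\rho\Sigma=\bar\rho\tJ\Sigma$. By induction $M^n\bar\rho=\bar\rho M^n$ for every $n$. Then I would pass to the exponential through its power series, keeping track of the antilinearity of $\bar\rho$: for the partial sums $S_N:=\sum_{n=0}^N\frac{\tau^n}{n!}M^n$ one has $S_N\bar\rho=\sum_n\frac{\tau^n}{n!}\bar\rho M^n$, while antilinearity together with $\tau\in\R$ gives $\bar\rho S_N=\sum_n\overline{(\tau^n/n!)}\,\bar\rho M^n=\sum_n\frac{\tau^n}{n!}\bar\rho M^n$; hence $S_N\bar\rho=\bar\rho S_N$, and letting $N\to\infty$ yields $\exp(\tau M)\bar\rho=\bar\rho\exp(\tau M)$.

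The computations are short, so I do not expect a serious obstacle; the only point requiring care is the antilinearity of $\bar\rho$ in the second part, where one must check that the real scalars $\tau^n/n!$ are unaffected by conjugation — which is precisely why the statement restricts to real $\tau$. I would also emphasize that the symplectic part uses no reversibility, and the reversible part uses no adjointness, so the hypotheses on $\Sigma$ and $\tJ$ split cleanly between the two conclusions.
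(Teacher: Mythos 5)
Your proposal is correct and follows essentially the same route as the paper: the $\tJ$-symplectic property is obtained by differentiating $\tau\mapsto\exp(\tau\tJ\Sigma)\,\tJ\,\exp(\tau\tJ\Sigma)^*$ and using $\tJ\Sigma^*\tJ^*=-\tJ\Sigma\tJ$, and the reversibility-preserving property by noting that $\tJ\Sigma$ is reversibility preserving and passing to the power series of the exponential. Your extra remark on the antilinearity of $\bar\rho$ and the reality of $\tau^n/n!$ is a small point of care that the paper leaves implicit.
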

\begin{proof}
    Let $\varphi (\tau) = \exp{(\tau \tJ \Sigma)}$.
     Let us prove that $\psi(\tau):=\varphi (\tau) \tJ \varphi (\tau)^*-\tJ =0 $. Indeed,  $\psi(0)=0$ and 
    $$
    \frac{\de}{\de\tau}\psi(\tau) = \varphi (\tau) (\tJ \Sigma \tJ + \tJ \Sigma^* \tJ^* ) \varphi (\tau) = 0.
    $$
    To show the reversibility preserving property, let us observe that $\tJ \Sigma$ is reversibility preserving since both $\tJ$ and $\Sigma$ are reversible.
    Then $\exp{(\tau \tJ \Sigma)} = \sum_{n\geq 0 } \frac{\tau^n}{n!}(\tJ \Sigma)^n
    $  is reversibility preserving as well.
\end{proof}

\section{Matrix representation of $\cL_{\mu,\e}$ on $\cV_{\mu,\e}$}\label{matrixrapsection}
In this section we use the transformation operators $U_{\mu,\e}$ obtained in the previous section to 
construct a symplectic and reversible basis of $\cV_{\mu,\e}$, and we compute in Proposition \ref{matrixrepprop} the $3 \times 3$ 
Hamiltonian and reversible matrix representing the action of $\cL_{\mu,\e}\colon \cV_{\mu,\e} \to \cV_{\mu,\e}$ on such basis.
The symplectic and reversible basis of $\cV_{\mu,\e}$  we  choose is 
$\tF=\{ f_1^+(\mu,\e),f_1^-(\mu,\e),f_0(\mu,\e) \}$, with 
\begin{equation}\label{fksigma}
f_k^\sigma (\mu,\e) := U_{\mu,\e}f_k^\sigma, \qquad 
f_1^+ = \cos x, \ \ f_1^- = \sin x, \ \ f_0^+ =\frac{1}{\sqrt 2}  \ .
\end{equation}

\begin{lemma}\label{lem:fks}
    The  Kato basis $\{f_1^+ (\mu,\e), f_1^- (\mu,\e), f_0 (\mu,\e) \}$ in \eqref{fksigma} is $\mu$-symplectic  $\forall (\mu, \e) \in B(\mu_0)\times B(\e_0)$ and  reversible, according to Definition \ref{def:symp}. Each map $(\mu,\e)\mapsto f_k^\sigma(\mu,\e)$ is $\cC^\reg$ as a map $B(\mu_0)\times B(\e_0) \to H^{m_\star}(\T)$.
\end{lemma}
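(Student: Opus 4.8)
The plan is to transport all three properties from the explicit starting basis $\{\cos x,\sin x,\tfrac{1}{\sqrt2}\}$ of $\cV_{0,0}$ through the Kato operator $U_{\mu,\e}$, exploiting the structural properties of $U_{\mu,\e}$ established in Lemma \ref{lemmanonzero}. Linear independence and the basis property come for free: since $U_{\mu,\e}\colon\cV_{0,0}\to\cV_{\mu,\e}$ is an isomorphism (Lemma \ref{lem:Kato1}, point 5), the images of three independent vectors form a basis of $\cV_{\mu,\e}$. The $\cC^\reg$ regularity of each $(\mu,\e)\mapsto f_k^\sigma(\mu,\e)=U_{\mu,\e}f_k^\sigma$ as a map into $H^{m_\star}(\T)$ follows at once from the $\cC^\reg$ dependence of $U_{\mu,\e}\in\cL(Y)$ (Lemma \ref{lem:Kato1}, point 4), evaluated on the fixed smooth vectors $f_k^\sigma$.

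For reversibility I would simply commute $\bar\rho$ through $U_{\mu,\e}$. Since $U_{\mu,\e}$ is reversibility preserving, $\bar\rho\,U_{\mu,\e}=U_{\mu,\e}\,\bar\rho$ (Lemma \ref{lemmanonzero}(ii)), and the starting vectors satisfy $\bar\rho\cos x=\cos x$, $\bar\rho\sin x=-\sin x$, $\bar\rho\tfrac{1}{\sqrt2}=\tfrac{1}{\sqrt2}$, applying $U_{\mu,\e}$ yields exactly the reversibility conditions of Definition \ref{def:symp}(ii) for the transported basis.

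The symplectic identities are the heart of the matter. For $\mu\neq0$ I would use the symplecticity $U_{\mu,\e}^*\cE_\mu U_{\mu,\e}=\cE_\mu$ (Lemma \ref{lemmanonzero}(ii)) to compute, for every choice of indices,
\[
(\cE_\mu f_k^\sigma(\mu,\e),f_{k'}^{\sigma'}(\mu,\e))=(U_{\mu,\e}^*\cE_\mu U_{\mu,\e}f_k^\sigma,f_{k'}^{\sigma'})=(\cE_\mu f_k^\sigma,f_{k'}^{\sigma'}),
\]
so $U_{\mu,\e}$ leaves all symplectic pairings invariant. It therefore suffices to verify \eqref{Emubasis} for the fixed basis $\{\cos x,\sin x,\tfrac{1}{\sqrt2}\}$ — precisely the Remark after Definition \ref{def:symp} — by a direct Fourier computation using $\cE_\mu e^{\im jx}=\tfrac{1}{\im(j+\mu)}e^{\im jx}$ and $(e^{\im jx},e^{\im kx})=2\delta_{jk}$; for instance $(\cE_\mu\cos x,\cos x)=\tfrac{\im\mu}{1-\mu^2}$, and likewise for the remaining entries.

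The case $\mu=0$ needs extra care, because $\cE_0$ is only defined on $L^2_0(\T)$ and the normalization \eqref{E0basis} has a different shape. I would first verify the structural membership: from $U_{0,\e}[1]=1$ and the invariance of $L^2_0(\T)$ under $U_{0,\e}$ (Lemma \ref{lemmanonzero}(ii)) one gets $f_0(0,\e)\in\textup{span}\la1\ra=\ker\cJ_0$ and $f_1^\pm(0,\e)\in L^2_0(\T)$. Then the second identity in \eqref{Usymp}, valid on $L^2_0(\T)$, again preserves the pairings of $f_1^\pm$, reducing \eqref{E0basis} to the computation $\cE_0\cos x=\sin x$, $\cE_0\sin x=-\cos x$, whence $(\cE_0\cos x,\sin x)=1$ and $(\cE_0\cos x,\cos x)=(\cE_0\sin x,\sin x)=0$. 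I expect no genuine obstacle once Lemma \ref{lemmanonzero} is in hand; the only delicate point is this $\mu=0$ bookkeeping, where $\cE_0$ lives only on the mean-zero subspace, so one must place the transported vectors in the correct subspaces before forming the $\cE_0$-pairings.
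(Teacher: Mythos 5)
Your proposal is correct and follows essentially the same route as the paper: the paper's proof is simply the observation that $\{\cos x,\sin x,\tfrac{1}{\sqrt2}\}$ is $\mu$-symplectic for every $\mu$ and reversible (the Remark after Definition \ref{def:symp}), combined with the fact that $U_{\mu,\e}$ is symplectic and reversibility preserving by Lemma \ref{lemmanonzero}. You merely spell out the details the paper leaves implicit, including the careful $\mu=0$ bookkeeping on $L^2_0(\T)$ and the regularity via point 4 of Lemma \ref{lem:Kato1}, all of which is accurate.
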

\begin{proof}
  The basis $\{ f_1^\pm, f_0 \}$ is $\mu$-symplectic $\forall \mu$ and reversible. Then the result follows from Lemma \ref{lemmanonzero}.
\end{proof}

In the next lemma we expand the vectors $f_k^\sigma(\mu,\e)$ in $\mu,\e$. 
\begin{lemma}[Expansion of the basis $\tF$] \label{totalbasisexpansion}
For small values of $\mu,\e$ the vectors $f_k^\sigma(\mu,\e)$ in \eqref{fksigma} expand as
    \begin{equation}\label{basis.exp}
        \begin{aligned}
            & f_1^+ (\mu, \e) = \cos x + \e \, \fa \cos (2x) -\im \mu \e \, \fb \sin (2x)+\e^2 even_0(x)  +\cO^\reg (\mu^2\e,\mu\e^2, \e^3) \\
            & f_1^- (\mu, \e) = \sin x + \e \, \fa \sin (2x) + \im \mu\e \, \fb \cos (2x) + \e^2 odd(x)  + \cO^\reg(\mu^2\e,\mu\e^2,\e^3) \\
&f_0^+ (\mu,\e) = \frac{1}{\sqrt 2} + \cO^\reg(\mu^2\e,\mu\e^2)
        \end{aligned}
    \end{equation}
    with $\fb$ a real number and 
\begin{equation}\label{fa}
\fa:= \frac{1}{\fm(1) - \fm(2)} \  , 
\end{equation}
and  $even_0(x)$ is a real valued,  even function with zero average, and the $\cO(\cdot)$s are vectors in $H^{m_\star}(\T)$. 
Moreover, for any $|\e| \leq \e_0$, $f_0^+(0,\e) = f_0^+$.

Finally, one has the expansions 
\begin{equation}\label{basis.der}
\dot f_1^- (0,\e) = \im \e \fb \cos (2x)+\cO(\e^2) , \quad  
 \dot f_0(0,\e) = \cO(\e^2) \ , \quad 
   \ddot f_k^\sigma(0,\e) = \cO(\e) \ .
   \end{equation}
\end{lemma}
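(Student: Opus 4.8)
The plan is to read off \eqref{basis.exp} as the second--order Taylor expansion in $(\mu,\e)$ of the $\cC^\reg$ map $(\mu,\e)\mapsto U_{\mu,\e}$ of Lemma \ref{lem:Kato1}, evaluated on the fixed vectors $f_1^+=\cos x$, $f_1^-=\sin x$, $f_0=\tfrac{1}{\sqrt2}$. The decisive preliminary observation is that $U_{\mu,0}=\uno$ for every small $\mu$. Indeed at $\e=0$ the operator $\cL_{\mu,0}$ is the Fourier multiplier \eqref{omega_nmu}, whose eigenfunctions $\{e^{\im jx}\}_{j\in\Z}$ are orthonormal for the scalar product \eqref{c.sp}; hence the spectral projector onto the near--zero cluster $j\in\{-1,0,1\}$ is the orthogonal projection onto $\mathrm{span}\{e^{-\im x},1,e^{\im x}\}$, which is independent of $\mu$. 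Thus $P_{\mu,0}=P_{0,0}$ and, by \eqref{OperatorU}, $U_{\mu,0}=\uno$. Consequently $f_k^\sigma(\mu,0)=f_k^\sigma$, every term of $f_k^\sigma(\mu,\e)-f_k^\sigma$ carries at least one power of $\e$, and Taylor's theorem with $\cC^\reg$ remainder produces precisely an error of type $\cO^\reg(\mu^2\e,\mu\e^2,\e^3)$ (no pure power of $\mu$ survives). In particular $\dot P_{0,0}=\pa_\mu P_{\mu,0}|_{\mu=0}=0$, a fact used repeatedly below.

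Next I identify the three surviving coefficients $\pa_\e U_{0,0}$, $\pa_\mu\pa_\e U_{0,0}$, $\pa_\e^2 U_{0,0}$ acting on the basis. Expanding \eqref{OperatorU} to first order in $Q:=P_{\mu,\e}-P_{0,0}$ gives $U_{\mu,\e}=\uno+Q(2P_{0,0}-\uno)$ up to terms quadratic in $Q$; since $f_k^\sigma\in\mathrm{Rg}\,P_{0,0}$ one has $(2P_{0,0}-\uno)f_k^\sigma=f_k^\sigma$, so $\pa_\e U_{0,0}f_k^\sigma=(\pa_\e P_{0,0})f_k^\sigma$. From \eqref{Bmue} and Theorem \ref{existPTW} ($u_\e=\e\cos x+\cO(\e^2)$, $c_\e=\fm(1)+\cO(\e^2)$) one gets $\pa_\e\cB_{0,0}=-2\cos x$, whence $\pa_\e\cL_{0,0}=\cJ_0\circ(-2\cos x)$. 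Inserting this into $\pa_\e P_{0,0}=\frac{1}{2\pi\im}\oint_\Gamma R_0(\lambda)\,\pa_\e\cL_{0,0}\,R_0(\lambda)\,\di\lambda$ with $R_0(\lambda)=(\cL_{0,0}-\lambda)^{-1}$, and using that $\cL_{0,0}$ annihilates $1,e^{\pm\im x}$ while $R_0(\lambda)e^{\pm2\im x}=(\im\omega_{\pm2,0}-\lambda)^{-1}e^{\pm2\im x}$ with $\omega_{\pm2,0}=\pm2(\fm(1)-\fm(2))$ lying outside $\Gamma$, a residue at $\lambda=0$ yields $\pa_\e P_{0,0}\cos x=\fa\cos2x$ and $\pa_\e P_{0,0}\sin x=\fa\sin2x$ with $\fa$ as in \eqref{fa}. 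The same computation on the constant gives $\pa_\e P_{0,0}[1]=0$, the integrand having a double pole at $\lambda=0$ with vanishing residue; this removes the $\e$--term of $f_0$.

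The remaining coefficients are fixed by the symmetries of Section \ref{sec:Kato}. Since $U_{0,\e}$ is real (Lemma \ref{lemmanonzero}(iii)) and leaves $L^2_0(\T)$ invariant with $U_{0,\e}[1]=1$ (Lemma \ref{lemmanonzero}(ii)), the vectors $f_1^\pm(0,\e)=U_{0,\e}f_1^\pm$ are real, mean--zero and, being reversible, respectively even and odd by Lemma \ref{paritybasis}; hence their $\e^2$--coefficients are $even_0(x)$ (real, even, zero average) and $odd(x)$, while $f_0(0,\e)=\tfrac{1}{\sqrt2}U_{0,\e}[1]=\tfrac{1}{\sqrt2}$ kills all pure--$\e$ terms of $f_0$. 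For the mixed coefficient I use $\dot P_{0,0}=0$: the quadratic--in--$Q$ part of $U$ then contributes nothing to $\pa_\mu\pa_\e U_{0,0}$, leaving $\pa_\mu\pa_\e U_{0,0}f_k^\sigma=(\pa_\mu\pa_\e P_{0,0})f_k^\sigma$. Since $\dot U_{0,\e}$ is purely imaginary (Lemma \ref{lemmanonzero}(iv)), this vector is purely imaginary; combined with the parity of $f_1^\pm$ and the explicit residue computation of $\pa_\mu\pa_\e P_{0,0}$ on $\cos x,\sin x$ (the $\mu$--derivative of the first--order formula) it reduces to $-\im\fb\sin2x$ and $+\im\fb\cos2x$ with a common real constant $\fb$. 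Applying the same argument to the constant gives $\pa_\mu\pa_\e P_{0,0}[1]=0$: each scalar integrand factors as $(\im\omega_{0,\mu}-\lambda)^{-1}(\im\omega_{\pm1,\mu}-\lambda)^{-1}$ with both poles inside $\Gamma$, whose residues cancel, so that $\pa_\e P_{\mu,\e}|_{\e=0}[1]\equiv0$ and the $\mu\e$--term of $f_0$ vanishes, yielding $f_0(\mu,\e)=\tfrac1{\sqrt2}+\cO^\reg(\mu^2\e,\mu\e^2)$. Finally \eqref{basis.der} follows by differentiating \eqref{basis.exp}: $\pa_\mu|_{\mu=0}$ selects the $\mu^1$--coefficient, giving $\dot f_1^-(0,\e)=\im\e\fb\cos2x+\cO(\e^2)$ and $\dot f_0(0,\e)=\cO(\e^2)$, while $\pa_\mu^2|_{\mu=0}$ hits only the $\mu^2\e$ remainder, giving $\ddot f_k^\sigma(0,\e)=\cO(\e)$.

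The main obstacle is twofold. First, as the dependence on $\mu$ is only $\cC^\reg$ (not analytic), the remainders must be tracked quantitatively: the errors in \eqref{basis.exp} are genuine $\cO^\reg$ functions, and the bounds in \eqref{basis.der} require that a $\cO^\reg(\mu^2\e)$ remainder have $\mu$--derivative $\cO(\mu\e)$ and second $\mu$--derivative $\cO(\e)$ at $\mu=0$ --- exactly the ``extra care with the size of the remainders'' flagged in the introduction, which needs $\reg\ge3$. Second, the honest evaluation of $\pa_\mu\pa_\e P_{0,0}$ on $\cos x$ and $\sin x$ --- showing that only the second harmonic survives, that the result is purely imaginary, and that the two constants coincide into a single real $\fb$ --- is the one genuinely computational point; everything else is forced by $U_{\mu,0}=\uno$, the reality/parity/$L^2_0$ structure, and the residue cancellations above.
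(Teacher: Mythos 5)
Your proposal follows essentially the same route as the paper's Appendix \ref{appendixA}: Taylor-expand $U_{\mu,\e}$ on the unperturbed basis, use $P_{\mu,0}\equiv P_{0,0}$ (the paper's Lemma \ref{nopuremu}) to kill all pure powers of $\mu$, compute the first jet $P_{0,0}'f_k^\sigma$ by residues (Lemma \ref{lem:firstjet}), and extract the structure of the $\e^2$ and $\mu\e$ coefficients from reality, parity and $L^2_0$-invariance of $U_{0,\e}$. Two of your shortcuts are correct and arguably cleaner than the paper's: observing $U_{\mu,0}=\uno$ directly (so that $\dot P_{0,0}=0$ and the correction $-\tfrac12 P_{0,0}\dot P_{0,0}'$ in the paper's formula \eqref{Umix} drops out automatically, rather than being shown to vanish via $\dot P_{0,0}'\cV_{0,0}\subseteq\cW$), and killing the $\mu\e$ term of $f_0$ by the cancellation of residues at the two simple poles $\im\omega_{0,\mu},\im\omega_{\pm1,\mu}$ for every $\mu$, where the paper computes $\mathrm{I}_0^+,\mathrm{II}_0^+,\mathrm{III}_0^+$ separately at $\mu=0$.

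The one substantive omission is the claim that the $\mu\e$ coefficients of $f_1^+$ and $f_1^-$ are $-\im\fb\sin(2x)$ and $+\im\fb\cos(2x)$ with a \emph{common} real constant $\fb$. Reality, parity and pure-imaginarity only force the form $\im\,odd(x)$ resp. $\im\,even(x)$ supported (after the mean-zero argument) on harmonics; they do not by themselves confine the output to the second harmonic nor identify the two constants, since $\cos x$ and $\sin x$ are not exchanged by any symmetry of $\cL_{0,\e}$. This is exactly the content of the paper's Lemma \ref{lem:secondjet}, which evaluates the three contour integrals in \eqref{Pmisto} and finds $\dot P_{0,0}'f_1^\sigma=-\sigma\im\frac{\dot\fm(1)-2\dot\fm(2)}{2(\fm(1)-\fm(2))^2}f_2^{-\sigma}$; you flag this as "the one genuinely computational point" but do not carry it out, so your argument is a correct skeleton with that computation still owed. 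Finally, for \eqref{basis.der} the paper does not differentiate the $\cO^\reg$ remainder of \eqref{basis.exp} but instead Taylor-expands $\dot f_k^\sigma(0,\e)=\dot U_{0,0}f_k^\sigma+\e\,\dot U_{0,0}'f_k^\sigma+\cO(\e^2)$ directly in $\e$, using $\dot U_{0,0}f_k^\sigma=\ddot U_{0,0}f_k^\sigma=0$; your route is salvageable because the remainder is a genuine Taylor remainder of a $\cC^{\reg}$ map with $\reg\ge3$, but as written it leans on derivative bounds that the bare $\cO^\reg$ notation does not provide.
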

\begin{proof}
    The long computations are left in Appendix \ref{appendixA}.
\end{proof}
We now state the main result of this section:
\begin{proposition}\label{matrixrepprop}
The action of the Hamiltonian and reversible operator $\cL_{\mu,\e}$ in the symplectic and reversible basis $\{  f_1^+(\mu,\e),f_1^-(\mu,\e),f_0^+(\mu,\e) \}$ of $\cV_{\mu,\e}$, defined  in 
    \eqref{fksigma}, is represented by the $3\times 3$ Hamiltonian and reversible matrix 
    \begin{equation}\label{tLmue}
        \tL_{\mu,\e} = \tJ_\mu \tB_{\mu,\e},
    \end{equation}
    where $\tJ_\mu$ is in \eqref{tJtB} and  $\tB_{\mu,\e}$ is the self-adjoint and reversibility preserving  $3\times 3$-matrix
    \begin{equation}\label{matrixrapresentationeq}
        \tB_{\mu,\e}=\begin{pmatrix}
            E & \rvline &  \tf \\
            \hline 
            \tf^\dag & \rvline & g
        \end{pmatrix} \ .
    \end{equation}
Here $E$ is the $2\times 2$ selfadjoint matrix 
    \begin{equation}\label{E}
            E = 
             \begin{pmatrix}
                 -\fa \e^2(1+r_1'(\e)) + \te_{22}\mu^2 (1 + r_1'' (\e,\mu)) & \im \mu (\te_{12}  + r_2(\e^2, \mu\e,\mu^2)) \\
                -\im \mu (\te_{12}  + r_2(\e^2, \mu\e,\mu^2)) & \te_{22}\mu^2 (1+ r_4(\e,\mu)) 
            \end{pmatrix}\ , 
            \end{equation}
 the vector $\tf$ and the number  $g$ are given by 
 \begin{equation}\label{tf}
    \tf = 
            \begin{pmatrix}
            -\sqrt 2 \e  \left( 1  + r_3 (\e^2,\mu^2) \right)  \\
            \im \mu \e \,  r_5(\e,\mu)\\
            \end{pmatrix},\qquad
            g  = \te_{33} + \frac{ \fa}{2} \e^2+\tg_{33}\mu^2 + r_6 (\e^3, \mu^2\e,\mu^3) \ ,
\end{equation}
with  $\fa$ in \eqref{fa} and
    \begin{equation}\label{te}
\te_{22} := -\frac12 \ddot \fm (1), \qquad \te_{12}:= \dot \fm(1), \qquad 
 \te_{33} := \fm(1)-\fm(0),  \qquad \tg_{33} := -\frac12 \ddot \fm (0)   \, .
    \end{equation}
    The reminders $r_k$ are functions of class  $\cC^{\reg-2}$.
\end{proposition}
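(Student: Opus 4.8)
The plan is to read off almost everything from Lemma \ref{matrixraplemma}, which already guarantees that in any $\mu$-symplectic and reversible basis the operator $\cL_{\mu,\e}$ is represented by $\tJ_\mu\tB_{\mu,\e}$, with $\tJ_\mu$ the explicit matrix in \eqref{tJtB} and $\tB_{\mu,\e}$ the Gram-type matrix with entries $(\cB_{\mu,\e}f_k^\sigma, f_{k'}^{\sigma'})$. Moreover, the relation \eqref{Brealim} forces these entries to be real when $\sigma=\sigma'$ and purely imaginary when $\sigma\neq\sigma'$, which fixes a priori the block structure \eqref{matrixrapresentationeq}: the three ``diagonal'' quantities $(\cB_{\mu,\e}f_1^+,f_1^+)$, $(\cB_{\mu,\e}f_1^-,f_1^-)$, $g=(\cB_{\mu,\e}f_0,f_0)$ together with $(\cB_{\mu,\e}f_0,f_1^+)$ are real, while $(\cB_{\mu,\e}f_1^-,f_1^+)$ and $(\cB_{\mu,\e}f_0,f_1^-)$ are purely imaginary. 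Thus the whole proposition reduces to computing these six scalar quantities as functions of $(\mu,\e)$; the Hamiltonian and reversible structure is then automatic.

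For the actual computation I would write $\cB_{\mu,\e}=(c_\e-2u_\e)-\cM(D+\mu)$ from \eqref{Bmue} and evaluate each entry by inserting the basis expansions \eqref{basis.exp} and \eqref{basis.der} of Lemma \ref{totalbasisexpansion} together with the expansions of $c_\e$ and $u_\e$ from Theorem \ref{existPTW}. Since $\cM(D+\mu)$ acts diagonally on the Fourier basis as multiplication by $\fm(j+\mu)$, the only analytic input needed is the second-order Taylor expansion of $\fm$ at the integers $j=0,\pm1,\pm2$; using evenness of $\fm$ (so that $\dot\fm(0)=0$ and $\fm(-1+\mu)=\fm(1-\mu)$) one organizes everything into the combinations $\fm(1+\mu)\pm\fm(1-\mu)$, whose Taylor coefficients are precisely $\te_{12}=\dot\fm(1)$ and $\te_{22}=-\tfrac12\ddot\fm(1)$, while the mode $j=0$ produces $\te_{33}=\fm(1)-\fm(0)$ and $\tg_{33}=-\tfrac12\ddot\fm(0)$, as in \eqref{te}. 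Because $\fm\in\cC^\reg$, stopping the Taylor expansion at second order costs exactly two derivatives, which is why all remainders land in $\cC^{\reg-2}$.

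Proceeding entry by entry, the off-diagonal $(\cB_{\mu,\e}f_1^+,f_1^-)$ at $\e=0$ picks out $\tfrac{\im}{2}\big(\fm(1+\mu)-\fm(1-\mu)\big)=\im\te_{12}\mu+\cO(\mu^3)$, giving the leading $-\im\te_{12}\mu$ after the sign of $c_\e-\cM(D+\mu)$, with all $\e$-dependence pushed into $r_2$; the diagonal quantities $(\cB_{\mu,0}f_1^\pm,f_1^\pm)$ vanish to orders $\e^0,\e^1$ and their even Taylor part yields the $\te_{22}\mu^2$ term; the vector $\tf$ has leading entry $-\sqrt2\,\e$ coming from the piece $-2u_\e=-2\e\cos x+\cO(\e^2)$ paired against $\cos x$ and the constant $f_0=1/\sqrt2$; and $g=(\cB_{\mu,\e}f_0,f_0)$ has leading value $\fm(1)-\fm(0)=\te_{33}$ from $c_\e-\cM(D)$ on the constant, corrected by $\tg_{33}\mu^2$ from $-\fm(\mu)$ and by $\tfrac{\fa}{2}\e^2$, the latter emerging from the combination $c_2-2u_2^{[0]}=\tfrac12\fa$ of the $\e^2$-coefficients of $c_\e$ and $u_\e$. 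Throughout, the real/imaginary dichotomy \eqref{Brealim} serves as a running consistency check, as it forces many candidate contributions to vanish.

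The main obstacle is the $(1,1)$ entry $(\cB_{\mu,\e}f_1^+,f_1^+)$, whose $\e^2$-coefficient $-\fa$ in \eqref{E} is \emph{not} visible at $\e=0$ and only appears through a genuine three-way interplay: the term $-2u_\e$ (carrying $\cos x$ at order $\e$ and $u_2^{[0]}+u_2^{[2]}\cos 2x$ at order $\e^2$), the first-order basis correction $\e\,\fa\cos 2x$ in $f_1^+$, and the normalization $\fa=(\fm(1)-\fm(2))^{-1}$. Concretely, the $\e^2$-coefficient collects a contribution $\big(-2\cos x\cdot\fa\cos 2x,\cos x\big)=-\fa$ together with $c_2-2u_2^{[0]}-u_2^{[2]}=0$, so that the surviving value is exactly $-\fa$. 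Getting this cancellation right, and in parallel certifying that every cross-term $\mu^a\e^b$ generated along the way is absorbed into the stated remainders $r_k$ with the correct $\cC^{\reg-2}$ regularity — using the derivative bounds \eqref{basis.der} to control the $\mu$-expansion of the basis at $\mu=0$ — is the delicate bookkeeping at the heart of the proof, and is precisely where the finite-regularity care discussed in the introduction enters.
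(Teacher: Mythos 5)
Your overall strategy coincides with the paper's: both reduce to computing the six Gram entries $(\cB_{\mu,\e}f_k^\sigma(\mu,\e),f_{k'}^{\sigma'}(\mu,\e))$ via Lemma \ref{matrixraplemma}, exploit the real/imaginary dichotomy \eqref{Brealim}, and Taylor-expand in $(\mu,\e)$ using the basis expansions and Theorem \ref{existPTW}; your identification of the cancellation producing $-\fa\e^2$ in the $(1,1)$ entry matches \eqref{E11} in the paper exactly. However, there is a genuine gap in how you handle the $(2,2)$ entry. The statement \eqref{E} asserts that this entry equals $\te_{22}\mu^2(1+r_4(\e,\mu))$, i.e.\ that it is \emph{divisible by} $\mu^2$ uniformly in $\e$. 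A finite-order Taylor computation of the kind you describe can only show that $(\cB_{0,\e}f_1^-(0,\e),f_1^-(0,\e))$ vanishes up to the order you compute (say $\cO(\e^3)$), and such a leftover $\cO(\e^3)$ term is \emph{not} absorbable into $\te_{22}\mu^2(1+r_4)$. The paper needs, and proves, the exact identity $E_{22}(\e)\equiv 0$ for all $|\e|<\e_0$, and it does so by a structural argument you do not mention: by Lemma \ref{basis0,eps} the operator $\cL_{0,\e}$ has $0$ as an eigenvalue of algebraic multiplicity $3$ and geometric multiplicity $2$, so $\tJ_0\tB_{0,\e}$ is nilpotent of index $2$, and $(\tJ_0\tB_{0,\e})^2=0$ forces $E_{22}(\e)\equiv 0$ (Step 1 of Lemma \ref{tB0exp1}). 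The analogous exact vanishing of the first-order-in-$\mu$ diagonal terms comes from another structural fact (the entries of $\dot\tB_{0,\e}$ are simultaneously real and purely imaginary, by Lemma \ref{lem:LB.im} and the reality of the basis at $\mu=0$), again not from computation.

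A second, related omission concerns the exact factorizations in $\tf$ in \eqref{tf}: to write the $(1,3)$ entry as $-\sqrt2\,\e(1+r_3(\e^2,\mu^2))$ and the $(2,3)$ entry as $\im\mu\e\,r_5(\e,\mu)$ one must extract a full factor of $\e$ (resp.\ $\mu\e$) from the $\cC^{\reg-2}$ integral remainder of the $\mu$-Taylor expansion, not merely from the explicitly computed terms. The paper does this in Lemma \ref{lem:off} by first establishing $(\tB_{\mu,0})_{i,3}=0$ for all $\mu$ (using $f_k^\sigma(\mu,0)=f_k^\sigma$ from Lemma \ref{nopuremu} and the fact that $\cB_{\mu,0}$ is a Fourier multiplier) and then invoking the division Lemma \ref{lem:fg}, at the cost of regularity. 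You acknowledge that this bookkeeping is "the heart of the proof" but do not supply the two inputs that make it work, namely the identical vanishing on the slice $\e=0$ and the $\cC^n\to\cC^{n-1}$ division lemma. Without these, the stated multiplicative forms of the remainders, which are what the block-decoupling of Section 5 actually uses, are not justified.
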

 The remaining part of the section will be devoted to the proof of Proposition \ref{matrixrepprop}.
 
\smallskip 
 
By Lemma \ref{lem:fks},  the matrix $\tB_{\mu,\e}:=(\cB_{\mu,\e}f_k^\sigma(\mu,\e), f_{k'}^{\sigma'}(\mu,\e))$  is of class $\cC^\reg$, $\reg \geq 3$. This allows us to expand the matrix  $\tB_{\mu,\e}$ in $\mu$, up to second order:
\begin{equation}\label{exptBmue}
    \tB_{\mu,\e} = \tB_{0,\e} + \mu\dot \tB_{0,\e} + \frac{\mu^2}{2}\ddot \tB_{0,\e} +  \mu^2 \varphi(\mu,\e) , \quad \varphi(\mu,\e):= \frac12 \int_0^1 (1-\tau)^2 \, \left( \ddot \tB_{\tau \mu, \e} - \ddot \tB_{0,\e} \right) \di \tau  \ \in \cC^{\reg -2} \ . 
\end{equation}
We now consider each term separately.

\smallskip
{\bf \noindent Expansion of $\tB_{0,\e}$.} We start with the matrix 
$\tB_{0,\e} =
\{  (\cB_{0,\e} f_k^\sigma (0,\e),  f_{k'}^{\sigma'} (0,\e) )  \}
 $. 

\begin{lemma}[Expansion of the matrix $\tB_{0,\e}$]\label{tB0exp1}
The  $3\times 3$ selfadjoint, real   and reversibility preserving matrix  $\tB_{0,\e} $ 
 expands as
 \begin{equation}\label{B0e}
 \tB_{0,\e} =
  \begin{pmatrix}
      - \fa 
\e^2 + r_1(\e^3)  & 0 &  -\sqrt 2 \e + r_3(\e^3) \\
        0 &  0  & 0 \\
       -\sqrt 2 \e + r_3(\e^3)  & 0 &  \fm(1) - \fm(0) +\frac12 \fa  \e^2 + r_6(\e^3)
    \end{pmatrix} \ ,
 \end{equation}
 with $\fa$ in \eqref{fa} and $r_k$ are functions of class  $\cC^{\reg}$ independent of $\mu$.
\end{lemma}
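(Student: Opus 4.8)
The plan is to compute the nine entries $(\cB_{0,\e}f_k^\sigma(0,\e),f_{k'}^{\sigma'}(0,\e))$ of $\tB_{0,\e}$ directly, using parity to discard most of them and the explicit expansions of both the Kato basis and the traveling wave profile to extract the survivors. First I would record that, by Lemma~\ref{lemmanonzero}(iii), $U_{0,\e}$ is a real operator, so the basis vectors $f_k^\sigma(0,\e)=U_{0,\e}f_k^\sigma$ are real functions; since $\cB_{0,\e}$ is self-adjoint this already makes $\tB_{0,\e}$ real and symmetric. Combining reality with the reversibility of the basis (Lemma~\ref{lem:fks}) and the involution $\bar\rho$ in \eqref{barrho} shows that $f_1^+(0,\e)$ and $f_0^+(0,\e)=\tfrac1{\sqrt2}$ are even in $x$, while $f_1^-(0,\e)$ is odd. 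The operator $\cB_{0,\e}=c_\e-\cM(D)-2u_\e$ of \eqref{Bmue} preserves parity (it is a constant shift plus the even Fourier multiplier $\cM(D)$ plus multiplication by the even profile $u_\e$), so $\cB_{0,\e}f_1^-(0,\e)$ is odd and pairs to zero against the even vectors. This simultaneously annihilates the $(1,2),(2,1),(2,3),(3,2)$ entries.

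For the $(2,2)$ entry I would argue structurally rather than by expansion. Since the odd part of $\cV_{0,\e}=\mathrm{span}\{g_\e^-,g_\e^+,\tfrac1{\sqrt2}\}$ (Remark~\ref{rem:V0e}) is one-dimensional, spanned by the odd vector $g_\e^-=(u_\e)_x$ (the generators $g_\e^+$ and $\tfrac1{\sqrt2}$ being even), the odd vector $f_1^-(0,\e)$ must be a scalar multiple of $g_\e^-$. From $\cL_{0,\e}g_\e^-=0$ (Lemma~\ref{basis0,eps}) and $\cL_{0,\e}=\cJ_0\cB_{0,\e}=\pa_x\cB_{0,\e}$ one gets that $\cB_{0,\e}g_\e^-$ is a constant function; pairing it with $g_\e^-$, which has zero average as a derivative of a periodic function, yields $(\cB_{0,\e}g_\e^-,g_\e^-)=0$, hence $(\cB_{0,\e}f_1^-(0,\e),f_1^-(0,\e))=0$ identically in $\e$.

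It remains to compute $(1,1)$, $(1,3)=(3,1)$ and $(3,3)$, which I would do by inserting the basis expansion \eqref{basis.exp}, the profile expansion \eqref{expansionsPTWsol} and the values of $u_2^{[0]},u_2^{[2]},c_2$ from Theorem~\ref{existPTW}, then collecting Fourier modes against the scalar product \eqref{c.sp} (for which $(\cos kx,\cos kx)=1$). For $(3,3)$ only the zero mode of $\cB_{0,\e}\tfrac1{\sqrt2}$ survives, equal to $\tfrac1{\sqrt2}\big(\fm(1)-\fm(0)+(c_2-2u_2^{[0]})\e^2\big)+\cO(\e^3)$, and $c_2-2u_2^{[0]}=\tfrac{\fa}2$ gives $\fm(1)-\fm(0)+\tfrac{\fa}2\e^2$. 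For $(1,3)$ the only surviving resonance is the $\cos x$ mode $-\sqrt2\,\e$ of $\cB_{0,\e}\tfrac1{\sqrt2}$ against the $\cos x$ in $f_1^+(0,\e)$, the zero mode dying against the zero-average $even_0(x)$; this gives $-\sqrt2\,\e+\cO(\e^3)$. For $(1,1)$ the leading term is $\cO(\e^2)$: computing $\cB_{0,\e}f_1^+(0,\e)$ one checks that the $\cos2x$ resonance cancels (this is precisely the role of the $\e\,\fa\cos2x$ term in $f_1^+(0,\e)$) and that the undetermined correction $\e^2 even_0$ contributes nothing at this order, since $\cB_{0,\e}(\e^2 even_0)$ pairs with $\cos x$ as $\e^2\big(even_0,(\fm(1)-\cM(D))\cos x\big)=0$ because $(\fm(1)-\cM(D))\cos x=0$. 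What is left is $(c_2-2u_2^{[0]}-u_2^{[2]}-\fa)\e^2=-\fa\,\e^2$.

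The main obstacle is the exact $\e^2$ bookkeeping behind these cancellations: one must track every second-order contribution (including the cross products of the $\cos x,\cos2x$ parts of $\cB_{0,\e}f_1^+(0,\e)$ and of $f_1^+(0,\e)$) and invoke the identities $c_2-2u_2^{[0]}=\tfrac{\fa}2$ and $u_2^{[2]}=\tfrac{\fa}2$ to obtain the clean values; the vanishing of the $even_0$ contributions through $(\fm(1)-\cM(D))\cos x=0$ is what makes the $\e^2$ corrections independent of the undetermined part of the basis. Finally, since all entries are $\cC^\reg$ functions of $\e$ (Lemma~\ref{lem:fks}), subtracting their second-order Taylor polynomials leaves remainders that are again $\cC^\reg$ and $\cO(\e^3)$, which are exactly the $r_k(\e^3)$ appearing in \eqref{B0e}.
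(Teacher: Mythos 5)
Your proposal is correct and its computational core (the $\e$-expansions of the $(1,1)$, $(1,3)$, $(3,3)$ entries via \eqref{expBf}--\eqref{actionexpansion} and the identities $c_2-2u_2^{[0]}=\tfrac{\fa}{2}$, $u_2^{[2]}=\tfrac{\fa}{2}$, $(\fm(1)-\fm(2))\fa^2=\fa$) coincides with the paper's. You diverge from the paper in two places, both legitimately. First, for the vanishing of the $(1,2),(2,1),(2,3),(3,2)$ entries you argue by parity ($f_1^-(0,\e)$ odd, the other two even, $\cB_{0,\e}$ parity-preserving), whereas the paper combines reality of the matrix with Lemma \ref{matrixraplemma}, which forces those entries to be simultaneously real and purely imaginary; since the reversibility of the basis is exactly what encodes the parity in \eqref{parity}, the two arguments are essentially the same fact in different clothing. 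Second, and more substantively, for the $(2,2)$ entry the paper computes $(\tJ_0\tB_{0,\e})^2=0$ from the Jordan-block structure of Lemma \ref{basis0,eps} and reads off $E_{22}\equiv 0$ \emph{conditionally} on $E_{11},E_{13}\neq 0$, which it only verifies afterwards in Step 2. Your argument instead identifies $f_1^-(0,\e)$ as the unique (up to scalar) real odd direction in $\cV_{0,\e}=\textup{span}\{g_\e^-,g_\e^+,\tfrac{1}{\sqrt 2}\}$, i.e.\ a multiple of $g_\e^-=(u_\e)_x$, and uses $\pa_x\cB_{0,\e}g_\e^-=\cL_{0,\e}g_\e^-=0$ to conclude that $\cB_{0,\e}g_\e^-$ is constant (indeed zero, being odd), whence $E_{22}\equiv 0$ directly; this avoids the mild logical back-reference of the paper's Step 1 and is, if anything, cleaner. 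The only points to keep an eye on are (a) the linear independence of $g_\e^+$ and the constants for small $\e$ (true, since $g_\e^+=\cos x+\cO(\e)$), which your one-dimensionality claim needs, and (b) the fact that Lemma \ref{basis0,eps} is stated for $\e\neq 0$, so the $\e=0$ case of $E_{22}=0$ should be closed by continuity or by the trivial identity $\cB_{0,0}\sin x=0$; neither is a real gap.
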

\begin{proof}
The matrix $\tB_{0,\e}$ is  real since $\cB_{0,\e}$ is a real operator and the vectors $f_k^\sigma(0,\e)$ are real, being $U_{0,\e}$ a real operator by Lemma 
\ref{lemmanonzero} $(iii)$.
Moreover,  by Lemma \ref{matrixraplemma},  the entries $(\cB_{0,\e} f_k^\sigma (0,\e),  f_{k'}^{-\sigma} (0,\e) )$ are purely imaginary,  so we get
\begin{equation}\label{tB0e}
    \tB_{0,\e} = 
    \begin{pmatrix}
        E_{11}(\e)  & 0 & E_{13}(\e)  \\
        0 &  E_{22}(\e) & 0 \\
       E_{13}(\e)  & 0 & E_{33}(\e)
    \end{pmatrix}
\end{equation}
for some real functions $E_{jk}(\e)$.

\smallskip 
\underline{ Step 1: proof that  $E_{22}(\e) \equiv 0$ for any $|\e|<\e_0$.}
Recall that, by Lemma  \ref{basis0,eps}, the operator $\cL_{0,\e}$
has  0 as eigenvalue with algebraic multiplicity 3 and geometric multiplicity 2. 
The matrix $\tJ_0 \tB_{0,\e}$, that represents the action of this operator on the generalized kernel $\cV_{0,\e}$, has therefore  two Jordan blocks: one of dimension $1$ and the other of dimension $2$. Then, we have that $\tJ_0 \tB_{0,\e}$ is nilpotent of index $2$, i.e.
    \begin{equation*}
    0 =     (\tJ_0 \tB_{0,\e})^2 = \begin{pmatrix}
            - E_{11}(\e) E_{22}(\e) & 0 & - E_{22}(\e)E_{13}(\e) \\
            0 & -E_{11}(\e) E_{22}(\e) & 0 \\
            0 & 0 & 0
        \end{pmatrix}  \ .
    \end{equation*}  
        We show below that  $E_{11}(\e)$ and $E_{13}(\e)$ are different from  zero for $\e$ sufficiently small,  therefore 
    \begin{equation}\label{E220}
    E_{22}(\e) \equiv  0  \ . 
    \end{equation}
    \underline{ Step 2: expansion of $E_{11}(\e)$, $E_{33}(\e)$ and $E_{13}(\e)$.} 
    We start by Taylor expanding  both the operator $\cB_{0,\e}$ and the elements of the basis $\tF$ in orders of $\e$:
\begin{equation}\label{expBf}
    \cB_{0,\e} = \cB_0 + \e \cB_1 + \e^2 \cB_2 + \cO^\reg(\e^3),  
    \qquad
      f_k^\sigma (0,\e) = f_{k}^\sigma +\e f_{k_1}^\sigma +\e^2 f_{k_2}^\sigma +\cO^\reg(\e^3),
\end{equation}
with 
\begin{equation}\label{expBf2}
        \cB_0:= \fm(1) - \cM (D),\qquad \cB_1:= -2\cos x, \qquad \cB_2:=c_2 -2 u_2^{[0]} -2 u_2^{[2]} \cos (2x)
\end{equation}
with  $c_2, u_2^{[0]}$, $ u_2^{[2]}$ given in Theorem \ref{existPTW} and 
 $f_{k_1}^\sigma$ in Lemma \ref{totalbasisexpansion}. 
We will see that $f_{k_2}^\sigma$ does not need to be computed explicitly, since it won't bring any relevant contribution in the expansion.
Moreover, a direct computation  gives 
\begin{equation} \label{actionexpansion}
\begin{aligned}
& \cB_0 f_{1}^\sigma = 0 \ ,\quad 
\cB_0 f_0^+ = (\fm(1) - \fm(0)) f_0^+  \ ,  \quad \cB_0 f_{1_1}^+ = (\fm(1) - \fm(2)) \fa \cos (2x)
\ ,  \\
& \cB_1 f_{1}^+=-(1+\cos (2x) ) , \quad \cB_1 f_{1_1}^+ = -\fa ( \cos x + \cos (3x) ) \ , 
\quad \cB_1 f_0^+ = - \sqrt{2} \cos(x) \ , 
\\
& \cB_2 f_{1}^+ = (c_2 -2u_2^{[0]} - u_2^{[2]}) \cos x - u_2^{[2]} \cos (3x) \ , 
\quad 
\cB_2 f_0^+ = (c_2 -2 u_2^{[0]} -2 u_2^{[2]} \cos (2x))f_0^+ \ .
\end{aligned}
\end{equation}
Using the decomposition \eqref{expBf} we get 
\begin{equation}\label{expBf1}
    \begin{split}
        (\cB_{0,\e} f_k^\sigma (0,\e),& f_{k'}^{\sigma'} (0,\e) ) = (\cB_0 f_{k} ^\sigma, f_{k'} ^{\sigma'}) + \e \left[(\cB_1 f_{k} ^\sigma, f_{k'} ^{\sigma'})+(\cB_0 f_{k_1} ^\sigma, f_{k'} ^{\sigma'})+(\cB_0 f_{k} ^\sigma, f_{k_1'} ^{\sigma'}) \right] +\\
        + \e^2 &\left[ (\cB_2 f_{k} ^\sigma, f_{k} ^{\sigma'}) 
        +(\cB_1 f_{k_1} ^\sigma, f_{k'} ^{\sigma'})
        +(\cB_1 f_{k} ^\sigma, f_{k_1'} ^{\sigma'})
        +(\cB_0 f_{k_1} ^\sigma, f_{k_1'} ^{\sigma'})
        +(\cB_0 f_{k_2} ^\sigma, f_{k'} ^{\sigma'}) 
        + (\cB_0 f_{k} ^\sigma, f_{k_2'} ^{\sigma'})\right]
    \end{split}
\end{equation}
We are now ready to compute the expansion of the elements $E_{11}(\e)$, $E_{33}(\e)$ and $E_{13}(\e)$.\\
\noindent$\bullet$ \textit{Expansion of $E_{1,1}(\e) =(\cB_{0,\e} f_1^+ (0,\e), f_{1}^{+} (0,\e) ) $.} By \eqref{actionexpansion} and the selfadjointness of $\cB_0$, the term of order zero  in \eqref{expBf1} does not contribute, together with the following others:
\begin{equation*}
 (\cB_0 f_{1}^+, f_{1_1}^+) = (\cB_0 f_{1_1}^+, f_{1}^+) =(\cB_0 f_{1}^+, f_{1_2}^+)=(\cB_0 f_{1_2}^+,f_{1}^+)=  (\cB_1 f_{1}^+, f_{1}^+) = 0
\end{equation*}
The nonvanishing terms, which we compute again using \eqref{actionexpansion}, are 
 $$
 (\cB_2 f_{1}^+, f_{1} ^{+}) = c_2 - 2u_2^{[0]} - u_2^{[2]} \, , \qquad
(\cB_1 f_{1_1}^+,f_{1}^+) = - \fa = (\cB_1 f_{1}^+, f_{1_1}^+)\, , \qquad
 (\cB_0 f_{1_1}^+,f_{1_1}^+)= (\fm(1) - \fm(2)) \fa^2 \ . $$
Summing up, and using the expressions of $u_2^{[0]}$ and $u_2^{[2]}$ in 
 Theorem \ref{existPTW} and of $\fa$ in  \eqref{fa}, we get 
\begin{equation}\label{E11}
    E_{1,1}(\e) = (\underbrace{c_2 - 2u_2^{[0]} - u_2^{[2]}}_{=0} -\underbrace{2 \fa+(\fm(1) - \fm(2))\fa^2}_{=- \fa})\e^2 + r (\e^3) = - \fa 
\e^2 + r(\e^3) \ ,
\end{equation}
where $r(\e^3)$ is of class $\cC^\reg$.\\
\noindent$\bullet$ \textit{Expansion of $E_{1,3}(\e)=(\cB_{0,\e} f_0^+ (0,\e), f_{1}^{+} (0,\e) )$.}
Again the term of order $0$ in \eqref{expBf1} vanishes. About the terms of order $\e$, by \eqref{actionexpansion} the only one not vanishing is
$(\cB_1 f_1^+, f_0^-) = - \sqrt{2}$.
Finally, all  terms of order $\e^2$ vanish, using also that 
$f_{1_2}^+$ has zero average by Lemma \ref{totalbasisexpansion}. 
 In conclusion 
\begin{equation}\label{E13}
    E_{1,3}(\e) = -\sqrt 2 \e + r(\e^3) \ 
\end{equation}
with $r(\e^3)$  of class $\cC^\reg$.\\
\noindent$\bullet$ \textit{Expansion of $E_{3,3}(\e) = (\cB_{0,\e} f_0^+ (0,\e), f_{0}^{+} (0,\e) )$: }
Recall that  $f_0^+(0,\e) = f_0^+$ for any $\epsilon$ by Lemma  \ref{totalbasisexpansion}.
Since $\cB_{0,\e} f_0 = (c_\e - \fm(0) - 2 u_\e) f_0^+$, by  Theorem \ref{existPTW} we readily get 
\begin{equation}\label{E33}
    E_{3,3}(\e) = \fm(1) - \fm(0) + \e^2 \underbrace{(c_2 - 2 u_2^{[0]})}_{\frac{1}{2}\fa} + r(\e^3)
\end{equation}
with $r(\e^3)$  of class $\cC^\reg$.\\
Then the expansion \eqref{B0e} follows from \eqref{tB0e}, \eqref{E220}, \eqref{E11}, \eqref{E13} and \eqref{E33}.
\end{proof}

{\bf \noindent Expansion of $\dot\tB_{0,\e}$.} 
Next we compute the expansion of the matrix $\dot \tB_{0,\e}$ in \eqref{exptBmue}. 
Remark that 
\begin{equation}\label{dB0e}
    \dot \tB_{0,\e} =( (\dot \cB_{0,\e}f_k^\sigma(0,\e), f_{k'}^{\sigma'}(0,\e)) + 2\textup{Sym}( \cB_{0,\e}f_k^\sigma(0,\e), \dot f_{k'}^{\sigma'}(0,\e))) = \dot \tB_{0,\e}^{[1]} + \dot \tB_{0,\e}^{[2]} + \dot \tB_{0,\e}^{[2]*}, 
\end{equation}
where $X^*$ denotes the transpose conjugate matrix to $X$.


We now prove:

\begin{lemma}[Expansion of the matrix $\dot \tB_{0,\e}$]\label{tB0exp2}
 The $3\times 3$ selfadjoint, purely imaginary and reversibility preserving matrix 
$\dot \tB_{0,\e}$ in \eqref{dB0e} 
 expands as
 \begin{equation}\label{dtB0e}
    \dot \tB_{0,\e} = \begin{pmatrix}
        0 &  \im (\dot \fm (1) +  r_2(\e^2)) & 0 \\
        - \im (\dot \fm (1) +  r_2(\e^2))& 0 & \im r_5(\e^2)\\
        0 & - \im r_5(\e^2) & 0
    \end{pmatrix} \ 
\end{equation} 
with $r_k$ are functions of class  $\cC^{\reg-1}$ independent of $\mu$.
 \end{lemma}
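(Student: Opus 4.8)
My plan is to split the argument into a \emph{structural} part, which fixes the shape and the purely imaginary character of $\dot\tB_{0,\e}$, and a \emph{computational} part, which evaluates the two surviving off-diagonal entries with the help of the $\e$-expansions in Lemma \ref{totalbasisexpansion}.

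\emph{Structure.} Every entry of $\dot\tB_{0,\e}$ is the $\mu$-derivative at $\mu=0$ of $(\cB_{\mu,\e}f_k^\sigma(\mu,\e),f_{k'}^{\sigma'}(\mu,\e))$; by the product rule this is
\[
(\dot\cB_{0,\e}f_k^\sigma(0,\e),f_{k'}^{\sigma'}(0,\e)) + (\cB_{0,\e}\dot f_k^\sigma(0,\e),f_{k'}^{\sigma'}(0,\e)) + (\cB_{0,\e}f_k^\sigma(0,\e),\dot f_{k'}^{\sigma'}(0,\e)),
\]
as recorded in \eqref{dB0e}. Now $\cB_{0,\e}$ is real and $f_k^\sigma(0,\e)=U_{0,\e}f_k^\sigma$ is real by Lemma \ref{lemmanonzero}$(iii)$, whereas $\dot\cB_{0,\e}$ is purely imaginary by Lemma \ref{lem:LB.im} and $\dot f_k^\sigma(0,\e)=\dot U_{0,\e}f_k^\sigma$ is purely imaginary by Lemma \ref{lemmanonzero}$(iv)$. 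Hence each of the three inner products is a pairing of a real with a purely imaginary vector, so it is purely imaginary; this shows $\dot\tB_{0,\e}$ is purely imaginary. It is moreover self-adjoint and reversibility preserving, since $\tB_{\mu,\e}$ is so for every $\mu$ and both properties pass to the $\mu$-derivative. Finally, by \eqref{Brealim} the entries of $\tB_{\mu,\e}$ with equal superscripts are real for all $\mu$, so their $\mu$-derivatives are real; being simultaneously purely imaginary they must vanish. This kills the three diagonal entries and the $(1,3),(3,1)$ entries, and self-adjointness together with the purely imaginary property then reduces $\dot\tB_{0,\e}$ to the claimed skew form, leaving only the off-diagonal $(1,2)$ and $(2,3)$ entries (equivalently, the $\mu$-derivatives at $\mu=0$ of $(\cB_{\mu,\e}f_1^-,f_1^+)$ and of $(\cB_{\mu,\e}f_0,f_1^-)$) to be computed.

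\emph{Computation of the entries.} Here I would insert $\cB_{0,\e}=\cB_0+\e\cB_1+\cO^\reg(\e^2)$ with $\cB_0=\fm(1)-\cM(D)$ and $\cB_1=-2\cos x$ from \eqref{expBf2}, the identity $\dot\cB_{0,\e}=-\dot\cM(D)$ (Fourier multiplier of symbol $-\dot\fm(\xi)$, Lemma \ref{lem:LB.im}), and the expansions of $f_k^\sigma(0,\e)$ and $\dot f_k^\sigma(0,\e)$ read off from \eqref{basis.exp}--\eqref{basis.der}. For the $(1,2)$ entry the leading contribution is $(-\dot\cM(D)\sin x,\cos x)=\im\dot\fm(1)$, using that $\fm$ is even, hence $\dot\fm$ odd. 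All $\cO(\e)$ contributions cancel: since $\cB_0$ annihilates the fundamental modes $\cos x,\sin x$ (see \eqref{actionexpansion}), both $\cB_{0,\e}f_1^-(0,\e)$ and $\cB_{0,\e}\dot f_1^-(0,\e)$ start at order $\e$ carrying only the second harmonic, which is orthogonal to the fundamental modes appearing at leading order in $f_1^+(0,\e)$ and in $\dot f_1^+(0,\e)$; this yields the $(1,2)$ entry $\im(\dot\fm(1)+r_2(\e^2))$. For the $(2,3)$ entry the first term vanishes identically, because $\dot\cB_{0,\e}f_0=-\dot\cM(D)\tfrac{1}{\sqrt2}$ and $\dot\fm(0)=0$ by evenness; the remaining two terms are $\cO(\e^2)$, since $\dot f_0(0,\e)=\cO(\e^2)$ by \eqref{basis.der} and since the $\cos 2x$ carried by $\dot f_1^-(0,\e)$ is orthogonal to the leading (constant) content of $\cB_{0,\e}f_0$. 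This gives the $(2,3)$ entry $\im r_5(\e^2)$.

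\emph{Regularity and the main difficulty.} Because $\tB_{\mu,\e}\in\cC^\reg$, one differentiation in $\mu$ produces remainders of class $\cC^{\reg-1}$, as claimed. The principal obstacle is the order-counting in the $(1,2)$ entry: one must verify that every candidate $\cO(\e)$ term genuinely cancels. This rests on the two mechanisms above — annihilation of the fundamentals by $\cB_0$ and orthogonality of distinct Fourier harmonics — together with careful control of the remainder vectors in Lemma \ref{totalbasisexpansion}, which are only $\cO^\reg(\e^2)$. Guaranteeing that these contribute within the stated $r(\e^2)$ bounds, rather than relying on analyticity in $\mu$ as in \cite{BMV1}, is where the finite regularity demands the most attention.
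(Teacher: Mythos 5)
Your proof is correct and follows essentially the same route as the paper's: the product-rule decomposition \eqref{dB0e}, a structural argument forcing the skew, purely imaginary form, and an explicit evaluation of the two surviving entries via the expansions \eqref{basis.exp}--\eqref{basis.der}, the splitting $\cB_{0,\e}=\cB_0+\e\cB_1+\cO(\e^2)$ and Fourier orthogonality, matching the paper's computations \eqref{E121}--\eqref{E232}. The only cosmetic difference is that you obtain the vanishing of the diagonal and $(1,3)$ entries in one stroke by differentiating \eqref{Brealim} in $\mu$ and observing that all three product-rule terms are purely imaginary, whereas the paper treats the $\dot\cB_{0,\e}$-contribution and the $\dot f$-contributions separately (the latter via the parity relations \eqref{dfk}); both arguments are valid, and your order-counting for the $(1,2)$ and $(2,3)$ entries reproduces the required cancellations (the one imprecision — invoking orthogonality to the ``fundamental modes of $\dot f_1^+(0,\e)$'', whose leading content is in fact the second harmonic at order $\e$ — is harmless, since that pairing is $\cO(\e^2)$ by order-counting alone).
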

\begin{proof}
We consider separately the matrices  $\dot \tB_{0,\e}^{[1]}$ and $\dot \tB_{0,\e}^{[2]}$ in \eqref{dB0e}.

\noindent
\underline{ Expansion of  $\dot \tB_{0,\e}^{[1]}$}. Recall $\dot \tB_{0,\e}^{[1]} = (\dot \cB_{0,\e}f_k^\sigma(0,\e), f_{k'}^{\sigma'}(0,\e))_{k,k' = 0,1}^{\sigma,\sigma' = \pm}$. 
By Lemma  \ref{lem:LB.im}, 
 $\dot \cB_{0,\e}$ is reversibility preserving, so the entries of the matrix are  alternatively real and purely  imaginary. 
 But  $\dot \cB_{0,\e}$ is also purely imaginary, so every entry must also be  purely imaginary. 
As a result, the entries of the type $(\dot \cB_{0,\e}f_k^\sigma(0,\e), f_{k'}^{\sigma}(0,\e))$ are zero for any $|\e|<\e_0$:
\begin{equation}\label{dtB0e1}
    \dot \tB_{0,\e}^{[1]} = \begin{pmatrix}
        0 & \dot E_{12}^{[1]} (\e) & 0 \\
        - \dot E_{12}^{[1]}  (\e)& 0 & \dot E_{23}^{[1]} (\e) \\
        0 & -\dot E_{23}^{[1]} (\e) & 0
    \end{pmatrix} \ .
\end{equation}
We now compute the expansion of the remaining elements. Recall that 
 $\dot \cB_{0,\e} = -\dot \cM(D)$.\\
$\bullet$ {\em Expansion of $ \dot E_{12}^{[1]} (\e) =(\dot \cB_{0,\e}f_1^-(0,\e), f_{1}^{+}(0,\e))$}:
By  \eqref{basis.exp}, 
\begin{equation}\label{E121}
    \dot E_{12}^{[1]} (\e) = (\dot \cB_{0,\e}f_1^- (0,\e),f_1^+(0,\e))
= (\im\dot \fm (1) \cos x + \im \e \dot \fm (2) \fa  \cos (2x) + \cO^{\reg-1}
(\e^2),  f_1^+(0,\e))    
     = \im (\dot \fm (1) +  r(\e^2)) \, ,
\end{equation}
where $r(\e^2)$ is of class $\cC^{\reg-1}$.\\
$\bullet$ \textit{Expansion of $\dot E_{23}^{[1]} (\e)= (\dot \cB_{0,\e}f_0^+(0,\e), f_{1}^{-}(0,\e)) $:}  By Lemma \ref{totalbasisexpansion},   $\dot \cB_{0,\e}f_0^+(0,\e)  =
-\dot \cM(D) f_0^+ = 0$, 
 thus 
\begin{equation}
 \label{E231}
 \dot E_{23}^{[1]} (\e) = 0 \ . 
 \end{equation}

\noindent \underline{ Expansion of  $\dot \tB_{0,\e}^{[2]}$}. Recall
$ \dot \tB_{0,\e}^{[2]} =  \big( ( \cB_{0,\e}f_k^\sigma(0,\e), \dot f_{k'}^{\sigma'}(0,\e)) \big)$.
Since $\tF$ is a reversible basis, $\dot f_k^\sigma(0,\e)$ have the same parity properties in \eqref{parity}. 
But   $\dot f_k^\sigma (0,\e) = \dot U_{0,\e} f_k^\sigma$ is purely imaginary, as  $\dot U_{0,\e}$  is  purely imaginary by Lemma \ref{lemmanonzero} and $f_k^\sigma$ is real. 
Being  instead $f_k^\sigma(0,\e)$ real, we conclude  that 
\begin{equation}\label{dfk}
f_k^+(0,\e) = even(x) \ , \quad f_k^-(0,\e) = odd(x) \ , \quad 
    \dot f_k^+ (0,\e)  = \im \, odd(x)   \ ,   \qquad \dot f_k^- (0,\e) =   \im \, even(x) \ . 
\end{equation}
Now we use that $\cB_{0,\e}$ is parity preserving, as $\cB_{0,\e} =  c_\e - \cM (D) -2 u_\e$ and 
 $\cM(D)$ is an even Fourier multiplier (Assumption (A1))  and $u_\e$ of Theorem \ref{existPTW} is an even function. 
So the matrix $ \dot \tB_{0,\e}^{[2]} + \dot \tB_{0,\e}^{[2]*} $ has the form 
\begin{equation}\label{dB0e2}
    \dot \tB_{0,\e}^{[2]} + \dot \tB_{0,\e}^{[2]*} = 
    \begin{pmatrix}
        0 & \dot E_{12}^{[2]} (\e) & 0 \\
        - \dot E_{12}^{[2]}  (\e)& 0 & \dot E_{23}^{[2]} (\e) \\
        0 & -\dot E_{23}^{[2]} (\e) & 0        
    \end{pmatrix}
\end{equation}
with some coefficients that we now compute. \\
$\bullet$ \textit{Expansion of $ \dot E_{12}^{[2]} (\e) = ( \cB_{0,\e} f_1^- (0,\e), \dot f_1^+ (0,\e) )  +
( \cB_{0,\e} \dot f_1^- (0,\e),  f_1^+ (0,\e) )$:} 
First note that the action of $\cL_{0,\e}$ on the basis $\{ f_1^\pm(0,\e), f_0^+(0,\e)\}$ is given by the matrix $\tJ_0 \tB_{0,\e}$, with $\tB_{0,\e}$ in \eqref{B0e}.
Then one checks that $\cL_{0,\e} f_1^-(0,\e) = 0$, as it is clear by Lemma \ref{tB0exp1}. 
Using this identity together with the fact that 
 $\dot f_1^+(0,\e)$ is mean free (see  \eqref{dfk}), and \eqref{expBf}, \eqref{expBf2}, and the expression for $ \dot f_1^- (0,\e) $ in \eqref{basis.der},  we get 
\begin{equation}\label{E122}
    \dot E_{12}^{[2]} (\e) =  \underbrace{( \cB_{0,\e} f_1^- (0,\e), \dot f_1^+ (0,\e) )}_{ =  - (\cL_{0,\e} f_1^- (0,\e),  \cE_0\dot f_1^+ (0,\e) )  = 0 } + \overline{( \cB_{0,\e} f_1^+ (0,\e), \dot f_1^- (0,\e) )} = \im \e^2 \fb \underbrace{\left(\fa (\fm(1) - \fm(2))-1\right)}_{=0} + \im r(\e^3)  \, 
\end{equation}
where $r(\e^3)$ is of class $\cC^{\reg-1}$.\\
$\bullet$ \textit{Expansion of $ \dot E_{23}^{[2]} (\e)=
( \cB_{0,\e} f_0 (0,\e), \dot f_1^- (0,\e) )
+
( \cB_{0,\e} \dot f_0^+ (0,\e), f_1^- (0,\e) ) $:} As $f_0^+(0,\e) = f_0^+$ $\forall \e$ small enough, we have by \eqref{actionexpansion} that $\cB_{0,\e} f_0^+ (0,\e) = a + b \e f_1^+ + \cO(\e^2)$ for some constants $a,b$,
and 
 by 
 Lemma \ref{totalbasisexpansion} 
 that $\dot f_1^- (0,\e) = \im \e \fb \cos (2x)+\cO(\e^2)$ and  $\dot f_0(0,\e) = \cO(\e^2)$. So 
\begin{equation}\label{E232}
    \dot E_{23}^{[2]} (\e) = ( \cB_{0,\e} f_0 (0,\e), \dot f_1^- (0,\e) ) + \overline{( \cB_{0,\e} f_1^- (0,\e), \dot f_0 (0,\e) )} = \im r(\e^2)
\end{equation}
where $r(\e^2)$ is of class $\cC^{\reg-1}$.

\smallskip
In conclusion  the expansion \eqref{dtB0e} follows from 
\eqref{dtB0e1}, \eqref{E121}, \eqref{E231}, \eqref{dB0e2}, 
\eqref{E122}, \eqref{E232}. 
\end{proof}

\noindent {\bf  Expansion of $\ddot\tB_{0,\e}$.} 
Finally, we compute the expansion of the matrix $\ddot \tB_{0,\e}$ in \eqref{exptBmue}.  Such matrix is given by 
\begin{equation} \label{quadraticterms}
    \ddot \tB_{0,\e} = \left( (\ddot \cB_{0,\e} f_k^\sigma, f_{k'}^{\sigma'} )+ ( \cB_{0,\e} \ddot f_k^\sigma, f_{k'}^{\sigma'} ) + ( \cB_{0,\e} f_k^\sigma, \ddot f_{k'}^{\sigma'} ) + 2(\dot \cB_{0,\e} \dot f_k^\sigma, f_{k'}^{\sigma'} ) + 2(\dot \cB_{0,\e} f_k^\sigma, \dot f_{k'}^{\sigma'} ) +  2( \cB_{0,\e} \dot f_k^\sigma, \dot f_{k'}^{\sigma'} ) \right)\vert_{\mu = 0}
\end{equation} 
\begin{lemma}[Expansion of the matrix $\ddot\tB_{0,\e}$]\label{tBexp3}
The  $3\times 3$ selfadjoint, real   and reversibility preserving matrix  $\ddot \tB_{0,\e}$ in \eqref{quadraticterms}
 expands as
 \begin{equation}\label{B0e2}
\ddot \tB_{0,\e} =
  \begin{pmatrix}
        -\ddot \fm (1)  & 0 & 0\\
        0 & -\ddot \fm (1)  & 0 \\
        0 & 0 & - \ddot \fm(0) 
    \end{pmatrix} + \cO^{\reg -2}(\e)  \ 
 \end{equation}
 with $\cO^{\reg-2}(\e)$ independent of $\mu$.
\end{lemma}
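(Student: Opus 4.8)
The plan is to read off the value of $\ddot\tB_{0,\e}$ at $\e=0$ directly from \eqref{quadraticterms} and then absorb the whole $\e$-dependence into a $\cC^{\reg-2}$ remainder. The three structural properties are inherited from $\tB_{\mu,\e}$: self-adjointness is preserved by $\partial_\mu^2|_{\mu=0}$; reality follows by applying $\partial_\mu^2|_{\mu=0}$ to the identity $\overline{\tB_{\mu,\e}}=\tB_{-\mu,\e}$ (which holds because $\cL_\e$ is real, so $\overline{\cB_{\mu,\e}}=\cB_{-\mu,\e}$ and $\overline{f_k^\sigma(\mu,\e)}=f_k^\sigma(-\mu,\e)$), exactly as in Lemmas \ref{tB0exp1}--\ref{tB0exp2}; and the reversibility preserving property passes to every real $\mu$-derivative since the involution $\bar\rho$ is $\mu$-independent. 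Finally, since $(\mu,\e)\mapsto\tB_{\mu,\e}$ is $\cC^\reg$, the map $\e\mapsto\ddot\tB_{0,\e}$ is of class $\cC^{\reg-2}$.

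The key observation I would exploit is that at $\e=0$ the Kato basis is frozen in $\mu$. Indeed, Lemma \ref{totalbasisexpansion} shows that every $\mu$-dependent term in the expansion of $f_k^\sigma(\mu,\e)$ carries a factor $\e$, so that $f_1^+(\mu,0)=\cos x$, $f_1^-(\mu,0)=\sin x$ and $f_0^+(\mu,0)=\tfrac{1}{\sqrt2}$ for all small $\mu$. Equivalently, at $\e=0$ the operator $\cL_{\mu,0}$ is a Fourier multiplier whose Riesz projection near $0$ is the orthogonal projection onto $\mathrm{span}\{e^{-\im x},1,e^{\im x}\}$, hence $\mu$-independent, so $U_{\mu,0}=\uno$. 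Consequently $\dot f_k^\sigma(0,0)=\ddot f_k^\sigma(0,0)=0$ for every $k,\sigma$.

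I would then evaluate \eqref{quadraticterms} at $\e=0$: all summands containing a factor $\dot f_{k}^\sigma(0,0)$ or $\ddot f_k^\sigma(0,0)$ vanish, leaving only the first term, $\ddot\tB_{0,0}=\big((\ddot\cB_{0,0}f_k^\sigma,f_{k'}^{\sigma'})\big)$. Since $\ddot\cB_{0,0}=-\ddot\cM(D)$ is the Fourier multiplier with even symbol $-\ddot\fm(\xi)$, one has $\ddot\cB_{0,0}f_1^\pm=-\ddot\fm(1)f_1^\pm$ and $\ddot\cB_{0,0}f_0^+=-\ddot\fm(0)f_0^+$; using $(\cos x,\cos x)=(\sin x,\sin x)=(\tfrac1{\sqrt2},\tfrac1{\sqrt2})=1$ together with the vanishing of all cross inner products, this gives $\ddot\tB_{0,0}=\mathrm{diag}\big(-\ddot\fm(1),-\ddot\fm(1),-\ddot\fm(0)\big)$, the claimed leading matrix.

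To conclude I would write $\ddot\tB_{0,\e}=\ddot\tB_{0,0}+(\ddot\tB_{0,\e}-\ddot\tB_{0,0})$: the second summand is of class $\cC^{\reg-2}$ and vanishes at $\e=0$, hence is $\cO^{\reg-2}(\e)$ since $\reg\ge3$, which yields \eqref{B0e2}. I do not expect a genuine obstacle: once the basis is recognized to be $\mu$-independent at $\e=0$, the second-order-in-$\mu$ information about the vectors $f_k^\sigma$ never enters the leading term and no delicate cancellation is needed (unlike for $\tB_{0,\e}$ and $\dot\tB_{0,\e}$, where the $\e$-corrections of the basis had to be tracked). The only point demanding care, in line with the finite-regularity philosophy of the paper, is to verify that two $\mu$-derivatives cost exactly two degrees of regularity, so that the remainder is $\cC^{\reg-2}$ and no worse.
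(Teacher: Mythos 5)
Your proposal is correct and follows essentially the same route as the paper: both arguments rest on the fact that the Kato basis is frozen in $\mu$ at $\e=0$ (so the terms of \eqref{quadraticterms} involving $\dot f_k^\sigma$ or $\ddot f_k^\sigma$ contribute only at order $\e$), leaving $\ddot\cB_{0,\e}=-\ddot\cM(D)$ acting on $\cos x,\ \sin x,\ \tfrac1{\sqrt2}$ as the sole source of the leading diagonal matrix. The only (cosmetic) difference is that the paper bounds each of the remaining terms by $\cO^{\reg-2}(\e)$ via Lemma \ref{totalbasisexpansion}, whereas you evaluate the whole matrix at $\e=0$ and absorb the difference by regularity; both are valid.
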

\begin{proof}
By Lemma \ref{totalbasisexpansion}, $\dot f_k^\sigma(0,\e), \ddot f_k^\sigma(0,\e) = \cO(\e)$. Hence, 
  all the terms in  \eqref{quadraticterms} are  $\cO^{\reg -2}(\e)$, except $(\ddot \cB_{0,\e} f_k^\sigma, f_{k'}^{\sigma'} )$. Since $\ddot \cB_{0,\e} = - \ddot \cM(D)$, the result follows.
\end{proof}
In the following lemma we improve the order of the remainders of the off-diagonal terms. This will be necessary in order to block diagonalize the matrix $\tB_{\mu,\e}$.
\begin{lemma}[off-diagonal errors]
\label{lem:off}
    The off diagonal terms of the matrix $\tB_{\mu,\e}$ have the following form:
    \begin{equation}
    \label{B23}
            (\tB_{\mu,\e})_{1,3} = -\sqrt 2 \e \big( 1 +  r_1 (\e^2,\mu^2) \big) ,     \quad (\tB_{\mu,\e})_{2,3} = \im  \mu \e \,  r_2 (\mu,\e)
    \end{equation}
    with remainders $ r_k \in \cC^{\reg-2}$.
\end{lemma}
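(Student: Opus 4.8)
The plan is to pin down the two entries $(\tB_{\mu,\e})_{1,3}=(\cB_{\mu,\e}f_0^+(\mu,\e),f_1^+(\mu,\e))$ and $(\tB_{\mu,\e})_{2,3}=(\cB_{\mu,\e}f_0^+(\mu,\e),f_1^-(\mu,\e))$ by combining three ingredients: their reality type (reversibility), their behaviour on the axis $\mu=0$ (already computed), and — the one genuinely new input — their behaviour on the axis $\e=0$. By \eqref{Brealim} the $(1,3)$ entry is real and the $(2,3)$ entry is purely imaginary. From \eqref{B0e} we already know $(\tB_{0,\e})_{1,3}=-\sqrt 2\,\e+r(\e^3)$ and $(\tB_{0,\e})_{2,3}=0$, and from \eqref{dtB0e} that $\pa_\mu(\tB_{\mu,\e})_{1,3}\big|_{\mu=0}=(\dot\tB_{0,\e})_{1,3}=0$. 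What is missing is the dependence at $\e=0$, for which the key claim is that $(\tB_{\mu,0})_{1,3}=(\tB_{\mu,0})_{2,3}=0$ for all small $\mu$.

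To prove this claim I would exploit translation invariance at $\e=0$. Since $u_0=0$ and $c_0=\fm(1)$, by \eqref{Bmue} the operator $\cL_{\mu,0}=\cJ_\mu(\fm(1)-\cM(D+\mu))$ is a Fourier multiplier, hence commutes with every translation $\tau_\varsigma$ in \eqref{trans}. Consequently the spectral projector $P_{\mu,0}$ in \eqref{Pproj} (and $P_{0,0}$), and therefore the Kato operator $U_{\mu,0}$ in \eqref{OperatorU}, commute with all $\tau_\varsigma$; an operator on $L^2(\T)$ with this property is a Fourier multiplier and thus preserves each mode $e^{\im j x}$. Applied to the unperturbed basis \eqref{fksigma}, this gives that $f_0^+(\mu,0)=U_{\mu,0}(1/\sqrt2)$ is a constant (mode $0$), while $f_1^\pm(\mu,0)\in\mathrm{span}\{e^{\im x},e^{-\im x}\}$ (modes $\pm1$). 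Since $\cB_{\mu,0}=\fm(1)-\cM(D+\mu)$ sends the constant function to the constant $(\fm(1)-\fm(\mu))/\sqrt2$, the scalar products $(\cB_{\mu,0}f_0^+(\mu,0),f_1^\pm(\mu,0))$ pair mode $0$ against modes $\pm1$ and therefore vanish, proving the claim.

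With the claim established I would conclude by Hadamard factorisation, tracking the regularity loss. For the $(2,3)$ entry, vanishing both at $\mu=0$ and at $\e=0$, together with the $\cC^\reg$ regularity of $\tB_{\mu,\e}$, yields $(\tB_{\mu,\e})_{2,3}=\mu\,\e\,K(\mu,\e)$ with $K\in\cC^{\reg-2}$; being purely imaginary forces $K=\im\,r_2$ with $r_2$ real, which is the second identity in \eqref{B23}. For the $(1,3)$ entry, vanishing at $\e=0$ gives $(\tB_{\mu,\e})_{1,3}=\e\,G(\mu,\e)$ with $G\in\cC^{\reg-1}$; the value $G(0,0)=-\sqrt2$ is read off from \eqref{B0e}, while the first derivatives $\pa_\e G(0,0)$ and $\pa_\mu G(0,0)$ both vanish, the former because $(\tB_{0,\e})_{1,3}=-\sqrt2\,\e+r(\e^3)$ forces $G(0,\e)=-\sqrt2+\cO(\e^2)$, the latter because $\e\,\pa_\mu G(0,\e)=(\dot\tB_{0,\e})_{1,3}=0$. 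A second-order Taylor expansion (licit since $\reg\geq 3$) then gives $G(\mu,\e)+\sqrt2=\cO(\mu^2+\e^2)$, i.e.\ $G=-\sqrt2\,(1+r_1(\e^2,\mu^2))$ with $r_1\in\cC^{\reg-2}$ obeying $|r_1|\leq C(\e^2+\mu^2)$, which is the first identity in \eqref{B23}.

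The main obstacle is the vanishing at $\e=0$: everything rests on the fact that, in the unperturbed problem, the zero Fourier mode decouples from the first harmonics, and the cleanest way to see that this decoupling survives the non-explicit Kato transformation $U_{\mu,0}$ is through translation invariance rather than by computing $U_{\mu,0}$ directly. A secondary point demanding care is the bookkeeping of the differentiability orders through the successive factorisations of $\e$ and $\mu$, since this is exactly what fixes the final regularity $\cC^{\reg-2}$ of the remainders.
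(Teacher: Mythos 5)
Your proposal is correct and follows essentially the same strategy as the paper: the decisive input in both is the identity $(\tB_{\mu,0})_{1,3}=(\tB_{\mu,0})_{2,3}=0$ for all small $\mu$, which the paper obtains from Lemma \ref{nopuremu} (namely $f_k^\sigma(\mu,0)=f_k^\sigma$, since $\cV_{\mu,0}=\cV_{0,0}$) while you obtain the weaker but sufficient statement that $U_{\mu,0}$ is a Fourier multiplier via translation invariance, after which both arguments combine this vanishing with the already-computed expansions of $\tB_{0,\e}$ and $\dot\tB_{0,\e}$ and the factorization Lemma \ref{lem:fg} (plus, in your case, a second-order Taylor expansion) to reach \eqref{B23} with remainders of class $\cC^{\reg-2}$.
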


\begin{proof}
So far,  from \eqref{exptBmue}  using Lemmata \ref{tB0exp1}, \ref{tB0exp2} and \ref{tBexp3}, we have that
 \begin{equation}\label{B23_1}
            (\tB_{\mu,\e})_{1,3} = -\sqrt 2 \e + r_3(\e^3) +\mu^2 \varphi_1(\mu,\e)  \quad \text{and}\quad 
            (\tB_{\mu,\e})_{2,3} = \im  \mu \, r_5 (\e^2) + \im \mu^2 \varphi_2 (\mu,\e)
    \end{equation} 
    with $r_3 \in \cC^\reg$, $r_5 \in \cC^{\reg-1}$, 
    $\varphi_i (\mu,\e) \in \cC^{\reg -2}$, $i=1,2$.

Consider first $(\tB_{\mu,\e})_{1,3} \in \cC^\reg$. By 
     Lemma \ref{lem:fg} $(ii)$, write $r_3(\e^3)  = \e \, \tilde r_3(\e^2)$
    for  $\tilde r_3 \in \cC^{\reg-1}$.
   Next put
   \begin{equation}\label{h1}
   h_1(\mu,\e):=  \mu^2\varphi_1(\mu,\e) = (\tB_{\mu,\e})_{1,3} +\sqrt 2 \e - \e \, \tilde r_3(\e^2) \in \cC^\reg \ .   
   \end{equation}
We claim that 
\begin{equation}\label{h1_claim}
   |h_1(\mu,\e)| \leq C | \mu^2 \e| \ , \quad \forall (\mu,\e) \in B(\mu_0)\times B(\e_0) \ . 
   \end{equation}   
Then, from Lemma \ref{lem:fg} $(ii)$, it follows that
   $h_1(\mu,\e) = \e  \, r(\mu^2)$ for some  $r \in \cC^{\reg -1}$, proving the first of \eqref{B23}. \\
We prove now    \eqref{h1_claim}. First  we show that  $h_1(\mu, 0)\equiv 0$.
     Indeed,      
  $f_k^\sigma(\mu,0) = f_k^\sigma$ 
  (see Lemma  \ref{nopuremu}) and  therefore 
$ (\cB_{\mu,0}f_1^\pm (\mu,0), f_0^+(\mu,0)) = 0$ $\, \forall \mu$ (recall that $\cB_{\mu,0}$  a Fourier multiplier). 
It follows that 
\begin{equation}
\label{tBmu0}
(\tB_{\mu,0})_{i,3} = 0  \  \ \  \forall \mu \ , \quad  i =1,2 \ . 
\end{equation}
From  \eqref{h1} we then deduce   $h_1(\mu, 0) \equiv 0  $. 
Then, since  from \eqref{h1} 
$ \mu^2 \varphi_1(\mu,0) = h_1(\mu, 0) = 0$,  
it follows that also $\varphi_1(\mu,0) \equiv 0$ $\, \forall \mu$. Then by Lemma   \ref{lem:fg} 
  $\varphi_1(\mu,\e) =  \e \, \tilde \varphi_1(\mu,\e)$ for  some 
    $\tilde \varphi_1 \in \cC^{\reg -3}$, and in conclusion 
 $h_1(\mu,\e) = \mu^2 \e \, \tilde \varphi_1(\mu,\e)$. The local boundedness of $\tilde \varphi_1$ proves the claim \eqref{h1_claim}.

Next consider $  (\tB_{\mu,\e})_{2,3} $. 
By  \eqref{tBmu0}, \eqref{B23_1} and arguing as before, we have again that 
$\varphi_2(\mu,\e) = \e \, \tilde \varphi_2(\mu,\e)$ with 
$ \tilde \varphi_2 \in \cC^{\reg -3}$. 
Thus from \eqref{B23_1} we have that
$$
| \,  (\tB_{\mu,\e})_{2,3} |\, \leq C |\mu\, \e|\ ( |\e| + |\mu|) \ , \quad \forall (\mu,\e) \in B(\mu_0)\times B(\e_0) \ 
$$
and by Lemma \ref{lem:fg} $(ii)$ it follows 
$ (\tB_{\mu,\e})_{2,3}  = \mu \e \, r(\mu,\e)$ for some $r \in \cC^{\reg -2}$, as claimed.
\end{proof}

We are finally ready to prove Proposition \ref{matrixrepprop}.
\begin{proof}[Proof of Proposition \ref{matrixrepprop}]
The expansions in \eqref{E}, \eqref{tf} follow from 
\eqref{exptBmue} and 
Lemmata \ref{tB0exp1}, \ref{tB0exp2}, \ref{tBexp3} and \ref{lem:off}. \\
In particular, the entries of the matrix $\tB_{\mu,\e}$ have the expressions
\begin{equation*}
    \begin{split}
        (\tB_{\mu,\e})_{1,1}&= - \fa \e^2 + r^\reg_1(\e^3) + \frac12 \mu^2 (-\ddot \fm (1) + r^{\reg-2}_2(\e)) + \mu^2 \varphi_{1,1}(\mu,\e)  \\
        (\tB_{\mu,\e})_{1,2}&=\im (\dot \fm(1) \mu + \mu r^{\reg-1}_2(\e^2) + \mu^2 r^{\reg-2}_2(\e))    + \mu^2 \varphi_{1,2}(\mu,\e)\\
        (\tB_{\mu,\e})_{2,2}&= -\frac12 \ddot \fm(1) \mu^2 + \mu^2 r^{\reg-2}_4(\e) + \mu^2 \varphi_{2,2}(\mu,\e)\\
        (\tB_{\mu,\e})_{3,3}&= \fm(1) - \fm(0) + \frac12 \fa \e^2 - \frac12  \mu^2 (\ddot\fm(0) + r^{\reg-2}_6 (\e)) + \mu^2 \varphi_{3,3}(\mu,\e)
    \end{split}
\end{equation*}
where $r^\reg(\e^a) $ denotes a real valued function in $\cO^\reg(\e^a)$  and $ \varphi(\mu,\e)$ is the remainder matrix in equation \eqref{exptBmue}. 
Since the matrix-valued function $\varphi(\mu,\e)\in \cC^{\reg-2}$ in \eqref{exptBmue} fulfills $\varphi(0,0) = 0$, by Lemma \ref{lem:fg}-$(iii)$ $\varphi_{i,j}(\mu,\e) = r^{\reg-2}(\mu,\e)$. \\
Lemma \ref{lem:fg}-$(i)$ allows gathering two orders of $\e$ from $r^\reg_1(\e^3)$, by lowering its regularity to $\cC^{\reg-2}$, proving equation  \eqref{E}. Finally, the expansions in Lemma \ref{lem:off} complete the proof of Proposition \ref{matrixrepprop}.
\end{proof}

\section{Block-decoupling}\label{sec5}
The aim of this section is to block diagonalize the matrix $\tL_{\mu,\e}$ 
in \eqref{tLmue}. The first step is a singular scaling transformation, which is  not symplectic (according to  Definition \ref{def:symp}) and thus we compute how the Poisson tensor $\tJ_{\mu}$ is transformed. 
\begin{lemma}\label{Rescaling}
    The conjugation of the Hamiltonian and reversible matrix $\tL_{\mu,\e}$ with the reversibility preserving  matrix \footnote{From now on we  use the symbol $f^\dagger$ for the conjugate transpose of vector}
    \begin{equation}
        Y := \begin{pmatrix}
Q & \rvline & {\tt 0} \\
\hline 
{\tt 0}^\dag & \rvline &  \sqrt{\mu}
\end{pmatrix}  , \quad Q = \begin{pmatrix}
    \sqrt{\mu} & 0 \\
    0 & \frac{1}{\sqrt{\mu}} 
\end{pmatrix} , \quad \mu >0 
    \end{equation}
    yields the Hamiltonian and reversible matrix 
    \begin{equation}\label{tLmue1}
        \tL_{\mu,\e}^{(1)} := Y^{-1}\tL_{\mu,\e}Y = \mu \,  \tJ_\mu^{(1)}\tB_{\mu,\e}^{(1)}
    \end{equation}
    where $\tJ_\mu^{(1)}$ is the skew-adjoint and reversible matrix
    \begin{equation}\label{tJ1}
    \tJ_\mu^{(1)} := Y^{-1}\tJ_\mu Y^{-*} = \begin{pmatrix}
        \im & 1 & 0\\
        -1 & \im \mu^2 & 0 \\
        0 & 0 & \im
    \end{pmatrix}\,   \ , 
    \end{equation}
 $ \tB_{\mu,\e}^{(1)}$ is the selfadjoint and reversibility preserving matrix
   \begin{equation}\label{tB1}
    \tB_{\mu,\e}^{(1)} := Y^* \tB_{\mu,\e} Y = \begin{pmatrix}
        E^{(1)} & \rvline & \tf^{(1)} \\
        \hline
        \tf^{(1)\dag} & \rvline & g^{(1)}
    \end{pmatrix}
\end{equation}
 and  the $2\times2$ symmetric and reversibility preserving matrix $E^{(1)}$, the vector $\tf^{(1)}$ and the number $g^{(1)}$ expand  as 
    \begin{equation}\label{E1f1g1}
        \begin{split}
    &E^{(1)}=\begin{pmatrix}
        -\fa \e^2 (1+r_1'(\e)) + \te_{22} \mu^2(1+r_1''(\e,\mu)) 
         & \im  ( \te_{12}  + r_2(\e^2, \mu\e,\mu^2))\\
         -\im ( \te_{12}  + r_2(\e^2, \mu\e,\mu^2)) &
         (\te_{22} + r_4(\e,\mu))  
    \end{pmatrix}\, ,\\
    &\tf^{(1)}= \e\begin{pmatrix}
        -\sqrt 2 + r_3(\e^2,\mu^2)  \\
        \im r_5(\e,\mu)
    \end{pmatrix}\, ,
    \quad 
    g^{(1)} =  \te_{33}+ \frac{ \fa}{2} \e^2+\tg_{33}\mu^2 + r_6 (\e^3, \mu^2\e,\mu^3) \ . 
        \end{split}
    \end{equation}
      The reminders $r_k$ are functions of class  $\cC^{\reg-2}$.
\end{lemma}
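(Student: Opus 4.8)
This is a direct computation with the real diagonal conjugation $Y$; the only delicate point is the bookkeeping of the negative powers of $\mu$ carried by the singular factor $1/\sqrt\mu$. Since $\mu>0$, $Y$ is real, so $Y^{*}=Y$ and $Y^{-*}=Y^{-1}$, and it commutes with $\bar\rho$ (being diagonal with real entries whose sign pattern matches that of $\bar\rho$); hence $Y$ is reversibility preserving. First I would produce the factorization: inserting $\uno=Y^{-*}Y^{*}$ between $\tJ_\mu$ and $\tB_{\mu,\e}$ in $Y^{-1}\tJ_\mu\tB_{\mu,\e}Y$ gives $Y^{-1}\tL_{\mu,\e}Y=(Y^{-1}\tJ_\mu Y^{-*})(Y^{*}\tB_{\mu,\e}Y)$. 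Conjugating $\tJ_\mu$ from \eqref{tJtB} by the diagonal $Y$ multiplies the $(i,j)$ entry by $d_i^{-1}d_j^{-1}$ with $d=(\sqrt\mu,1/\sqrt\mu,\sqrt\mu)$, turning the diagonal $\im\mu$ into $\im,\im\mu^{2},\im$ and leaving the off-diagonal $\pm1$ fixed; this is exactly $\tJ^{(1)}_\mu$ in \eqref{tJ1}, whose skew-adjointness and reversibility I read off directly (or inherit from those of $\tJ_\mu$ via $Y\bar\rho=\bar\rho Y$).

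Next I would compute $Y^{*}\tB_{\mu,\e}Y$, whose $(i,j)$ entry is $d_id_j(\tB_{\mu,\e})_{ij}$, so the diagonal carries $d_i^{2}\in\{\mu,1/\mu,\mu\}$ and the off-diagonal carries $d_id_j\in\{1,\mu\}$. Substituting the entries from \eqref{E}, \eqref{tf} and \eqref{B23}, one checks that every entry is $\mu$ times a quantity that stays bounded as $\mu\to0$; this defines $\tB^{(1)}_{\mu,\e}$ through $Y^{*}\tB_{\mu,\e}Y=\mu\,\tB^{(1)}_{\mu,\e}$ and gives $\tL^{(1)}_{\mu,\e}=\mu\,\tJ^{(1)}_\mu\tB^{(1)}_{\mu,\e}$. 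The key cancellations are that $(\tB_{\mu,\e})_{22}=\te_{22}\mu^{2}(1+\dots)$ and $(\tB_{\mu,\e})_{12}=\im\mu(\te_{12}+\dots)$ vanish to precisely the order offsetting the factors $1/\mu$ and $1$, producing the finite entries $E^{(1)}_{22}$, $E^{(1)}_{12}$ of \eqref{E1f1g1}.

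I expect the true obstacle to be the coupling vector $\tf^{(1)}$, where the singular scaling risks leaving a term that survives the limit $\e\to0$: the naive expansion from Lemmata \ref{tB0exp1}, \ref{tB0exp2} and \ref{tBexp3} would leave in the $(2,3)$ slot a contribution of size $O(\mu)$ that is not proportional to $\e$. Removing it is exactly the purpose of Lemma \ref{lem:off}; using its refined bounds $(\tB_{\mu,\e})_{1,3}=-\sqrt2\,\e(1+r(\e^{2},\mu^{2}))$ and $(\tB_{\mu,\e})_{2,3}=\im\mu\e\,r(\mu,\e)$, the factors $d_1d_3=\mu$ and $d_2d_3=1$ combine with the extracted $\mu$ to make $\tf^{(1)}$ genuinely proportional to $\e$, as claimed in \eqref{E1f1g1}.

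Finally I would record the structural properties. Self-adjointness of $\tB^{(1)}_{\mu,\e}$ follows from $(Y^{*}\tB_{\mu,\e}Y)^{*}=Y^{*}\tB_{\mu,\e}^{*}Y=Y^{*}\tB_{\mu,\e}Y$ using $\tB_{\mu,\e}^{*}=\tB_{\mu,\e}$, and skew-adjointness of $\tJ^{(1)}_\mu$ as above, so $\tL^{(1)}_{\mu,\e}$ is Hamiltonian; the reversibility-preserving property of $\tB^{(1)}_{\mu,\e}$ and reversibility of $\tL^{(1)}_{\mu,\e}$ follow from $\tB_{\mu,\e}\bar\rho=\bar\rho\,\tB_{\mu,\e}$, the reversibility of $\tJ^{(1)}_\mu$, and $Y\bar\rho=\bar\rho Y$. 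The class $\cC^{\reg-2}$ of the remainders is inherited from Proposition \ref{matrixrepprop} and Lemma \ref{lem:off}, since the powers of $\mu$ are extracted symbolically from the already-factored expressions there rather than by genuine division, so no regularity is lost.
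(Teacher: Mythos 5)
Your proposal is correct and follows essentially the same route as the paper, whose entire proof is the one-line computation $Y^*\tB_{\mu,\e}Y=\begin{pmatrix} QEQ & \sqrt{\mu}\,Q\tf\\ (\sqrt{\mu}\,Q\tf)^\dag & \mu g\end{pmatrix}$ followed by collecting the common factor $\mu$; you simply spell out the entrywise scaling $d_id_j$, the role of the refined off-diagonal bounds from Lemma \ref{lem:off} in making $\tf^{(1)}$ proportional to $\e$, and the structural properties, all of which check out.
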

\begin{proof} Compute 
$
        Y^* \tB_{\mu,\e} Y = \begin{pmatrix}
            QEQ & \rvline & \sqrt \mu Q \vec \tf\\
            \hline 
            (\sqrt \mu Q \vec \tf)^\dag & \rvline & \mu g
        \end{pmatrix}
  $
and collect a common factor $\mu$ from each entry. 
\end{proof}

\begin{remark}
1.  Before the conjugation, every entry of the matrix $\tB_{\mu,\e}$ is of class $\cC^\reg$ in $(\mu,\e)$. After the conjugation, the (2,2)-entry is divided by $\mu$, and thus it's regularity is decreased to $\cC^{\reg-1}$ at $\mu=0$.\\
2.  The  spectrum of $\tL_{\mu,\e}^{(1)}$ coincides with that of $\tL_{\mu,\e}$ whenever $\mu \neq 0$.
\end{remark}

\subsection{Non-perturbative step of block decoupling}\label{sectionnpbd}
  The next step is to block-diagonalize the matrix $\tL_{\mu,\e}^{(1)}$ in \eqref{tLmue1}. 
The main result of this section is the following.
\begin{lemma}\label{firstblockdec}
    There exists a vector $\ts (\mu,\e)$, with values in $\C^2$ and of class $\cC^{\reg-2}$ of the form 
 \begin{align}\label{ts}
\ts:=  \ts (\mu,\e) &=\frac{ \sqrt 2 \, \e}{\te_d^2} \begin{pmatrix}
           - \im \te_d + r_1(\e^2,\mu\e,\mu^2) )
                      \vspace{.3em}
                      \\
             \te_{b }  + r_2(\e,\mu)
        \end{pmatrix} 
 \end{align}    
    with  $\te_b, \te_d$,  defined in \eqref{tedet}, real  and non-zero by Assumption B, such that the following holds true. 
Conjugating the Hamiltonian and reversible matrix $\tL_{\mu,\e}^{(1)}$ in \eqref{tLmue1} with the symplectic and reversibility-preserving matrix 
    \begin{equation}\label{S.1}
    \exp{(S)},\qquad S:=\tJ_\mu^{(1)} \begin{pmatrix}
        \mathbf{0} & \rvline & \ts (\mu,\e)\\
        \hline 
        \ts (\mu,\e)^\dag & \rvline & 0
    \end{pmatrix} = \tJ_\mu^{(1)} X\ , 
    \end{equation}
    we obtain the Hamiltonian and reversible matrix 
    \begin{equation}\label{tLmue2}
        \tL_{\mu,\e}^{(2)} :=  \mu \exp{(S)}\, \tL^{(1)}_{\mu,\e}\, \exp{(-S)} = \mu\,  \tJ_\mu^{(1)}\tB_{\mu,\e}^{(2)}
    \end{equation}
    where $\tJ_\mu^{(1)} $   in \eqref{tJ1} and 
    \begin{equation}\label{tBmue2}
        \tB_{\mu,\e}^{(2)}:=\exp{(S)}^{-*}\,
        \tB_{\mu,\e}^{(1)}\, \exp{(S)}^{-1}= \begin{pmatrix}
            E^{(2)} & \rvline & \tf^{(2)} \\
            \hline
            \tf^{(2)\dag} & \rvline & g^{(2)}
        \end{pmatrix}
    \end{equation}
   with the $2\times2$ symmetric and reversibility preserving matrix $E^{(2)}$, the vector $\tf^{(2)}$ and the number $g^{(2)}$ expanding as 
    \begin{equation}\label{Efg2}
        \begin{split}
            E^{(2)} = &\begin{pmatrix}
                -\te_w \e^2 (1+r_1'(\e,\mu)) + \te_{22} \mu^2(1+r_1''(\e,\mu)) 
                 & \im ( \te_{12}  + r_2(\e^2, \mu\e,\mu^2))\\
                 -\im ( \te_{12}  + r_2(\e^2, \mu\e,\mu^2)) &
                 \te_{22} + r_4(\e,\mu)  
            \end{pmatrix}\, , \\
            & \tf^{(2)}=\e^3\begin{pmatrix}
                r_3(1)\\
                \im r_5(1)
            \end{pmatrix}\, , \qquad
            g^{(2)}= \te_{33} + r_6(\e^2,\mu^2) 
        \end{split}
    \end{equation}
and 
\begin{equation}\label{tewb}
\te_w:=\fa+\frac{2}{\te_d} \ , \quad \fa \mbox{ in } \eqref{fa} \ . 
\end{equation}
The functions $r_k$ are all   $\cC^{\reg-2}$ in $(\mu,\e)$.
\end{lemma}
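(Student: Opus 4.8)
The plan is to realize the block-decoupling as a single symplectic conjugation whose generator solves a homological equation, and then to read off the new diagonal from a second-order expansion. \textbf{Structure preservation.} First I would record that $S=\tJ^{(1)}_\mu X$ with $X=\begin{pmatrix}\mathbf 0 & \ts\\ \ts^\dagger & 0\end{pmatrix}$; since $\tJ^{(1)}_\mu$ in \eqref{tJ1} is skew-adjoint and reversible and $X$ is self-adjoint, Lemma \ref{expontentiallemma} gives that $\exp(S)$ is $\tJ^{(1)}_\mu$-symplectic and reversibility preserving \emph{provided} $X$ is reversible. A direct check shows that $X$ anticommutes with $\bar\rho$ exactly when $\ts_1\in\im\R$ and $\ts_2\in\R$, i.e. precisely the reality pattern imposed in \eqref{ts}; this is the constraint that the homological solution must respect. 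Symplecticity then gives $\exp(S)\,\tJ^{(1)}_\mu=\tJ^{(1)}_\mu\exp(S)^{-*}$, and since $S^*=-X\tJ^{(1)}_\mu$ so that $\exp(S)^{-*}=\exp(X\tJ^{(1)}_\mu)$, the conjugation \eqref{tLmue2} produces $\tL^{(2)}_{\mu,\e}=\mu\,\tJ^{(1)}_\mu\tB^{(2)}_{\mu,\e}$ with $\tB^{(2)}_{\mu,\e}=\exp(S)^{-*}\tB^{(1)}_{\mu,\e}\exp(S)^{-1}$ still self-adjoint and reversibility preserving. Because $\tJ^{(1)}_\mu$ is block-diagonal, $\tL^{(2)}_{\mu,\e}$ is block-diagonal precisely when the coupling block $\tf^{(2)}$ of $\tB^{(2)}$ is annihilated to the required order.

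\textbf{Homological equation for $\ts$.} Expanding $\tB^{(2)}=\tB^{(1)}+\big(X\tJ^{(1)}_\mu\tB^{(1)}-\tB^{(1)}\tJ^{(1)}_\mu X\big)+O(X^2)$ and reading off the top-right $2\times1$ block, the first-order term is $\tf^{(1)}-\mathcal A\,\ts$ with $\mathcal A:=E^{(1)}\hat J-\im g^{(1)}\,\mathrm{Id}$, where $\hat J$ is the upper $2\times2$ block of $\tJ^{(1)}_\mu$. Requiring cancellation gives $\mathcal A\,\ts=\tf^{(1)}$. Evaluated at $(\mu,\e)=(0,0)$ the matrix $\mathcal A$ is lower-triangular with both diagonal entries equal to $-\im(\te_{12}+\te_{33})=-\im\,\te_d$ (using $\te_{12}=\dot\fm(1)$ and $\te_{33}=\fm(1)-\fm(0)$ from \eqref{te}), so $\det\mathcal A(0,0)=-\te_d^2\neq0$ by Assumption B --- this is exactly where the hypothesis $\te_d\neq0$ enters, and it explains the $\te_d^2$ in the denominator of \eqref{ts}. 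Hence $\mathcal A(\mu,\e)$ is invertible for small $(\mu,\e)$ and $\ts=\mathcal A^{-1}\tf^{(1)}+(\text{higher order})$; solving the triangular system at leading order (and using $\te_{22}-\te_{12}=\te_b$) yields a vector of the form \eqref{ts}. To pin down $\ts$ exactly I would recast the full requirement $\tf^{(2)}(\ts;\mu,\e)=O(\e^3)$ as a fixed point $\ts=\mathcal A^{-1}\big(\tf^{(1)}+\mathcal N(\ts)\big)$, with $\mathcal N$ collecting the quadratic-and-higher exponential terms; $\mathcal N$ is $\cC^{\reg-2}$ and quadratically small, so a contraction on a ball of radius $O(\e)$ produces a unique $\cC^{\reg-2}$ solution $\ts=O(\e)$ with the stated expansion and drives $\tf^{(2)}$ down to $\e^3(r_3,\im r_5)^\top$.

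\textbf{The new diagonal and the Whitham--Benjamin coefficient.} With $\ts=O(\e)$ fixed, I would compute the surviving $2\times2$ block $E^{(2)}$ and the scalar $g^{(2)}$. The decisive entry is $(E^{(2)})_{11}$: assembling all the $O(\e^2)$ contributions of $\exp(X\tJ^{(1)}_\mu)\tB^{(1)}\exp(-\tJ^{(1)}_\mu X)$ --- the linear cross-terms $X\tJ^{(1)}_\mu\tB^{(1)}+\mathrm{h.c.}$ together with the quadratic terms $\tfrac12(X\tJ^{(1)}_\mu)^2\tB^{(1)}+\mathrm{h.c.}$ and $X\tJ^{(1)}_\mu\tB^{(1)}(X\tJ^{(1)}_\mu)^*$ --- and again invoking $\te_{12}+\te_{33}=\te_d$, these collapse to an extra $-\tfrac{2}{\te_d}\e^2$ on top of the pre-existing $-\fa\e^2$, producing $(E^{(2)})_{11}=-\te_w\,\e^2(1+\cdots)+\te_{22}\mu^2(1+\cdots)$ with $\te_w=\fa+\tfrac2{\te_d}$ as in \eqref{tewb}. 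This Schur-complement-type feedback, encoding the coupling to the stable third mode, is the computational heart of the lemma; the remaining entries $(E^{(2)})_{12}=\im(\te_{12}+\cdots)$, $(E^{(2)})_{22}=\te_{22}+\cdots$ and $g^{(2)}=\te_{33}+\cdots$ are unchanged at leading order, matching \eqref{Efg2}.

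\textbf{Main obstacle.} I expect the principal difficulties to be twofold. First, the bookkeeping of the second-order expansion that produces $\te_w$: many $O(\e^2)$ pieces must be gathered and the algebraic identity $\te_{12}+\te_{33}=\te_d$ used to make them coalesce into the clean $-\tfrac2{\te_d}\e^2$. Second, as flagged in the introduction, the remainder control under only finite $\cC^{\reg}$ regularity: each inversion of $\mathcal A$ and each Taylor truncation costs derivatives, so one must certify that $\mathcal A^{-1}$, $\ts$, and every entry of $\tB^{(2)}$ lie in $\cC^{\reg-2}$, repeatedly applying the order-extraction Lemma \ref{lem:fg} to pull the correct powers of $\e$ (and $\mu$) out of the remainders --- in particular to certify that $\tf^{(2)}$ is genuinely $O(\e^3)$ rather than merely $O(\e^2)$. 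Here the reversibility-forced alternating real/imaginary pattern of the entries is essential, since it annihilates the spurious lower-order terms that would otherwise obstruct the $\e^3$ bound.
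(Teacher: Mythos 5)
Your proposal is correct and follows essentially the same route as the paper: solve the linear homological equation $(\hat{\tJ}_\mu E^{(1)}-\im g^{(1)})\hat{\tJ}_\mu\ts=\hat{\tJ}_\mu\tf^{(1)}$ (invertible since the determinant at $(\mu,\e)=(0,0)$ equals $-\te_d^2\neq 0$), verify via Lemma \ref{expontentiallemma} that $\exp(S)$ is symplectic and reversibility preserving, and extract the new coefficient $\te_w=\fa+\tfrac{2}{\te_d}$ from the second-order correction $\tfrac12[S,R^{(1)}]$. The only superfluous ingredient is your fixed-point reformulation: once the linear homological equation is solved exactly, the residual off-diagonal block comes from $\mathrm{ad}_S^2(R^{(1)})$ and is automatically $O(\e^3)$ because $S$ and $R^{(1)}$ are both $O(\e)$, so no contraction argument is needed (the exact annihilation of $\tf^{(2)}$ is deferred to the subsequent complete-decoupling step).
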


The rest of the section is devoted to prove Lemma \ref{firstblockdec}.\\
We look for $\ts = \begin{pmatrix} \im s_1(\mu,\e) \\ s_2(\mu,\e) \end{pmatrix}$ and $s_j(\mu,\e)$ real valued, so that  the matrix $X$ is symmetric and reversible. 
By  \eqref{tJ1} and Lemma \ref{expontentiallemma},  the matrix   $\exp{(S)}$ is $\tJ^{(1)}$-symplectic and reversibility preserving for any $\mu\neq 0$. 
Next we compute the  Lie series  of the matrix  $\tL_{\mu,\e}^{(2)}$ in \eqref{tLmue2}.
 We first split $\tL_{\mu,\e}^{(1)}$ in the block-diagonal  and off diagonal parts:
\begin{equation}\label{D1R1}
\tL_{\mu,\e}^{(1)}= \mu \left(  D^{(1)} +  R^{(1)}\right) , \qquad  D^{(1)} := \begin{pmatrix}
        \Hat{\tJ}_\mu E^{(1)} & \rvline & {\tt 0} \\
        \hline
     {\tt 0}^\dag &\rvline & \im  g^{(1)}
    \end{pmatrix}, \qquad
 R^{(1)} := \begin{pmatrix}
       {\bf 0} & \rvline & \Hat{\tJ}_\mu  \tf^{(1)} \\
        \hline
        \im   \tf^{(1)\dag} &\rvline & 0
    \end{pmatrix} 
\end{equation}
where we adopted the notation
\begin{equation}\label{hJmu1}
    \tJ_\mu^{(1)} = \begin{pmatrix}
        \Hat{\tJ}_\mu & \rvline & \mathtt{0} \\
        \hline
        \mathtt{0}^\dag &\rvline & \im 
    \end{pmatrix},\qquad \Hat{\tJ}_\mu = \begin{pmatrix}
        \im & 1 \\
        -1 & \im \mu^2
    \end{pmatrix} \ . 
\end{equation}
The Lie expansion of  $\tL_{\mu,\e}^{(2)}$ is 
\begin{equation}
\label{lie.L2}
\begin{split}
    &\tL_{\mu,\e}^{(2)} = \mu \Big(  D^{(1)} + R^{(1)} + [S,  D^{(1)}] + [S, R^{(1)}]+ \frac12 [S,[S, D^{(1)}]]+ \\
    +\frac12 \int_0^1  (1-\tau)^2 &\exp{(\tau S)} \, \textup{ad}_S^3( D^{(1)})\, \exp{(-\tau S)}\de\tau + \int_0^1  (1-\tau) \exp{(\tau S)}\, \textup{ad}_S^2( R^{(1)})\, \exp{(-\tau S)}\de\tau \Big) \,.
    \end{split}
\end{equation}
We look for $S$ in order to solve the \textit{homological equation}
$ R^{(1)}+[S, D^{(1)}]=0$, which, by the  explicit expressions  \eqref{ts},  \eqref{D1R1}, reads 
\begin{equation}\label{homoleq}
    \begin{pmatrix}
      {\bf 0}  & \rvline & (\im g^{(1)} - \Hat{\tJ}_\mu  E^{(1)})\Hat{\tJ}_\mu \ts + \Hat{\tJ}_\mu  \tf^{(1)} \\
        \hline 
        \im \ts ^\dag (\Hat{\tJ}_\mu  E^{(1)} -\im g^{(1)} ) + \im  \tf^{(1)\dag}  & \rvline & 0
    \end{pmatrix} = 0   \ . 
\end{equation}
One readily checks that the  two equations are equivalent, 
so it is enough to solve 
\begin{equation}\label{homeq1}
    ( \Hat{\tJ}_\mu E^{(1)}-\im g^{(1)} )\Hat{\tJ}_\mu \ts  =\Hat{\tJ}_\mu \tf^{(1)}  \ . 
\end{equation}
By Proposition \ref{Rescaling}  
the matrix $\Hat{\tJ}_\mu E^{(1)}-\im  g^{(1)}$ has the form 
\begin{equation}\label{inversedeterminingS}
    \Hat{\tJ}_\mu E^{(1)}-\im  g^{(1)} = \begin{pmatrix}
        -\im\te_d   + \im r_1 (\e^2, \mu\e,\mu^2)& \te_b   + r_2(\e,\mu) \\
         \fa\e^2(1+r_3'(\e))-  \te_b \mu^2(1+r_3''(\e,\mu))   & - \im \te_d  + \im r_4 (\e^2, \mu\e,\mu^2)
    \end{pmatrix}
\end{equation}
where we used that by \eqref{te}
\begin{equation}\label{te.id}
\te_{12} + \te_{33} \equiv \te_d  , \quad \te_{22}- \te_{12} \equiv   \te_b \ 
\end{equation}
with $\te_d$, $\te_b$ the numbers in \eqref{tedet}.
Its  determinant is
\begin{equation}
    \det (\Hat{\tJ}_\mu  E^{(1)}-\im g^{(1)})  = 
     - \left( \te_d^2+r(\e^2,\mu\e,\mu^2)\right) 
\end{equation}
which is not zero for sufficiently small $(\mu,\e)$ since $\te_d \neq 0$ by Assumption B.
A direct computation than gives the following result:
\begin{lemma}
    The vector $\ts(\mu,\e)$ in  \eqref{ts} solves  the homological equation \eqref{homeq1} and it  is $\cC^{\reg-2}$ .
    \end{lemma}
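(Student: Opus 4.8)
The plan is to treat \eqref{homeq1} as a genuine linear system for $\ts$ and solve it explicitly. First I observe that the two factors on the left are separately invertible near $(0,0)$: the matrix $\Hat{\tJ}_\mu$ in \eqref{hJmu1} has $\det\Hat{\tJ}_\mu = 1-\mu^2$, hence is invertible for $|\mu|<1$, while the determinant of $\Hat{\tJ}_\mu E^{(1)}-\im g^{(1)}$ has already been computed to equal $-\big(\te_d^2+r(\e^2,\mu\e,\mu^2)\big)$, which stays bounded away from zero for small $(\mu,\e)$ precisely because $\te_d\neq 0$ by Assumption B. Consequently the product $(\Hat{\tJ}_\mu E^{(1)}-\im g^{(1)})\Hat{\tJ}_\mu$ is invertible on a full neighbourhood of the origin, and \eqref{homeq1} admits the unique solution
\[
\ts = \Hat{\tJ}_\mu^{-1}\,\big(\Hat{\tJ}_\mu E^{(1)}-\im g^{(1)}\big)^{-1}\,\Hat{\tJ}_\mu\,\tf^{(1)}\, .
\]

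For the regularity I would invoke Cramer's rule. By Lemma \ref{Rescaling} all entries of $E^{(1)}$, of $\tf^{(1)}$ and the scalar $g^{(1)}$ are of class $\cC^{\reg-2}$, whereas $\Hat{\tJ}_\mu$ is polynomial in $\mu$; hence $\Hat{\tJ}_\mu E^{(1)}-\im g^{(1)}$, its adjugate, and $\Hat{\tJ}_\mu\tf^{(1)}$ are all $\cC^{\reg-2}$. Since the determinant is $\cC^{\reg-2}$ and nonvanishing near $(0,0)$, its reciprocal is $\cC^{\reg-2}$, and therefore the inverse matrix and the resulting $\ts$ inherit the same regularity.

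To obtain the precise form \eqref{ts}, I would substitute the leading expansions recorded in \eqref{E1f1g1} and \eqref{inversedeterminingS}, using the identities \eqref{te.id}, namely $\te_{12}+\te_{33}=\te_d$ and $\te_{22}-\te_{12}=\te_b$. Because $\tf^{(1)}=\cO(\e)$ while the two factors being inverted are nonsingular at $(0,0)$, the whole vector $\ts$ carries an overall factor $\e$; the prefactor $\sqrt 2/\te_d^2$ arises from the leading coefficient $-\sqrt 2$ of $\tf^{(1)}$ together with the determinant $-\te_d^2$. Collecting the leading terms, and using \eqref{te.id}, then reproduces the structure displayed in \eqref{ts}, with the first entry purely imaginary and the second real. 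That real/imaginary split is in fact guaranteed a priori: the blocks $D^{(1)}$ and $R^{(1)}$ in \eqref{D1R1} respect the reversible structure, so the unique solution of the homological equation must be of the reversible type $\ts=(\im s_1,s_2)^\top$, $s_j$ real, postulated at the start of the proof.

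The steps above are routine linear algebra once invertibility is in hand. The only point genuinely requiring care — and the one I expect to be the main, though still elementary, obstacle — is the bookkeeping of the remainders: one must verify slot by slot that the error in the first component depends only on $(\e^2,\mu\e,\mu^2)$ and that in the second only on $(\e,\mu)$, and that no regularity below $\cC^{\reg-2}$ is lost. This is handled by tracking the real/purely-imaginary alternation of the entries of $\Hat{\tJ}_\mu E^{(1)}-\im g^{(1)}$ (inherited from the reversibility-preserving structure of $\tB^{(1)}_{\mu,\e}$) through the adjugate and through the expansion of the reciprocal determinant.
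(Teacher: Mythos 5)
Your proposal is correct and is essentially the paper's own argument: the paper likewise notes that $\det\big(\Hat{\tJ}_\mu E^{(1)}-\im g^{(1)}\big)=-\big(\te_d^2+r(\e^2,\mu\e,\mu^2)\big)\neq 0$ by Assumption B and then states that a direct computation yields $\ts$, so your explicit inversion $\ts=\Hat{\tJ}_\mu^{-1}\big(\Hat{\tJ}_\mu E^{(1)}-\im g^{(1)}\big)^{-1}\Hat{\tJ}_\mu\tf^{(1)}$ together with the Cramer's-rule regularity argument is just that computation written out. One remark: if you actually finish the leading-order arithmetic, the first component of $\ts$ comes out proportional to $-\im\te_d$ rather than $-\im\te_b$ (this is exactly what is needed for the $(1,1)$ entry $-2\te_d^{-1}\e^2(1+\cdots)$ of $\Delta E^{(1)}$ in \eqref{tildeEg} and hence for $\te_w=\fa+2/\te_d$), so \eqref{ts} as printed appears to carry a typo that your outline, once completed, would correct rather than ``reproduce''.
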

%
\noindent
Since the matrix $S$ solves the homological equation $[S,D^{(1)}]+R^{(1)}=0$, the Lie expansion of $\tL^{(2)}_{\mu,\e}$ in \eqref{lie.L2} reduces to
\begin{equation}\label{L2.lie2}
    \tL^{(2)}_{\mu,\e}=\mu \Big(  D^{(1)} + \frac12 [S, R^{(1)}] + \frac12 \int_0^1  (1-\tau^2) \exp{(\tau S)}\, \textup{ad}_S^2( R^{(1)}) \, \exp{(-\tau S)}\de\tau \Big) \ . 
\end{equation}
In particular, the block-diagonal correction $\frac12[S,  R^{(1)}]$ is the Hamiltonian and reversible matrix 
\begin{equation}\label{tildeEg}
    \frac12 \tJ_\mu^{(1)} \begin{pmatrix}
        \im ( \ts  \tf^{(1)\dag}  -  \tf^{(1)}  \ts^\dag ) & \rvline & \mathtt{0}\\
        \hline 
        \mathtt{0} & \rvline &  \ts^\dag \hat \tJ_\mu  \tf^{(1)} - \tf^{(1)\dag}\hat \tJ_\mu  \ts 
    \end{pmatrix} =: \tJ_\mu^{(1)}\begin{pmatrix}
       \Delta E^{(1)} & \rvline & 0 \\
        \hline 
        0 & \rvline & \ \Delta g^{(1)}
    \end{pmatrix}
\end{equation}
where     the $2\times 2$ selfadjoint a reversibility preserving matrix $ \Delta E^{(1)}$ 
 and the real number $ \Delta g^{(1)}$ expand as 
    \begin{equation}
      \Delta E^{(1)} = \e^2 \begin{pmatrix}
            -2\te_d^{-1} (1+r_1(\e,\mu)) & \im \te_b\te_d^{-2}(1+r_2(\e,\mu))\vspace{0,15cm}\\ 
            -\im\te_b\te_d^{-2}(1+r_2(\e,\mu))& 0
        \end{pmatrix}\ , \quad \Delta g^{(1)} =  r_3(\e^2) \ . 
    \end{equation}
Notice that, whereas the other corrective terms are perturbative (i.e. they are of the same order of the remainders in $\tB_{\mu,\e}$), the $(1,1)$ entrance of the matrix $  \Delta E^{(1)} $ is of the same order $\e^2$ as that of $E^{(1)}$, 
and will give an essential contribute  to the Benjamin-Feir instability. 
In particular,  the block-diagonal matrix $D^{(1)}+\frac12 [S,R^{(1)}]$ is now  of the form
\begin{equation}
    \tJ_\mu^{(1)} \begin{pmatrix}
         E^{(1)} + \Delta E^{(1)}  & \rvline & 0\\
        \hline
        0 & \rvline & g^{(1)} + \Delta g^{(1)} 
    \end{pmatrix}=:\tJ_\mu^{(1)}\begin{pmatrix}
            E^{(2)} & \rvline & 0 \\
            \hline
            0 & \rvline & g^{(2)}
        \end{pmatrix}
\end{equation}
with $E^{(2)}$ and $g^{(2)}$ given in Lemma \ref{firstblockdec}. In particular note the new coefficient $\te_w$ in \eqref{tewb} at order $\e^2$ in the entrance (1,1) of $E^{(2)}$.\\
Finally, we show that the reminder of the Lie expansion in \eqref{L2.lie2} 
is small.
\begin{lemma}
    The $3\times 3$ Hamiltonian and reversible matrix 
    $$
    \frac12 \int_0^1  (1-\tau^2) \exp{(\tau S)} \, \textup{ad}_S^2(R^{(1)})\, \exp{(-\tau S)}\de\tau = \e^3 \tJ_\mu^{(1)} \begin{pmatrix}
        \widehat{E} & \rvline & \widehat \tf^{(2)} \\
        \hline
       \widehat  \tf^{(2)\dag} & \rvline & \widehat{g}
    \end{pmatrix}
    $$
    with 
    $\widehat{E}$, $\widehat{g}$ and 
    $\widehat \tf^{(2)}$ functions  $\cC^{\reg-2}$ of $(\mu,\e)$.
\end{lemma}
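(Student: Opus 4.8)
The plan is to exploit that both the solution $\ts$ of the homological equation in \eqref{ts} and the off-diagonal vector $\tf^{(1)}$ in \eqref{E1f1g1} carry an \emph{explicit} factor $\e$, so that the double bracket $\textup{ad}_S^2(R^{(1)})$ appearing in the integral remainder of \eqref{L2.lie2} is literally proportional to $\e^3$, with no division needed. First I would write $\ts=\e\,\hat\ts$ and $\tf^{(1)}=\e\,\hat\tf^{(1)}$, where
$$
\hat\ts=\frac{\sqrt2}{\te_d^2}\begin{pmatrix}-\im(\te_b+r_1)\\ \te_b+r_2\end{pmatrix},\qquad
\hat\tf^{(1)}=\begin{pmatrix}-\sqrt2+r_3\\ \im\,r_5\end{pmatrix}
$$
are of class $\cC^{\reg-2}$. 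Correspondingly, by \eqref{S.1}, \eqref{D1R1} and \eqref{hJmu1}, one has $S=\e\,\hat S$ and $R^{(1)}=\e\,\hat R^{(1)}$ with
$$
\hat S:=\tJ_\mu^{(1)}\begin{pmatrix}\mathbf{0}&\hat\ts\\ \hat\ts^\dagger&0\end{pmatrix},\qquad
\hat R^{(1)}:=\tJ_\mu^{(1)}\begin{pmatrix}\mathbf{0}&\hat\tf^{(1)}\\ \hat\tf^{(1)\dagger}&0\end{pmatrix},
$$
both $\cC^{\reg-2}$ in $(\mu,\e)$ and both of the form $\tJ_\mu^{(1)}\cdot(\text{self-adjoint})$.

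By bilinearity of the commutator, $\textup{ad}_S^2(R^{(1)})=\e^3\,\textup{ad}_{\hat S}^2(\hat R^{(1)})$, while $\exp(\tau S)=\exp(\tau\e\hat S)$. Substituting into the integral in \eqref{L2.lie2} pulls out the factor $\e^3$ and leaves
$$
\widehat M(\mu,\e):=\tfrac12\int_0^1(1-\tau^2)\,\exp(\tau\e\hat S)\,\textup{ad}_{\hat S}^2(\hat R^{(1)})\,\exp(-\tau\e\hat S)\,\de\tau .
$$
The integrand is jointly $\cC^{\reg-2}$ in $(\mu,\e)$ (the matrix exponential is smooth in its entry and $\hat S,\hat R^{(1)}\in\cC^{\reg-2}$) and continuous in $\tau\in[0,1]$, so integrating in $\tau$ preserves $\cC^{\reg-2}$ regularity. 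This is precisely where the explicit factorization $\ts=\e\hat\ts$, $\tf^{(1)}=\e\hat\tf^{(1)}$ pays off: it yields the order $\e^3$ \emph{without} the loss of derivatives that a formal division by $\e^3$, or an appeal to the division Lemma \ref{lem:fg}, would cost.

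It remains to identify the algebraic structure of $\widehat M$. I would use that the Hamiltonian matrices for the symplectic form $\tJ_\mu^{(1)}$ form a Lie algebra, hence are closed under commutators, together with the parity bookkeeping of Definition \ref{def:sympm}: since $\tJ_\mu^{(1)}$ and $\hat X:=\begin{pmatrix}\mathbf{0}&\hat\ts\\ \hat\ts^\dagger&0\end{pmatrix}$ are both reversible, $\hat S=\tJ_\mu^{(1)}\hat X$ is reversibility preserving, whereas $\hat R^{(1)}$ (the off-diagonal part of the Hamiltonian and reversible matrix $\tL_{\mu,\e}^{(1)}$, and $\bar\rho$ respects the $2+1$ block splitting) is reversible; the commutator of a reversibility-preserving matrix with a reversible one is again reversible, so $\textup{ad}_{\hat S}^2(\hat R^{(1)})$ is Hamiltonian and reversible. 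By Lemma \ref{expontentiallemma} the conjugators $\exp(\pm\tau\e\hat S)$ are $\tJ^{(1)}$-symplectic and reversibility preserving, so each conjugate remains Hamiltonian and reversible, a property preserved under the $\tau$-integral. (Equivalently, from \eqref{L2.lie2} the remainder equals $\mu^{-1}\tL_{\mu,\e}^{(2)}-D^{(1)}-\tfrac12[S,R^{(1)}]$, a real-linear combination of Hamiltonian and reversible matrices by Lemma \ref{firstblockdec} and \eqref{tildeEg}, hence Hamiltonian and reversible.) Since $\tJ_\mu^{(1)}$ is invertible for small $\mu$ (its determinant equals $\im(1-\mu^2)$), I set $\widehat B:=(\tJ_\mu^{(1)})^{-1}\widehat M$, which is self-adjoint and reversibility preserving, and read off its $2+1$ block structure $\widehat B=\begin{pmatrix}\widehat E&\widehat\tf^{(2)}\\ \widehat\tf^{(2)\dagger}&\widehat g\end{pmatrix}$, giving the announced form with $\widehat E,\widehat\tf^{(2)},\widehat g$ all of class $\cC^{\reg-2}$.

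The only genuinely delicate point is the regularity accounting: one must guarantee that extracting $\e^3$ costs nothing in differentiability, which the structural factorization $S=\e\hat S$, $R^{(1)}=\e\hat R^{(1)}$ secures outright; everything else is the routine Lie-algebraic bookkeeping of the Hamiltonian and reversible classes.
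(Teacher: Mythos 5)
Your proposal is correct and follows essentially the same route as the paper: the paper's proof likewise observes that every entry of $S$ and of $R^{(1)}$ carries an explicit factor $\e$ times a $\cC^{\reg-2}$ function, so that $\textup{ad}_S^2(R^{(1)})$ is $\e^3$ times a $\cC^{\reg-2}$ matrix and the $\tau$-integral preserves this, with no division lemma needed. Your additional verification of the Hamiltonian and reversible structure (closure of the commutator and Lemma \ref{expontentiallemma}) is correct bookkeeping that the paper leaves implicit.
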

\begin{proof}
    Each entry of $S$  in \eqref{S.1}, \eqref{ts} and $R^{(1)}$ in \eqref{D1R1}, \eqref{E1f1g1} has the form  $\e \varphi(\mu,\e)$ with $\varphi \in \cC^{\reg-2}$. 
    The entries of  $\exp{(\tau S)}$ are $\cC^{\reg-2}$  functions of $(\mu,\e)$, and so $\textup{ad}_S^2(R^{(1)})$ has entries of the form $\e^3 \varphi(\mu,\e)$ with $\varphi \in \cC^{\reg-2}$.
\end{proof}
This concludes the proof of Lemma \ref{firstblockdec}.
\subsection{Complete block decoupling}
We now aim to completely block diagonalize the matrix $\tL_{\mu,\e}$. In order to do that, after the first step of non-perturbative block decoupling in the previous  section, we will perform a further $\tJ^{(1)}_\mu$-symplectic change of basis, that will completely remove the off-diagonal blocks of 
$\tL_{\mu,\e}^{(2)} $. This change of basis will be found solving a nonlinear equation in the entries of the matrix $\tB_{\mu,\e}^{(2)}$.
First we split
\begin{equation}\label{}
\begin{aligned}
\tL^{(2)}_{\mu,\e}  & = \mu \left( D^{(2)} + R^{(2)} \right) \ , \\
& D^{(2)} := \begin{pmatrix}
        \Hat{\tJ}_\mu E^{(2)} & \rvline & {\tt 0} \\
        \hline
     {\tt 0}^\dag &\rvline & \im  g^{(2)}
    \end{pmatrix}, \qquad
 R^{(2)} := \begin{pmatrix}
       {\bf 0} & \rvline & \Hat{\tJ}_\mu  \tf^{(2)} \\
        \hline
        \im  \tf^{(2)\dag} &\rvline & 0
    \end{pmatrix} 
\end{aligned}
\end{equation}
with   $\Hat{\tJ}_\mu$ in \eqref{hJmu1}, $E^{(2)}$, $g^{(2)}$, $\tf^{(2)}$ in \eqref{Efg2}.
\begin{lemma}\label{finallemma}
    There exist $\mu_0>0,\e_0>0$ such that for any $|\mu |<\mu_0$, $|\e|<\e_0$ there exist a reversibility preserving, Hamiltonian matrix $S^{(2)}=S^{(2)}(\mu,\e)$ and a Hamiltonian, reversible, block-diagonal matrix $P=P(\mu,\e)$, both of class $\cC^{\reg-2}$ in $(\mu,\e)\in B(\mu_0)\times B(\e_0) $, such that
    \begin{equation}
  \tL_{\mu,\e}^{(3)}:=       \exp(S^{(2)})\, \tL_{\mu,\e}^{(2)}\, \exp(-S^{(2)}) = \mu \big( D^{(2)}+ \e^6 P\big) \ . 
    \end{equation}
\end{lemma}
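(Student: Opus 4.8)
The plan is to realize the decoupling conjugation as the exponential of an \emph{off-diagonal} generator, so that the Hamiltonian and reversible structure is preserved for free by Lemma \ref{expontentiallemma}, and then to determine the generator by solving the resulting nonlinear block-decoupling equation through a fixed point argument whose solvability rests entirely on the spectral separation between the two diagonal blocks of $D^{(2)}$. Concretely, I would seek $S^{(2)} = \tJ_\mu^{(1)} X^{(2)}$ with $X^{(2)} = \begin{pmatrix} \mathbf{0} & \ts^{(2)} \\ \ts^{(2)\dag} & 0 \end{pmatrix}$ and $\ts^{(2)} = \begin{pmatrix} \im s_1 \\ s_2 \end{pmatrix}$, $s_1,s_2$ real, exactly as in \eqref{ts}, so that $X^{(2)}$ is self-adjoint and reversible. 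By Lemma \ref{expontentiallemma}, $\exp(S^{(2)})$ is then $\tJ_\mu^{(1)}$-symplectic and reversibility preserving, whence $\tL^{(3)}_{\mu,\e} := \exp(S^{(2)})\,\tL^{(2)}_{\mu,\e}\,\exp(-S^{(2)})$ is again Hamiltonian and reversible, of the form $\mu\,\tJ_\mu^{(1)}\tB^{(3)}$.

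Writing the Lie series as in \eqref{lie.L2} and observing that $[S^{(2)},D^{(2)}]$ is off-diagonal while $[S^{(2)},R^{(2)}]$ is block-diagonal, the requirement that $\tB^{(3)}$ be block-diagonal reduces to a single nonlinear equation $\Phi(\ts^{(2)},\mu,\e)=0$ for the vector $\ts^{(2)}$, whose differential at $\ts^{(2)}=0$ is the homological operator $\ts \mapsto (\Hat{\tJ}_\mu E^{(2)} - \im g^{(2)})\,\Hat{\tJ}_\mu\,\ts$ already met in \eqref{homeq1}--\eqref{inversedeterminingS}. The key point is that this linear operator is boundedly invertible uniformly in $(\mu,\e)$: repeating the computation that produced \eqref{inversedeterminingS}, and using $\te_{12}+\te_{33}=\te_d$ and $\te_{22}-\te_{12}=\te_b$ from \eqref{te.id}, the matrix $\Hat{\tJ}_\mu E^{(2)}-\im g^{(2)}$ has determinant $-\te_d^2 + r(\e^2,\mu\e,\mu^2)$, which stays bounded away from zero for $(\mu,\e)$ small because $\te_d\neq 0$ by Assumption B. This is precisely the spectral-gap mechanism: the $2\times 2$ block $\Hat{\tJ}_\mu E^{(2)}$ carries eigenvalues of size $\cO(\e,\mu)$ while the scalar block $\im g^{(2)}\approx \im\te_{33}$ stays $\cO(1)$, so the two spectra are disjoint.

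With the inverse in hand I would solve $\Phi=0$ by the implicit function theorem, or equivalently by a contraction on a ball of radius $\cO(\e^3)$. Since $\Phi(0,\mu,\e)$ equals the off-diagonal part of $R^{(2)}$, which is $\cO(\tf^{(2)})=\cO(\e^3)$ by \eqref{Efg2}, the unique small solution satisfies $\ts^{(2)}=\cO(\e^3)$, and it is of class $\cC^{\reg-2}$ since $\Phi$ is, invoking the regularity and factorization statements of Lemma \ref{lem:fg}. To read off the sharp size of the surviving diagonal correction, note that $[S^{(2)},D^{(2)}]$ is purely off-diagonal and is spent cancelling $R^{(2)}$, so the leading block-diagonal correction is $\tfrac12[S^{(2)},R^{(2)}]$, quadratic in the off-diagonal data: with $S^{(2)}=\cO(\e^3)$ and $R^{(2)}=\cO(\e^3)$ this is $\cO(\e^6)$, and the same bound propagates to every higher commutator in \eqref{lie.L2}. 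Hence $\tL^{(3)}_{\mu,\e}=\mu\big(D^{(2)}+\e^6 P\big)$ with $P$ block-diagonal, Hamiltonian, reversible and $\cC^{\reg-2}$, as claimed.

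I expect the main obstacle to be the combination of two issues in the middle step: establishing the uniform invertibility of the homological operator \emph{down to} $\mu=0$, where $\Hat{\tJ}_\mu$ degenerates, while simultaneously controlling the $\cC^{\reg-2}$ dependence of the fixed point on $(\mu,\e)$, so that the quadratic cancellation producing the gain from $\e^3$ to the sharp $\e^6$ genuinely survives under only finite regularity of the matrix entries. Everything else is a bookkeeping of orders already fixed by \eqref{Efg2} and \eqref{te}.
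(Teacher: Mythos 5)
Your proposal is correct and follows essentially the same route as the paper: an off-diagonal Hamiltonian, reversibility-preserving generator, structure preservation via Lemma \ref{expontentiallemma}, the implicit function theorem applied to the nonlinear off-diagonal equation whose linearization at the origin has determinant $-\te_d^2\neq 0$ by Assumption B, and the quadratic gain from the $\cO(\e^3)$ off-diagonal data to the $\cO(\e^6)$ surviving block-diagonal correction. The only inessential slip is in your closing worry: after the singular rescaling of Lemma \ref{Rescaling} the tensor $\Hat{\tJ}_\mu$ has determinant $1-\mu^2$ and does \emph{not} degenerate at $\mu=0$ (and the reversibility of the fixed-point solution, which you assume by ansatz, is obtained in the paper a posteriori from uniqueness, since $\bar\rho\circ S\circ\bar\rho$ solves the same equation).
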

\begin{proof}
    We write for brevity $S^{(2)}=S$, and we look for  a Hamiltonian reversibility preserving matrix of the form 
    \begin{equation}\label{S2}
    S= \e^3 \tJ_\mu^{(1)}\begin{pmatrix}
        \mathbf{0} & \rvline & \ts\\
        \hline 
        \ts^\dag & \rvline & 0
    \end{pmatrix} \ , \quad \ts =   \begin{pmatrix} \im s_1(\mu,\e) \\ s_2(\mu,\e) \end{pmatrix} \ , \quad s_1, s_2 \in \cC^{\reg-2} \ .
\end{equation}

    Decomposing $\exp(S)\tL_{\mu,\e}^{(2)}\exp(-S)$ in the diagonal and off-diagonal blocks we solve for $S$ and $P$ the system 
    \begin{equation}\label{eqcases}
        \begin{cases}
            \Pi_D (\exp(S)\tL_{\mu,\e}^{(2)}\exp(-S))= \mu \big( D^{(2)} + \e^6 P\big) \\
            \Pi_\varnothing(\exp(S)\tL_{\mu,\e}^{(2)}\exp(-S))=0
        \end{cases} \ , 
    \end{equation}
    where we denoted $\Pi_D,\, \Pi_\varnothing$ the  projections respectively onto the diagonal and off diagonal matrices. \\
        Rewriting the second one in Lie expansion, the equation reads
        \begin{equation}\label{liesereq}
            R^{(2)}+[S,D^{(2)}] + \underbrace{\Pi_\varnothing(\int_0^1 (1-\tau)\exp(\tau S)\textup{ad}_S^2 ( D^{(2)} + R^{(2)}) \exp(-\tau S)\de\tau)}_{\cR(\ts,\mu,\e)}=0 \, .
        \end{equation}
By \eqref{Efg2},  
$$R^{(2)} = \e^3 \wt R^{(2)} = \e^3 \begin{pmatrix}
       {\bf 0} & \rvline & \Hat{\tJ}_\mu  \widetilde \tf^{(2)} \\
        \hline
        \im \wt  \tf^{(2)\dag} &\rvline & 0
    \end{pmatrix} 
$$ with $ \widetilde \tf^{(2)} $ of class $\cC^{\reg-2}$.
Writing also $S  = \e^3 \wt S$, \eqref{liesereq} reads
        \begin{equation}\label{liesereq2}
            \wt R^{(2)}+[\wt S,D^{(2)}] + \e^3 \underbrace{\Pi_\varnothing(\int_0^1 (1-\tau)\exp(\tau \e^3 \wt S)\textup{ad}_{\wt S}^2 ( D^{(2)} + \e^3 \wt R^{(2)}) \exp(-\tau \e^3 \wt S)\de\tau)}_{\cR(\ts,\mu,\e)}=0 \, ,
        \end{equation}
     where, due to the operator $\Pi_\varnothing$, $\cR(\ts,\mu,\e) $ has the form 
        \begin{equation*}
            \cR(\ts',\mu,\e) = \tJ_\mu^{(1)}\begin{pmatrix}
                0 & \rvline & \eta (\ts, \mu, \e)\\
                \hline
                \eta (\ts, \mu, \e)^\dag & \rvline & 0
            \end{pmatrix}
        \end{equation*}
        with $\eta (\ts, \mu, \e)$ of class $\cC^{\reg-2}$ in its variables, and at least quadratic in  $\ts$. 
        Hence, equation \eqref{liesereq2}  reduces to 
        \begin{equation}\label{secondblockdeceq}
            F(\ts,\mu,\e)=(\im g^{(2)} - \Hat{\tJ}_\mu E^{(2)})\Hat{\tJ}_\mu \ts  + \Hat{\tJ}_\mu \wt \tf^{(2)} + \Hat{\tJ}_\mu \eta (\ts, \mu, \e) = 0
        \end{equation}
        where $F\in \cC^{\reg-2}(B_r(0)\times B_{\mu_0}(0) \times B_{\e_0}(0))$.
       Now 
        $$\di_{\ts}F(0,0,0) = (\im g^{(2)} - \Hat{\tJ}_\mu E^{(2)})\Hat{\tJ}_\mu 
      \quad   \mbox{ and } \quad 
        \det(\im g^{(2)} - \Hat{\tJ}_\mu E^{(2)})\bigg|_{\e=\mu=0} = - \te_d^2 \neq 0 $$ 
        by Assumption B. 
        By the Implicit Function Theorem we find  a $\cC^{\reg-2}$ vector $\ts(\mu,\e)$ that solves equation \ref{secondblockdeceq}. 
 Finally, we prove that   $\ts$  has the claimed form \eqref{S2}. Indeed, a direct computation shows that 
 the matrix $\bar \rho \circ S \circ \bar \rho$ solves \eqref{liesereq}, and so by uniqueness of the solution $S  = \bar \rho \circ S \circ \bar \rho$, proving that $S$ is  reversibility preserving as  claimed  in  \eqref{S2}.
 
 Consider now the  first equation  in \eqref{eqcases}. Again Lie expanding we find 
        \begin{equation}
        \begin{aligned}
            \Pi_D  &(\exp(S)\tL_{\mu,\e}^{(2)}\exp(-S)) \\
            & = \mu
             \Big( 
             D^{(2)} +
              \e^6\underbrace{\big( [\wt S, \wt R^{(2)}]
              +
               \Pi_D
              \int_0^1 (1-\tau)\exp(\tau S)\, \textup{ad}_{\wt S}^2( D^{(2)} +R^{(2)}) \, \exp(-\tau S) \de \tau\big)}_{ =: P(\mu,\e)\in \cC^{\reg-2}}\big) \Big)
              \end{aligned}
        \end{equation}
        concluding the proof of the lemma.
\end{proof}

We are now ready to prove the main result:
\begin{proof}[Proof of Theorem  \ref{mainres2}]
By Lemma \ref{finallemma}, 
$       \tL_{\mu,\e}^{(3)} 
     = \mu \,  \tJ_{\mu}^{(1)}   \begin{pmatrix}  E^{(3)} & \rvline & 0 \\
    \hline
    0 & \rvline &  g^{(3)}
\end{pmatrix}
$
with  $ E^{(3)}$ and $ g^{(3)}$ expanding as   $E^{(2)}$ and $g^{(2)}$ in \eqref{Efg2}. 
Then the matrix $\tU:= \mu \hat \tJ_\mu  E^{(3)} $ expands as in   \eqref{tU} 
(recall $\te_{12} = \dot\fm(1)$ by \eqref{tedet} and $\te_{22}- \te_{12} = \te_b$ by  \eqref{te.id}) and the number $\tg:= \mu \im g^{(3)}$ as  in \eqref{tg.fin} (recall  $\te_{33} = \fm(1) - \fm(0)$ by \eqref{te}).
\end{proof}

\section{Applications}\label{appl}
In this section we apply Theorem \ref{mainres1} to several generalized KdV equations, listed in the table in Section \ref{sec1}

\subsection{Whitham's type equations}
We first focus on  Whitham's type equations. The Whitham equation is a famous nonlinear dispersive partial differential equation, modelling unidirectional, two-dimensional water waves at the surface of an irrotational, inviscid fluid. It is characterized by the same dispersion relation as the original water waves equation, and can be adapted to include effects such as surface tension or a constant vorticity, by modifying the dispersion relation. In particular, it has the form \eqref{eq}, where the operator $\cM(D)$ 
has  symbol 
\begin{align}
\label{m.whitham}
& \fm_\tth(\xi)=\sqrt{\frac{\tanh (\tth \xi)}{\xi}} \hspace{7em} \mbox{(classical Whitham)} \\
\label{m.c.whitham}
& \fm_{\tth, \kappa}(\xi)=\sqrt{(1+\kappa\xi^2)\frac{\tanh (\tth \xi)}{\xi}}
 \qquad \mbox{(gravity-capillary Whitham)} \\
 \label{m.v.whitham}
 &\fm_{\tth, \gamma} (\xi)= \frac{\gamma}{2} \frac{\tanh(\tth \xi)}{\xi} 
 + 
 \sqrt{\frac{\tanh (\tth \xi)}{\xi} 
 + \frac{\gamma^2}{4} \frac{\tanh^2 (\tth \xi)}{\xi^2} 
 } \qquad \mbox{(vorticity-modified Whitham)}
\end{align}
where $\tth >0$ is a parameter representing the depth of the fluid, $\kappa>0$ the surface tension coefficients and $\gamma\in \R$ a constant vorticity.

Modulational instability for the Whitham equations in \eqref{m.whitham}--\eqref{m.v.whitham} has been studied in \cite{hur2014modulational, hur2015modulational}, where it has been proved that, for certain values of the parameters $\tth, \kappa, \gamma$, the linearized Whitham equations at a small amplitude traveling wave solutions have unstable eigenvalues near the origin.

Applying our abstract Theorem \ref{mainres1} we now show that the spectrum of the linearized operator actually depicts a closed "figure 8".

\paragraph{Whitham equation.} We first consider the  classical Witham equation with symbol $\fm_\tth(\xi)$ in \eqref{m.whitham}. 
For any $\tth >0$, the function $\xi \mapsto \fm_\tth(\xi)$ is real valued, even, smooth, it fulfills \eqref{orderpsedodiff} with $m = - \frac12$ and it is strictly decreasing on $\R_{>0}$. 
Hence,  condition  \eqref{m1diff} is verified for all $\tth >0$ and so is Assumption A.
Then,  by Theorem \ref{existPTW}, there exists $\e_0 = \e_0(\tth)>0$ and for any $\e \in (0, \e_0)$ a traveling wave
solution    $u_\e$ of  \eqref{eq}. 
Denote by  $\cL_\e$ the linearized operator \eqref{def:cLe} at  $u_\e$. We obtain:
\begin{theorem}[{\bf Whitham}]
Consider equation \eqref{eq} with $\cM(D)$ having symbol  \eqref{m.whitham}.
  There exists a critical threshold  $\tth_{\scaleto{\mathrm{WB}}{3pt}} \sim 1.146\dots$ such that:
\begin{itemize}
\item if $\tth > \tth_{\scaleto{\mathrm{WB}}{3pt}} $,  there exists $\e_1(\tth)>0$ such that for any $\e \in [0,\e_1(\tth))$, the spectrum of the operator $\cL_\e$ depicts near the origin a complete  figure ``8'' as in Figure \ref{figure8image}.
\item If $\tth <\tth_{\scaleto{\mathrm{WB}}{3pt}} $, the spectrum of $\cL_\e$ near the origin is purely imaginary.
\end{itemize}
\end{theorem}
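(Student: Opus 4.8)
The plan is to apply the dichotomy of Theorem~\ref{mainres1}, so that the whole statement reduces to determining the sign of the Whitham--Benjamin coefficient $\te_{\scaleto{\mathrm{WB}}{3pt}}(\tth)$ in \eqref{eWB} as the depth $\tth$ varies. Assumption~A has already been checked for $\fm_\tth$ in \eqref{m.whitham}, so the remaining input is the explicit evaluation of the few values of $\fm_\tth$ and its derivatives entering \eqref{tedet}--\eqref{formulaewb}, namely
\[
\fm_\tth(0)=\sqrt{\tth},\quad \fm_\tth(1)=\sqrt{\tanh\tth},\quad \fm_\tth(2)=\sqrt{\tfrac{\tanh(2\tth)}{2}},\quad
\dot\fm_\tth(1)=\frac{\tth\,\cosh^{-2}(\tth)-\tanh\tth}{2\sqrt{\tanh\tth}},
\]
together with the analogous (elementary but longer) closed form for $\ddot\fm_\tth(1)$ obtained by differentiating $\fm_\tth=\sqrt{\tanh(\tth\xi)/\xi}$ twice at $\xi=1$. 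I would treat each of these as a smooth function of $\tth>0$ and feed them into \eqref{eWB}.

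The key observation is that the sign of $\te_{\scaleto{\mathrm{WB}}{3pt}}$ is governed by a single transcendental factor. Since $\fm_\tth$ is strictly decreasing one has $\fm_\tth(2)-\fm_\tth(1)<0$; next I would show that $\te_d=\dot\fm_\tth(1)+\fm_\tth(1)-\fm_\tth(0)<0$ and that $\dot\fm_\tth(1)+\tfrac12\ddot\fm_\tth(1)<0$ (equivalently $\te_b>0$) for every $\tth>0$. Granting these, the denominator $(\fm_\tth(2)-\fm_\tth(1))\,\te_d$ in \eqref{eWB} is positive, the first numerator factor is negative, and hence
\[
\operatorname{sign}\te_{\scaleto{\mathrm{WB}}{3pt}}(\tth)=-\operatorname{sign}N(\tth),\qquad
N(\tth):=\dot\fm_\tth(1)+3\fm_\tth(1)-2\fm_\tth(2)-\fm_\tth(0),
\]
where $N$ is exactly the numerator of $\te_w$ in \eqref{formulaewb}. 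Thus the instability threshold is the zero of $N$.

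It then remains to analyse $N(\tth)$. A short asymptotic computation gives $N(\tth)=\tfrac56\,\tth^{5/2}+o(\tth^{5/2})>0$ as $\tth\to0^+$, while $N(\tth)\to-\infty$ as $\tth\to\infty$ (driven by the term $-\fm_\tth(0)=-\sqrt\tth$). The core analytic step is to prove that $N$ has a \emph{unique} zero on $(0,\infty)$; I would do this by showing $N$ is strictly monotone near the sign change (or that $N'<0$ there), which pins down a single critical depth $\tth_{\scaleto{\mathrm{WB}}{3pt}}$, numerically $\tth_{\scaleto{\mathrm{WB}}{3pt}}\simeq1.146$ from $N(\tth_{\scaleto{\mathrm{WB}}{3pt}})=0$. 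For $\tth\neq\tth_{\scaleto{\mathrm{WB}}{3pt}}$ all the quantities in \eqref{tedet}--\eqref{formulaewb} are then nonzero (in particular $\te_{12}=\dot\fm_\tth(1)<0$, since $\tfrac12\sinh(2\tth)>\tth$, together with $\te_b,\te_d\neq0$ by the above), so Assumption~B holds and Theorem~\ref{mainres1} applies: for $\tth>\tth_{\scaleto{\mathrm{WB}}{3pt}}$ one has $N<0$, hence $\te_{\scaleto{\mathrm{WB}}{3pt}}>0$ and a complete figure ``8''; whereas for $\tth<\tth_{\scaleto{\mathrm{WB}}{3pt}}$ one has $N>0$, hence $\te_{\scaleto{\mathrm{WB}}{3pt}}<0$ and purely imaginary spectrum near the origin.

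The main obstacle is precisely this global sign analysis of the transcendental functions of $\tth$: establishing the three inequalities $\te_d<0$, $\te_b>0$, and the uniqueness of the zero of $N$. These cannot be read off termwise and will require monotonicity arguments for the relevant combinations of $\tanh$, $\cosh^{-2}$ and square roots, or rigorous enclosures near $\tth_{\scaleto{\mathrm{WB}}{3pt}}\simeq1.146$; the numerical value $1.146$ itself is only determined by solving $N(\tth)=0$ numerically.
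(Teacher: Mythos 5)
Your proposal is correct and follows essentially the same route as the paper: both reduce the statement to the sign of $\te_{\scaleto{\mathrm{WB}}{3pt}}$ via Theorem~\ref{mainres1}, verify Assumption~B using the strict monotonicity of $\fm_\tth$ (which gives $\te_{12}<0$, $\te_d<0$ and $\fm_\tth(1)-\fm_\tth(2)>0$ termwise) together with $\te_b>0$, and then locate the unique sign change of the numerator $N(\tth)$ of $\te_w$ numerically at $\tth_{\scaleto{\mathrm{WB}}{3pt}}\approx 1.146$, exactly as the paper does by plotting $\te_w$. Two minor remarks: the paper obtains $\te_b>0$ for free from the explicit closed form \eqref{tebwhit}, which is manifestly a sum of nonnegative terms, so no enclosure or monotonicity argument is needed for that factor; and the leading coefficient in your small-$\tth$ expansion of $N$ comes out as $\tfrac12\,\tth^{5/2}$ rather than $\tfrac56\,\tth^{5/2}$, which is immaterial since only the sign enters.
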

\begin{proof}
We have already verified Assumption A, and   now we check Assumption B.
The coefficient $\te_{12}  = \dot \fm_\tth(1)<0 $ since $\xi \mapsto \fm_\tth(\xi)$ is strictly decreasing for $\xi \geq 0$. Also, 
\begin{equation}
 \label{ed.w}
 \te_d = \dot \fm_\tth(1)  + \fm_\tth(1) - \fm_\tth(0) <0 , \quad 
\fm(1)-\fm(2)>0
 \end{equation} for the same reason. 
 The coefficient $\te_b$ is
 \begin{equation}\label{tebwhit}
\te_{b} = \frac{\big( \tth(1-\tc_\tth^4) - \tc_\tth^2 \big)^2 + 4 \tth^2 \tc_\tth^4 (1- \tc_\tth^4) }{8 \tc_\tth^3} >0 
 \ , \qquad \tc_\tth:= \sqrt{\tanh(\tth)} \ . 
\end{equation}
Finally, the coefficient
$$
\te_w = \frac{\left(\tth(1-\ch^4)+5\ch^2 -2\ch(\sqrt{\tth}+2\ch\sqrt{\frac{1}{1+\ch^4}}) \right)}{2\ch(\tth(1-\ch^4)-2\sqrt{\tth}\ch-3\ch^2)(1-\sqrt{\frac{1}{1+\ch^4}})}\ 
$$
is plotted in Figure \ref{tewwitham} as a function of $\tth$. Numerically one sees that it is a  strictly increasing function of $\tth$. 
Moreover,  $\lim_{\tth \to 0+}\te_w = -\infty$ and $\lim_{\tth \to \infty} \te_w = \frac{\sqrt{2}}{\sqrt{2} -1}$. It has only one zero at the critical depth $\tth_{\scaleto{\mathrm{WB}}{3pt}} \approx 1.146...$

\noindent\underline{Sign of $\te_{\scaleto{\mathrm{WB}}{3pt}}$:} 
The function $\te_{\scaleto{\mathrm{WB}}{3pt}}=\te_w\te_b$
is plotted in  Figure \ref{tewwitham}. Since $\te_b$ is strictly positive by \eqref{tebwhit}, the function $\te_{\scaleto{\mathrm{WB}}{3pt}} $ has a zero at
$\tth_{\scaleto{\mathrm{WB}}{3pt}} \approx 1.146...$.
Therefore, whenever $\tth > \tth_{\scaleto{\mathrm{WB}}{3pt}}$, $\te_{\scaleto{\mathrm{WB}}{3pt}}>0$ and item $(i)$ of Theorem \ref{mainres1} applies. 
Otherwise, item $(ii)$ applies.

\end{proof}
\begin{figure}[H]
    \centering
    \includegraphics[width=0.42\linewidth]{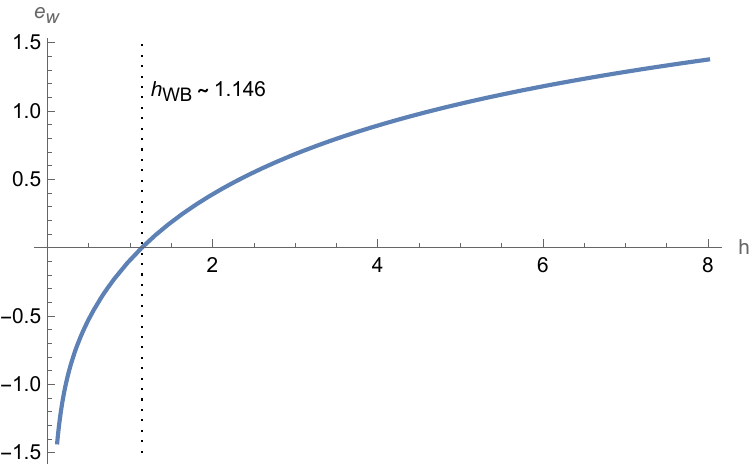}
    \includegraphics[width=0.42\linewidth]{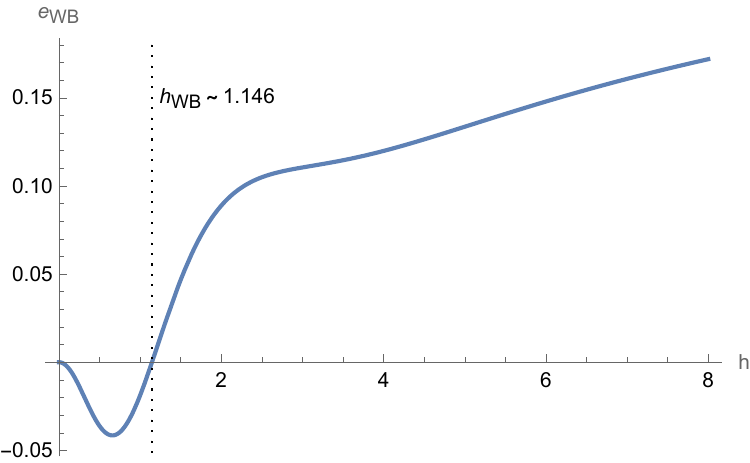}

    \caption{
    In the left figure is represented the dependence of $\te_w$ on $\tth$. We can see that it is in accordance with the one in \cite{hur2014modulational}.
    In the right figure is represented the dependence of $\te_{\scaleto{\mathrm{WB}}{3pt}}$ on $\tth$.}
    \label{tewwitham}
    \end{figure}

\textsc{Relation with Hur-Johnson \cite{hur2014modulational}.} 
Hur and Johnson proves the existence of unstable eigenvalues of the  $\cL_{\mu,\e}$  near zero provided the  \textit{stability-index function}
$$
\Gamma(\tth):=3\alpha (\tth) -2\alpha (2\tth) -1 -\tth\alpha' (\tth)\, , \qquad \alpha (\tth) := \sqrt{\frac{\tanh (\tth)}{\tth}}
$$
is negative. 
A direct computation shows that 
$$
\sqrt{\tth}\, \Gamma(\tth) = \frac{\te_{\scaleto{\mathrm{WB}}{3pt}}}{\te_b} \, (\fm_\tth(1) - \fm_\tth(2) ) \, \te_d \ , 
$$
and so, in view of \eqref{ed.w} and \eqref{tebwhit}, $\Gamma(\tth) < 0$ if and only if $\te_{\scaleto{\mathrm{WB}}{3pt}} >0$.
So our work shows that whenever the stability-index function is negative, actually the spectrum of the operator $\cL_\e$ depicts a complete figure 8 around the origin. 
%
%

\paragraph{Gravity-capillary Whitham equation.}
We now consider the Whitham equation with gravity and surface tension, namely \eqref{eq} with symbol $\fm_{\tth, \kappa}(\xi)$ in \eqref{m.c.whitham}.
Let us focus first on Assumption A.  The symbol $\fm_{\tth, \kappa}(\xi)$ is even, smooth and of order $\frac12$. 
Regarding Assumption (A3), it fails whenever $(\tth, \kappa)$  provoke a resonance, namely an integer  $n\in\N_0\setminus\{1\}$ such that $\fm_{\tth, \kappa}(1)=\fm_{\tth, \kappa}(n)$. 
These conditions define on the plane $(\kappa,\tth)$ a family of curves 
\begin{equation}
    \kappa_n(\tth) = \begin{cases}
        \dfrac{\tth}{\tanh (\tth)}-1 \, \qquad &\mbox{ if } n=0 \\
        \dfrac{n\tanh (\tth) - \tanh (n\tth)}{n^2\tanh(n\tth)-n\tanh (\tth)} \, \qquad &\mbox{ if } n\geq 2
    \end{cases}
\end{equation}
that are represented in dotted lines in Figure \ref{signwithtens} below, and move from top to bottom  as $n$ increases. In particular, curve (c) in Figure \ref{signwithtens} is due to the resonance between the first and second mode, that actively produce a change in the stability of the periodic traveling wave.
Denote by $\cR$ the resonant set 
\begin{equation}\label{}
\cR:= \{ (\tth, \kappa) \in \R_{>0} \times \R_{>0} \colon \kappa = \kappa_n(\tth) \ \ \mbox{ for some } n \in \N_0 \setminus \{1\} \} \ .
\end{equation}
In case $(\tth, \kappa) \not\in \cR$, \eqref{m1diff} is verified. Indeed, $\fm_{\tth,\kappa}(\xi)\mapsto +\infty$ for $\xi\mapsto+\infty$, and thus $\exists \ \bar n>0$ such that $\forall n >\bar n$, $|\fm_{\tth,\kappa}(n)-\fm_{\tth,\kappa}(1)|>1$. Therefore, $|\fm_{\tth,\kappa}(n)-\fm_{\tth,\kappa}(1)|>\min \{1, \min\{|\fm_{\tth,\kappa}(n)-\fm_{\tth,\kappa}(1)| \ : \ n\leq \bar n  \} \}$.
Then,  by Theorem \ref{existPTW}, for any $(\tth, \kappa) \not \in \cR$ there exists $\e_0 = \e_0(\tth,\kappa)>0$ and for any $\e \in (0, \e_0)$ a traveling wave
solution    $u_\e$ of  \eqref{eq}. 
Denote by  $\cL_\e$ the linearized operator \eqref{def:cLe} at  $u_\e$. We obtain:
\begin{theorem}[{\bf Gravity-capillary Whitham}]
Consider equation \eqref{eq} with $\cM(D)$ having symbol  \eqref{m.c.whitham}.
  There exist two open regions $\cU$ and $\cS$, delimited by analytic curves in $\R_{>0} \times \R_{>0}$, such that:
  \begin{itemize}
\item for any $(\tth, \kappa) \in \cU\setminus \cR$  there exists $\e_1(\tth, \kappa)>0$ such that for any $\e \in [0,\e_1(\tth,\kappa))$, the spectrum of the operator $\cL_\e$ depicts near the origin a complete  figure ``8'' as in Figure \ref{figure8image}.
\item for any $(\tth, \kappa) \in \cS\setminus \cR$  the spectrum of $\cL_\e$ near the origin is purely imaginary.
\end{itemize}
\end{theorem}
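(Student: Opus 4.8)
The plan is to deduce the statement from the dichotomy of Theorem \ref{mainres1}, exactly as for the classical Whitham equation, the essential new feature being that the relevant coefficients now depend on the two parameters $(\tth,\kappa)$ rather than on $\tth$ alone. Assumption A having already been verified on $\R_{>0}\times\R_{>0}\setminus\cR$, the first task is to check Assumption B. To this end I would record from \eqref{m.c.whitham} the values
\[
\fm_{\tth,\kappa}(0)=\sqrt{\tth},\qquad \fm_{\tth,\kappa}(1)=\sqrt{(1+\kappa)\tanh\tth},\qquad \fm_{\tth,\kappa}(2)=\sqrt{(1+4\kappa)\tfrac{\tanh(2\tth)}{2}},
\]
and compute $\dot\fm_{\tth,\kappa}(1)$, $\ddot\fm_{\tth,\kappa}(1)$ by differentiating the symbol. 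Substituting these into \eqref{tedet} produces explicit real-analytic expressions for $\te_{12}$, $\te_d$ and $\te_b$; the coefficient $\te_w$ is most conveniently handled through the reduced formula $\te_w=\fa+2\te_d^{-1}$ from \eqref{tewb}, with $\fa=(\fm_{\tth,\kappa}(1)-\fm_{\tth,\kappa}(2))^{-1}$ as in \eqref{fa}. Since each of these is a nonzero real-analytic function on $\R_{>0}\times\R_{>0}\setminus\cR$, its zero set is a (possibly empty) union of analytic curves, off which Assumption B holds.

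With Assumption B secured, I would define the two regions directly through the sign of the Whitham--Benjamin coefficient \eqref{eWB}, namely
\[
\cU:=\{(\tth,\kappa)\in\R_{>0}\times\R_{>0}:\ \te_{\scaleto{\mathrm{WB}}{3pt}}(\tth,\kappa)>0\},\qquad \cS:=\{(\tth,\kappa)\in\R_{>0}\times\R_{>0}:\ \te_{\scaleto{\mathrm{WB}}{3pt}}(\tth,\kappa)<0\}.
\]
Both are open because $\te_{\scaleto{\mathrm{WB}}{3pt}}=\te_w\te_b$ is real-analytic away from $\cR$, and their common boundary is contained in the analytic variety $\{\te_w=0\}\cup\{\te_b=0\}$, which (where the defining functions have non-degenerate gradient) furnishes the delimiting analytic curves. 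The conclusion is then immediate from Theorem \ref{mainres1}: on $\cU\setminus\cR$ we are in case (i), so for every $\e$ below a threshold $\e_1(\tth,\kappa)$ the two branches $\lambda_1^\pm(\mu,\e)$ trace the complete figure ``8'' of Figure \ref{figure8image}; on $\cS\setminus\cR$ we are in case (ii) and the spectrum near $0$ is purely imaginary.

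The substantive step --- and the one I expect to be the main obstacle --- is the qualitative analysis of the sign of $\te_{\scaleto{\mathrm{WB}}{3pt}}$ over the two-dimensional parameter plane, which is what actually produces the nonempty regions $\cU$ and $\cS$ with the claimed geometry. In the classical Whitham case $\te_b>0$ throughout, so the sign is governed by the single monotone function $\te_w(\tth)$ with one zero; here surface tension allows \emph{both} factors to change sign, so one must locate the level curves $\{\te_b=0\}$ and $\{\te_w=0\}$ and determine the sign of $\te_{\scaleto{\mathrm{WB}}{3pt}}$ on each complementary component, producing the sign diagram sketched in Figure \ref{signwithtens}. Particular care is required near the resonant curves $\kappa_n(\tth)$: across the $n=2$ curve the difference $\fm_{\tth,\kappa}(1)-\fm_{\tth,\kappa}(2)$ vanishes, so $\fa$ --- and hence $\te_w$ --- develops a pole, and one must check that the genuine separatrices of $\cU$ and $\cS$ are the analytic curves $\{\te_w=0\}$, $\{\te_b=0\}$ rather than $\cR$ itself. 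Finally, nonemptiness of both regions follows by a continuity argument: letting $\kappa\to0^{+}$ recovers the classical Whitham symbol $\fm_\tth$ and hence its coefficients, so for small $\kappa$ the two sign regimes persist near $\tth_{\scaleto{\mathrm{WB}}{3pt}}$, while the small/large $\tth$ and $\kappa$ asymptotics of $\te_w$ and $\te_b$ fix the global layout.
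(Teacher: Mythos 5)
Your proposal follows the same route as the paper: compute the coefficients in \eqref{tedet}--\eqref{formulaewb} explicitly for the symbol \eqref{m.c.whitham}, observe they are real-analytic off $\cR$, define $\cU$ and $\cS$ as the positivity/negativity sets of $\te_{\scaleto{\mathrm{WB}}{3pt}}=\te_w\te_b$, and invoke Theorem \ref{mainres1}; the sign diagram of Figure \ref{signwithtens} is then, as you anticipate, the substantive (numerically assisted) content. There is, however, one concrete oversight in your definition of the regions. Unlike in the pure-gravity case, the coefficient $\te_{12}=\dot\fm_{\tth,\kappa}(1)$ does vanish along a curve $\gamma_{12}$ in the $(\tth,\kappa)$ plane (surface tension makes the phase speed non-monotone), and $\te_{12}$ enters Assumption B but is \emph{not} a factor of $\te_{\scaleto{\mathrm{WB}}{3pt}}$: it neither forces a zero nor a pole of $\te_w\te_b$. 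Hence $\gamma_{12}$ can cross the open sets $\{\te_{\scaleto{\mathrm{WB}}{3pt}}>0\}$ and $\{\te_{\scaleto{\mathrm{WB}}{3pt}}<0\}$, and on it Theorem \ref{mainres1} is not applicable. You must therefore set $\cU:=\{\te_{\scaleto{\mathrm{WB}}{3pt}}>0\}\setminus\gamma_{12}$ and $\cS:=\{\te_{\scaleto{\mathrm{WB}}{3pt}}<0\}\setminus\gamma_{12}$, which is exactly what the paper does (the purple curve in Figure \ref{signwithtens}). The other potential degeneracies take care of themselves, as you partly note: $\{\te_d=0\}$ is a polar set of $\te_w$, so it is automatically excluded from both sign sets, and $\{\fm(1)=\fm(2)\}$ is already contained in $\cR$. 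Your additional continuity argument for nonemptiness ($\kappa\to0^+$ recovering the classical Whitham coefficients) is sound and is not spelled out in the paper, which relies on the plotted sign diagram instead.
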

\begin{proof}
Assumption A has already been verified. 
The coefficient $\te_{12}$ is given by
\begin{equation}
\te_{12} = \sqrt{1+\kappa} \, \ch \ . 
\end{equation}
Its zero set gives a curve $\gamma_{12}$ plotted in purple  in Figure \ref{signwithtens}.
The function 
\begin{equation}
\label{}
\te_d = \frac{\left(\tth(1+\kappa)(1-\ch^4)  - (3+\kappa)\ch^2 -2\sqrt{\tth(1+\kappa)}\ch \right)}{2\ch\sqrt{1+\kappa}}
\end{equation}
has zero set given by curve  (d) in Figure \ref{signwithtens},
the function 
\begin{equation}
\label{}
\begin{split}
\te_b = \frac{1-3\kappa(2+\kappa)}{(2\sqrt{1+\kappa})^3}\ch+\frac12\tth^2(1+\kappa)^{\frac12}(1-\ch^4)\ch^{-3}-\frac14 \tth(1+k)^{-\frac12}(1+3k)(1-\ch^4)
\end{split}
\end{equation}
has zero set given by curve (b) in Figure \ref{signwithtens}
and finally the function
\begin{equation}
\label{}
\te_w =\frac{1}{\te_d(\fm_{\tth,\kappa}(1)-\fm_{\tth,\kappa}(2))}\left( \frac{\left(\tth(1+\kappa)(1-\ch^4)+(5+7\kappa)\ch \right)}{2\sqrt{1+\kappa}\ch}-\sqrt \tth + \sqrt{1+4\kappa}\frac{2\ch}{\sqrt{1+\ch^4}}\right)
\end{equation}
has zero set given by two connected components (a) and (e).

Finally, the function 
$$
\te_{\scaleto{\mathrm{WB}}{3pt}}=\te_w\te_b
$$
Then put $\cU:= \{\te_{\scaleto{\mathrm{WB}}{3pt}} >0 \} \setminus \gamma_{12}$, and $\cS:=  \{\te_{\scaleto{\mathrm{WB}}{3pt}} <0 \} \setminus \gamma_{12}$.
%
\end{proof}

\begin{figure}[H]
    \centering
    \includegraphics[width=0.42\linewidth]{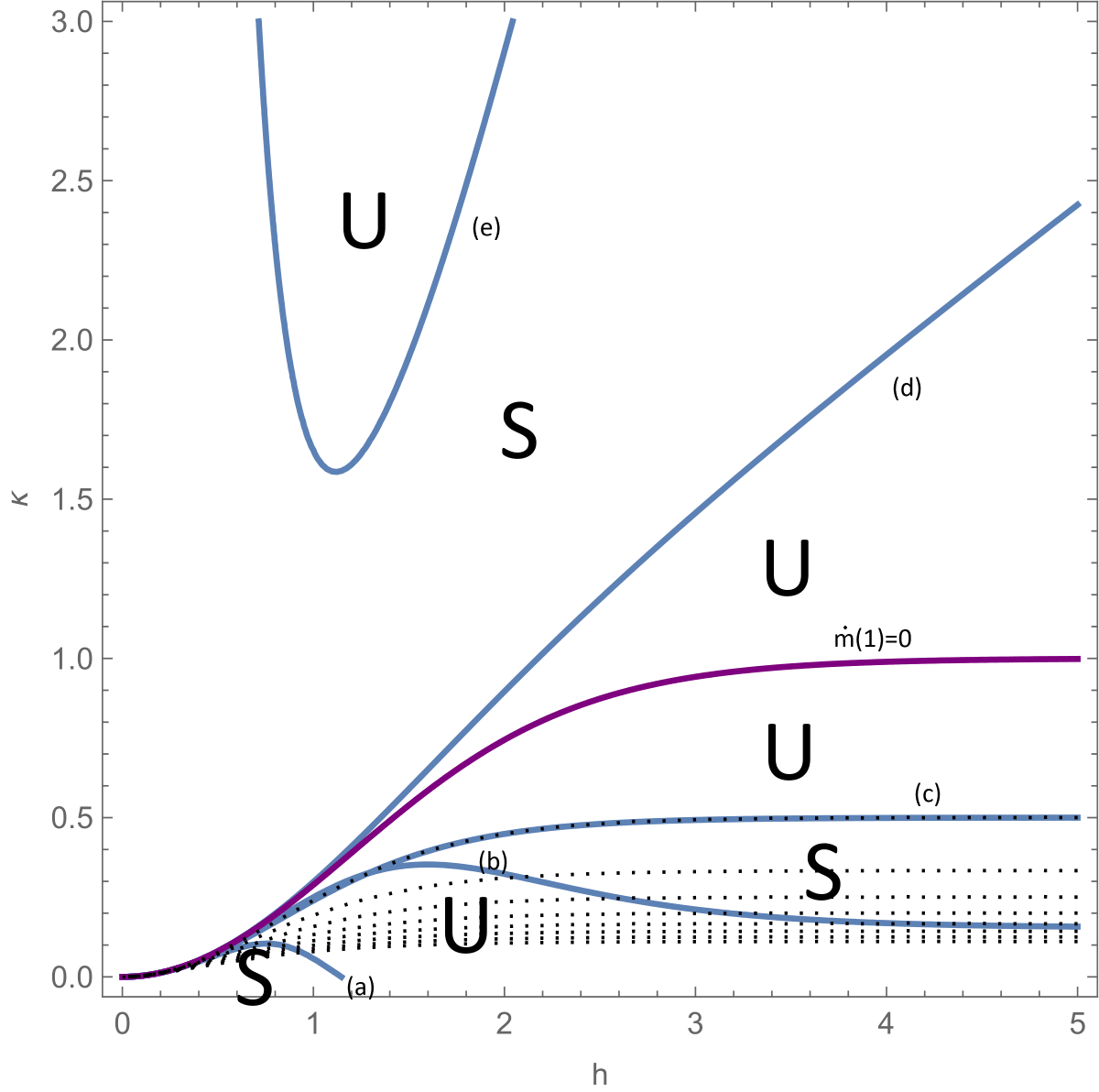}
    \hspace{1cm}
    \caption{In the figure is represented the sign of the function $\te_{\scaleto{\mathrm{WB}}{3pt}}$ dependent on the parameters $\tth$ (on the horizontal axis) and $\kappa$ (on the vertical axis). The areas denoted by the letter U are the parts of the plane where $\te_{\scaleto{\mathrm{WB}}{3pt}}$ is positive, the areas denoted by the letter S are the ones where it is negative. 
    Moreover, in the figure are reported the curves where Assumption A fails as dotted lines, namely there exists $n\in \N \setminus \{ 1\}$ such that $\fm(1)=\fm(n)$. In addition, in correspondence of the curves in blue and the curve in purple, Assumption B fails. 
    In particular, the curve represented in blue are the ones where the sign of $\te_{\scaleto{\mathrm{WB}}{3pt}}$ changes:
    (a) and (e) are given by the condition $\te_w= 0$; 
    (b) is given by $\te_b =0$; 
    (c) is given by $\fm(1)-\fm(2)=0$ and it is a resonance contributing to the sign of $\te_{\scaleto{\mathrm{WB}}{3pt}}$, and finally (d) is given by 
    $\te_d=0$. The curve in purple is where $\te_{12}=0$.
    }
    \label{signwithtens}
    \end{figure}

    \textsc{Relation with Hur, Johnson \cite{hur2015modulational}.}
In \cite{hur2015modulational} the authors prove that whenever the  index function 
    $$
    \Gamma(\tth,\kappa):= \frac{(z m(z))'' \, \big((z m(z))' - m(0) \big)}{m(z) - m(2z)}  \cdot \big(2(m(z) - m(2z)) + (zm(z))' - m(0) \big)\big |_{z=\tth} \ , \quad m(z) := \fm_{\tth, \kappa}(z/\tth)
$$    
is negative, the operator $\cL_{\mu,\e}$ has two unstable eigenvalues near zero.
Again a direct computation using \eqref{eWB} shows that 
$$
\tth\Gamma(\tth,\kappa) = -2 \te_{\scaleto{\mathrm{WB}}{3pt}} \, \te_d^2 \ ,
$$
so $\Gamma(\tth,\kappa) < 0$ iff $\te_{\scaleto{\mathrm{WB}}{3pt}} >0$.
As a consequence,  the stability-instability regions that we find in Figure \ref{signwithtens} are in one to one correspondence with those of  \cite{hur2015modulational}, see e.g. Figure 2.

    \paragraph{Vorticity-modified Whitham equation.}
    We now consider the Whitham equation with constant vorticity $\gamma$, namely \eqref{eq} with symbol $\fm_{\tth, \gamma}(\xi)$ in \eqref{m.v.whitham}. 
    Let us focus first on Assumption A. Similarly to the original Whitham equation, the Fourier multiplier $\cM(D)$  has order $-\frac12$, and the symbol $\fm_{\tth, \gamma}(\xi)$ is smooth $\forall (\tth,\gamma)\in \R_{>0}\times \R$.
    Moreover, $\fm_{\tth, \gamma}(\xi)$ is strictly decreasing in $\xi>0$, $\forall \ (\tth,\gamma)\in\R_{>0}\times\R$: indeed, $\xi\in (0,+\infty)\mapsto \frac{\tanh (\tth\xi)}{\xi}$ is strictly decreasing $\forall\tth\in\R_{>0}$, whereas $x\in(0,+\infty)\mapsto \frac{\gamma}{2} x + \sqrt{x + \frac{\gamma^2}{4}x^2}$ is strictly increasing $\forall\gamma\in\R$. Thus, no resonance occur, verifying Assumption A. 
    Then,  by Theorem \ref{existPTW}, there exists $\e_0 = \e_0(\tth)>0$ and a traveling wave
    solution    $u_\e$ of  \eqref{eq}, parametrized by any $\e \in (0, \e_0)$ . 
    Denote by  $\cL_\e$ the linearized operator \eqref{def:cLe} at  $u_\e$. We obtain:
    \begin{theorem}[{\bf Vorticity-modified Whitham}]
    Let $\cM(D)$ having symbol  \eqref{m.v.whitham}.
      There exists an analytic curve in the semiplane $(\tth,\gamma)$, defined by equation 
      $$
      \dot\fm_{\tth, \gamma}(1)+3\fm_{\tth, \gamma}(1)-2\fm_{\tth, \gamma}(2)-\fm_{\tth, \gamma}(0)=0
      $$
       separating the regions $\cU$ and $\cS$, such that:
    \begin{itemize}
    \item if $(\tth,\gamma)\in\cU$,  there exists $\e_1(\tth)>0$ such that, for any $\e \in [0,\e_1(\tth))$, the spectrum of the operator $\cL_\e$ depicts near the origin a complete  figure ``8'' as in Figure \ref{figure8image}.
    \item If $(\tth,\gamma)\in\cS$, the spectrum of $\cL_\e$ near the origin is purely imaginary.
    \end{itemize}
    \end{theorem}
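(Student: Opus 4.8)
The plan is to follow exactly the pattern of the two preceding Whitham theorems: Assumption A having already been checked, I would verify Assumption B and pin down the sign of the Whitham--Benjamin coefficient $\te_{\scaleto{\mathrm{WB}}{3pt}}$ in \eqref{eWB}, whose positivity is what triggers item (i) of Theorem \ref{mainres1}. The decisive structural observation is that the factor $N:=\dot\fm_{\tth,\gamma}(1)+3\fm_{\tth,\gamma}(1)-2\fm_{\tth,\gamma}(2)-\fm_{\tth,\gamma}(0)$ appearing in the numerator of $\te_w$ (see \eqref{formulaewb}) is precisely the left-hand side of the curve equation in the statement; hence, if the remaining factors in \eqref{eWB} all have constant sign, the sign of $\te_{\scaleto{\mathrm{WB}}{3pt}}$ is governed entirely by $N$, and the separating curve must be $\{N=0\}$.

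First I would exploit the strict monotonicity of $\xi\mapsto\fm_{\tth,\gamma}(\xi)$ on $(0,+\infty)$, already established above for every $(\tth,\gamma)\in\R_{>0}\times\R$. This immediately gives $\dot\fm_{\tth,\gamma}(1)<0$ (so $\te_{12}<0$), $\fm_{\tth,\gamma}(1)-\fm_{\tth,\gamma}(2)>0$, and, since also $\fm_{\tth,\gamma}(1)-\fm_{\tth,\gamma}(0)<0$, the bound $\te_d=\dot\fm_{\tth,\gamma}(1)+\fm_{\tth,\gamma}(1)-\fm_{\tth,\gamma}(0)<0$. Thus the denominator $\big(\fm_{\tth,\gamma}(2)-\fm_{\tth,\gamma}(1)\big)\,\te_d$ in \eqref{eWB} is a product of two negative quantities, hence strictly positive and nonvanishing throughout the parameter plane; note that, unlike the gravity-capillary case, no resonant set has to be excised.

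The heart of the argument, and the step I expect to be the main obstacle, is to show that $\te_b=-\dot\fm_{\tth,\gamma}(1)-\tfrac12\ddot\fm_{\tth,\gamma}(1)$ is strictly positive for every $(\tth,\gamma)$. I would set $w(\xi):=\tanh(\tth\xi)/\xi$ and $f(w):=\tfrac{\gamma}{2}w+\sqrt{w+\tfrac{\gamma^2}{4}w^2}$, so that $\fm_{\tth,\gamma}=f\circ w$ and $\dot\fm_{\tth,\gamma}=f'(w)w'$, $\ddot\fm_{\tth,\gamma}=f''(w)(w')^2+f'(w)w''$; evaluating at $\xi=1$ produces a closed but cumbersome expression for $\te_b$ in $\tth$ and $\tanh\tth$ that reduces to the manifestly positive formula \eqref{tebwhit} at $\gamma=0$. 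The difficulty is that $f$ couples a linear and a square-root term, so positivity is not evident term by term; I would try to recast $\te_b$ as a sum of squares plus a positive remainder, using the signs $w'<0<w''$ on $(0,+\infty)$ and monotonicity of the contributions in $\gamma$, and if a clean algebraic certificate proves elusive I would fall back on a plot, as done for the classical and gravity-capillary cases.

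Granting $\te_b>0$, the sign of $\te_{\scaleto{\mathrm{WB}}{3pt}}$ is opposite to that of $N$. Since $(\tth,\gamma)\mapsto N$ is real analytic, its zero set is an analytic curve splitting $\R_{>0}\times\R$ into the two open regions $\cU:=\{N<0\}=\{\te_{\scaleto{\mathrm{WB}}{3pt}}>0\}$ and $\cS:=\{N>0\}=\{\te_{\scaleto{\mathrm{WB}}{3pt}}<0\}$. On each of these, all four numbers $\te_{12},\te_d,\te_b,\te_w$ are nonzero ($\te_w\neq0$ exactly because $N\neq0$ there), so Assumption B holds and Theorem \ref{mainres1} applies verbatim: item (i) on $\cU$ yields the complete figure ``8'', while item (ii) on $\cS$ yields a purely imaginary spectrum near the origin, which is the claim.
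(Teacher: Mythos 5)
Your proposal matches the paper's proof: it verifies Assumption B via the strict monotonicity of $\fm_{\tth,\gamma}$ (giving $\te_{12}<0$, $\fm(1)-\fm(2)>0$, $\te_d<0$), establishes $\te_b>0$ by an explicit (and ultimately numerically certified) computation, and concludes that the sign of $\te_{\scaleto{\mathrm{WB}}{3pt}}$ is dictated by the numerator $N$ of $\te_w$, whose zero set is the stated separating curve. Your honest caveat about needing a numerical check for $\te_b>0$ is exactly what the paper does.
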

    \begin{proof}
        We only need to show that, outside the critical curve, Assumption B is verified. In particular, by the monotonicity of $\fm_{\gamma,\tth}$ it is clear that $$\dot\fm_{\gamma,\tth}(1)<0,\quad \fm_{\gamma,\tth}(1)-\fm_{\gamma,\tth}(2)>0, \quad \te_d = \dot \fm_{\gamma,\tth}(1)+\fm_{\gamma,\tth}(1)-\fm_{\gamma,\tth}(0)<0.$$ 
        Moreover, a direct computation shows that 
        \begin{equation*}
            \begin{split}
        \te_b= \frac{\ch}{2}\left(2+8\tth^2\left(\frac{1-\ch^4}{2\ch^2}\right)^2-4\tth\frac{1-\ch^4}{\ch} +\tth^2(1-\ch^4)(4+\gamma^2\ch^2)(2+\gamma^2\ch^2+\gamma\ch\sqrt{4+\gamma^2\ch^2})\right)\\/(4+\gamma^2\ch^2)^{\frac32} \ .
    \end{split}
        \end{equation*}
        A numerical evaluation show that $\te_b>0\quad \forall (\tth,\gamma)\in\R_{>0}\times\R$, and thus Assumption B is verified outside the curve defined by $\te_w=0$.
    \end{proof}
    \begin{figure}[H]
        \centering
        \includegraphics[width=0.42\linewidth]{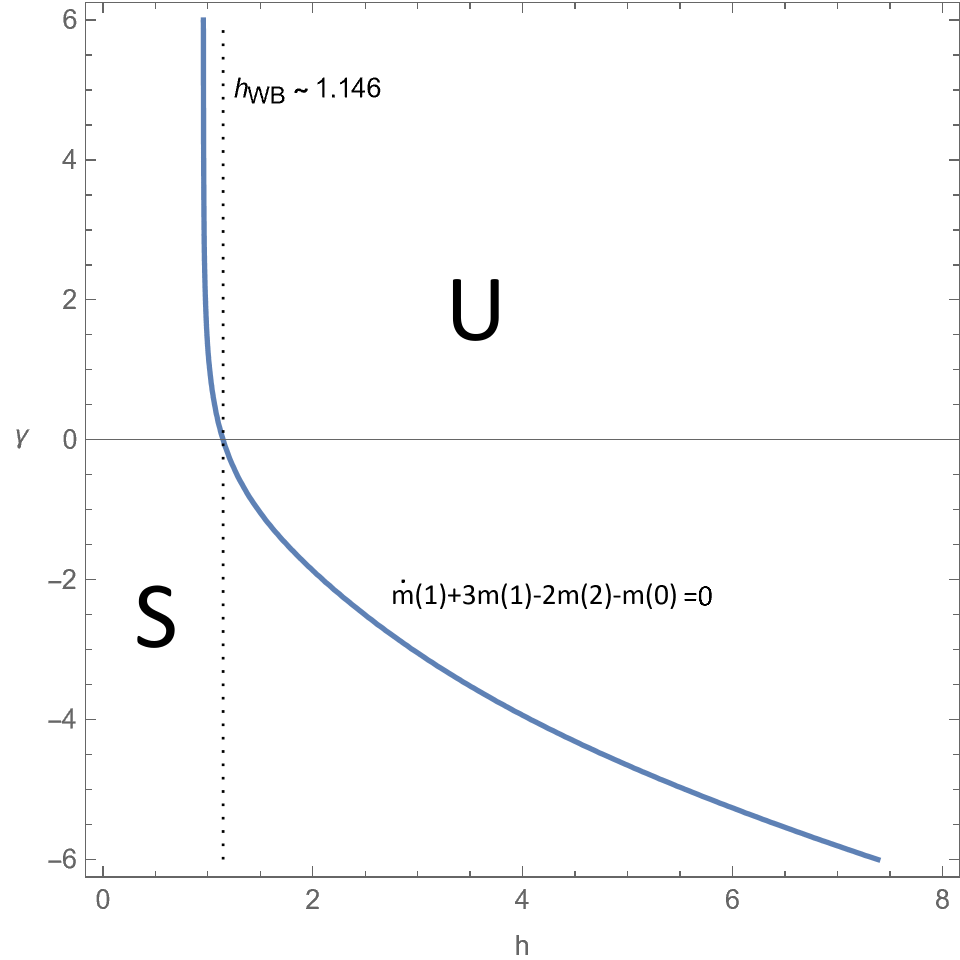}
        \hspace{1cm}
        \caption{
        In the figure is represented the sign of the function $\te_{\scaleto{\mathrm{WB}}{3pt}}$ dependent on the parameters $\tth$ (on the horizontal axis) and $\gamma$ (on the vertical axis). The areas denoted by the letter U are the parts of the plane where $\te_{\scaleto{\mathrm{WB}}{3pt}}$ is positive, and thus where the equation is unstable; the areas denoted by the letter S are the ones where it is negative, and thus the equation results to be stable. 
        The curve in blue represent the critical line where $\dot\fm_{\tth, \gamma}(1)+3\fm_{\tth, \gamma}(1)-2\fm_{\tth, \gamma}(2)-\fm_{\tth, \gamma}(0)=0$, sepearating the two regimes. }
        \label{signwithvort}
        \end{figure}
\subsection{Kawahara equation}
In this section we consider the  Kawahara equation, namely equation  \ref{eq}
with Fourier multiplier $\cM(D)$ having as symbol
\begin{equation}
\label{kawa}
\fm_{\ta, \tb}(\xi) =\ta  \xi^2 + \tb \xi^4, \qquad \ta \in \R, \ \ \tb>0  \ . 
\end{equation}
Kawahara equation is a model for nonlinear dispersive waves, that emerges as a correction to the well known KdV equation in the regimes where the fifth order term of the dispersion relation of the water waves equation is relevant. \\
We first verify Assumption A. Clearly the symbol $\fm_{\ta, \tb}$ is even, smooth, and fulfills \eqref{orderpsedodiff} with $m = 4$.
Assumption (A3) is verified provided $(\ta,\tb)$ is outside the resonant set 
\begin{equation}
\label{R.k}
\cR = \{ (\ta, \tb) \in \R \times \R_{>0} \colon  \ \ \ta = -(n^2 +1) \tb , \ \forall n \in \N \setminus\{1\} \} \ , 
\end{equation}
composed by infinitely many critical lines.
Then for any $(\ta, \tb) \not\in \cR$, Theorem \ref{existPTW} guarantees the existence of  $\e_0 = \e_0(\ta,\tb)>0$ and for any $\e \in (0, \e_0)$ a traveling wave
solution    $u_\e$ of  \eqref{eq}. 
Denote by  $\cL_\e$ the linearized operator \eqref{def:cLe} at  $u_\e$. We obtain:
\begin{theorem}[{\bf Kawahara}]
Consider equation \eqref{eq} with $\cM(D)$ having symbol  \eqref{kawa}.
Let 
 \begin{equation}
\begin{aligned}
 \label{}
& \cU :=
\{ (\ta, \tb) \in \R \times \R_{>0} \colon
-\frac{25}{3} \tb < \ta < - 5\tb \  \ 
\vee  \ \ 
-\frac{10}{3} \tb < \ta < -\frac{5}{3}\tb \ , \ta\neq -2\tb
 \}\setminus\cR \ , 
  \\
& \cS := 
\{ (\ta, \tb) \in \R \times \R_{>0} \colon  
\ta < -\frac{25}{3} \tb  \ \ 
 \vee \ \ 
-5 \tb < \ta < -\frac{10}{3}\tb \  \ 
\vee  \ \ 
\ta > -\frac53 \tb \ , \ta\neq -2\tb
\}\setminus\cR 
 \end{aligned} 
 \end{equation}
 where $\cR$ is the resonant set in \eqref{R.k}.
  \begin{itemize}
\item For  any $(\ta, \tb) \in \cU$  there exists $\e_1=\e_1(\ta, \tb)>0$ such that for any $\e \in [0,\e_1)$, the spectrum of the operator $\cL_\e$ depicts near the origin a complete  figure ``8'' as in Figure \ref{figure8image}.
\item For  any $(\ta, \tb) \in \cS$  the spectrum of $\cL_\e$ near the origin is purely imaginary.
\end{itemize}
\end{theorem}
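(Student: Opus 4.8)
The plan is to follow verbatim the scheme already used for the Whitham equations: Assumption A was verified in the discussion preceding the statement, so it remains to check Assumption B off the resonant set and to determine the sign of the Whitham--Benjamin coefficient $\te_{\scaleto{\mathrm{WB}}{3pt}}$. The key structural simplification is that $\fm_{\ta,\tb}$ is a genuine polynomial, so every quantity in \eqref{tedet}--\eqref{formulaewb} is an explicit rational function of $(\ta,\tb)$ and, unlike the Whitham cases, no numerical evaluation is required. First I would record $\fm(0)=0$, $\fm(1)=\ta+\tb$, $\fm(2)=4\ta+16\tb$, $\dot\fm(1)=2\ta+4\tb$ and $\ddot\fm(1)=2\ta+12\tb$, and substitute into \eqref{tedet}--\eqref{formulaewb} to obtain
\[
\te_{12}=2\ta+4\tb,\quad \te_d=3\ta+5\tb,\quad \te_b=-(3\ta+10\tb),\quad \te_w=\frac{3\ta+25\tb}{3(\ta+5\tb)(3\ta+5\tb)}.
\]

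Next I would verify Assumption B. All four numbers are manifestly well defined and non-zero off the lines $\ta=-2\tb$ (where $\te_{12}=0$), $\ta=-\tfrac{5}{3}\tb$ (where $\te_d=0$, which is also a pole of $\te_w$), $\ta=-\tfrac{10}{3}\tb$ (where $\te_b=0$), $\ta=-\tfrac{25}{3}\tb$ (where $\te_w=0$) and $\ta=-5\tb$ (the second-mode resonance $\fm(1)=\fm(2)$, which lies in $\cR$ and at which $\te_w$ is singular); existence of the wave additionally requires $\fm(1)\neq\fm(0)$, i.e. $\ta\neq-\tb$, the $n=0$ resonance in \eqref{R.k}. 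On the open regions $\cU$ and $\cS$ none of these degeneracies occur, so Assumptions A and B hold and Theorem \ref{mainres1} applies.

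Then comes the sign analysis. Forming the product gives
\[
\te_{\scaleto{\mathrm{WB}}{3pt}}=\te_w\,\te_b=\frac{-(3\ta+10\tb)(3\ta+25\tb)}{3(\ta+5\tb)(3\ta+5\tb)} \ ,
\]
and since $\tb>0$ I would set $s:=\ta/\tb$ and read off the signs of the four linear factors across the ordered breakpoints $s=-\tfrac{25}{3},-5,-\tfrac{10}{3},-\tfrac{5}{3}$. A feature absent from the Whitham analyses is that here $\te_b=-\tb(3s+10)$ itself changes sign at $s=-\tfrac{10}{3}$, so both factors of $\te_{\scaleto{\mathrm{WB}}{3pt}}$ genuinely contribute. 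The resulting sign chart yields $\te_{\scaleto{\mathrm{WB}}{3pt}}>0$ exactly on $s\in(-\tfrac{25}{3},-5)\cup(-\tfrac{10}{3},-\tfrac{5}{3})$ and $\te_{\scaleto{\mathrm{WB}}{3pt}}<0$ on the complementary intervals $s<-\tfrac{25}{3}$, $-5<s<-\tfrac{10}{3}$, $s>-\tfrac{5}{3}$. Rewriting in terms of $\ta$ these are precisely the regions $\cU$ and $\cS$ of the statement, with the puncture $\ta=-2\tb$ sitting inside the second unstable interval. Invoking Theorem \ref{mainres1}, item (i) on $\cU$ produces the complete figure ``8'' of Figure \ref{figure8image}, while item (ii) on $\cS$ gives purely imaginary spectrum near the origin.

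There is no genuine analytic obstacle: since the symbol is polynomial the whole argument is an explicit finite computation. The only care needed is bookkeeping—keeping track of which breakpoints are zeros of $\te_w$ or $\te_b$ (at $s=-\tfrac{25}{3}$ and $s=-\tfrac{10}{3}$) and which are poles coming from $\te_d=0$ (at $s=-\tfrac{5}{3}$) or from the resonance $\fm(1)=\fm(2)$ (at $s=-5$, lying in $\cR$), since all four bound the regions $\cU,\cS$; and recording the interior punctures $\ta=-2\tb$ (where $\te_{12}=0$) and $\ta=-\tb$ (where the traveling wave ceases to exist), where the hypotheses fail even though the surrounding open set is covered.
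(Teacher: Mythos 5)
Your proposal is correct and takes essentially the same approach as the paper: explicit computation of $\te_{12},\te_d,\te_b,\te_w$ for the polynomial symbol, identification of the critical lines where Assumption B fails (with $\ta=-5\tb$ absorbed into $\cR$), and a sign analysis of $\te_{\scaleto{\mathrm{WB}}{3pt}}=-\frac{(3\ta+10\tb)(3\ta+25\tb)}{3(\ta+5\tb)(3\ta+5\tb)}$ followed by an application of Theorem \ref{mainres1}. Your explicit sign chart in $s=\ta/\tb$ is in fact more detailed than the paper's appeal to its plotted figure, and your extra caution about the line $\ta=-\tb$, where $\fm(1)=\fm(0)$ and the expansion of the traveling wave degenerates, is sound.
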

\begin{proof}
We check Assumption B. Since
\begin{equation}
\begin{aligned}
\label{}
\te_{12} = 2\ta +4\tb \ ,   \quad
\te_d =3\ta + 5 \tb \ ,
\quad 
\te_b = -3\ta -10\tb \ , 
\quad
\te_w = \frac{3\ta + 25\tb}{3(\ta +5\tb)(3\ta + 5\tb)} \ .
\end{aligned}
\end{equation}
hence, the non-degeneracy condition defines the 4 critical lines 
$ \ta = - c_j \tb $, $c_j = -\frac53, -2, - \frac{10}{3}, -\frac{25}{3}$ where Assumption B is not met (note that the line $\ta = -5 \tb$ is already included in $\cR$). We plot these lines in Figure \ref{kawahara}.
The positive superlevel of the  function 
\begin{equation}
\label{ewb.k}
\te_{\scaleto{\mathrm{WB}}{3pt}}  = -\frac{(3\ta +10\tb)(3\ta + 25\tb)}{3(\ta +5\tb)(3\ta + 5\tb)} 
\end{equation}
defines the region of parameters $(\ta, \tb)$ where $\cL_\e$ has unstable spectrum near the origin, whereas the negative one defines the region where $\cL_\e$ is stable, see Figure \ref{kawahara}.
\end{proof}

\begin{figure}[H]\label{kawahara}
    \centering
    \includegraphics[width=0.35\linewidth]{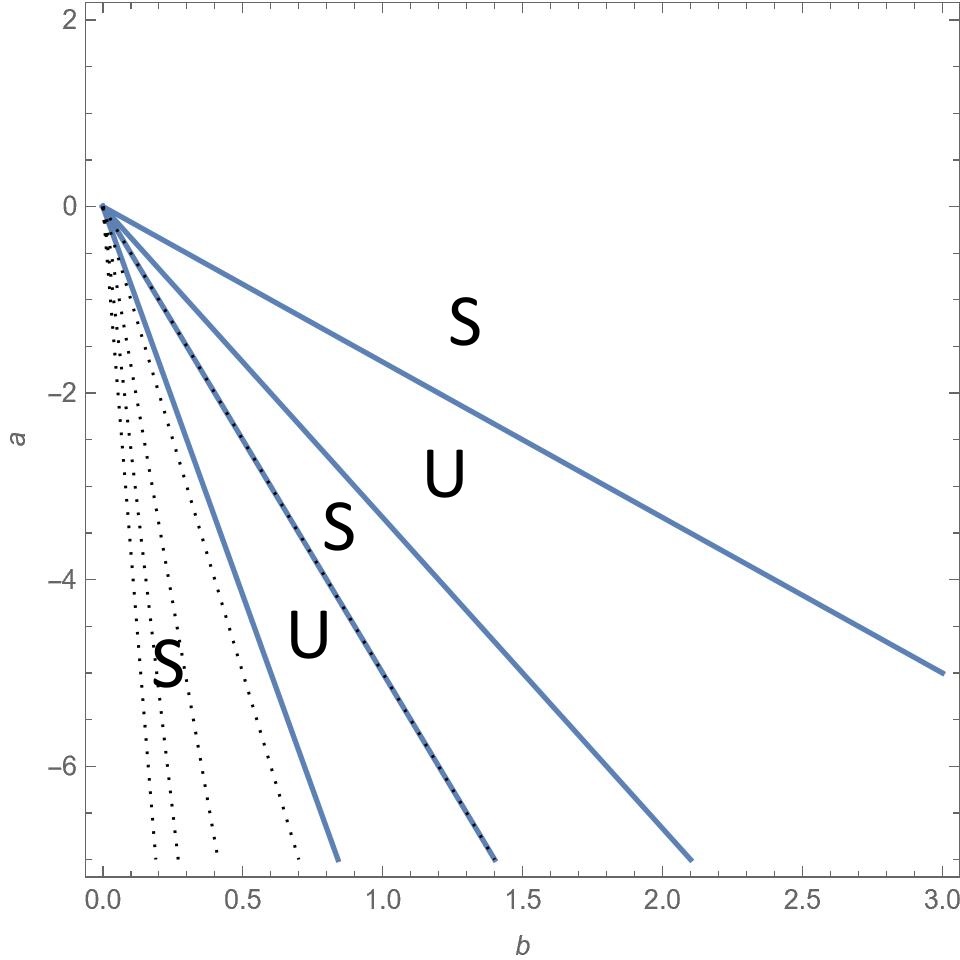}
    \hspace{1cm}
    \label{signkawa}
    \caption{
The blue lines are the critical lines
$\ta = - \frac53 \tb$, $\ta = -\frac{10}{3}\tb$,
$\ta = - 5\tb$,
 $\ta = -\frac{25}{3}\tb$, 
where $\te_d$, $\te_b$, $\te_w$ are identically zero or not defined,  
  the green one is $\ta = -2\tb$ where $\te_{12} \equiv 0$, 
$\te_{d}$, 
and the dotted line are the resonant lines in $\cR$ in \eqref{R.k}.
The regions $U$ are where $\te_{\scaleto{\mathrm{WB}}{3pt}} >0$ and the $S$ ones where $\te_{\scaleto{\mathrm{WB}}{3pt}} <0$.}
    \end{figure}

    \textsc{Relation with Creedon, Deconinck, Trichtchenko \cite{creedon2021high}.}
The paper  \cite{creedon2021high} studies the spectrum of $\cL_\e$ both near and away from zero. They prove the  existence of unstable eigenvalues of $\cL_{\mu,\e}$ near zero when 
\eqref{ewb.k} is strictly positive. 
Moreover, they prove the existence of isola of instabilities also away from zero.

\subsection{Intermediate long-wave equation}
The intermediate long-wave equation is equation \eqref{eq} with $\cM(D)$ having symbol 
\begin{equation}
\label{ilw}
\fm_\tth(\xi) = \frac{\xi}{\tanh(\tth \xi)} - \frac{1}{\tth}\ , \qquad \tth>0 
\end{equation}
It models unidirectional long waves at the surface of a fluid of finite depth, and it is particularly famous as an example of integrable Hamiltonian system. It can be derived from the Whitham equation, as it is described in \cite{joseph1977solitary}.\\  
To apply our abstract result to the intermediate long-wave equation, we first verify Assumption A.  The symbol is smooth, even,  of order 1. Moreover, $\xi \mapsto \fm_\tth(\xi)$  is strictly increasing for $\xi\geq 0$, and so Assumption A is satisfied.
Theorem \ref{existPTW} guarantees the existence of small amplitude traveling waves. 
In this case our abstract Theorem \ref{mainres1} yields:
\begin{theorem}[{\bf Intermediate long-wave}]
Consider equation \eqref{eq} with $\cM(D)$ having symbol  \eqref{ilw}.
 For  any $\tth>0$  the spectrum of $\cL_\e$ near the origin is purely imaginary.
\end{theorem}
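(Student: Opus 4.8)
The plan is to verify Assumption B for every $\tth>0$ and to show that the Whitham--Benjamin coefficient $\te_{\scaleto{\mathrm{WB}}{3pt}}$ in \eqref{eWB} is strictly negative for all $\tth>0$; item (ii) of Theorem \ref{mainres1} then yields the claim. All computations simplify after the rescaling $\fm_\tth(\xi)=\tfrac1\tth\,\phi(\tth\xi)$ with
$$\phi(\eta):=\eta\coth\eta-1.$$
Since $\tanh\eta<\eta$ for $\eta>0$ we have $\eta\coth\eta>1$, hence $\phi(\eta)>0$ on $(0,\infty)$ and $\phi(0)=0$. Differentiating, and using $\tfrac{d}{d\eta}\coth\eta=-(\coth^2\eta-1)$, one obtains the identity $\phi''(\eta)=2(\coth^2\eta-1)\,\phi(\eta)>0$, so $\phi$ is convex; together with $\phi'(0^+)=0$ this forces $\phi'(\eta)>0$ on $(0,\infty)$. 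The values entering \eqref{tedet}--\eqref{formulaewb} are then $\fm(0)=0$, $\dot\fm(1)=\phi'(\tth)$, $\ddot\fm(1)=\tth\,\phi''(\tth)$, and $\fm(j)=\tfrac1\tth\phi(j\tth)$.

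First I would dispatch the easy signs. As $\fm_\tth$ is strictly increasing, $\fm(2)-\fm(1)>0$ and $\te_{12}=\dot\fm(1)=\phi'(\tth)>0$; moreover $\te_d=\dot\fm(1)+\fm(1)-\fm(0)=\phi'(\tth)+\tfrac1\tth\phi(\tth)>0$. Both are nonzero, so the denominator $(\fm(2)-\fm(1))\,\te_d$ of $\te_{\scaleto{\mathrm{WB}}{3pt}}$ is strictly positive. For the first factor of the numerator, $\dot\fm(1)+\tfrac12\ddot\fm(1)=\phi'(\tth)+\tfrac12\tth\,\phi''(\tth)>0$, whence $\te_b=-(\dot\fm(1)+\tfrac12\ddot\fm(1))<0$ is nonzero as well. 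Consequently $\te_{\scaleto{\mathrm{WB}}{3pt}}$ carries the sign of the remaining factor $N(\tth):=\dot\fm(1)+3\fm(1)-2\fm(2)-\fm(0)$, while $\te_w=N/\big[(\fm(1)-\fm(2))\,\te_d\big]$ is nonzero precisely when $N\neq0$.

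The crux is therefore to prove $N(\tth)<0$ for every $\tth>0$: since $\te_d>0$ and $\fm(1)<\fm(2)$, this simultaneously gives $\te_w>0$ and $\te_{\scaleto{\mathrm{WB}}{3pt}}=\te_w\te_b<0$. Expressing $N$ through $\phi$ and clearing denominators with the identity $\coth(2\tth)=\tfrac12(\coth\tth+\tanh\tth)$, I expect the compact form
$$\tth\,N(\tth)=\frac{2\tth\tanh\tth-\tth^2-\sinh^2\tth}{\sinh^2\tth}.$$
Hence $N<0$ is equivalent, after multiplying by $\cosh\tth>0$, to the elementary inequality
$$g(\tth):=(\tth^2+\sinh^2\tth)\cosh\tth-2\tth\sinh\tth>0,\qquad \tth>0.$$
I would settle this by monotonicity: $g(0)=0$, and a direct differentiation, using $2\cosh^2\tth-2=2\sinh^2\tth$, collapses to
$$g'(\tth)=\sinh\tth\,\big(3\sinh^2\tth+\tth^2\big)>0,\qquad \tth>0,$$
so $g>0$ on $(0,\infty)$.

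This establishes $N<0$, hence both Assumption B and $\te_{\scaleto{\mathrm{WB}}{3pt}}<0$ for all $\tth>0$, and item (ii) of Theorem \ref{mainres1} concludes that the spectrum of $\cL_\e$ near the origin is purely imaginary, consistent with the integrability of the equation. The only genuinely delicate step is the algebraic reduction of $N$ to $g$ and the verification $g>0$; every other sign is an immediate consequence of the monotonicity and convexity of $\phi$.
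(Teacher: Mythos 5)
Your proof is correct, and all the computations check out: the identity $\phi''(\eta)=2(\coth^2\eta-1)\,\phi(\eta)$ for $\phi(\eta)=\eta\coth\eta-1$ is valid and gives $\te_b<0$ cleanly; the reduction $\tth N(\tth)=\bigl(2\tth\tanh\tth-\tth^2-\sinh^2\tth\bigr)/\sinh^2\tth$ is exact (via $4\coth\tth-4\coth 2\tth=2/(\sinh\tth\cosh\tth)$); and $g'(\tth)=\sinh\tth\,(3\sinh^2\tth+\tth^2)$ does follow from a direct differentiation, so $N<0$, Assumption B holds, and item (ii) of Theorem \ref{mainres1} applies. Your route is genuinely different from the paper's: the paper computes closed-form expressions for $\te_b$ and $\te_w$ in terms of $\ch=\sqrt{\tanh\tth}$ and then \emph{reads off their signs from numerical plots} (Figure \ref{signILW}), whereas you give a fully analytic argument. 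What your approach buys is a rigorous, plot-free proof of both signs — the convexity identity for $\phi$ disposes of $\te_b<0$ without any explicit formula, and the monotonicity of $g$ settles $\te_{\scaleto{\mathrm{WB}}{3pt}}<0$ by an elementary inequality (one could even shortcut the last step via $2\tth\tanh\tth\le\tth^2+\tanh^2\tth\le\tth^2+\sinh^2\tth$ by AM--GM, with equality only at $\tth=0$). The only thing the paper's version offers in exchange is the explicit display of $\te_b$ and $\te_w$ as functions of $\tth$, which is useful for the accompanying figure but not needed for the stability conclusion.
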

\begin{proof}
Since $\xi \mapsto \fm_\tth(\xi)$ is strictly increasing,  $\te_{12}$ and $\te_d$ are strictly positive. $\te_b$ is strictly negative: by direct computations one has
$$
\te_b = -\ch^{-2}+\tth(\ch^{-4}-1+(1-\tth\ch^{-2}-\tth\ch^2)(\frac14(1+\ch^4)^2\ch^{-4}-1))
$$
and $$
\te_w =\frac{ 2\tth (-2+\tth\ch^{-2})(1-\ch^4)+2\ch^2}{-(1+2\tth^2)(1-\ch^4)+1+\ch^4}
$$
The sign of $\te_b$ and $\te_{\scaleto{\mathrm{WB}}{3pt}}$ is represented in Figure \ref{signILW} below, proving the Theorem.

\end{proof}

\begin{figure}[H]
    \centering
    \includegraphics[width=0.45\linewidth]{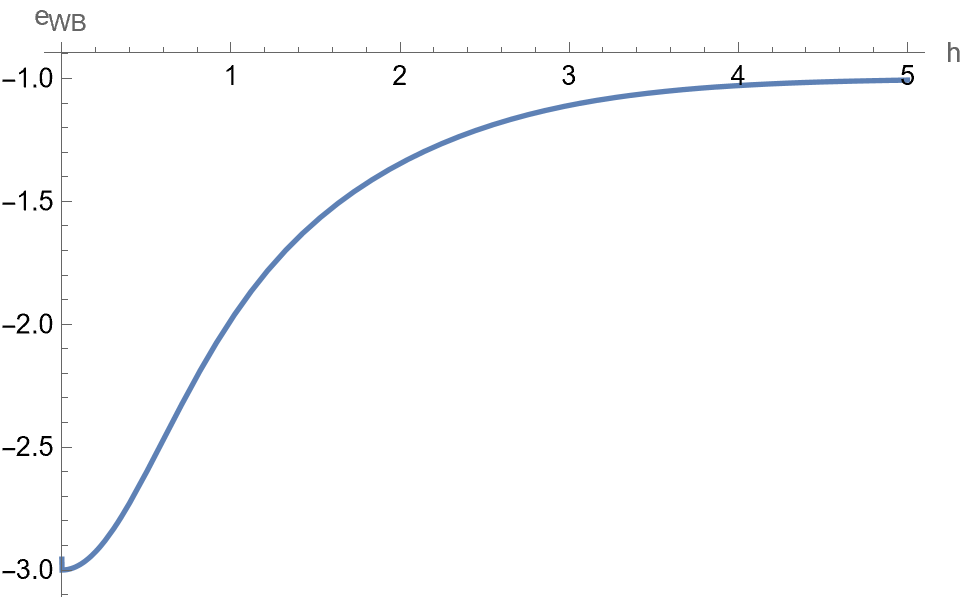}
    \includegraphics[width=0.45\linewidth]{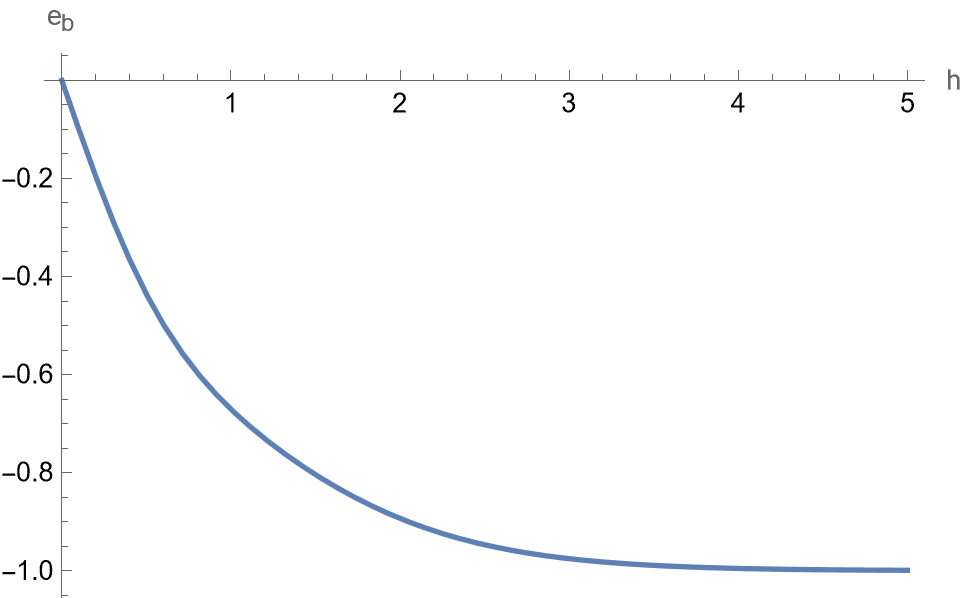}
    \hspace{1cm}
    \caption{
    In the figure is represented of the functions $\te_b$ and $\te_{\scaleto{\mathrm{WB}}{3pt}}$ in dependence of the parameters $\tth>0$.}
    \label{signILW}
    \end{figure}

   \subsection{Benjamin-Ono, KdV and Fractional KdV}
   \textit{Benjamin-Ono, KdV} and \textit{fractional KdV} are equations of type \eqref{eq}, with Fourier multiplier having symbol respectively
   $\fm(\xi)=|\xi|$, $\fm(\xi)=\xi^2$ and $\fm(\xi)=\xi^\alpha$, $\alpha\in\R_{>\frac12}$,  respectively.     For the first two equations, Assumption A is satisfied, noticing that the symbol of the Benjamin-Ono equation is infinitely many times differentiable from the left at $\xi=0$. Concerning the fractional KdV equation, the $\cC^3$ condition implies $\alpha\geq 3$. \\
    Assumption B is satisfied for the KdV and fractional KdV equations under analysis, whereas it fails in the case of the Benjamin-Ono equation, that represents  the critical case for which $\te_{\scaleto{\mathrm{WB}}{3pt}}=0$. 
Thus, for the former two equations Theorem \ref{mainres1} can be applied, establishing modulational  stability near the origin, as already proved by Johnson \cite{johnson2013stability}. 
The Benjamin-Ono equation  is further analyzed in  \cite{BH}, where it is  proved that traveling waves are modulationally stable. 

    \appendix

\section{Expansion of the Kato basis} \label{appendixA}
 We first Taylor-expand the transformation operators $U_{\mu,\e}$ defined in \eqref{OperatorU}. 
  We denote  $\pa_\e$ with a prime and  $\pa_\mu$ with a dot. 
  The  following lemma follows with the same lines of \cite[Lemma A.1]{BMV1}, and we omit the proof.
\begin{lemma}\label{B1}
The first jets of $U_{\mu,\e}P_{0,0}$ are 
 \begin{align}
  U_{0,0}P_{0,0}&=P_{0,0} \, , \quad U_{0,0}'P_{0,0}=P_{0,0}'P_{0,0} \, , \quad \dot U_{0,0}P_{0,0}=\dot P_{0,0}P_{0,0} \, , \label{Ufirstorder}\\
\dot U_{0,0}'P_{0,0}&=
\big(\dot P_{0,0}'- \tfrac12 P_{0,0}\dot P_{0,0}' \big)P_{0,0} \, , \label{Umix} 
 \end{align}
where
\begin{align}
\label{Pdereps}
 P_{0,0}' &= \frac{1}{2\pi\im} \oint_\Gamma (\cL_{0,0}-\lambda)^{-1} \cL_{0,0}' (\cL_{0,0}-\lambda)^{-1} \de\lambda \, ,  \\ 
\dot P_{0,0}  \label{Pdermu} &= \frac{1}{2\pi\im} \oint_\Gamma (\cL_{0,0}-\lambda)^{-1} \dot \cL_{0,0} (\cL_{0,0}-\lambda)^{-1} \de\lambda \, ,
\end{align}
and
\begin{subequations}
\label{Pmisto}
\begin{align}
\dot P_{0,0}' &= -\frac{1}{2\pi\im} \oint_\Gamma (\cL_{0,0}-\lambda)^{-1} \dot \cL_{0,0} (\cL_{0,0}-\lambda)^{-1}  \cL_{0,0}' (\cL_{0,0}-\lambda)^{-1} \de\lambda  \label{Pmisto1}\\
&\qquad   -\frac{1}{2\pi\im} \oint_\Gamma (\cL_{0,0}-\lambda)^{-1} \cL_{0,0}' (\cL_{0,0}-\lambda)^{-1} \dot \cL_{0,0} (\cL_{0,0}-\lambda)^{-1} \de\lambda \label{Pmisto2} \\ 
&\qquad   + \frac{1}{2\pi\im} \oint_\Gamma (\cL_{0,0}-\lambda)^{-1} \dot \cL_{0,0}' (\cL_{0,0}-\lambda)^{-1}  \de\lambda \label{Pmisto3} \, .
\end{align}
\end{subequations}
The operators $\cL_{0,0}'$ and $\dot \cL_{0,0}$ are
\begin{equation}
\cL_{0,0}' =
-2  \pa_x \circ \cos(x) 
 \qquad 
 \dot \cL_{0,0} =  
\im (\fm(1) - \cM(D))  - \pa_x \dot\cM(D)
   \label{cLfirstorder}
\end{equation}
where $\dot \cM(D)$ is the purely imaginary, odd Fourier multiplier with symbol $\dot \fm(\xi) \equiv \frac{\di \fm}{\di \xi}(\xi)$, i.e. 
\begin{equation}\label{multiplier}
\widehat{ [\dot \cM(D)u]}(k) := \dot \fm(k) \widehat u_k  \ , \quad k \in \Z \ . 
\end{equation}
The operator $\dot \cL_{0,0}'$ is
\begin{equation}\label{cLmisto}
\dot \cL_{0,0}' =  -2\im \cos x\,.
\end{equation}
\end{lemma}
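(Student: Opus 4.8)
The plan is to reduce everything to the \emph{restricted} operator $U_{\mu,\e}P_{0,0}$, where the algebra collapses. Writing $R_{\mu,\e} := P_{\mu,\e} - P_{0,0}$ and recalling $g(A) := (\uno - A^2)^{-1/2}$ from the proof of Lemma \ref{lemmanonzero}, the definition \eqref{OperatorU} reads $U_{\mu,\e} = g(R_{\mu,\e})\,\big[P_{\mu,\e}P_{0,0} + (\uno - P_{\mu,\e})(\uno - P_{0,0})\big]$. Since $(\uno - P_{0,0})P_{0,0} = 0$, applying this to $P_{0,0}$ annihilates the second summand, leaving the clean identity
\begin{equation}\label{Bred}
U_{\mu,\e}P_{0,0} = g(R_{\mu,\e})\,P_{\mu,\e}P_{0,0}.
\end{equation}
This is the observation that makes the jets computable. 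By Lemma \ref{lem:Kato1} the map $(\mu,\e)\mapsto P_{\mu,\e}$ is of class $\cC^\reg$, so \eqref{Bred} may be differentiated freely; the projector derivatives $P'_{0,0}$, $\dot P_{0,0}$, $\dot P'_{0,0}$ in \eqref{Pdereps}--\eqref{Pmisto} then arise by differentiating the Riesz formula \eqref{Pproj} and using $\pa(\cL_{\mu,\e}-\lambda)^{-1} = -(\cL_{\mu,\e}-\lambda)^{-1}(\pa\cL_{\mu,\e})(\cL_{\mu,\e}-\lambda)^{-1}$.

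The second step is to exploit the Taylor structure of $g$. One has $g(A) = \uno + \tfrac12 A^2 + h(A)$ with $h(A) = \cO(\|A\|^4)$, so $g(0)=\uno$, $\di g(0)=0$, and moreover $\di h(0)=\di^2 h(0)=0$; also $R_{0,0}=0$. Plugging into \eqref{Bred}, the first-order terms are immediate: $U_{0,0}P_{0,0}=P_{0,0}$, and since $\di g(0)=0$ the factor $g(R_{\mu,\e})$ contributes nothing upon a single differentiation, giving $U'_{0,0}P_{0,0}=P'_{0,0}P_{0,0}$ and $\dot U_{0,0}P_{0,0}=\dot P_{0,0}P_{0,0}$, i.e. \eqref{Ufirstorder}. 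For the mixed derivative, the Leibniz rule applied to \eqref{Bred} leaves only two surviving contributions at $(0,0)$: the term $g(0)\,\pa_\mu\pa_\e\!\big(P_{\mu,\e}P_{0,0}\big) = \dot P'_{0,0}P_{0,0}$, and the term from $\pa_\mu\pa_\e g(R_{\mu,\e})$, where the remainder $h$ drops out (as $\di h(0)=\di^2 h(0)=0$) so only $\tfrac12 R^2$ matters. A direct computation gives $\pa_\mu\pa_\e\big(\tfrac12 R_{\mu,\e}^2\big)\big|_{0,0} = \tfrac12\big(P'_{0,0}\dot P_{0,0} + \dot P_{0,0}P'_{0,0}\big)$, whence
\begin{equation}\label{Bmixraw}
\dot U'_{0,0}P_{0,0} = \dot P'_{0,0}P_{0,0} + \tfrac12\big(P'_{0,0}\dot P_{0,0} + \dot P_{0,0}P'_{0,0}\big)P_{0,0}.
\end{equation}

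The last step is to bring \eqref{Bmixraw} into the stated form using idempotency $P_{\mu,\e}^2 = P_{\mu,\e}$. Applying $\pa_\mu\pa_\e$ to this identity at $(0,0)$ produces
\begin{equation}\label{Bstar}
\dot P'_{0,0}P_{0,0} + P_{0,0}\dot P'_{0,0} + P'_{0,0}\dot P_{0,0} + \dot P_{0,0}P'_{0,0} = \dot P'_{0,0},
\end{equation}
and multiplying \eqref{Bstar} on the right by $P_{0,0}$ (using $P_{0,0}^2 = P_{0,0}$) yields precisely $\big(P'_{0,0}\dot P_{0,0} + \dot P_{0,0}P'_{0,0}\big)P_{0,0} = -\,P_{0,0}\dot P'_{0,0}P_{0,0}$. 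Substituting this into \eqref{Bmixraw} gives $\dot U'_{0,0}P_{0,0} = \big(\dot P'_{0,0} - \tfrac12 P_{0,0}\dot P'_{0,0}\big)P_{0,0}$, which is \eqref{Umix}.

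Finally, the explicit operators \eqref{cLfirstorder}, \eqref{cLmisto} follow by differentiating $\cL_{\mu,\e} = \cJ_\mu\cB_{\mu,\e} = (\pa_x + \im\mu)(c_\e - \cM(D+\mu) - 2u_\e)$ in $\mu$ and $\e$ and evaluating at $(0,0)$, using $c_0=\fm(1)$, $u_0=0$ and $\pa_\e c_\e|_0=0$, $\pa_\e u_\e|_0=\cos x$ from Theorem \ref{existPTW}. I expect the only delicate point to be the bookkeeping of the mixed derivative: one must check that $\di g(0)=0$ (together with $\di h(0)=\di^2 h(0)=0$) kills every first-order-in-$g$ cross term, and then invoke the second-order idempotency relation \eqref{Bstar} to produce the characteristic factor $-\tfrac12 P_{0,0}\dot P'_{0,0}$. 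Everything else is a routine application of the resolvent identity and the product rule.
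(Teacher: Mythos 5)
Your proposal is correct and follows essentially the same route as the paper, which omits the proof and refers to \cite[Lemma A.1]{BMV1}: one restricts $U_{\mu,\e}$ to the range of $P_{0,0}$ so that the second summand in \eqref{OperatorU} is annihilated, exploits that $\big(\uno-(P_{\mu,\e}-P_{0,0})^2\big)^{-1/2}=\uno+\cO\big((P_{\mu,\e}-P_{0,0})^2\big)$ has vanishing first differential at the origin, and uses the differentiated idempotency relation $P_{\mu,\e}^2=P_{\mu,\e}$ to produce the term $-\tfrac12 P_{0,0}\dot P_{0,0}'$. All the individual computations (the reduction $U_{\mu,\e}P_{0,0}=g(R_{\mu,\e})P_{\mu,\e}P_{0,0}$, the mixed-derivative bookkeeping, the resolvent-identity formulas for $P_{0,0}'$, $\dot P_{0,0}$, $\dot P_{0,0}'$, and the explicit expressions \eqref{cLfirstorder}, \eqref{cLmisto} obtained from Theorem \ref{existPTW}) check out.
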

The next lemma describes the projectors $P_{\mu,\e}$ and the transformation operators $U_{\mu,\e}$  at $\e = 0$:
\begin{lemma}\label{nopuremu}
    For every $\mu$ small enough, one has 
$ P_{\mu,0}P_{0,0} = P_{0,0}$ and 
$U_{\mu,0} P_{0,0} =  P_{0,0}$. 
    In particular $f_k^\sigma(\mu, 0) = f_k^\sigma$ for $(k,\sigma) \in \{(1, \pm), (0,+) \}$.
\end{lemma}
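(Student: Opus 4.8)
The plan is to exploit that at $\e=0$ the operator $\cL_{\mu,0}=\cJ_\mu\cB_{\mu,0}$ is the Fourier multiplier $(\pa_x+\im\mu)(\fm(1)-\cM(D+\mu))$, which is diagonal in the \emph{$\mu$-independent} orthogonal basis $\{e^{\im jx}\}_{j\in\Z}$ of $L^2(\T,\C)$, with eigenvalues $\im\,\omega_{j,\mu}$ as in \eqref{omega_nmu}. The first step is to identify which Fourier modes contribute to $\cV_{\mu,0}=\textup{Rg}(P_{\mu,0})$. By \eqref{collision} the three eigenvalues colliding at $0$ for $\mu=0$ are exactly $\im\,\omega_{j,0}$, $j\in\{-1,0,1\}$, while $|\omega_{j,0}|\geq|j|c_0$ for all other modes. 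Lemma \ref{lem:Kato1}, applied along $\e=0$, guarantees that for $|\mu|<\mu_0$ the part of $\sigma(\cL_{\mu,0})$ enclosed by $\Gamma$ still consists of exactly three eigenvalues; by continuity these remain $\im\,\omega_{j,\mu}$, $j\in\{-1,0,1\}$, the other $\im\,\omega_{j,\mu}$ staying outside $\Gamma$. Hence the spectral subspace $\cV_{\mu,0}=\textup{span}\{e^{-\im x},\,1,\,e^{\im x}\}=\cV_{0,0}$ is independent of $\mu$.

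Next I would read off $P_{\mu,0}$ explicitly. Since $\cL_{\mu,0}$ is diagonal in the fixed orthogonal basis $\{e^{\im jx}\}$, its Riesz projector \eqref{Pproj} is simply the coordinate projection onto the modes selected by $\Gamma$, namely $P_{\mu,0}f=\sum_{j\in\{-1,0,1\}}\tfrac{(f,e^{\im jx})}{(e^{\im jx},e^{\im jx})}\,e^{\im jx}$, which is manifestly $\mu$-independent. Therefore $P_{\mu,0}=P_{0,0}$ for all $|\mu|<\mu_0$, and in particular $P_{\mu,0}P_{0,0}=P_{0,0}^2=P_{0,0}$, which is the first claim.

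For the transformation operator I would substitute $P_{\mu,0}=P_{0,0}$ into the defining formula \eqref{OperatorU} at $\e=0$. Then $(P_{\mu,0}-P_{0,0})^2=0$, so the factor $\big(\uno-(P_{\mu,0}-P_{0,0})^2\big)^{-1/2}$ is the identity, while the bracket reduces to $P_{0,0}+(\uno-P_{0,0})^2=P_{0,0}+(\uno-P_{0,0})=\uno$. Thus $U_{\mu,0}=\uno$, and a fortiori $U_{\mu,0}P_{0,0}=P_{0,0}$. The statement on the basis is then immediate: since $f_k^\sigma\in\cV_{0,0}=\textup{Rg}(P_{0,0})$ we have $P_{0,0}f_k^\sigma=f_k^\sigma$, whence by \eqref{fksigma} $f_k^\sigma(\mu,0)=U_{\mu,0}f_k^\sigma=U_{\mu,0}P_{0,0}f_k^\sigma=P_{0,0}f_k^\sigma=f_k^\sigma$ for $(k,\sigma)\in\{(1,\pm),(0,+)\}$.

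The only genuinely delicate point — which I regard as the main obstacle — is the uniform-in-$\mu$ control ensuring that no Fourier mode is exchanged across $\Gamma$ for small $\mu$: one must verify that no mode $j\notin\{-1,0,1\}$ enters $\Gamma$ and none among $\{-1,0,1\}$ leaves it. This is precisely what Lemma \ref{lem:Kato1} supplies (the number of eigenvalues inside $\Gamma$ is locally constant in $(\mu,\e)$), combined with the spectral gap \eqref{collision} furnished by Assumption (A3). Once this is granted, the remainder is the elementary projector algebra above, with no computation of remainders required.
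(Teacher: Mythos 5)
Your proof is correct and follows essentially the same route as the paper's: identify that $\cV_{\mu,0}$ is spanned by the modes $e^{\im jx}$, $j\in\{-1,0,1\}$, for all small $\mu$ (hence $P_{\mu,0}=P_{0,0}$), and then read off $U_{\mu,0}P_{0,0}=P_{0,0}$ from the formula \eqref{OperatorU}. You simply make explicit the spectral-gap and eigenvalue-counting argument that the paper leaves implicit, and you additionally observe the slightly stronger fact $U_{\mu,0}=\uno$.
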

\begin{proof}
The basis $\{f^\pm_1, f_0^+\}$ in \eqref{fksigma} is a basis of  $\cV_{\mu,0}$   for any $\mu$ sufficiently small. Hence, $P_{\mu,0} = P_{0,0}$ for any $\mu$ small enough, so using Definition  \ref{OperatorU} one obtains also $U_{\mu,0}P_{0,0} = P_{0,0}$.
\end{proof}
By  Lemma  \ref{B1} and \ref{nopuremu} we have the Taylor expansion
\begin{equation}\label{ordinibase}
f_k^\sigma(\mu,\e) = f_k^\sigma + \e P_{0,0}' f_k^\sigma  + \mu\e  \big(\dot P_{0,0}'- \frac12 P_{0,0}\dot P_{0,0}' \big) f_k^\sigma + \cO^\reg(\mu^2\e,\e^2) \, , \qquad  (k, \sigma) \in \{(1, \pm), (0,+) \}
\end{equation}
where  the remainders $\cO^\reg(\cdot)$ are vectors in  $H^{m_\star}(\T)$. Note that the term $\mu \dot P_{0,0} f_k^\sigma$ must vanish because $f_k^\sigma(\mu,0) \equiv  f_k^\sigma$.
In order to compute the vectors
 $P_{0,0}' f_k^\sigma$  using 
 \eqref{Pdereps}, it is useful to know the action of  $(\cL_{0,0} - \lambda)^{-1}$ on the vectors 
\begin{equation}
\begin{aligned}
\label{fksigma2}
& f_k^+:= \cos(kx) , 
\quad  f_k^- :=\sin(kx) \, , \quad  k \in \N  \ . 
\end{aligned}  
\end{equation}
We have the following result:
\begin{lemma}\label{lem:VUW}
The space $ H^{m_\star}(\T) $ decomposes as 
$
H^{m_\star}(\T) =  \cV_{0,0} \oplus {\cW_{H^{m_\star}}}$
with  $\cW_{H^{m_\star}}=\overline{\bigoplus\limits_{k=2}^\infty \cW_k}^{H^{m_\star}}$
where the subspaces $\cV_{0,0}$ and $ \cW_k $, defined below, are 
invariant  under   $\cL_{0,0} $ and  the following properties hold:
\begin{itemize}
\item[(i)] $ \cV_{0,0} = \textup{span} \{ f^+_1, f^-_1, f_0^+ \}$  is the kernel of $\cL_{0,0}$. For any $ \lambda  \neq 0 $  the operator 
$ \cL_{0,0}-\lambda :  \cV_{0,0} \to \cV_{0,0} $ is invertible and  
 \begin{align}\label{primainversione1}
& (\cL_{0,0}-\lambda)^{-1}f_k^\sigma = -\frac1\lambda f_k^\sigma \, ,
\quad \forall  (k, \sigma) \in \{(1, \pm), (0,+) \} \ . 
\end{align} 
\item[{(ii)}]  Each
subspace $\cW_k:= \textup{span}\left\{f_k^+, \ f_k^-,  \right\}$ is  invariant under $ \cL_{0,0} $.  Let $\cW_{L^2}=\overline{\bigoplus\limits_{k=2}^\infty \cW_k}^{L^2}$. For any
$|\lambda| < 
\delta_0$ small enough, the operator 
$ {\cL_{0,0}-\lambda :  \cW_{H^{m_\star}}\to \cW_{L^2} }$ is invertible and in particular 
\begin{equation}
\begin{aligned}
\label{primainversione4}
 (\cL_{0,0}-\lambda)^{-1} f_k^\sigma
&= \sigma \frac{1}{k (\fm(1)-\fm(k))} f_k^{-\sigma}
-  \frac{\lambda }{k^2 (\fm(1) - \fm(k))^2} f_k^\sigma  
+ \lambda^2 \varphi_k^\pm(\lambda, x) \,  \ , 
 \end{aligned}
\end{equation}
for some analytic  functions  $ B(\delta_0) \ni \lambda \mapsto \varphi_k^\pm(\lambda, \cdot) \in H^1(\T, \C)$.
\end{itemize}
\end{lemma}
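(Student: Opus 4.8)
The plan is to block-diagonalize $\cL_{0,0}$ along the real Fourier basis \eqref{fksigma2} and then invert $\cL_{0,0}-\lambda$ explicitly on each two-dimensional block. First I would recall that $\cL_{0,0}=\pa_x(\fm(1)-\cM(D))$ and compute, by passing through the exponentials $e^{\pm\im kx}$ and using that $\fm$ is even (Assumption (A1)),
\[
\cL_{0,0}f_k^+=-\omega_{k,0}\,f_k^-\,,\qquad \cL_{0,0}f_k^-=\omega_{k,0}\,f_k^+\,,\qquad \omega_{k,0}=k\big(\fm(1)-\fm(k)\big)\,,
\]
in accordance with \eqref{omega_nmu}. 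This identity shows immediately that each $\cW_k=\textup{span}\{f_k^+,f_k^-\}$ and $\cV_{0,0}=\textup{span}\{f_1^+,f_1^-,f_0^+\}$ are invariant under $\cL_{0,0}$; since $\{f_k^\sigma\}$ is a Hilbert basis of $H^{m_\star}(\T)$, grouping the frequencies $k\in\{0,1\}$ into $\cV_{0,0}$ and $k\geq 2$ into $\cW_{H^{m_\star}}$ gives the claimed decomposition. Because $\omega_{0,0}=\omega_{1,0}=0$ while $\omega_{k,0}\neq 0$ for $k\geq 2$ by \eqref{collision}, the subspace $\cV_{0,0}$ is exactly $\ker\cL_{0,0}$, which proves the first sentence of $(i)$.

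For the resolvent statement in $(i)$, on the kernel $\cV_{0,0}$ the operator $\cL_{0,0}$ vanishes, so for $\lambda\neq 0$ one has $\cL_{0,0}-\lambda=-\lambda\,\uno$ on $\cV_{0,0}$, which is invertible with inverse $-\lambda^{-1}\uno$; this is exactly \eqref{primainversione1}. For $(ii)$ I would work block by block: in the ordered basis $\{f_k^+,f_k^-\}$ of $\cW_k$ the operator $\cL_{0,0}-\lambda$ is represented by the matrix $\big(\begin{smallmatrix}-\lambda & \omega_{k,0}\\ -\omega_{k,0} & -\lambda\end{smallmatrix}\big)$, with determinant $\lambda^2+\omega_{k,0}^2$. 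Since $|\omega_{k,0}|\geq k\,c_0\geq 2c_0$ for $k\geq 2$ by \eqref{collision}, fixing any $\delta_0\leq c_0$ makes this matrix invertible for all $|\lambda|<\delta_0$ simultaneously, and inverting it yields
\[
(\cL_{0,0}-\lambda)^{-1}f_k^\sigma=\frac{1}{\lambda^2+\omega_{k,0}^2}\big(\sigma\,\omega_{k,0}\,f_k^{-\sigma}-\lambda\,f_k^\sigma\big)\,,\qquad \sigma\in\{+,-\}\,.
\]
Expanding the analytic scalar factor $(\lambda^2+\omega_{k,0}^2)^{-1}=\omega_{k,0}^{-2}\big(1-\lambda^2\omega_{k,0}^{-2}+\cdots\big)$, whose geometric series converges for $|\lambda|<|\omega_{k,0}|$, reproduces the two leading terms of \eqref{primainversione4} and isolates a remainder of the form $\lambda^2\varphi_k^\pm(\lambda,x)$; the convergence of the series on $B(\delta_0)$ gives the analyticity of $\lambda\mapsto\varphi_k^\pm(\lambda,\cdot)$.

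The step that requires genuine care — and which I expect to be the main obstacle — is to promote these blockwise inverses to a single bounded operator $(\cL_{0,0}-\lambda)^{-1}\colon \cW_{L^2}\to\cW_{H^{m_\star}}$, uniformly in $k$. This is a matter of matching the order of the symbol to the Sobolev exponent. From \eqref{orderpsedodiff} and Assumption (A3) one gets $|\omega_{k,0}|\sim \la k\ra^{\,m_\star}$ with $m_\star=\max\{1,1+m\}$: when $m\geq 0$ the factor $\fm(k)$ dominates and $|\omega_{k,0}|\sim k^{1+m}$, while for $-1\leq m<0$ the value $\fm(k)$ stays bounded and $|\fm(1)-\fm(k)|\geq c_0$ forces $|\omega_{k,0}|\sim k$. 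For $|\lambda|<\delta_0\leq c_0$ one has $\textup{dist}(\lambda,\{\pm\im\omega_{k,0}\})\geq |\omega_{k,0}|-|\lambda|\geq |\omega_{k,0}|/2$, so the $L^2$-norm of the blockwise resolvent on $\cW_k$ is at most $2/|\omega_{k,0}|$, and since a function supported on frequency $k$ satisfies $\|\cdot\|_{H^{m_\star}}\leq C\la k\ra^{\,m_\star}\|\cdot\|_{L^2}$, the $\cW_{L^2}\to\cW_{H^{m_\star}}$ norm of the block is bounded by $C\la k\ra^{\,m_\star}/|\omega_{k,0}|\leq C'$ uniformly in $k$ and $\lambda$. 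Assembling these uniform bounds over $k\geq 2$ yields the boundedness and invertibility of $\cL_{0,0}-\lambda$ between the stated spaces and completes the proof.
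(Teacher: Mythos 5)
Your proof is correct and follows essentially the same route as the paper: both reduce $\cL_{0,0}-\lambda$ to $2\times 2$ blocks on each $\cW_k$ and expand the inverse in powers of $\lambda$ (the paper via a Neumann series for $\cL_{0,0}^{-1}(1-\lambda\cL_{0,0}^{-1})^{-1}$, you via direct inversion of the block matrix and the geometric series of $(\lambda^2+\omega_{k,0}^2)^{-1}$, which is the same computation). Your closing paragraph on the uniform-in-$k$ bound $\la k\ra^{m_\star}/|\omega_{k,0}|\leq C'$ is a welcome addition that makes explicit the point the paper's ``converges provided $|\lambda|$ is small enough'' leaves implicit.
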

\begin{proof}
$(i)$ For  $f\in \cV_{0,0} = \ker (\cL_{0,0} )$ and $\lambda \neq 0$  we have that
$  (\cL_{0,0}-\lambda) f = -\lambda f$.
        Inverting both sides of the equation gives \eqref{primainversione1}.

$(ii)$  Notice that $\cW_k$ is invariant under the action of $\cL_{0,0}$. The matrix representing the action  $\cL_{0,0}\colon \cW_k \to \cW_k$ in the basis $\{ f_k^+, f_k^-\}$ is given by 
        \begin{equation}
            \tL_{0,0} = \begin{pmatrix}
                0 & k ( \fm(1) - \fm(k)) \\
                - k ( \fm(1)- \fm(k)) & 0
            \end{pmatrix} \ . 
        \end{equation}
        By Assumption (A3), $\det  \tL_{0,0} = k^2 ( \fm(1) - \fm(k))^2 \neq 0$, so the matrix $\tL_{0,0}$ is invertible with inverse
        \begin{equation}
           \tL_{0,0}^{-1} = \begin{pmatrix}
                0 & \frac{-1}{ k ( \fm(1) -\fm(k))} \\
                \frac{1}{ k ( \fm(1)- \fm(k))} & 0
            \end{pmatrix} \ .
        \end{equation}
        The invertibility of $\cL_{0,0}-\lambda$ and the formulas in \eqref{primainversione4}  follow using the Neumann series expansion
        \begin{equation}
            (\cL_{0,0}-\lambda)^{-1} = \cL_{0,0}^{-1} (1-\lambda\cL_{0,0}^{-1})^{-1} = \sum_{j=0}^\infty \lambda^j \cL_{0,0}^{-j-1} = \cL_{0,0}^{-1} +  \sum_{j=0}^\infty \lambda^{j+1} \cL_{0,0}^{-j-2}
        \end{equation}
     which converges provided  $|\lambda|$ is small enough.
\end{proof}

We now compute the action of the operator  $(\cL_{0,0} - \lambda)^{-1} \cL_{0,0}'$ on the vectors $f_1^\pm, f_0^+$.
\begin{lemma}[Action of $(\cL_{0,0} - \lambda)^{-1} \cL_{0,0}'$ on $\cV_{0,0}$] \label{explambda'} One has
    \begin{equation} \label{expansionRlambda}
        \begin{aligned}
           & (\cL_{0,0} - \lambda)^{-1} \cL_{0,0}' f_1^\sigma = - \frac{1}{\fm(1) - \fm(2)} f_2^\sigma  - \sigma  \frac{\lambda}{2(\fm(1) - \fm(2))^2} f_2^{-\sigma} +\cO_\cW(\lambda^2) \ ,\\
           & (\cL_{0,0} - \lambda)^{-1} \cL_{0,0}' f_0^+ = -\frac{\sqrt2}{\lambda} f_1^- \ .
        \end{aligned}
    \end{equation}
\end{lemma}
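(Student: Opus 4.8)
The plan is to reduce the statement to two elementary computations, exploiting the fact that $\cL_{0,0}'$ sends each relevant basis vector onto a single harmonic whose resolvent is already tabulated in Lemma \ref{lem:VUW}. First I would compute the action of $\cL_{0,0}' = -2\partial_x \circ \cos x$ (given in \eqref{cLfirstorder}) on $f_1^\pm$ and $f_0^+$, using the product-to-sum identities $\cos^2 x = \tfrac12(1+\cos 2x)$ and $\cos x \sin x = \tfrac12 \sin 2x$. A direct differentiation yields
\begin{equation*}
\cL_{0,0}' f_1^\sigma = 2\sigma\, f_2^{-\sigma}\,,\qquad \cL_{0,0}' f_0^+ = \sqrt 2\, f_1^-\,,
\end{equation*}
where the constant produced by $\cos^2 x$ is annihilated by $\partial_x$.

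The key structural observation, which dictates the different shape of the two formulas, is that these two images live in different invariant subspaces of the splitting of Lemma \ref{lem:VUW}. Indeed $\cL_{0,0}' f_1^\sigma \in \cW_2$, so the resolvent acts through the \emph{regular} expansion \eqref{primainversione4} with $k=2$; whereas $\cL_{0,0}' f_0^+ = \sqrt 2\, f_1^- \in \cV_{0,0} = \ker \cL_{0,0}$, so the resolvent acts through the \emph{singular} formula \eqref{primainversione1}, producing the pole $\lambda^{-1}$.

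It then remains to substitute and collect terms. Applying \eqref{primainversione4} to $f_2^{-\sigma}$ (that is, setting $k=2$ and replacing $\sigma \mapsto -\sigma$ in that formula), multiplying by the prefactor $2\sigma$, and using $\sigma^2 = 1$, the leading term becomes $-\tfrac{1}{\fm(1)-\fm(2)} f_2^\sigma$, the first-order term becomes $-\tfrac{\sigma\lambda}{2(\fm(1)-\fm(2))^2} f_2^{-\sigma}$, and the tail $2\sigma\lambda^2 \varphi_2^{-\sigma}$ is absorbed into $\cO_\cW(\lambda^2)$, which gives the first line of \eqref{expansionRlambda}. For the second line, \eqref{primainversione1} gives $(\cL_{0,0}-\lambda)^{-1} f_1^- = -\tfrac1\lambda f_1^-$, and multiplying by $\sqrt 2$ produces $-\tfrac{\sqrt 2}{\lambda} f_1^-$.

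There is no genuine analytic obstacle here; the computation is elementary. The only points requiring care are bookkeeping: tracking the signs in \eqref{primainversione4} (which originate from the off-diagonal action of $\cL_{0,0}$ on $\cW_2$), and confirming that $\cL_{0,0}' f_1^\sigma$ has no component along $\cV_{0,0}$ nor along $\cW_k$ with $k\geq 3$. This last point holds because multiplication by $\cos x$ shifts $\cos x,\sin x$ only to frequencies $0$ and $2$, and the zero-frequency part is killed by $\partial_x$, so that indeed only the $k=2$ block of Lemma \ref{lem:VUW} enters.
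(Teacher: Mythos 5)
Your proposal is correct and follows exactly the paper's own (very terse) proof: compute $\cL_{0,0}'f_1^\sigma = 2\sigma f_2^{-\sigma}$ and $\cL_{0,0}'f_0^+ = \sqrt2 f_1^-$, then apply the resolvent formulas \eqref{primainversione4} (regular, $k=2$ block) and \eqref{primainversione1} (singular, kernel) of Lemma \ref{lem:VUW}. The sign and coefficient bookkeeping in your substitution is accurate.
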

\begin{proof} Recall
$
\cL_{0,0}' 
=- 2  \pa_x \circ \cos (x),$
 and so
$     \cL_{0,0}'f_1^\pm =   \pm 2 f_2^\mp$ and $\cL_{0,0}' f_0^+ = \sqrt2f_1^-$.
Then apply    Lemma \ref{lem:VUW}.
\end{proof}

We are ready to compute the terms  $ P_{0,0}' f_k^\sigma$  in \eqref{ordinibase}:
\begin{lemma}\label{lem:firstjet}
One has 
    $$        
                P_{0,0}' f_1^+ = \frac{\cos (2x)}{\fm(1) - \fm(2)},\qquad
                P_{0,0}' f_1^- = \frac{\sin (2x)}{\fm(1) - \fm(2)}, \qquad
                P_{0,0}' f_0^+ = 0 \ .
    $$
\end{lemma}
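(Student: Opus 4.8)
The plan is to read off $P_{0,0}'f_k^\sigma$ directly from the contour representation \eqref{Pdereps} as a residue at $\lambda=0$, using the explicit action of the two resolvents supplied by Lemmata \ref{lem:VUW} and \ref{explambda'}. Since each of $f_1^+,f_1^-,f_0^+$ lies in $\cV_{0,0}=\ker\cL_{0,0}$, the \emph{rightmost} resolvent in \eqref{Pdereps} acts by the scalar multiplication \eqref{primainversione1}, namely $(\cL_{0,0}-\lambda)^{-1}f_k^\sigma=-\lambda^{-1}f_k^\sigma$. Hence
\begin{equation*}
P_{0,0}'f_k^\sigma=-\frac{1}{2\pi\im}\oint_\Gamma \frac{1}{\lambda}\,(\cL_{0,0}-\lambda)^{-1}\cL_{0,0}'f_k^\sigma\,\de\lambda ,
\end{equation*}
so the whole computation reduces to expanding the vector-valued integrand $(\cL_{0,0}-\lambda)^{-1}\cL_{0,0}'f_k^\sigma$ in powers of $\lambda$ near $0$ — which is exactly the content of \eqref{expansionRlambda} — and extracting the coefficient of $\lambda^{-1}$.

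For the two kernel vectors $f_1^\pm$ I would substitute the first line of \eqref{expansionRlambda}. After dividing by $\lambda$, the leading term $-\frac{1}{\fm(1)-\fm(2)}f_2^\sigma$ produces a simple pole with residue $-\frac{1}{\fm(1)-\fm(2)}f_2^\sigma$, while the $\cO(\lambda)$ term $-\sigma\frac{\lambda}{2(\fm(1)-\fm(2))^2}f_2^{-\sigma}$ and the remainder $\cO_\cW(\lambda^2)$ become regular at $0$ and contribute nothing. Taking into account the outer sign, this gives $P_{0,0}'f_1^\sigma=\frac{1}{\fm(1)-\fm(2)}f_2^\sigma$, i.e. the first two claimed identities once we recall $f_2^+=\cos(2x)$, $f_2^-=\sin(2x)$ from \eqref{fksigma2}.

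For $f_0^+$ I would instead use the second line of \eqref{expansionRlambda}, $(\cL_{0,0}-\lambda)^{-1}\cL_{0,0}'f_0^+=-\sqrt2\,\lambda^{-1}f_1^-$. After dividing by $\lambda$ the integrand is $\sqrt2\,\lambda^{-2}f_1^-$, a pure double pole; since $\oint_\Gamma \lambda^{-2}\,\de\lambda=0$ its residue vanishes and $P_{0,0}'f_0^+=0$, as claimed.

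The computation is essentially bookkeeping of Laurent coefficients, so there is no genuine analytic obstacle; the only point requiring care is that the surviving contribution is dictated purely by the \emph{order of the pole} of the integrand at $\lambda=0$. This is why $f_1^\pm$ retains the $\lambda$-independent part of \eqref{expansionRlambda} (a simple pole, with nonzero residue) whereas $f_0^+$, whose integrand has a genuine double pole with vanishing residue, yields zero. One should also check that $\Gamma$ may be shrunk to a small circle about $0$ on which the expansions of Lemma \ref{explambda'} (valid for $|\lambda|<\delta_0$) apply and the functions $\varphi_k^\pm(\lambda,\cdot)$ are analytic, so that deforming the contour is legitimate.
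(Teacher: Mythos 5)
Your proof is correct and follows exactly the paper's own argument: reduce \eqref{Pdereps} to $-\frac{1}{2\pi\im}\oint_\Gamma \lambda^{-1}(\cL_{0,0}-\lambda)^{-1}\cL_{0,0}'f_k^\sigma\,\de\lambda$ via \eqref{primainversione1}, then extract the residue at $\lambda=0$ using the expansions of Lemma \ref{explambda'}. Your additional remark on shrinking $\Gamma$ to a small circle where those expansions hold is a sensible point of care that the paper leaves implicit.
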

\begin{proof}
 By equation \ref{Pdereps} and Lemma \ref{lem:VUW}--$(i)$ one gets
    $$
     P_{0,0}' f_k^\sigma = -\frac{1}{2\pi\im} \oint_\Gamma \frac1\lambda (\cL_{0,0}-\lambda)^{-1} \cL_{0,0}' f_k^\sigma \de\lambda \ .
    $$
 Then    by residue theorem the only relevant terms are those of order $O(1)$ in $\lambda$  in the expansion of $- (\cL_{0,0}-\lambda)^{-1} \cL_{0,0}' f_k^\sigma$. Then the  result follows by Lemma  \ref{explambda'}.
\end{proof}

\begin{lemma}[Action of $(\cL_{0,0} - \lambda)^{-1} \dot \cL_{0,0}$ on $\cV_{0,0}$] One has
    \begin{equation}\label{risdL00}
    \begin{aligned}
        &(\cL_{0,0} - \lambda)^{-1} \dot \cL_{0,0} f_1^\sigma = \frac{\im \dot{\fm}(1)}{\lambda} f_1^\sigma\, , \qquad
        (\cL_{0,0} - \lambda)^{-1} \dot \cL_{0,0} f_0^+\, \,= -\frac{\im(\fm(1) - \fm(0) )}{\lambda} f_0^+\  \\
    \end{aligned}
\end{equation}
and,  for $k\geq 2$, 
\begin{equation}\label{risdL002}
        (\cL_{0,0} - \lambda)^{-1} \dot \cL_{0,0} f_k^\sigma =\sigma  \dfrac{\im (\fm(1) - \fm(k) - k  \dot\fm(k))}{k(\fm(1) - \fm(k))} f_k^{-\sigma} + \cO_\cW (\lambda) \,  \ . 
\end{equation}
\end{lemma}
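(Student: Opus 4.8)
The plan is to reduce everything to the single observation that $\dot\cL_{0,0}$ acts as a \emph{scalar} on each mode $f_k^\sigma$, after which the statement follows by directly inserting the resolvent formulas of Lemma \ref{lem:VUW}. First I would start from the explicit expression $\dot\cL_{0,0} = \im(\fm(1)-\cM(D)) - \pa_x\dot\cM(D)$ recorded in \eqref{cLfirstorder}. The delicate ingredient is the action of the \emph{odd} Fourier multiplier $\dot\cM(D)$: since $\dot\fm$ is odd, $\dot\cM(D)$ does not act diagonally on the real trigonometric basis but exchanges sines and cosines. Expanding $\cos(kx)=\tfrac12(e^{\im kx}+e^{-\im kx})$ and using $\dot\fm(-k)=-\dot\fm(k)$ gives
\[
\dot\cM(D)f_k^+ = \im\,\dot\fm(k)\,f_k^-, \qquad \dot\cM(D)f_k^- = -\im\,\dot\fm(k)\,f_k^+,
\]
in agreement with $\dot\cM(D)$ being purely imaginary (Lemma \ref{lem:LB.im}).

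Composing with $\pa_x$ then restores the diagonal structure, $\pa_x\dot\cM(D)f_k^\sigma = \im\,k\dot\fm(k)\,f_k^\sigma$, and combined with $\im(\fm(1)-\cM(D))f_k^\sigma=\im(\fm(1)-\fm(k))f_k^\sigma$ this yields the key identity
\[
\dot\cL_{0,0}f_k^\sigma = \im\big(\fm(1)-\fm(k)-k\dot\fm(k)\big)f_k^\sigma, \qquad \sigma=\pm .
\]
With this in hand the rest is bookkeeping. For the kernel directions $k\in\{0,1\}$ I would use Lemma \ref{lem:VUW}$(i)$, $(\cL_{0,0}-\lambda)^{-1}f_k^\sigma=-\tfrac1\lambda f_k^\sigma$; evaluating the scalar above at $k=1$ (where it reduces to $-\dot\fm(1)$) and at $k=0$ (where $\dot\fm(0)=0$ by oddness, leaving $\fm(1)-\fm(0)$) gives at once the two formulas in \eqref{risdL00}. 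For the higher modes $k\geq 2$ I would use Lemma \ref{lem:VUW}$(ii)$: because $\dot\cL_{0,0}f_k^\sigma$ is a scalar multiple of $f_k^\sigma$, only the $\cO(1)$ term $\sigma\frac{1}{k(\fm(1)-\fm(k))}f_k^{-\sigma}$ of the resolvent contributes at leading order, and the $\cO(\lambda)$ part of the resolvent is swept into the remainder $\cO_\cW(\lambda)$, producing \eqref{risdL002}.

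I do not expect a genuine obstacle here; the computation is short once the scalar action of $\dot\cL_{0,0}$ is identified. The one place where care is essential — and where a sign or parity slip is easy — is the evaluation of the odd multiplier $\dot\cM(D)$, which must be carried out in the complex exponential basis rather than by naively mimicking the diagonal action of the even multiplier $\cM(D)$. It is precisely this parity that makes the would-be off-diagonal terms cancel and leaves $\dot\cL_{0,0}$ diagonal, which is what underlies the clean form of both \eqref{risdL00} and \eqref{risdL002}.
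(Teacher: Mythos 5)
Your proposal is correct and follows essentially the same route as the paper: compute the scalar (diagonal) action of $\dot\cL_{0,0} = \im(\fm(1)-\cM(D)) - \pa_x\dot\cM(D)$ on each mode $f_k^\sigma$, then apply the resolvent formulas \eqref{primainversione1} and \eqref{primainversione4} of Lemma \ref{lem:VUW}. Your explicit verification that the odd multiplier $\dot\cM(D)$ swaps sines and cosines before $\pa_x$ restores diagonality is exactly the computation the paper leaves implicit, and all signs check out.
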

\begin{proof}
By the second of \eqref{cLfirstorder} one has 
$\dot \cL_{0,0}f_1^\sigma  = - \pa_x \dot\cM(D) f_1^\sigma = -\im \dot{\fm}(1) f_1^\sigma$.
Then 
 the first of \eqref{risdL00} follows using  \eqref{primainversione1}.
  To prove the second one we use that 
 $\pa_x \dot\cM(D) f_0^+=0$ and so 
$\dot \cL_{0,0} f_0 = \im(\fm(1)-  \fm(0)) f_0^+$
and again we conclude using \eqref{primainversione1}.
Formula \eqref{risdL002} is proved on the same lines, using \eqref{primainversione4}.
\end{proof}

\begin{lemma}\label{lem:secondjet}
One has
\begin{equation}\label{dot'P}
    \begin{split}
        &\dot P_{0,0}' f_1^\sigma = - \sigma \im \frac{\dot \fm (1)-2\dot \fm (2) }{2(\fm(1) - \fm(2))^2}f_2^{-\sigma}  \ , \qquad  \dot P_{0,0}' f_0^+ = 0 \ .
    \end{split}
\end{equation}
Consequently, 
\begin{equation}\label{dot'P2}
 \big(\dot P_{0,0}'- \frac12 P_{0,0}\dot P_{0,0}' \big) f_1^\sigma = - \sigma \im \frac{\dot \fm (1)-2\dot \fm (2) }{2(\fm(1) - \fm(2))^2}f_2^{-\sigma} \, , 
 \quad
  \big(\dot P_{0,0}'- \frac12 P_{0,0}\dot P_{0,0}' \big) f_0^+ = 0  \ . 
 \end{equation}
\end{lemma}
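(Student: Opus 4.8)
The plan is to substitute each of the three integral representations of $\dot P_{0,0}'$ in \eqref{Pmisto} into the basis vectors $f_1^\sigma$ and $f_0^+$, and to evaluate the resulting contour integrals by the residue theorem: since $0$ is the only singularity of the integrands enclosed by $\Gamma$, each integral equals the residue at $\lambda=0$ of its integrand. To extract these residues I would Laurent-expand every factor $(\cL_{0,0}-\lambda)^{-1}$ via Lemma \ref{lem:VUW}, distinguishing whether the vector it acts on lies in $\cV_{0,0}$ (where \eqref{primainversione1} gives the pure pole $-\lambda^{-1}$) or in some $\cW_k$ (where \eqref{primainversione4} gives a regular expansion with a nonzero constant term). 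The elementary actions used repeatedly are $\cL_{0,0}'f_1^\sigma = 2\sigma f_2^{-\sigma}$ and $\cL_{0,0}'f_0^+ = \sqrt2\,f_1^-$ (from \eqref{cLfirstorder}), $\dot\cL_{0,0}'=-2\im\cos x$ (from \eqref{cLmisto}), together with the fact that $\dot\cL_{0,0}$ is diagonal on each invariant block, namely $\dot\cL_{0,0}f_1^\sigma=-\im\dot\fm(1)f_1^\sigma$, $\dot\cL_{0,0}f_0^+=\im(\fm(1)-\fm(0))f_0^+$ and $\dot\cL_{0,0}f_2^\sigma=\im\big((\fm(1)-\fm(2))-2\dot\fm(2)\big)f_2^\sigma$, all consistent with \eqref{risdL00}--\eqref{risdL002}.

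The vanishing $\dot P_{0,0}'f_0^+=0$ is the easy case and I would dispatch it first. Applying each of the three operator strings in \eqref{Pmisto} to $f_0^+$, one checks that the running vector never leaves $\cV_{0,0}=\textup{span}\{f_0^+,f_1^+,f_1^-\}$: indeed $\cL_{0,0}'f_0^+=\sqrt2 f_1^-$, $\dot\cL_{0,0}'f_0^+\propto f_1^+$, and $\dot\cL_{0,0}$ preserves $\cV_{0,0}$. On $\cV_{0,0}$ every resolvent factor acts as $-\lambda^{-1}$, so each integrand is a pure monomial $\lambda^{-n}$ with $n\geq 2$ (three resolvents in \eqref{Pmisto1}--\eqref{Pmisto2}, two in \eqref{Pmisto3}), whose residue at $\lambda=0$ vanishes. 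Hence all three contributions are zero.

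For $f_1^\sigma$ the mechanism is different, because $\cL_{0,0}'$ pushes $f_1^\sigma$ into the block $\cW_2$, where the regular part of \eqref{primainversione4} supplies the constant terms that upgrade the higher-order poles into genuine residues. I would compute the three residues separately: \eqref{Pmisto3} yields $\tfrac{\im\sigma}{2(\fm(1)-\fm(2))}f_2^{-\sigma}$, while \eqref{Pmisto1} and \eqref{Pmisto2}, which additionally carry the diagonal eigenvalues of $\dot\cL_{0,0}$ on $\cW_2$ and on $\cV_{0,0}$, produce $-\im\big((\fm(1)-\fm(2))-2\dot\fm(2)\big)\tfrac{\sigma}{2(\fm(1)-\fm(2))^2}f_2^{-\sigma}$ and $-\tfrac{\sigma\im\dot\fm(1)}{2(\fm(1)-\fm(2))^2}f_2^{-\sigma}$ respectively. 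Here the components along $f_2^{\sigma}$ (the non-swapped direction) always appear as pure poles of order $\geq 2$ and drop out, so only $f_2^{-\sigma}$ survives. Summing the three coefficients, the $(\fm(1)-\fm(2))$ terms cancel and leave the combination $2\dot\fm(2)-\dot\fm(1)$, i.e. $\dot P_{0,0}'f_1^\sigma=-\sigma\im\frac{\dot\fm(1)-2\dot\fm(2)}{2(\fm(1)-\fm(2))^2}f_2^{-\sigma}$, which is the first line of \eqref{dot'P}.

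Finally \eqref{dot'P2} follows for free: both $\dot P_{0,0}'f_1^\sigma\propto f_2^{-\sigma}\in\cW_2$ and $\dot P_{0,0}'f_0^+=0$ lie in the complement $\cW$, which by Lemma \ref{lem:VUW} is annihilated by the spectral projection $P_{0,0}$ onto $\cV_{0,0}=\ker\cL_{0,0}$; therefore $P_{0,0}\dot P_{0,0}'f_k^\sigma=0$ and $\big(\dot P_{0,0}'-\tfrac12 P_{0,0}\dot P_{0,0}'\big)f_k^\sigma=\dot P_{0,0}'f_k^\sigma$. I expect the main difficulty to be the bookkeeping in the third paragraph: tracking the Laurent coefficients through the triple resolvent products to the correct order and verifying that the unwanted $f_2^\sigma$ contributions are indeed residue-free, so that the delicate cancellation producing $\dot\fm(1)-2\dot\fm(2)$ comes out cleanly.
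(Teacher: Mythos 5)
Your proposal is correct and follows essentially the same route as the paper: split $\dot P_{0,0}'$ into the three contour integrals of \eqref{Pmisto}, evaluate each residue at $\lambda=0$ using the resolvent formulas \eqref{primainversione1} and \eqref{primainversione4} (so that the $f_0^+$ case dies on pure poles of order $\geq 2$, while the $f_1^\sigma$ case picks up exactly the three $f_2^{-\sigma}$ contributions you list, whose sum cancels the $\fm(1)-\fm(2)$ terms), and conclude \eqref{dot'P2} from $\dot P_{0,0}'\cV_{0,0}\subseteq \cW_{L^2}$ so that $P_{0,0}\dot P_{0,0}'=0$. The individual residues you state coincide with the paper's $\mathrm{I}_1^+$, $\mathrm{II}_1^+$, $\mathrm{III}_1^+$.
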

\begin{proof}
 By  \eqref{Pmisto} and \eqref{primainversione1} we write, for $(k,\sigma) \in \{(1,\pm), (0,+) \}$ 
\begin{align*}
\dot P_{0,0}' f_k^\sigma  = & \frac{1}{2\pi\im} \oint_\Gamma \frac{1}{\lambda} (\cL_{0,0}-\lambda)^{-1} \dot \cL_{0,0} (\cL_{0,0}-\lambda)^{-1}  \cL_{0,0}' f_k^\sigma \de\lambda  \\
&
+ \frac{1}{2\pi\im} \oint_\Gamma \frac{1}{\lambda} (\cL_{0,0}-\lambda)^{-1} \cL_{0,0}' (\cL_{0,0}-\lambda)^{-1} \dot \cL_{0,0} f_k^\sigma \de\lambda   \\ 
&
 - \frac{1}{2\pi\im} \oint_\Gamma 
 \frac{1}{\lambda} (\cL_{0,0}-\lambda)^{-1} \dot \cL_{0,0}' f_k^\sigma  \de\lambda =:  \text{I}_k^\sigma + \text{II}_k^\sigma +\text{III}_k^\sigma. 
\end{align*}
Let now $(k, \sigma) = (1,+)$.
Using \eqref{primainversione1}, \eqref{expansionRlambda}, \eqref{risdL002} and the residue theorem  we get 
    \begin{equation}\label{I}
        \begin{aligned}
            \mathrm{I}_1^+
            &= \frac{1}{2\pi\im} \oint_\Gamma \frac{1}{\lambda}(\cL_{0,0}-\lambda)^{-1} \dot \cL_{0,0} \left(- \frac{1}{\fm(1) - \fm(2)} \cos(2x)+\cO_\cW(\lambda)\right)  = -\im \frac{(\fm(1) - \fm(2)-2\dot \fm (2))}{2(\fm(1) - \fm(2))^2}\sin (2x)  \ .
        \end{aligned}
    \end{equation}
Similarly, using     \eqref{primainversione1}, \eqref{risdL00}, \eqref{expansionRlambda} and the residue theorem
        \begin{align}
        \notag
            \mathrm{II}_1^+
        &= \frac{1}{2\pi\im} \oint_\Gamma \frac1\lambda  (\cL_{0,0}-\lambda)^{-1} \cL_{0,0}'\left(\frac{\im \dot{\fm}(1)}{\lambda} f_1^+ \right)\de \lambda \\
        \notag
        &= - \frac{1}{2\pi\im} \oint_\Gamma \frac{\im \dot{\fm}(1)}{\lambda^2} ( \frac{1}{\fm(1) - \fm(2)} \cos(2x) + \frac{\lambda}{2(\fm(1) - \fm(2))^2} \sin (2x) +\cO_\cW(\lambda^2)) \de \lambda\\
        \label{II}
        &=-\im \frac{\dot \fm(1)}{2(\fm(1) - \fm(2))^2}\sin (2x)
        \end{align}
    Finally consider $\mathrm{III}_1^+$. 
By \eqref{cLmisto} we have  $\dot \cL_{0,0}' f_1^+ = -\im (\cos (2x) + 1)$, so using also 
   \eqref{primainversione1} we get 
    \begin{equation}\label{III}
            \mathrm{III}_1^+  =
        \frac{1}{2\pi\im} \oint_\Gamma \left(-\frac1\lambda\right)(\cL_{0,0}-\lambda)^{-1} \dot \cL_{0,0}' f_1^+\de\lambda = \frac{1}{2\pi\im} \oint_\Gamma \left(\frac\im\lambda(\cL_{0,0}-\lambda)^{-1} \cos (2x) -\frac{\im}{\lambda^2}\right)\de \lambda = \im \frac{\sin (2x)}{2(\fm(1) - \fm(2))}
    \end{equation}
    Summing \eqref{I}, \eqref{II}, \eqref{III} up we deduce the first of \eqref{dot'P}.
    The computations of
    $\mathrm{I}_1^- $, $\mathrm{II}_1^- $, $\mathrm{III}_1^- $ are analogous.
    
    We now consider $(k,\sigma) = (0, +)$. By \eqref{expansionRlambda} and \eqref{risdL00} and the residue theorem 
\begin{align*}
   \mathrm{I}_0^+ = 
   - \frac{1}{2\pi\im} \oint_\Gamma \frac{\im \sqrt{2} \dot\fm(1)}{\lambda^3} f_1^- \de\lambda = 0  \ , 
   \quad 
   \mathrm{II}_0^+ = 
  \frac{1}{2\pi\im} \oint_\Gamma \frac{\im \sqrt{2} (\fm(1) - \fm(0))}{\lambda^3} f_1^- \de\lambda = 0  \ .   
\end{align*}    
    Then, since $\dot \cL_{0,0}' f_0^+ = -  \im \sqrt{2} f_1^+$, by \eqref{primainversione1} we find also $\mathrm{III}_0^+ = 0$, so $\dot P'_{0,0} f_0^+ =0$.
    
    To deduce \eqref{dot'P2} just notice that 
 $\dot P_{0,0}' \cV_{0,0} \subseteq \cW_{L^2}$, and so  $P_{0,0}\dot P_{0,0}' = 0$.
\end{proof}
Finally, in the next lemma we prove the identities in Equation \eqref{basis.der}
  \begin{lemma}
    The following expansions hold:
    \begin{equation}
        \dot f_1^- (0,\e) = \im \e \fb \cos (2x)+\cO(\e^2) , \quad  
 \dot f_0(0,\e) = \cO(\e^2) \ , \quad 
   \ddot f_k^\sigma(0,\e) = \cO(\e) \ .
    \end{equation}
  \end{lemma}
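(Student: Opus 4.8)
The plan is to read off all three expansions directly from the Taylor jets of the Kato map $U_{\mu,\e}$ already collected in Lemmas \ref{B1}, \ref{nopuremu} and \ref{lem:secondjet}, rather than differentiating the full expansion \eqref{ordinibase} in $\mu$ (which would force me to control the $\mu$-derivative of the $\cO^\reg(\mu^2\e,\e^2)$ remainder). The one structural fact I exploit throughout is Lemma \ref{nopuremu}: since $U_{\mu,0}f_k^\sigma=f_k^\sigma$ for all small $\mu$ and $(k,\sigma)\in\{(1,\pm),(0,+)\}$, the curve $\mu\mapsto f_k^\sigma(\mu,0)$ is constant, whence every pure $\mu$-derivative vanishes at $\e=0$; in particular $\dot f_k^\sigma(0,0)=0$ and $\ddot f_k^\sigma(0,0)=0$.

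First I would treat the two first-order identities. Because $(\mu,\e)\mapsto f_k^\sigma(\mu,\e)$ is $\cC^\reg$ with $\reg\ge 3$, the map $\e\mapsto\dot f_k^\sigma(0,\e)$ is $\cC^{\reg-1}$, so Taylor's theorem in $\e$ together with $\dot f_k^\sigma(0,0)=0$ gives $\dot f_k^\sigma(0,\e)=\e\,\partial_\e\dot f_k^\sigma(0,0)+\cO(\e^2)=\e\,\dot U_{0,0}'f_k^\sigma+\cO(\e^2)$, where I use that mixed partials commute for $\cC^\reg$ maps. Since $P_{0,0}f_k^\sigma=f_k^\sigma$, the coefficient simplifies via the mixed-jet formula \eqref{Umix} to $\dot U_{0,0}'f_k^\sigma=\big(\dot P_{0,0}'-\tfrac12 P_{0,0}\dot P_{0,0}'\big)f_k^\sigma$, which is exactly what \eqref{dot'P2} evaluates. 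For $(k,\sigma)=(1,-)$ the two sign factors cancel (here $f_2^{-\sigma}=\cos 2x$) and the coefficient becomes $\im\,\fb\cos(2x)$ with the real number $\fb:=\tfrac{\dot\fm(1)-2\dot\fm(2)}{2(\fm(1)-\fm(2))^2}$; this yields $\dot f_1^-(0,\e)=\im\e\,\fb\cos(2x)+\cO(\e^2)$ and simultaneously fixes the value of $\fb$ left unspecified in Lemma \ref{totalbasisexpansion}. For $(k,\sigma)=(0,+)$ the second relation in \eqref{dot'P2} is $0$, giving $\dot f_0(0,\e)=\cO(\e^2)$.

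For the second-order bound I would argue even more cheaply: $\e\mapsto\ddot f_k^\sigma(0,\e)$ is $\cC^{\reg-2}$ with $\reg-2\ge 1$, so Taylor's theorem in $\e$ gives $\ddot f_k^\sigma(0,\e)=\ddot f_k^\sigma(0,0)+\cO(\e)$, and the vanishing $\ddot f_k^\sigma(0,0)=0$ from Lemma \ref{nopuremu} closes the argument. This establishes all three expansions in \eqref{basis.der}.

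I do not expect a genuine obstacle here, since every ingredient is already in place. The only point requiring care is conceptual bookkeeping: one should expand in $\e$ the functions $\dot f_k^\sigma(0,\cdot)$ and $\ddot f_k^\sigma(0,\cdot)$, whose values at $\e=0$ are known to vanish by Lemma \ref{nopuremu}, instead of differentiating \eqref{ordinibase} with respect to $\mu$, which would require estimating $\partial_\mu$ of the remainder. I would also double-check the sign and parity bookkeeping in \eqref{dot'P2} when specializing to $\sigma=-$, to confirm that $\fb$ is real (which holds because $\fm$ is real valued with real derivatives) and that the claimed $\cos(2x)$ rather than $\sin(2x)$ appears.
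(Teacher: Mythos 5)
Your proposal is correct and follows essentially the same route as the paper: Taylor-expand $\e\mapsto\dot f_k^\sigma(0,\e)$ and $\e\mapsto\ddot f_k^\sigma(0,\e)$, kill the constant terms via Lemma \ref{nopuremu} (which forces $\dot U_{0,0}f_k^\sigma=\ddot U_{0,0}f_k^\sigma=0$), and identify the linear coefficient with $\dot U_{0,0}'f_k^\sigma=\big(\dot P_{0,0}'-\tfrac12 P_{0,0}\dot P_{0,0}'\big)f_k^\sigma$ computed in Lemma \ref{lem:secondjet}. Your sign bookkeeping for $\sigma=-$ and the resulting value $\fb=\tfrac{\dot\fm(1)-2\dot\fm(2)}{2(\fm(1)-\fm(2))^2}$ are consistent with \eqref{dot'P2}.
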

  \begin{proof}
    Recall that $f_k^\sigma(\mu,\e) = U_{\mu,\e}f_k^\sigma$. Thus, we have the following Taylor expansion
    \begin{equation}\label{eqprov}
        \begin{split}
            \dot f_1^- (0,\e) &= \dot U_{0,0}f_1^- + \e\dot U'_{0,0}f_1^-+\cO(\e^2) , \\  
            \dot f_0^+(0,\e) &= \dot U_{0,0}f_0^+ + \e\dot U'_{0,0}f_0^+ + \cO(\e^2) \ , \\
              \ddot f_k^\sigma(0,\e) &= \ddot U_{0,0} f_k^\sigma +\cO(\e) \ .
        \end{split}
    \end{equation}
    By Lemma \ref{nopuremu} we have that $\dot U_{0,0} f_k^\sigma  = \ddot U_{0,0} f_k^\sigma  = 0 $. The thesis follows by Lemma \ref{lem:secondjet}, substituting the expressions for $\dot U'_{0,0} =\dot P_{0,0}'- \frac12 P_{0,0}\dot P_{0,0}'  $ in equation \eqref{eqprov}.
  \end{proof}

\begin{proof}[Proof of Lemma \ref{totalbasisexpansion}]
It follows from the expansion in equation \eqref{ordinibase}, Lemma \ref{lem:firstjet} and Lemma \ref{lem:secondjet}.
To prove that the term of order $\e^2$ of  $f_1^+(\mu,\e)$ has zero average use that, by Lemma \ref{lemmanonzero}  $(ii)$
$$
(f_1^+(0,\e), 1 ) = (U_{0,\e} f_1^+, 1 )  =  (U_{0,\e} \cJ_0 f_1^-, 1 ) =
  (\cJ_0 U_{0,\e}^{-*} f_1^-, 1 )  = 0 \ . 
$$
To prove that the remainders of the expansion are, as in Equation \eqref{basis.exp}, of order $\cO^\reg (\mu^2\e , \mu\e^2,\e^3)$ we proceed as follows. Let 
$h(\mu,\e) = f_k^\sigma (\mu,\e) - f_k^\sigma - \mu \dot U_{0,0}f_k^\sigma - \e  U_{0,0}'f_k^\sigma - \mu \e \dot U_{0,0}'f_k^\sigma -\frac12 \e^2 U_{0,0}''f_k^\sigma  $. By the Taylor expansion, we know that 
$$
h(\mu,\e) = \mu^3 \varphi_0(\mu,\e) + \mu^2\e \varphi_1(\mu,\e) + \mu\e^2 \varphi_2(\mu,\e) + \e^3 \varphi_3(\mu,\e)
$$
for some $\cC^{\reg-2}$ functions $\varphi_j$, $j=0\dots 3$. By Lemma \ref{nopuremu}, $h(\mu,0)\equiv 0$, and thus $\varphi_0 (\mu,0)\equiv 0$. Thus, by Lemma \ref{lem:fg}, $\varphi_0 (\mu,\e) = \e \tilde \varphi_0 (\mu,\e)$ for some $\cC^{\reg-3}$ function $\tilde \varphi_0 (\mu,\e)$, and in conclusion $h(\mu,\e) =\cO^\reg(\mu^2\e,\mu\e^2,\e^3)$.\\
Finally, $f_0(0,\e) = U_{0,\e} f_0 = f_0$ again by Lemma \ref{lemmanonzero}  $(ii)$.
\end{proof}
\section{Technical lemmas}
\begin{lemma}\label{lem:fg}
    Let $f:\overline{B(\mu_0)}\times \overline{B(\e_0)}\mapsto \R$ be a $\cC^n$ function, $n \geq 2$, where by $\overline{B(r)}$ we denote the closed ball in $\R$ of center $0$ and radius $r$. 
    \begin{itemize}
    \item[(i)] Assume $f(\mu,0) = 0$ for any $(\mu,\e) \in B(\mu_0)\times B(\e_0)$. Then $f(\mu,\e) = \e \, g(\mu,\e)$ for some $g \in \cC^{n-1}$.
    \item[(ii)] Assume  $|f(\mu,\e)|\leq C|\mu|^{a_0} |\e|^{b_0} \sum_{i} \mu^{a_i} \, \e^{b_i} $ on $B(\mu_0)\times B(\e_0)$ for some constant $C>0$ and natural numbers  $1 \leq a_0+b_0<n$. Then $f(\mu,\e) = \mu^{a_0} \e^{b_0} \, g(\mu,\e)$ for some $g \in \cC^{n-a_0-b_0}$ fulfilling
$|g(\mu,\e)|\leq C \sum_{i} \mu^{a_i} \, \e^{b_i} $.
\item[(iii)] Assume $f(0,0) = 0$, and $n\geq 1$. Then $\exists C>0$ such that $f(\mu,\e)\leq C(|\mu| + |\e|)$ for every $(\mu,\e)\in \overline{B(\mu_0)}\times \overline{B(\e_0)}$.
    \end{itemize}
%
%
\end{lemma}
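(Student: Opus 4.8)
The plan is to handle the three items in order, using (i) as the basic Hadamard-type factorization which (ii) will iterate, and deducing (iii) directly from $\cC^1$ regularity. For (i), since $f(\mu,0)=0$ I would write, by the fundamental theorem of calculus,
\[
f(\mu,\e)=\int_0^1 \frac{\de}{\de t}f(\mu,t\e)\,\de t=\e\int_0^1 (\pa_\e f)(\mu,t\e)\,\de t=:\e\,g(\mu,\e).
\]
Because $f\in\cC^n$, the integrand $(\pa_\e f)(\mu,t\e)$ is $\cC^{n-1}$ in $(\mu,\e)$ uniformly in $t$, so differentiating under the integral sign gives $g\in\cC^{n-1}$.

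For (ii), the key preliminary observation is that the pointwise bound forces the low-order Taylor jets of $f$ along each coordinate axis to vanish. Fixing $\mu$ and letting $\e\to0$, the hypothesis gives $|f(\mu,\e)|\le C_\mu|\e|^{b_0}$, whereas Taylor's formula (available since $n\ge a_0+b_0+1>b_0$) reads $f(\mu,\e)=\sum_{k=0}^{b_0-1}\frac{(\pa_\e^k f)(\mu,0)}{k!}\e^k+O(|\e|^{b_0})$; a polynomial of degree $<b_0$ that is $O(|\e|^{b_0})$ must vanish identically, so $(\pa_\e^k f)(\mu,0)=0$ for all $k<b_0$ and all $\mu$. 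The integral form of Taylor's remainder then factors
\[
f(\mu,\e)=\e^{b_0}\int_0^1\frac{(1-t)^{b_0-1}}{(b_0-1)!}(\pa_\e^{b_0}f)(\mu,t\e)\,\de t=:\e^{b_0}f_1(\mu,\e),\qquad f_1\in\cC^{n-b_0}.
\]
Dividing the hypothesis by $|\e|^{b_0}$ and letting $\e\to0$ shows $|f_1(\mu,\e)|\le C|\mu|^{a_0}\sum_i|\mu|^{a_i}|\e|^{b_i}$, and since $n-b_0\ge a_0+1>a_0$ the identical argument in the variable $\mu$ kills the jets $(\pa_\mu^j f_1)(0,\e)=0$ for $j<a_0$ and factors $f_1=\mu^{a_0}g$ with $g\in\cC^{(n-b_0)-a_0}=\cC^{n-a_0-b_0}$. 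On the dense set $\{\mu\e\ne0\}$ one has $g=f/(\mu^{a_0}\e^{b_0})$, so the hypothesis yields $|g(\mu,\e)|\le C\sum_i|\mu|^{a_i}|\e|^{b_i}$ there, and by continuity of $g$ this bound extends to the whole box.

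For (iii), since $f$ is $\cC^1$ on the compact set $\overline{B(\mu_0)}\times\overline{B(\e_0)}$ its gradient is bounded by some $L$, and the mean value inequality together with $f(0,0)=0$ gives $|f(\mu,\e)|\le L\,|(\mu,\e)|\le C(|\mu|+|\e|)$.

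I expect the only genuinely delicate point to be the vanishing-of-jets step in (ii): one must verify that a bound on $|f|$ alone (rather than on $f$ together with its derivatives) suffices to annihilate the low-order Taylor coefficients along each axis, and must carefully track the loss of exactly one derivative per factored monomial, which is precisely why the strict inequality $a_0+b_0<n$ is needed in order to land in $\cC^{n-a_0-b_0}$.
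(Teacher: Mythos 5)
Your proof is correct and takes essentially the same route as the paper: item (i) is the identical fundamental-theorem-of-calculus/Hadamard factorization, and item (iii) is the same mean-value argument on the compact box. In item (ii) your one-shot factorization (annihilating the low-order jets via Taylor's theorem and then using the integral remainder) is just a compressed version of the paper's argument, which instead iterates item (i) one power of $\e$ (and then of $\mu$) at a time while re-deriving the inherited bound at each step; the derivative bookkeeping landing in $\cC^{n-a_0-b_0}$ and the division-on-the-dense-set argument for the bound on $g$ are the same.
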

\begin{proof}
    $(i)$ Put $h(\tau):= f( \mu, \tau  \e)$. Then 
    $$
    f(\mu,\e) =\underbrace{f(\mu,0)}_{=0} + \e \int_0^1 \pa_\tau h(\tau) \di \tau = \e g(\mu,\e)\ , \qquad g(\mu,\e)\in \cC^{n-1}
    $$
    proving the first item.

    $(ii)$ At least one among $a_0$ and $b_0$ is not zero. Assume it is  $b_0$. First notice that, if $|f(\mu,\e)|\leq C|\mu|^{a_0} |\e|^{b_0} \sum_{i} \mu^{a_i} \, \e^{b_i} $, then $f(\mu,0) = 0$. Using $i)$, we have that $f(\mu,\e) = \e h_1 (\mu,\e)$, with $h_1 (\mu,\e)\in\cC^{n-1}$ satisfying $|h_1 (\mu,\e)|\leq  C|\mu|^{a_0} |\e|^{b_0-1} \sum_{i} \mu^{a_i} \, \e^{b_i} $. \\
    We can then iterate the procedure $b_0-1$ times, obtaining a sequence of functions $h_k(\mu,\e)$, $k = 2, \ldots, b_0$, fulfilling
    $$
    h_{k-1}(\mu,\e) = \e h_k (\mu,\e) \ , \qquad h_k \in \cC^{n-k}\ ,\qquad |h_k (\mu,\e)|\leq  C|\mu|^{a_0} |\e|^{b_0-k} \sum_{i} \mu^{a_i} \, \e^{b_i} \ . 
    $$
    In particular $f(\mu,\e) = \e^{b_0} h_{b_0} (\mu,\e)$. Repeating the same procedure in the variable $\mu$ allows us to gather a factor $\mu^{a_0}$ from $h_{b_0} (\mu,\e)$, proving the result.\\
    $(iii)$ It is a standard application of the mean value theorem. 
\end{proof}

\end{document}